\newtheorem{thm}{Theorem}[section]
\newtheorem{cor}[thm]{Corollary}
\newtheorem{lem}[thm]{Lemma}
\newtheorem{prop}[thm]{Proposition}
\newtheorem{conj}[thm]{Conjecture}
\theoremstyle{definition}
\newtheorem{definition}[thm]{Definition}
\newtheorem{ex}[thm]{Example}
\newtheorem{remark}[thm]{Remark}
\numberwithin{equation}{section}
\DeclareMathOperator{\lcm}{lcm}
\newcommand{\rank}{\textrm{rk}}
\newcommand{\Iex}{\{5,8,9,10,11,16\}}
\newcommand{\IC}{\mathcal{IC}}
\renewcommand{\O}{\mathcal{O}}
\newcommand{\row}{\mathrm{Row}}
\newcommand{\Max}{\mathrm{Max}}
\newcommand{\Min}{\mathrm{Min}}
\newcommand{\fl}{\mathrm{Floor}}
\newcommand{\inc}{\mathrm{Inc}}
\newcommand{\comp}{\mathrm{Comp}}
\newcommand{\f}{\nabla}
\newcommand{\oi}{\Delta}
\newcommand{\tog}{\mathfrak{T}}
\newcommand{\ceil}[1]{\mathrm{Ceil}({#1})}
\newcommand{\A}{\inc_I\big(\ceil{I}\big)}
\newcommand{\F}{\Min(I)\cap\oi\ceil{I}}
\newcommand{\arow}{\inc(I)\cup\Big(\oi\inc_{I}\big(\ceil{I}\big) -\big(I\cup\oi\ceil{I}\big)\Big)\cup\Big(\oi\ceil{I}-\oi(\F) \Big)}
\newcommand{\arowcomp}{\Big(\oi\inc_I(\ceil{I})-\big(I\cup\oi\ceil{I}\big)\Big)\cup\Big(\oi\ceil{I}-\oi\big(\F\big)\Big)}
\newcommand{\mm}{\mathfrak{M}}
\title{Toggling, rowmotion, and homomesy on interval-closed sets}
\author[1]{Jennifer Elder$^1$}
\address[1]{Missouri Western State University. \href{mailto:jelder8@missouriwestern.edu}{jelder8@missouriwestern.edu}}
\author[2]{Nadia Lafreni\`ere$^2$}
\address[2]{Dartmouth College. \href{mailto:nadia.lafreniere@dartmouth.edu}{nadia.lafreniere@dartmouth.edu}}
\author[3]{Erin McNicholas$^3$}
\address[3]{Willamette University. \href{mailto:emcnicho@willamette.edu}{emcnicho@willamette.edu}}
\author[4]{Jessica Striker$^4$}
\address[4]{North Dakota State University. \href{mailto:jessica.striker@ndsu.edu}{jessica.striker@ndsu.edu}}
\author[5]{Amanda Welch$^5$}
\address[5]{Eastern Illinois University. \href{mailto:arwelch@eiu.edu}{arwelch@eiu.edu}}
\begin{document}

\begin{abstract}
Interval-closed sets of a poset are a natural superset of order ideals. We initiate the study of interval-closed sets of finite posets from enumerative and dynamical perspectives. In particular, we use the generalized toggle group to define rowmotion on interval-closed sets as a product of these toggles. Our main theorem is an intricate global characterization of rowmotion on interval-closed sets, which we show is equivalent to the toggling definition. We also study specific posets; we enumerate interval-closed sets of ordinal sums of antichains, completely describe their rowmotion orbits, and prove a homomesy result involving the signed cardinality statistic. Finally, we study interval-closed sets of product of chains posets, proving further results about enumeration and homomesy. 
\end{abstract}

\maketitle

\tableofcontents

\section{Introduction}
The action of \textit{rowmotion}  has been studied in a variety of contexts in recent years (see e.g.\ \cite{Striker2017,DPS2017,Rsystems,DSV2019,Okada21,BSV21,HopkinsRow22,IyamaMarczinzik2022}). It came to prominence as an action on order ideals of posets (equivalently, elements of distributive lattices)~\cite{CF1995}, and serves as a motivating example of the cyclic sieving~\cite{ReStWh2004} and homomesy~\cite{PR2015} phenomena. Rowmotion has recently been extended to more general types of lattices \cite{ThomasWilliams19Slow,ThomasWilliams19Indep,Semidistrim} and other kinds of objects~\cite{Striker2018}. 

There are two equivalent characterizations of rowmotion on order ideals; the first was a global definition using convex-closure~\cite{Duchet1974,BS1974}. Then, Cameron and Fon-der-Flaass realized rowmotion on order ideals as an element of a permutation group \cite{CF1995} now called the \textit{toggle group} \cite{SW2012}. (A \emph{toggle} for a given poset element acts on an order ideal as the symmetric difference of the element and the order ideal if the result is an order ideal, and as the identity otherwise.) The fourth author with Williams \cite{SW2012} used this perspective to show {rowmotion} on order ideals is conjugate to another toggle group action,  which proved easier to analyze in many cases. This yielded  proofs of the \emph{cyclic sieving phenomenon} (which gives the orbit sizes explicitly) of order ideals under rowmotion in many posets of interest, including products of chains $[m]\times[n]$ and $[m]\times[n]\times[2]$. Propp and Roby \cite{PR2015} and Vorland \cite{Vorland2019} showed rowmotion on order ideals of these products of chains posets exhibits the \emph{homomesy phenomenon}, in which the average value of a statistic over any orbit of an action equals the global average.

In \cite{Striker2018}, the notion of the toggle group was generalized beyond order ideals to any family of subsets of a finite set (as the symmetric difference of the element and the subset if the result is in the set of subsets, and as the identity otherwise). This included structures found in posets (including chains and antichains) and graphs (including independent sets and vertex covers), as well as abstractions of these (matroids and convex geometries). For each of these families, toggle commutation lemmas and toggle group structure theorems were given, but there was no exploration of specific posets or products of toggles.
Several papers have since been written to find such results on toggling independent sets of certain graphs~\cite{JosephRoby,Joseph19,JosephRoby20,defant2023torsors}, but to our knowledge, there has been little to no subsequent work on other families. 

The present paper is the first dedicated study of enumeration and toggle dynamics of the set of \emph{interval-closed sets} of a poset, a natural superset of order ideals.  Using the definition of generalized toggles in~\cite{Striker2018}, we directly extend the notion of rowmotion from order ideals to interval-closed sets (Definition \ref{def:Row_tog}). 
(Toggling on interval-closed sets was briefly mentioned in \cite{Striker2018}, as an example of an interesting set of subsets on which one may apply generalized toggles; see  Remark~\ref{remark:gentoggle} of the present paper for details.)

Our first main  result, Theorem \ref{thm:AltRow}, gives a global construction of interval-closed set rowmotion, which is far more complex than the analogous statement for order ideals, and shows it is equivalent to the toggling definition. 
We also give a homomesy result for interval-closed sets of any poset (Proposition~\ref{toggleabilitly_max_min}).
The rest of the paper studies enumeration, rowmotion orbits, and homomesy on interval-closed sets of posets belonging to various families.

In Section \ref{sec:ordinalsum}, we discuss  chains and {ordinal sums} of antichains.
We enumerate the interval-closed sets in Proposition~\ref{thm:chains_ICS} and Theorem~\ref{thm:gen_ord_sum_ics_card} and completely characterize their rowmotion orbits in Theorems~\ref{thm:chains_orbits} and \ref{thm:gen_ord_sum_row}. We also show in Theorem~\ref{alt_def_ordinal} that the global rowmotion characterization of Theorem~\ref{thm:AltRow} simplifies nicely in the setting of ordinal sums of antichains. When the size of each antichain is equal and there are an even number of these antichains, we show in Theorem~\ref{thm:signed_card_ord_sum} the signed cardinality statistic exhibits homomesy on these orbits. 

In Section \ref{sec:products_of_chains}, we first prove Theorem~\ref{prodofchainICS},  enumerating interval-closed sets of products of chains of the form $[2]\times [n]$.  In Theorem~\ref{thm:enum_ICS_m_by_n_with_items_in_each_chain}, we show that the collection of interval-closed sets of $[m]\times [n]$ with at least one element from each chain is in bijection with order ideals of $[m]\times[n-1]\times[2]$, so they are enumerated by the Narayana numbers. We also show in Theorem~\ref{thm:homomesy_2_by_n_max-min} that the number of maximal elements minus the number of minimal elements is homomesic on interval-closed sets of $[2] \times [n]$ and conjecture a similar statement for $[m]\times[n]$ (Conjecture~\ref{cj:homomesy_prod_2_chains}). We also conjecture that the signed cardinality statistic is $0$-mesic with respect to rowmotion of interval-closed sets of $[m]\times[n]$ with $m+n-1$ even and $m=2,3$ (Conjecture~\ref{conj:signed_card}).

This paper contains several conjectures and open problems. Many conjectures were found experimentally with SageMath~\cite{sage}, and we encourage researchers interested in the problems we suggest to experiment with our code. For that purpose, we provide a worksheet containing the methods for enumerating interval-closed sets and for acting on them by rowmotion. It can be found at \url{https://nadialafreniere.github.io/ICS-rowmotion-worksheet}.

\subsection*{Acknowledgements} We would like to thank the referee for helpful comments. The genesis for this project was the Research Community in Algebraic Combinatorics workshop, hosted by ICERM and funded by the NSF.  We also thank ICERM for the opportunity to continue this work in the Collaborate@ICERM program.
We thank the developers of OEIS~\cite{OEIS} and SageMath~\cite{sage}, which were useful in this research, and the CoCalc~\cite{SMC} collaboration platform. JS was supported by a grant from the Simons Foundation/SFARI (527204, JS) and NSF grant DMS-2247089.

\section{Toggling and rowmotion on interval-closed sets}\label{sec:togglingICS}
\subsection{Poset preliminaries}
\label{sec:def}
Let $n\in \mathbb{N}$ and let $(P,\leq_P)$ be a partially ordered set (poset); we drop the subscript on the order relation when the poset is understood. All posets in this paper are finite; we denote the number of elements in a poset $P$ as $|P|$.
We follow \cite[Ch.\ 3]{Stanley2011} for standard poset terminology and list the most important definitions for our work below. We postpone the definition of interval-closed set and associated notions to Section~\ref{sec:ICS}.

For $a,b\in P$, $a<b$ means $a\leq b$ and $a\neq b$. We say $b$ \emph{covers} $a$, denoted $a\lessdot b$, if $a<b$ and
there is no $p\in P$ such that $a < p < b$. The \emph{interval} $[a,b]$ is the set $\{p\in P \ | \ a\leq p\leq b\}$. Given a subset $S$ of $P$, a \emph{minimal} element of $S$ is an element that covers no other element in $S$. A \emph{maximal} element of $S$ is an element that is covered by no other element of $S$. A \emph{linear extension} of $P$ is a total order $\leq_P^*$ that \emph{extends} $\leq_P$, in the sense that if $a\leq_P b$, then $a\leq_P^* b$. We often write a linear extention as an ordered tuple of all poset elements: $(x_1,x_2,\ldots,x_{|P|})$.
We say $P$ is \emph{ranked} if there exists a function $\rank:P\rightarrow\mathbb{Z}$ such that $a\lessdot b$ implies $\rank(b)=\rank(a)+1$. If we specify that  the smallest value attained by $\rank$ on $P$ is $0$, then for each element $p\in P$, we say $\rank(p)$ is its \emph{rank}, while the rank of the poset is the largest value attained by $\rank$.

A \textit{chain} of $P$ is a totally ordered subset of $P$. An $n$-element \textit{chain poset} (where all elements are distinct) is denoted as $[n]$.
An \textit{antichain} of $P$ is a subset of pairwise incomparable elements. An \emph{antichain poset} of $n$ distinct, pairwise incomparable elements is denoted as $\mathbf{n}$. (Note that $[1]=\mathbf{1}$.)
Given posets $P$ and $Q$, their \textit{disjoint union} $P + Q$ is the poset with elements $P\cup Q$ and  order relation $a \leq_{P + Q} b$ if and only if either  $a,b\in P$ and $a\leq_P b$, or  $a,b\in Q$ and $a\leq_Q b$. 
The \textit{ordinal sum} of posets $P \oplus Q$ is a poset with elements $P \cup Q$ and order relation $a \leq_{P\oplus Q} b$ if and only if one of the following holds:
$a,b\in P$ and $a \leq_P b$,
$a,b\in Q$ and $a \leq_Q b$, or
$a \in P$ and $b \in Q$.
The \emph{Cartesian product} $P\times Q$ is the poset with elements $\{(p,q) \ | \ p\in P, q\in Q\}$ and partial order given by $(a,b)\leq_{P\times Q} (c,d)$ if and only if componentwise comparisons $a\leq_P c$ and $b\leq_Q d$ are true.
 A subset $I\subseteq P$ is an \emph{order ideal} if whenever $b\in I$ and $a\leq b$, then $a\in I$. A subset $I$ is an \emph{order filter} if whenever $a\in I$ and $a\leq b$, then $b\in I$.
Given a subset $S$ of $P$, its \emph{complement} $\overline{S}$ is $P- S$.  Note that the complement of an order ideal is an order filter, and vice versa.

\subsection{Interval-closed sets}
\label{sec:ICS}

We begin by defining interval-closed sets and important associated concepts.
\begin{definition} Let $P$ be a poset and $I$ a subset of $P$. We say that $I$ is an \emph{interval-closed set} if for all $x, y \in I$ such that $x \leq y$, then $z \in I$ if $x \leq z \leq y$ (that is, the entire interval $[x,y]$ is in $I$). Let $\IC(P)$ denote the set of interval-closed sets of $P$.
\end{definition}

Order ideals and order filters are both examples of interval-closed sets, though there are other types, as the following example shows.

\begin{ex}\label{ex:Exfirst} Consider the poset in Figure~\ref{fig:ex12}, where we have highlighted the interval-closed set $I = \Iex$. $I$ is an example of an interval-closed set that is neither an order ideal nor an order filter. 

Note that $5$, $8$, $11$, and $16$ could each be removed from $I$ and we would still have an interval-closed set ($8$ because it is incomparable to the other elements of $I$ and $5$, $11$, and $16$ because they are minimal and maximal). However, removing $9$ or $10$ from $I$ would result in a subset of $P$ that is not an interval-closed set.
\end{ex}

\begin{figure}[htbp]
\centering
	\begin{minipage}{.4\linewidth}\centering
		\begin{tikzpicture}[scale = .5]
		\node[draw,circle](A) at (5,0) {1};
		\node[draw,circle](B) at (2,2) {2};
		\node[draw,circle](C) at (4,2) {3};
		\node[draw,circle](D) at (6,2) {4};
		\node[draw,circle,fill=red!60](E) at (8,2) {5};
		\node[draw,circle](T) at (10,2) {6};
		\node[draw,circle](F) at (2,4) {7};
		\node[draw,circle,fill=red!60](G) at (4,4) {8};
    \node[draw,circle,fill=red!60](H) at (6,4) {9};
    \node[draw,circle,fill=red!60](I) at (8,4) {10};
    \node[draw,circle,fill=red!60](J) at (10,4) {11};
    \node[draw,circle](K) at (0,6) {12};
    \node[draw,circle](L) at (2,6) {13};
    \node[draw,circle](M) at (4,6) {14};
    \node[draw,circle](N) at (6,6) {15};
    \node[draw,circle,fill=red!60](O) at (8,6) {16};
    \node[draw,circle](P) at (10,6) {17};
    \node[draw,circle](Q) at (0,8) {18};
    \node[draw,circle](R) at (3,8) {19};
    \node[draw,circle](S) at (6,8) {20};
		\draw[ultra thick] (A) --(B) --(F) --(K) --(Q);
		\draw[ultra thick] (F) --(L) --(R);
		\draw[ultra thick] (F) --(M) --(S);
		\draw[ultra thick] (B) --(G) --(M) --(R);
		\draw[ultra thick] (G) --(N) --(S);
    \draw[ultra thick] (A) --(C) --(H) --(O) --(S);
    \draw[ultra thick] (A) --(D) --(I) --(P) --(S);
    \draw[ultra thick] (A) --(E) --(I) --(O);
    \draw[ultra thick] (E) --(H) --(N);
    \draw[ultra thick] (A) --(C) --(H) --(O) --(S);
    \draw[ultra thick] (I) --(N);
    \draw[ultra thick] (L) --(S);
    \draw[ultra thick] (J) --(O);
    \draw[ultra thick] (J) --(T);
\end{tikzpicture}
\end{minipage}
	\caption{The interval-closed set from Examples~\ref{ex:Exfirst} and \ref{ex:Exsecond}} \label{fig:ex12}
\end{figure}
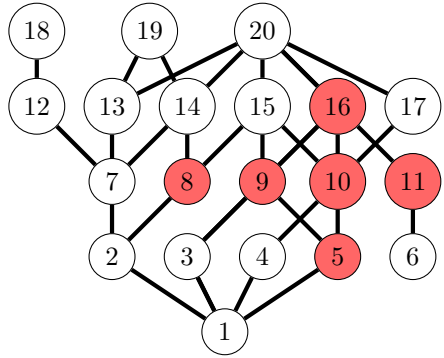

\begin{definition}\label{def:poset_stuff} Given $I\in \IC(P)$, let $\oi(I)$ denote the smallest order ideal containing  $I$, and let $\f(I)$ denote the smallest order filter containing $I$.
The set of \emph{elements of $P$ comparable to $I$} is  $\comp(I):=\f(I)\cup\oi(I)$.  We denote its complement, the set of \emph{elements of $P$ incomparable to $I$}, by $\inc(I)$.  The set of elements of a subset $J\subseteq P$ which are incomparable to $I$ is denoted $\inc_J(I):=J\cap\inc(I)$.
Denote the minimal elements of $I$ as $\Min(I)$ and the maximal elements of $I$ as $\Max(I)$.  We define the \emph{floor} of $I$ to be the set of maximal elements in $\oi(I)-I$, and denote it $\fl(I)$.
\end{definition}

Note that given an interval-closed set $I$,  $\Max(I)$,  $\Min(I)$, and  $\fl(I)$ are each antichains in $P$.

\begin{ex}\label{ex:Exsecond} Consider again the interval-closed set $I = \Iex$ in Figure~\ref{fig:ex12}. Then $\Max(I) = \{8,16\}$, $\Min(I)= \{5, 8, 11\}$, and $\fl(I) = \{2, 3, 4, 6\}$. Figure \ref{fig:Sec2_orders} shows $\inc(I)$, $\oi(I)$, and $\f(I)$.

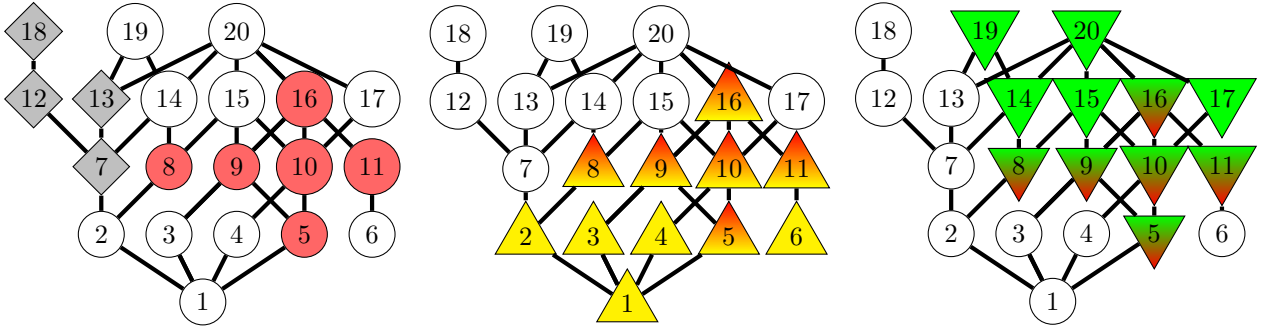
\begin{figure}\centering
	\begin{minipage}{.3\linewidth}\centering
		\begin{tikzpicture}[scale = .45]
		\node[draw,circle](A) at (5,0) {1};
		\node[draw,circle](B) at (2,2) {2};
		\node[draw,circle](C) at (4,2) {3};
		\node[draw,circle](D) at (6,2) {4};
		\node[draw,circle,fill=red!60](E) at (8,2) {5};
		\node[draw,diamond, fill=lightgray, inner sep=.9mm](F) at (2,4) {7};
		\node[draw,circle,fill=red!60](G) at (4,4) {8};
    \node[draw,circle,fill=red!60](H) at (6,4) {9};
    \node[draw,circle,fill=red!60](I) at (8,4) {10};
    \node[draw,circle, fill=red!60](J) at (10,4) {11};
		\node[draw,circle](T) at (10,2) {6};
    \node[draw,diamond, fill=lightgray, inner sep=.5mm](K) at (0,6) {12};
    \node[draw,diamond, fill=lightgray, inner sep=.5mm](L) at (2,6) {13};
    \node[draw,circle](M) at (4,6) {14};
    \node[draw,circle](N) at (6,6) {15};
    \node[draw,circle,fill=red!60](O) at (8,6) {16};
    \node[draw,circle](P) at (10,6) {17};
    \node[draw,diamond, fill=lightgray, inner sep=.5mm](Q) at (0,8) {18};
    \node[draw,circle](R) at (3,8) {19};
    \node[draw,circle](S) at (6,8) {20};
		\draw[ultra thick] (A) --(B) --(F) --(K) --(Q);
		\draw[ultra thick] (F) --(L) --(R);
		\draw[ultra thick] (F) --(M) --(S);
		\draw[ultra thick] (B) --(G) --(M) --(R);
		\draw[ultra thick] (G) --(N) --(S);
    \draw[ultra thick] (A) --(C) --(H) --(O) --(S);
    \draw[ultra thick] (A) --(D) --(I) --(P) --(S);
    \draw[ultra thick] (A) --(E) --(I) --(O);
    \draw[ultra thick] (E) --(H) --(N);
    \draw[ultra thick] (A) --(C) --(H) --(O) --(S);
    \draw[ultra thick] (I) --(N);
    \draw[ultra thick] (L) --(S);
    \draw[ultra thick] (J) --(O);
    \draw[ultra thick] (J) --(T);
\end{tikzpicture}
\end{minipage}\qquad
	\begin{minipage}{.3\linewidth}\centering
		\begin{tikzpicture}[scale = .45]
		\node[regular polygon, draw, regular polygon sides=3,  inner sep=.55mm, fill=yellow](A) at (5,0) {1};
		\node[regular polygon, draw, regular polygon sides=3,  inner sep=.55mm, fill=yellow](B) at (2,2) {2};
		\node[regular polygon, draw, regular polygon sides=3,  inner sep=.55mm, fill=yellow](C) at (4,2) {3};
		\node[regular polygon, draw, regular polygon sides=3,  inner sep=.55mm, fill=yellow](D) at (6,2) {4};
		\node[regular polygon, regular polygon sides=3,  inner sep=.55mm, top color = red, bottom color= yellow, draw](E) at (8,2) {5};
		\node[draw,circle](F) at (2,4) {7};
   \node[regular polygon, regular polygon sides=3,  inner sep=.55mm, top color = red, bottom color= yellow, draw](G) at (4,4) {8};
    \node[regular polygon, regular polygon sides=3,  inner sep=.55mm, top color = red, bottom color= yellow, draw](H) at (6,4) {9};
    \node[regular polygon, draw, regular polygon sides=3,  inner sep=.03mm, top color = red, bottom color= yellow](I) at (8,4) {10};
    \node[regular polygon, regular polygon sides=3,  inner sep=.03mm, top color = red, bottom color= yellow, draw](J) at (10,4) {11};
		\node[regular polygon, draw, regular polygon sides=3,  inner sep=.55mm, fill=yellow](T) at (10,2) {6};
    \node[draw,circle](K) at (0,6) {12};
    \node[draw,circle](L) at (2,6) {13};
    \node[draw,circle](M) at (4,6) {14};
    \node[draw,circle](N) at (6,6) {15};
    \node[regular polygon, regular polygon sides=3,  inner sep=.07mm, top color = red, bottom color= yellow,  draw](O) at (8,6) {16};
    \node[draw,circle](P) at (10,6) {17};
    \node[draw,circle](Q) at (0,8) {18};
    \node[draw,circle](R) at (3,8) {19};
    \node[draw,circle](S) at (6,8) {20};
		\draw[ultra thick] (A) --(B) --(F) --(K) --(Q);
		\draw[ultra thick] (F) --(L) --(R);
		\draw[ultra thick] (F) --(M) --(S);
		\draw[ultra thick] (B) --(G) --(M) --(R);
		\draw[ultra thick] (G) --(N) --(S);
    \draw[ultra thick] (A) --(C) --(H) --(O) --(S);
    \draw[ultra thick] (A) --(D) --(I) --(P) --(S);
    \draw[ultra thick] (A) --(E) --(I) --(O);
    \draw[ultra thick] (E) --(H) --(N);
    \draw[ultra thick] (A) --(C) --(H) --(O) --(S);
    \draw[ultra thick] (I) --(N);
    \draw[ultra thick] (L) --(S);
    \draw[ultra thick] (J) --(O);
    \draw[ultra thick] (J) --(T);
\end{tikzpicture} 
\end{minipage}\qquad
	\begin{minipage}{.3\linewidth}\centering
		\begin{tikzpicture}[scale = .45]
		\node[draw,circle](A) at (5,0) {1};
		\node[draw,circle](B) at (2,2) {2};
		\node[draw,circle](C) at (4,2) {3};
		\node[draw,circle](D) at (6,2) {4};
		\node[regular polygon, regular polygon sides=3,  inner sep=.55mm, top color = green, bottom color= red, shape border rotate=180,draw](E) at (8,2) {5};
		\node[draw,circle](F) at (2,4) {7};
		\node[regular polygon, regular polygon sides=3,  inner sep=.55mm, top color = green, bottom color= red, shape border rotate=180,draw](G) at (4,4) {8};
    \node[regular polygon, regular polygon sides=3,  inner sep=.55mm, top color = green, bottom color= red, shape border rotate=180,draw](H) at (6,4) {9};
    \node[regular polygon, regular polygon sides=3,  inner sep=.1mm, top color = green, bottom color= red, shape border rotate=180,draw](I) at (8,4) {10};
    \node[regular polygon, regular polygon sides=3,  inner sep=.1mm, top color = green, bottom color= red, shape border rotate=180,draw](J) at (10,4) {11};
		\node[draw,circle](T) at (10,2) {6};
    \node[draw,circle](K) at (0,6) {12};
    \node[draw,circle](L) at (2,6) {13};
    \node[regular polygon, regular polygon sides=3,  inner sep=.1mm, fill=green, shape border rotate=180,draw](M) at (4,6) {14};
    \node[regular polygon, regular polygon sides=3,  inner sep=.1mm, fill=green, shape border rotate=180,draw](N) at (6,6) {15};
    \node[regular polygon, regular polygon sides=3,  inner sep=.1mm, top color = green, bottom color= red, shape border rotate=180,draw](O) at (8,6) {16};
    \node[regular polygon, regular polygon sides=3,  inner sep=.1mm, fill=green, shape border rotate=180,draw](P) at (10,6) {17};
    \node[draw,circle](Q) at (0,8) {18};
    \node[regular polygon, regular polygon sides=3,  inner sep=.1mm, fill=green, shape border rotate=180,draw](R) at (3,8) {19};
    \node[regular polygon, regular polygon sides=3,  inner sep=.1mm, fill=green, shape border rotate=180,draw](S) at (6,8) {20};
		\draw[ultra thick] (A) --(B) --(F) --(K) --(Q);
		\draw[ultra thick] (F) --(L) --(R);
		\draw[ultra thick] (F) --(M) --(S);
		\draw[ultra thick] (B) --(G) --(M) --(R);
		\draw[ultra thick] (G) --(N) --(S);
    \draw[ultra thick] (A) --(C) --(H) --(O) --(S);
    \draw[ultra thick] (A) --(D) --(I) --(P) --(S);
    \draw[ultra thick] (A) --(E) --(I) --(O);
    \draw[ultra thick] (E) --(H) --(N);
    \draw[ultra thick] (A) --(C) --(H) --(O) --(S);
    \draw[ultra thick] (I) --(N);
    \draw[ultra thick] (L) --(S);
    \draw[ultra thick] (J) --(O);
    \draw[ultra thick] (J) --(T);
\end{tikzpicture} 
\end{minipage}
	\caption{The interval-closed set $I$ (shaded with red), the elements incomparable to $I$ $\inc(I)$ (diamond nodes shaded in grey), the order ideal $\oi(I)$ (triangular nodes $\triangle$ shaded with yellow), and the order filter $\f(I)$ (inverted triangular nodes $\triangledown$ shaded with green).} \label{fig:Sec2_orders}
\end{figure}

\end{ex}

The following proposition shows we can uniquely specify an interval-closed set by the  pair of antichains $(\Max(I),\fl(I))$.

\begin{prop}\label{prop:ICS_alt_def}
The map $I\mapsto (\Max(I),\fl(I))$ is a bijection between interval-closed sets of a poset $P$ and pairs of disjoint antichains $(A,B)$ of $P$ such that any element in $B$ is in the order ideal $\Delta(A)$ generated by $A$.
\end{prop}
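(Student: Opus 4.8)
The plan is to exhibit an explicit two-sided inverse to the map $\Phi\colon I\mapsto(\Max(I),\fl(I))$. First I would check that $\Phi$ actually lands in the claimed codomain: as noted immediately after Definition~\ref{def:poset_stuff}, $\Max(I)$ and $\fl(I)$ are antichains; they are disjoint since $\Max(I)\subseteq I$ while $\fl(I)\subseteq\oi(I)-I$; and $\fl(I)\subseteq\oi(I)=\oi(\Max(I))$ because the smallest order ideal containing $I$ is generated by the maximal elements of $I$.

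The candidate inverse is $\Psi\colon(A,B)\mapsto \oi(A)-\oi(B)$. The central observation driving everything is the following lemma: for any interval-closed set $I$, the difference $\oi(I)-I$ is itself an order ideal. Indeed, if $p\in\oi(I)-I$ and $q<p$, then $q\in\oi(I)$ since $\oi(I)$ is an order ideal; and if we had $q\in I$, then choosing $m\in I$ with $p\le m$ (possible as $p\in\oi(I)$) gives $q\le p\le m$ with $q,m\in I$, so interval-closure forces $p\in I$, a contradiction. Hence $q\in\oi(I)-I$. Since an order ideal is generated by its maximal elements, and the maximal elements of $\oi(I)-I$ are by definition $\fl(I)$, we conclude $\oi(I)-I=\oi(\fl(I))$, and therefore $I=\oi(I)-(\oi(I)-I)=\oi(\Max(I))-\oi(\fl(I))=\Psi(\Phi(I))$. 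This establishes $\Psi\circ\Phi=\mathrm{id}$.

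For the other composite I would take a valid pair $(A,B)$ and set $I:=\oi(A)-\oi(B)$, then verify in turn that: (i) $I$ is interval-closed — given $x\le z\le y$ with $x,y\in I$, membership $z\in\oi(A)$ follows from $z\le y$, while $z\notin\oi(B)$ follows because $z\in\oi(B)$ would force $x\in\oi(B)$, contradicting $x\in I$; (ii) $A\subseteq I$, which reduces to showing $A\cap\oi(B)=\emptyset$, using $B\subseteq\oi(A)$, disjointness of $A,B$, and the antichain property; (iii) $\Max(I)=A$, since every element of $I$ lies weakly below some element of $A\subseteq I$, while no element of $A$ lies strictly below another element of $I$; and (iv) $\oi(I)=\oi(A)$ (from $A\subseteq I\subseteq\oi(A)$), whence $\oi(I)-I=\oi(A)\cap\oi(B)=\oi(B)$, whose maximal elements are exactly $B$, giving $\fl(I)=B$. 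Together these yield $\Phi(\Psi(A,B))=(A,B)$.

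The main obstacle, and the step I would isolate as a named lemma, is the order-ideal property of $\oi(I)-I$ in the second paragraph, since it is precisely there that interval-closure (rather than the weaker order-ideal hypothesis) is used to recover $I$ from the pair $(\Max(I),\fl(I))$. Everything else is careful but routine bookkeeping with antichains and generated order ideals, where the recurring device is that a chain $a\le b\le a'$ with $a,a'$ in a common antichain collapses to equalities.
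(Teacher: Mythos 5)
Your proposal is correct and follows essentially the same route as the paper: the same map $I\mapsto(\Max(I),\fl(I))$, with the same inverse $(A,B)\mapsto \oi(A)-\oi(B)$, verified to be a two-sided inverse. The only difference is level of detail: the paper simply asserts that the two composites are identities, whereas you isolate the key fact that $\oi(I)-I$ is an order ideal (hence equals $\oi(\fl(I))$, which is exactly where interval-closure is used) and carry out the remaining routine checks explicitly.
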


\begin{proof}
Let $I$ be an interval-closed set and $f$ the function $f(I) = (\Max(I), \fl(I))$.
By definition, $\fl(I)$ is strictly below all elements of $I$, so in particular, $\fl(I)$ is strictly below $\Max(I)$. Hence, the image of $f$ is a pair of disjoint antichains $\Max(I)$ and $\fl(I)$ of $P$ such that any element in $\fl(I)$ is in the order ideal generated by $\Max(I)$. 

Conversely, let $A$ and $B$ be any two disjoint antichains of $P$ with all elements of $B$ in $\oi(A)$, the order ideal generated by $A$. 
The difference of order ideals $g(A,B) := \oi(A) - \oi(B)$ is an interval-closed set. 

This process is
such that $g\circ f(I) = I$ and $f \circ g(A, B) = (A, B)$, thus $f$ is a bijection.
\end{proof}

\subsection{Rowmotion as a composition of toggles}\label{sec:togrow}
Following \cite[Section 3.4]{Striker2018}, we define toggling on interval-closed sets. We then define \emph{rowmotion} on interval-closed sets as a product of these toggles and prove several lemmas and corollaries on the action of rowmotion on interval-closed sets of general posets.

\begin{definition} Let $x\in P$ and $I\in\IC(P)$ an interval-closed set of $P$. Define the \emph{toggle} $t_x:\IC(P)\rightarrow\IC(P)$ as follows:

\begin{itemize}
\item If $x \in I$,
$t_x(I) = 
\begin{cases} I-\{x\} &\text{if } I-\{x\}\in\IC(P) \\
I &\text{otherwise.}
\end{cases}$
\item If $x \notin I$,
$t_x(I) = 
\begin{cases} I \cup \{x\} &\text{if } I \cup\{x\} \in\IC(P)\\
I &\text{otherwise.}
\end{cases}$
\end{itemize}
That is, $x$ is toggled in/out of $I$ if doing so results in another interval-closed set.  
\end{definition}

\begin{remark}\label{remark:gentoggle}
The paper \cite{Striker2018} defined the \emph{interval-closed set toggle group} as the subgroup (of the symmetric group $\mathfrak{S}_{\IC(P)}$ on all interval-closed sets of $P$) generated by these toggles.
It also proved the following toggle commutation lemma and partially classified the group structure of the interval-closed set toggle group, leaving an open question about strengthening that classification.
\end{remark}

\begin{lem}[\protect{\cite[Lemma 3.17]{Striker2018}}] \label{lem:commute}
Given $x,y\in P$,
$(t_x t_y)^2 = 1$ if and only if $x$ and $y$ are
incomparable in $P$ or (in the case $x < y$) if $y$ covers $x$ where $y$ is maximal and $x$ is minimal.
\end{lem}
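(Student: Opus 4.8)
The plan is to first reduce the statement to a commutation criterion. Since every toggle is an involution ($t_x^2=1$), we have $(t_xt_y)^2=1$ if and only if $t_xt_y=t_yt_x$, so I would prove that $t_x$ and $t_y$ commute exactly under the stated conditions (assuming $x\neq y$, the case $x=y$ being trivial). The technical backbone is a \emph{locality lemma}: whether $t_x$ adds $x$, removes $x$, or fixes $I$ depends only on the membership of $x$ and on the restriction of $I$ to the elements comparable to $x$. To see this, recall that a subset fails to be interval-closed precisely when some $a\le z\le b$ has $a,b$ in the set and $z$ outside it; since $I$ is already interval-closed, any violation created by inserting or deleting $x$ must have $x$ as an endpoint or as the missing middle $z$, so the witnessing elements are all comparable to $x$. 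Concretely, $x$ is removable from $I$ iff no $a,b\in I$ satisfy $a<x<b$, and $x$ is addable iff no $b\in I$ with $b>x$ has a gap $z\in(x,b)-I$ and no $a\in I$ with $a<x$ has a gap $z\in(a,x)-I$.

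For the forward (``if'') direction I split into the two allowed configurations. If $x$ and $y$ are incomparable, then neither lies in the data governing the other's toggle, so by the locality lemma $t_y$ does not alter $t_x$'s decision (and conversely); since each toggle flips only its own element, the two commute. For the covering case ($x<y$, $y$ covers $x$, $x$ minimal, $y$ maximal), $x$ and $y$ are comparable, so I must argue more carefully that each toggle's decision remains independent of the other element's membership. Minimality of $x$ makes $x$ always removable and prevents $x$ from ever being the missing middle of an interval $[a,b]$ with $a\in I$; maximality of $y$ is dual; and since $y$ covers $x$ the open interval $(x,y)$ is empty, so neither element can serve as the gap obstructing the toggle of the other. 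Hence $t_x$'s decision is unaffected by whether $y\in I$, and $t_y$'s by whether $x\in I$, and commutation follows as in the incomparable case.

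For the converse I argue by contrapositive: assuming $x<y$ (without loss of generality) with the covering/min/max condition failing, I exhibit an interval-closed set on which the toggles disagree, in three subcases. If $y$ does not cover $x$, pick $z$ with $x<z<y$; then $\{x,y\}\notin\IC(P)$, and from $I=\emptyset$ one computes $t_xt_y(\emptyset)=\{y\}\neq\{x\}=t_yt_x(\emptyset)$. If $y$ covers $x$ but $x$ is not minimal, choose $w<x$ and take $I=\{z\in\oi(\{y\}):z\not>x\}$; I would verify that $I$ is interval-closed, that $x$ is removable from $I$ (no element of $I$ lies above $x$), and that $I\cup\{y\}=\oi(\{y\})$ because $(x,y)=\emptyset$. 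Then $t_xt_y(I)=\oi(\{y\})$ while $t_yt_x(I)=I-\{x\}$, and these differ since $x\in\oi(\{y\})$ but $x\notin I-\{x\}$. The remaining subcase, $y$ covers $x$ but $y$ is not maximal, follows by order duality: interval-closedness and the toggles are invariant under passing to the dual poset $P^{*}$, which swaps minimal and maximal elements and reduces this subcase to the previous one.

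I expect the main obstacle to be the converse, and specifically producing witnesses guaranteed to be interval-closed in an \emph{arbitrary} poset: naive choices such as $\{w,x,y\}$ can fail to be interval-closed when extra elements lie in the interval $[w,y]$, so the robust witness $\{z\in\oi(\{y\}):z\not>x\}$, together with the verification that adding $y$ recovers the full principal ideal $\oi(\{y\})$ (exploiting $(x,y)=\emptyset$), is the delicate heart of the argument. By contrast, the locality lemma reduces every ``if''-direction check to a short, poset-independent case analysis.
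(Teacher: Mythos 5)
You should first note that this paper never proves the lemma: it is imported verbatim from \cite[Lemma 3.17]{Striker2018} (cf.\ Remark~\ref{remark:gentoggle}), so there is no in-paper argument to compare against, and your proposal has to stand on its own. It does. The involution reduction $(t_xt_y)^2=1\iff t_xt_y=t_yt_x$ is valid; your two toggleability criteria are exactly right (removability of $x$ from $I$ iff no $a,b\in I$ with $a<x<b$; addability iff no $b\in I$ above $x$ has a gap in $(x,b)-I$ and no $a\in I$ below $x$ has a gap in $(a,x)-I$); and the ``if'' direction goes through on them. Incomparability keeps $y$ entirely outside the data governing $t_x$'s decision, and in the min/max-cover case the three hypotheses each kill one way $y$ could enter that decision: minimality of $x$ makes $x$ unconditionally removable and empties the below-$x$ condition, the covering relation kills $y$ as an upper endpoint (no gap fits in $(x,y)=\emptyset$), and maximality of $y$ kills $y$ as a gap (nothing lies above $y$); dually for $t_y$. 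Since each toggle's decision is thus unchanged by the other's flip, both compositions equal $I\,\triangle\, A_x\,\triangle\, A_y$ and commutation follows. Your converse witnesses also verify: with $x<z<y$, the empty set gives $t_xt_y(\emptyset)=\{y\}\neq\{x\}=t_yt_x(\emptyset)$; with $y\gtrdot x$ and $w<x$, the set $I=\{u\in\oi(\{y\}) : u\not>x\}$ is interval-closed, $t_y(I)=\oi(\{y\})$ (this uses $(x,y)=\emptyset$) and is then fixed by $t_x$ because of the chain $w<x<y$, while $t_x(I)=I-\{x\}$ is fixed by $t_y$ for the same reason, so the two compositions differ at $x$ and $y$; and the remaining subcase does reduce to this one by passing to $P^*$, which is legitimate precisely because toggles on $\IC(P)$ and $\IC(P^*)$ are the same maps --- the first assertion of the paper's own Proposition~\ref{Prop: DualInverse}. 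Two minor points worth making explicit in a final write-up: the lemma must be read with $x\neq y$ (as you observe, the ``only if'' fails literally when $x=y$), and in the cover case you should say in one line why equal decisions imply equal compositions, since that is the step where the locality lemma is actually consumed.
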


We can compose toggles to create interesting actions. If we compose toggles from top to bottom, we name this action rowmotion (as in the case of order ideals~\cite{SW2012}). 

\begin{definition} \label{def:Row_tog}
Given an interval-closed set $I\in\IC(P)$, the \emph{rowmotion} of $I$, $\row(I)$, is given by applying all toggles in the reverse order of any linear extension.
\end{definition}

For example, Figure~\ref{fig:Sec2_rowI} shows the interval-closed set $I = \Iex$ and its rowmotion $\row(I)=\{3,4,6,7,9,10,12,13,14,15,17,18\}$.

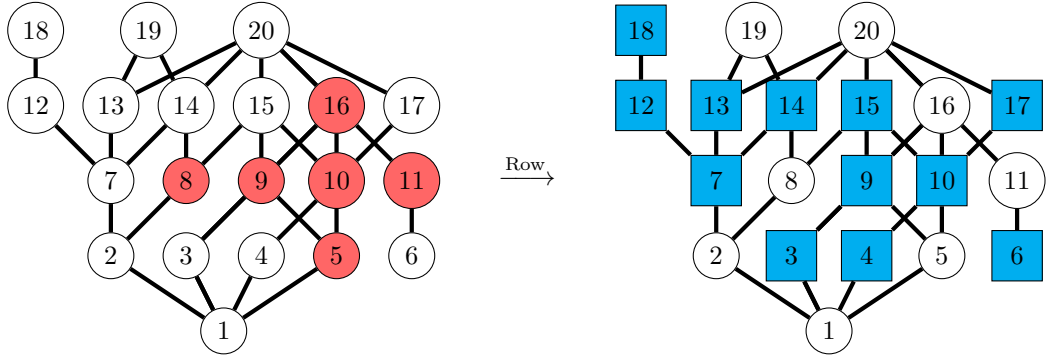
\begin{figure}\centering
	\begin{minipage}{.4\linewidth}\centering
		\begin{tikzpicture}[scale = .5]
		\node[draw,circle](A) at (5,0) {1};
		\node[draw,circle](B) at (2,2) {2};
		\node[draw,circle](C) at (4,2) {3};
		\node[draw,circle](D) at (6,2) {4};
		\node[draw,circle,fill=red!60](E) at (8,2) {5};
		\node[draw,circle](F) at (2,4) {7};
		\node[draw,circle,fill=red!60](G) at (4,4) {8};
    \node[draw,circle,fill=red!60](H) at (6,4) {9};
    \node[draw,circle,fill=red!60](I) at (8,4) {10};
    \node[draw,circle, fill=red!60](J) at (10,4) {11};
    \node[draw,circle](T) at (10,2) {6};
    \node[draw,circle](K) at (0,6) {12};
    \node[draw,circle](L) at (2,6) {13};
    \node[draw,circle](M) at (4,6) {14};
    \node[draw,circle](N) at (6,6) {15};
    \node[draw,circle,fill=red!60](O) at (8,6) {16};
    \node[draw,circle](P) at (10,6) {17};
    \node[draw,circle](Q) at (0,8) {18};
    \node[draw,circle](R) at (3,8) {19};
    \node[draw,circle](S) at (6,8) {20};
		\draw[ultra thick] (A) --(B) --(F) --(K) --(Q);
		\draw[ultra thick] (F) --(L) --(R);
		\draw[ultra thick] (F) --(M) --(S);
		\draw[ultra thick] (B) --(G) --(M) --(R);
		\draw[ultra thick] (G) --(N) --(S);
    \draw[ultra thick] (A) --(C) --(H) --(O) --(S);
    \draw[ultra thick] (A) --(D) --(I) --(P) --(S);
    \draw[ultra thick] (A) --(E) --(I) --(O);
    \draw[ultra thick] (E) --(H) --(N);
    \draw[ultra thick] (A) --(C) --(H) --(O) --(S);
    \draw[ultra thick] (I) --(N);
    \draw[ultra thick] (L) --(S);
    \draw[ultra thick] (J) --(O);
    \draw[ultra thick] (J) --(T);
\end{tikzpicture} 
\end{minipage}\quad $\xlongrightarrow{\text{Row}}$\quad
	\begin{minipage}{.4\linewidth}\centering
		\begin{tikzpicture}[scale = .5]
		\node[draw,circle](A) at (5,0) {1};
		\node[draw,circle](B) at (2,2) {2};
		\node[draw,rectangle, minimum size=6.7mm, fill=cyan](C) at (4,2) {3};
		\node[draw,rectangle, , minimum size=6.7mm, fill=cyan](D) at (6,2) {4};
		\node[draw,circle](E) at (8,2) {5};
		\node[draw,rectangle, fill=cyan, minimum size=6.7mm](F) at (2,4) {7};
		\node[draw,circle](G) at (4,4) {8};
    \node[draw,rectangle, minimum size=6.7mm,fill=cyan](H) at (6,4) {9};
    \node[draw,rectangle, minimum size=6.7mm,fill=cyan](I) at (8,4) {10};
    \node[draw,circle](J) at (10,4) {11};
		\node[draw,rectangle, minimum size=6.7mm,fill=cyan](T) at (10,2) {6};
    \node[draw,rectangle, fill=cyan, minimum size=6.7mm](K) at (0,6) {12};
    \node[draw,rectangle, fill=cyan, minimum size=6.7mm](L) at (2,6) {13};
    \node[draw,rectangle, , minimum size=6.7mm, fill=cyan](M) at (4,6) {14};
    \node[draw,rectangle, minimum size=6.7mm, fill=cyan](N) at (6,6) {15};
    \node[draw,circle](O) at (8,6) {16};
    \node[draw,rectangle, minimum size=6.7mm, fill=cyan](P) at (10,6) {17};
    \node[draw,rectangle, fill=cyan, minimum size=6.7mm](Q) at (0,8) {18};
    \node[draw,circle](R) at (3,8) {19};
    \node[draw,circle](S) at (6,8) {20};
		\draw[ultra thick] (A) --(B) --(F) --(K) --(Q);
		\draw[ultra thick] (F) --(L) --(R);
		\draw[ultra thick] (F) --(M) --(S);
		\draw[ultra thick] (B) --(G) --(M) --(R);
		\draw[ultra thick] (G) --(N) --(S);
    \draw[ultra thick] (A) --(C) --(H) --(O) --(S);
    \draw[ultra thick] (A) --(D) --(I) --(P) --(S);
    \draw[ultra thick] (A) --(E) --(I) --(O);
    \draw[ultra thick] (E) --(H) --(N);
    \draw[ultra thick] (A) --(C) --(H) --(O) --(S);
    \draw[ultra thick] (I) --(N);
    \draw[ultra thick] (L) --(S);
    \draw[ultra thick] (J) --(O);
    \draw[ultra thick] (J) --(T);
\end{tikzpicture} 
\end{minipage}
	\caption{The interval-closed set $I$ shown in red shaded circular nodes, and $\row(I)$ shown in blue shaded square nodes} \label{fig:Sec2_rowI}
\end{figure}

Rowmotion is well-defined by the commutativity of toggles for incomparable elements given in Lemma~\ref{lem:commute}.
As a bijective action on a finite set, rowmotion partitions the set into finite, disjoint subsets called \emph{orbits} given by:
$\O(I)=\{I, \row(I),\ldots, \row^{-1}(I)\}$. The \emph{order} of an action is the least common multiple of all orbit cardinalities.

The rest of this subsection discusses specifics about rowmotion on  interval-closed sets.

\begin{lem}
\label{lem:max_elt_comp}
Let $I\in \IC(P)$ be an interval-closed set containing all the maximal elements of $P$. Then $\row(I) = \overline{I}$.
\end{lem}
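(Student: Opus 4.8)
The plan is to first simplify the hypothesis and then analyze the toggles one at a time, working from the top of the poset downward.

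\emph{Reducing to an order filter.} I would begin by observing that an interval-closed set $I$ containing every maximal element of $P$ is automatically an order filter. Indeed, suppose $x\in I$ and $x\le b$; since $P$ is finite there is a maximal element $m$ with $b\le m$, and by hypothesis $m\in I$. Applying interval-closedness to $x\le b\le m$ with $x,m\in I$ gives $b\in I$, so $I$ is upward closed and $\overline{I}$ is an order ideal. This upward-closure is the structural property I will lean on repeatedly.

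\emph{Setting up the induction.} Recall that $\row(I)$ toggles the elements of $P$ in the reverse of a fixed linear extension, i.e.\ from maximal toward minimal. Let $I^{(k)}$ denote the interval-closed set obtained after performing the first $k$ of these toggles, so that $I^{(0)}=I$ and $I^{(|P|)}=\row(I)$, and write $x$ for the element toggled at step $k+1$. I would prove by induction on $k$ that $I^{(k)}$ agrees with $\overline{I}$ on the already-toggled elements and agrees with $I$ on the as-yet-untoggled elements. The base case $k=0$ is vacuous, and since each toggle $t_x$ alters the membership of at most the single element $x$, the inductive step reduces to one claim: \emph{when we reach $x$, the toggle genuinely flips it rather than acting as the identity.} Granting this, $x$ switches from its $I$-value to its $\overline{I}$-value while everything else is untouched, which is exactly what the inductive hypothesis requires; taking $k=|P|$ then yields $\row(I)=\overline{I}$.

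\emph{The crux: every toggle fires.} This is the step I expect to be the main obstacle, and it is where the order-filter reduction pays off. The key bookkeeping point is that every element strictly above $x$ occurs later in the linear extension, hence has already been toggled (so it currently carries its $\overline{I}$-value), while every element strictly below $x$ is still untouched (so it carries its $I$-value). Now split into two cases. If $x\in I$, I must show $I^{(k)}-\{x\}$ is still interval-closed; removal could only fail if some $a<x<b$ had $a,b\in I^{(k)}$, but any $b>x$ is flipped, so $b\in I^{(k)}$ would force $b\notin I$, contradicting upward-closure from $x\in I$. Hence no such $b$ exists and the removal is legal. If $x\notin I$, I must show $I^{(k)}\cup\{x\}$ is interval-closed; since $I^{(k)}$ is already interval-closed, any failure must come from an interval having $x$ as an endpoint. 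For an upper endpoint $v>x$ in $I^{(k)}$ we have $v\notin I$, and upward-closure forces every $c$ with $x<c\le v$ to satisfy $c\notin I$, hence $c\in I^{(k)}$, so $[x,v]$ is filled; for a lower endpoint $u<x$ in $I^{(k)}$ we would need $u\in I$, which by upward-closure forces $x\in I$, contradicting $x\notin I$, so no such $u$ exists. In both cases the toggle fires, completing the induction and hence the proof.
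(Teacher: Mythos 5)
Your proof is correct, and it takes a genuinely different route from the paper's. The paper constructs one particular reverse linear extension adapted to $I$ (first sweep through $I$ from its maximal elements downward, then through $\overline{I}$ from its maximal elements downward) and argues globally in two phases: all of $I$ is toggled out, the sweep passes through the empty interval-closed set, and then all of $\overline{I}$ is toggled in; independence of the linear extension is then implicitly covered by the toggle-commutation lemma. You instead work with an \emph{arbitrary} linear extension and prove, by induction on the number of toggles performed, the invariant that already-toggled elements carry their $\overline{I}$-membership while untoggled elements carry their $I$-membership, reducing everything to the claim that each toggle fires; that claim is where you use the observation (which you, unlike the paper, prove in detail) that $I$ is an order filter. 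What your approach buys: it never needs to choose a special linear extension, it does not lean on the well-definedness of rowmotion, and it makes explicit the interval-closedness checks at each toggle that the paper's narrative sweep leaves to the reader. What the paper's approach buys: brevity, and a transparent global picture of the orbit passing through $\emptyset$, which it immediately reuses to deduce Corollary~\ref{cor:empty ICS}.
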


\begin{proof}
Let $I\in \IC(P)$ be an interval-closed set containing all the maximal elements of $P$. Thus $I$ is an order filter.
Construct a reverse linear extension $\mathcal{L}$ as follows: first add all maximal elements of $P$, then all elements in $I$ covered by those maximal elements, etc., until all elements of $I$ are reached. Then add the maximal element of $\overline{I}$, then the elements of $\overline{I}$ covered by those maximal elements, etc., until all elements of $P$ are included. We compute rowmotion by applying all toggles in the order of this reverse  linear extension, so each maximal element of $P$ is toggled out, then all elements of $I$ covered by these elements, etc. Rowmotion continues untoggling all elements in $I$ until all are removed. At this point, we arrive at the empty interval-closed set. After this, all remaining elements of $P$ are successively toggled in, producing the complement of~$I$.
\end{proof}

We have the following as a corollary.
\begin{cor}\label{cor:empty ICS}
Rowmotion on $\IC(P)$ has an orbit of size $2$, namely, $\{\emptyset, P \}$.
\end{cor}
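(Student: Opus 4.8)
The plan is to deduce the corollary from Lemma~\ref{lem:max_elt_comp} together with one short additional toggling computation. First I would note that $\emptyset$ and $P$ are both interval-closed sets: the empty set vacuously satisfies the interval-closure condition, and $P$ contains every element, hence every interval. Moreover $P$ trivially contains all of the maximal elements of $P$, so Lemma~\ref{lem:max_elt_comp} applies with $I = P$ and yields $\row(P) = \overline{P} = \emptyset$.

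To conclude that $\{\emptyset, P\}$ is a genuine orbit (that is, a subset closed under $\row$), I must also show $\row(\emptyset) = P$. Note that Lemma~\ref{lem:max_elt_comp} does not apply directly to $\emptyset$, since $\emptyset$ contains no maximal elements unless $P$ is empty. I would instead compute $\row(\emptyset)$ directly from the definition, fixing a linear extension $(x_1, \ldots, x_{|P|})$ and toggling in the reverse order $x_{|P|}, x_{|P|-1}, \ldots, x_1$. The key claim, proved by induction on the number of toggles performed, is that after toggling $x_{|P|}, \ldots, x_{i+1}$ the current set equals $\{x_{i+1}, \ldots, x_{|P|}\}$. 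Since indices respect the linear extension, $x_j \leq x_k$ forces $j \leq k$, so each set $\{x_i, \ldots, x_{|P|}\}$ is an order filter and hence interval-closed; thus toggling $x_i$ always succeeds in adding it, advancing the induction. After all $|P|$ toggles we obtain $P$, so $\row(\emptyset) = P$.

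Combining the two computations, $\row(P) = \emptyset$ and $\row(\emptyset) = P$, shows $\{\emptyset, P\}$ is closed under $\row$; assuming $P$ is nonempty these two sets are distinct, so the orbit has size exactly $2$.

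I expect the only real content to be the second paragraph, verifying $\row(\emptyset) = P$, since it is the part not already handed to us by Lemma~\ref{lem:max_elt_comp}. Even there the obstacle is mild: the induction mirrors the untoggling argument in the proof of that lemma run in the opposite direction, with the observation that every upper segment $\{x_i, \ldots, x_{|P|}\}$ of a linear extension is an order filter doing all the work.
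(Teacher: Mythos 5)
Your proposal is correct and takes essentially the same approach as the paper: $\row(P)=\emptyset$ via Lemma~\ref{lem:max_elt_comp}, and $\row(\emptyset)=P$ because every toggle acts nontrivially when starting from the empty set. Your induction on upper segments of a linear extension simply fills in the detail behind the paper's one-line justification of that second step.
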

\begin{proof}
By Lemma~\ref{lem:max_elt_comp}, $\row(P)=\overline{P}=\emptyset$. By definition, $\row(\emptyset)=P$, as all toggles act nontrivially.
\end{proof}

The following lemma shows that for posets of rank at most $1$, rowmotion on interval-closed sets acts as complementation, and thus has order $2$.
\begin{lem}\label{lem:at_most_1}
Let $P$ be a poset of rank at most one, and $I\in \IC(P)$. Then $\row(I) = \overline{I}$, thus the order of rowmotion is $2$.
\end{lem}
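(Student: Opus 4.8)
The plan is to reduce everything to the observation that in a poset of rank at most one, \emph{every} subset is interval-closed. First I would note that if $x<y$ in such a poset, then $\rank(x)=0$ and $\rank(y)=1$, so $y$ covers $x$ and the interval $[x,y]$ equals $\{x,y\}$; likewise $[x,x]=\{x\}$. Thus every interval of $P$ has its endpoints as its only members, so any subset that contains the endpoints of an interval contains the whole interval. In other words $\IC(P)=2^{P}$.

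With this in hand the toggles become completely transparent: for any $x\in P$ and any $I\subseteq P$, the set $I\mathbin{\triangle}\{x\}$ is again a subset of $P$, hence interval-closed, so $t_x$ never acts as the identity and $t_x(I)=I\mathbin{\triangle}\{x\}$. Equivalently, one can invoke Lemma~\ref{lem:commute}: incomparable pairs commute automatically, and any comparable pair $x<y$ has $y$ covering $x$ with $y$ maximal and $x$ minimal, so the lemma guarantees that all toggles commute.

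Next I would compute rowmotion directly. By Definition~\ref{def:Row_tog}, $\row$ applies each toggle $t_x$ exactly once, in the reverse order of a fixed linear extension. Since each toggle is the involution $I\mapsto I\mathbin{\triangle}\{x\}$ and distinct toggles only affect the membership of distinct elements, their composition is independent of the order of application and equals $I\mapsto I\mathbin{\triangle}P=\overline{I}$. Hence $\row(I)=\overline{I}$. Finally, $\overline{\overline{I}}=I$ shows $\row^2=\mathrm{id}$, so the order of rowmotion is $2$ (it is exactly $2$ rather than $1$ since, for nonempty $P$, $\row(\emptyset)=P\neq\emptyset$).

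The only real content is the first step—recognizing that $\IC(P)=2^{P}$ and thereby collapsing toggling to symmetric difference with singletons—after which the commutativity and the computation of the product are routine. I do not expect a genuine obstacle here; the subtlety is simply being careful that no interval of length greater than one can exist when the poset has rank at most one.
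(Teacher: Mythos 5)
Your proof is correct and takes essentially the same approach as the paper: both arguments rest on the observation that in a poset of rank at most one every subset is interval-closed (no chain $x<z<y$ can exist), so every toggle acts nontrivially and rowmotion, applying each toggle exactly once, sends $I$ to $\overline{I}$. Your additional remarks on toggle commutativity via Lemma~\ref{lem:commute} and on the order being exactly $2$ are harmless elaborations of what the paper's proof leaves implicit.
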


\begin{proof}
Let $P$ be a ranked poset of rank at most one, and let $S$ be any subset of $P$. Then, there exists no $z \in S$ such that $x,y \in S$ and $x<z<y$. Thus, the interval-closed condition is trivially satisfied, so $S \in \IC(P)$. Consequently, the action of toggling becomes, for any interval-closed set $I$,
\[ t_x(I) = \begin{cases} I - \{x\}& \text{if } x\in I\\
I\cup\{x\} & \text{otherwise.}\end{cases} \]
Since rowmotion toggles all elements of $P$, $\row(I) = \overline{I}$.
\end{proof}

Let $P^*$ denote the dual of a poset $P$ (the same elements and the reversed partial order), and let $t^*_x$ denote the toggle of $x$ acting on $\IC(P^*)$. Let $\row_P$ denote the action of rowmotion on interval-closed sets of the poset $P$.
\begin{prop} \label{Prop: DualInverse}
If $I\in \IC(P)$, then $I\in \IC(P^*)$ and $t_x(I)=t^*_x(I)$. Moreover, $\row^{-1}_P(I)=\row_{P^*}(I)$.
\end{prop}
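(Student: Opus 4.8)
The plan is to prove the three assertions in sequence, as each one feeds into the next. First I would establish that $\IC(P)=\IC(P^*)$ as sets of subsets. This is immediate from the definition: the interval-closed condition on $I$ says that whenever $x,y\in I$ with $x\leq_P y$, every $z$ with $x\leq_P z\leq_P y$ lies in $I$. Passing to $P^*$ simply swaps the roles of $x$ and $y$ (since $x\leq_P y$ iff $y\leq_{P^*} x$) and reverses the betweenness inequalities, but the interval $[x,y]_P$ equals the interval $[y,x]_{P^*}$ as a set. So the family of interval-closed sets is literally unchanged, and in particular $I\in\IC(P)$ iff $I\in\IC(P^*)$.

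Next I would show $t_x(I)=t_x^*(I)$ for every $x\in P$ and $I\in\IC(P)$. Since the toggle is defined purely in terms of membership and of whether the resulting symmetric difference is again interval-closed, and since we have just shown $\IC(P)=\IC(P^*)$, the two toggle maps are defined by identical case distinctions: $I\pm\{x\}$ is in $\IC(P)$ exactly when it is in $\IC(P^*)$. Hence $t_x=t_x^*$ as functions on this common set, with no dependence on the orientation of the order. This is the routine core and should be just a sentence or two.

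The main content is the final claim $\row_P^{-1}(I)=\row_{P^*}(I)$, and here is where I would spend the most care. Rowmotion on $P$ is the product of toggles taken in the reverse order of some linear extension $\mathcal L=(x_1,\dots,x_{|P|})$ of $P$; by Definition~\ref{def:Row_tog} and the well-definedness remark it does not matter which linear extension we pick. The key observation is that if $\mathcal L$ is a linear extension of $P$, then the reversed tuple $\mathcal L^{\mathrm{rev}}=(x_{|P|},\dots,x_1)$ is a linear extension of $P^*$. I would compute $\row_P(I)$ as the composition applying $t_{x_{|P|}}$ first down to $t_{x_1}$ last (reverse order of $\mathcal L$), and then observe that $\row_{P^*}(I)$, computed via the reverse of the linear extension $\mathcal L^{\mathrm{rev}}$ of $P^*$, applies the same toggles but in the exactly opposite order, namely $t^*_{x_1}$ first up to $t^*_{x_{|P|}}$ last. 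Since each $t^*_{x}=t_x$ by the previous step, $\row_{P^*}$ is the composition of the very same toggle maps in the reverse sequence to $\row_P$. Because each toggle is an involution on its domain, reversing the order of composition of the identical factors yields the inverse permutation, so $\row_{P^*}=\row_P^{-1}$.

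The step I expect to require the most attention is justifying that reversing the order of the toggle product gives the inverse. For a product of involutions $g=s_k\cdots s_1$, the inverse is $g^{-1}=s_1\cdots s_k$, which is exactly the reversed product; I would state this elementary group-theoretic fact cleanly and then verify that the toggle list used for $\row_{P^*}$ is genuinely the reversal of the list used for $\row_P$ (same elements, opposite sequence), taking care that "reverse order of a linear extension of $P$" and "reverse order of the reversed linear extension, viewed in $P^*$" really do produce opposite orderings of the same index set. A small subtlety worth a remark is that the toggles are not globally invertible involutions on all subsets but are involutions on $\IC(P)=\IC(P^*)$, which is precisely the set these compositions act on, so the involution-reversal identity applies on the relevant domain.
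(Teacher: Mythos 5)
Your proof is correct and takes essentially the same approach as the paper: establish $\IC(P)=\IC(P^*)$ and $t_x=t^*_x$, then note that the reversed linear extension of $P$ is a linear extension of $P^*$, so $\row_{P^*}$ and $\row_P$ are products of the same toggles in opposite orders and hence mutually inverse. The only difference is cosmetic---you spell out the involution-reversal identity and the domain caveat that the paper leaves implicit.
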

\begin{proof}
If $I\in \IC(P)$, $I$ is  interval-closed under the relation~$\leq_P$, thus it is also interval-closed under the relation~$\geq_P$, which equals the relation $\leq_{P^*}$. Consequently, the toggle $t^*_x$ acts identically to the toggle $t_x$.

Let $|P|=n$
and $(x_1,x_2,\ldots,x_{n})$ be a linear extension of $P$. Then given $I\in\IC(P)$, $\row_P(I)=t_{x_1}\circ t_{x_2}\circ\cdots t_{x_n}(I)$, while $\row_P^{-1}(I)=t_{x_n}\circ t_{x_{n-1}}\circ\cdots\circ t_{x_1}(I)$. But $(x_n,x_{n-1},\ldots,x_{1})$ is a linear extension of $P^*$, so $\row_{P^*}(I)=t^*_{x_n}\circ t^*_{x_{n-1}}\circ\cdots\circ t^*_{x_1}(I)=t_{x_n}\circ t_{x_{n-1}}\circ\cdots\circ t_{x_1}(I)=\row_P^{-1}(I)$.
\end{proof}
The above proposition shows that as in the case of order ideals, toggling from bottom to top is inverse rowmotion.

\subsection{Rowmotion as a global action}
In this subsection, we state and prove Theorem~\ref{thm:AltRow}: an alternate, global description of rowmotion on interval-closed sets.

Let $P$ be a poset and $I\in\IC(P)$.
We recall the notation from Definition~\ref{def:poset_stuff}, 
which will be used in Theorem~\ref{thm:AltRow}.

We also need the following new definitions. 
\begin{definition}\label{defn: ceiling} 
Given an interval-closed set $I\in\IC(P)$, we define:
\begin{itemize}
\item the \emph{ceiling of $I$}, denoted $\ceil{I}$, as $\ceil{I}=\Min(\f(I) - I)$, and 
\item  the \emph{minimal elements of $I$ under the ceiling}, $\F,$ which is equivalent to the set $\{x\in \Min(I) \ | \ \f(\{x\})\cap \ceil{I}\not=\emptyset\}$. 
\end{itemize}
\end{definition}

\begin{ex} Figure \ref{fig:Sec2_Ceil_MinUnder} shows the interval-closed set $I=\Iex$, the ceiling of $I$, $\ceil{I}=\{14, 15, 17\}$, and the set of minimum elements under the ceiling of $I$, $\Min(I)\cap\ceil{I}=\{5, 8\}$. 
\end{ex}

\begin{figure}[htbp]\centering
\begin{minipage}{.45\linewidth}\centering
		\begin{tikzpicture}[scale = .5]
		\node[regular polygon, draw, regular polygon sides=3, color=blue, very thick, inner sep=.55mm](A) at (5,0) {1};
		\node[regular polygon, draw, regular polygon sides=3, color=blue, very thick, inner sep=.55mm](B) at (2,2) {2};
		\node[regular polygon, draw, regular polygon sides=3, color=blue, very thick, inner sep=.55mm](C) at (4,2) {3};
		\node[regular polygon, draw, regular polygon sides=3, color=blue, very thick, inner sep=.55mm](D) at (6,2) {4};
		\node[regular polygon, draw, regular polygon sides=3, color=blue, very thick, inner sep=.55mm, fill=red!60](E) at (8,2) {5};
		\node[regular polygon, draw, regular polygon sides=3, color=blue, very thick, inner sep=.55mm](F) at (2,4) {7};
   \node[regular polygon, draw, regular polygon sides=3, color=blue, very thick, inner sep=.55mm, fill=red!60](G) at (4,4) {8};
    \node[regular polygon, draw, regular polygon sides=3, color=blue, very thick, inner sep=.55mm, fill=red!60](H) at (6,4) {9};
    \node[regular polygon, draw, regular polygon sides=3, color=blue, very thick, inner sep=.07mm, fill=red!60](I) at (8,4) {10};
    \node[draw,circle,fill=red!60](J) at (10,4) {11};
		\node[draw, circle](T) at (10,2) {6};
    \node[draw,circle](K) at (0,6) {12};
    \node[draw,circle](L) at (2,6) {13};
    \node[regular polygon, draw, regular polygon sides=3, very thick, inner sep=.08mm, fill=blue!60](M) at (4,6) {14};
    \node[regular polygon, draw, regular polygon sides=3, very thick, inner sep=.08mm, fill=blue!60](N) at (6,6) {15};
    \node[draw,circle,fill=red!60](O) at (8,6) {16};
    \node[regular polygon, draw, regular polygon sides=3, very thick, inner sep=.08mm, fill=blue!60](P) at (10,6) {17};
    \node[draw,circle](Q) at (0,8) {18};
    \node[draw,circle](R) at (3,8) {19};
    \node[draw,circle](S) at (6,8) {20};
		\draw[ultra thick] (A) --(B) --(F) --(K) --(Q);
		\draw[ultra thick] (F) --(L) --(R);
		\draw[ultra thick] (F) --(M) --(S);
		\draw[ultra thick] (B) --(G) --(M) --(R);
		\draw[ultra thick] (G) --(N) --(S);
    \draw[ultra thick] (A) --(C) --(H) --(O) --(S);
    \draw[ultra thick] (A) --(D) --(I) --(P) --(S);
    \draw[ultra thick] (A) --(E) --(I) --(O);
    \draw[ultra thick] (E) --(H) --(N);
    \draw[ultra thick] (A) --(C) --(H) --(O) --(S);
    \draw[ultra thick] (I) --(N);
    \draw[ultra thick] (L) --(S);
    \draw[ultra thick] (J) --(O);
    \draw[ultra thick] (J) --(T);
\end{tikzpicture} 
\end{minipage}\qquad
\begin{minipage}{.45\linewidth}\centering
		\begin{tikzpicture}[scale = .5]
\node[draw,circle](A) at (5,0) {1};
		\node[draw,circle](B) at (2,2) {2};
		\node[draw,circle](C) at (4,2) {3};
		\node[draw,circle](D) at (6,2) {4};
		\node[regular polygon, draw, regular polygon sides=5, color=blue, very thick, inner sep=.8mm, fill=red!60](E) at (8,2) {5};
		\node[draw,circle](F) at (2,4) {7};
		\node[regular polygon, draw, regular polygon sides=5, color=blue, very thick, inner sep=.8mm, fill=red!60](G) at (4,4) {8};
    \node[draw,circle,fill=red!60](H) at (6,4) {9};
    \node[draw,circle,fill=red!60](I) at (8,4) {10};
    \node[draw,circle, fill=red!60](J) at (10,4) {11};
    \node[draw,circle](T) at (10,2) {6};
    \node[draw,circle](K) at (0,6) {12};
    \node[draw,circle](L) at (2,6) {13};
    \node[draw,circle](M) at (4,6) {14};
    \node[draw,circle](N) at (6,6) {15};
    \node[draw,circle,fill=red!60](O) at (8,6) {16};
    \node[draw,circle](P) at (10,6) {17};
    \node[draw,circle](Q) at (0,8) {18};
    \node[draw,circle](R) at (3,8) {19};
    \node[draw,circle](S) at (6,8) {20};
		\draw[ultra thick] (A) --(B) --(F) --(K) --(Q);
		\draw[ultra thick] (F) --(L) --(R);
		\draw[ultra thick] (F) --(M) --(S);
		\draw[ultra thick] (B) --(G) --(M) --(R);
		\draw[ultra thick] (G) --(N) --(S);
    \draw[ultra thick] (A) --(C) --(H) --(O) --(S);
    \draw[ultra thick] (A) --(D) --(I) --(P) --(S);
    \draw[ultra thick] (A) --(E) --(I) --(O);
    \draw[ultra thick] (E) --(H) --(N);
    \draw[ultra thick] (A) --(C) --(H) --(O) --(S);
    \draw[ultra thick] (I) --(N);
    \draw[ultra thick] (L) --(S);
    \draw[ultra thick] (J) --(O);
    \draw[ultra thick] (J) --(T);
\end{tikzpicture} 
\end{minipage}
	\caption{Left: the $\ceil{I}$ (shaded in blue), the order ideal $\oi(\ceil{I})$ (triangular nodes). Right: the minimal elements of $I$ under the ceiling $\Min(I)\cap\oi\ceil{I}$ (pentagonal nodes). The interval-closed set $I$ is shaded in red in both diagrams.} \label{fig:Sec2_Ceil_MinUnder} 
\end{figure}
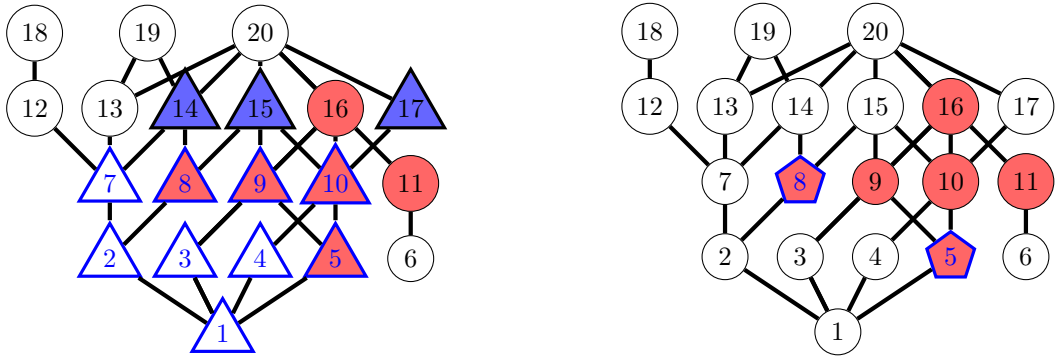

The process of rowmotion on $I$ for some arbitrary $I\in\IC(P)$ proceeds iteratively on the elements of $P$ starting with the maximal elements and moving down.  At the point where we are determining if an element $x\in P$ is toggled in or out, i.e. whether $x\in \row(I)$ or not, the decision has already been made for all $w> x$ in $P$. Thus, in asking whether $x\in \row(I)$ we can unambiguously reference whether or not $w\in \row(I)$ for all $w> x.$  This allows for the following lemma. 

\begin{lem}\label{Lem: togobs}
Given an interval-closed set $I\in\IC(P)$, consider an element $x\in P-I$. Rowmotion toggles $x$ into the interval-closed set unless doing so results in a subset of $P$ that is not an interval-closed set, i.e. unless 
\begin{itemize}
\item[(1)] there exists $y\in I$ and $z\in P-I$ such that $y< z < x$, or
\item[(2)] there exists $y,z\in P$ such that $x< y < z$ with $y\not\in\row(I)$ and $z\in\row(I).$
\end{itemize}
Similarly, for $x\in I$, rowmotion toggles $x$ out of the interval-closed set unless
\begin{itemize}
\item[(3)] there exist $y,z\in P$ such that $y< x< z$, $y\in I$ and $z\in \row(I)$.
\end{itemize}
\end{lem}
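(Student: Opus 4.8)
The plan is to unwind the toggling definition of rowmotion and track, at the moment element $x$ is processed, exactly what the current set looks like near $x$. Writing rowmotion as toggling in the reverse order of a linear extension, when we reach $x$ every $w > x$ has already been toggled and every $w < x$ has not yet been touched. Thus if $J$ denotes the set just before $x$ is toggled, then $J$ agrees with $\row(I)$ on all $w > x$ and with $I$ on all $w < x$; elements incomparable to $x$ may be in either state, but they are irrelevant since the interval-closed condition imposed by $x$ only involves elements comparable to $x$. I would record at the outset that $J\in\IC(P)$, because each toggle maps $\IC(P)$ into itself. The toggle $t_x$ acts as the identity precisely when modifying $J$ at $x$ leaves $\IC(P)$, so the whole lemma reduces to deciding, in each of the two cases, exactly when $J\cup\{x\}$ (resp.\ $J-\{x\}$) fails to be interval-closed.

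For the case $x\in P-I$ (toggling in), I would observe that since $J$ is already interval-closed, adding $x$ can only create a violation through a pair having $x$ as one endpoint: if $p<x<q$ with $p,q\in J$, interval-closedness of $J$ would already force $x\in J$, contradicting $x\notin J$, so the interior case cannot occur. A violation with $x$ as the \emph{upper} endpoint means there is $p\in J$ with $p<x$ and a witness $r\notin J$ with $p<r<x$; since $p,r<x$ their membership in $J$ equals their membership in $I$, giving $p\in I$, $r\in P-I$ with $p<r<x$, which is exactly condition~(1). A violation with $x$ as the \emph{lower} endpoint means there is $q\in J$ with $x<q$ and a witness $r\notin J$ with $x<r<q$; since $q,r>x$ their membership in $J$ equals their membership in $\row(I)$, giving $q\in\row(I)$, $r\notin\row(I)$ with $x<r<q$, which is condition~(2). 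Each implication is reversible: given $(1)$ the elements $y,x$ lie in $J\cup\{x\}$ while $z$ does not, and symmetrically for $(2)$, so these are exactly the obstructions, and $x$ is toggled in otherwise.

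For the case $x\in I$ (toggling out), removing $x$ can only break an interval that previously \emph{required} $x$, i.e.\ one with $x$ strictly interior and both endpoints surviving: $y<x<z$ with $y,z\in J-\{x\}$. Translating the states as before, $y<x$ gives $y\in I$ and $z>x$ gives $z\in\row(I)$, which is condition~(3); conversely any such $y,z$ produce a genuine failure since $y,z\in J-\{x\}$ but $x\notin J-\{x\}$. Hence $x$ is toggled out unless $(3)$ holds.

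I expect the main obstacle to be purely bookkeeping rather than deep: the crux is the careful justification that ``above $x$'' elements carry their $\row(I)$-state while ``below $x$'' elements carry their $I$-state, together with the complete and non-overlapping case analysis of how interval-closedness can fail under a single insertion or deletion. In particular, the one genuinely substantive point is ruling out the ``$x$ interior'' failure when inserting (using interval-closedness of $J$ to derive a contradiction) so that insertion failures are forced to have $x$ as an endpoint, matched to $(1)$ and $(2)$, while deletion failures are forced to have $x$ interior, matched to $(3)$.
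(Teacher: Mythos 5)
Your proof is correct and follows essentially the same route as the paper: the paper states this lemma without a separate formal proof, justifying it by precisely the observation you formalize, namely that when $x$ is toggled every element above $x$ already carries its $\row(I)$-state while every element below $x$ still carries its $I$-state, so the toggle condition reduces to the stated obstructions. Your case analysis (ruling out $x$ as the interior missing element on insertion, and forcing $x$ to be interior on deletion) is a faithful, rigorous unwinding of that same idea.
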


Lemmas \ref{Lemma: RowMaxMin}, \ref{Lemma: RowCeil} and \ref{Lemma: RowInc} make use of Lemma \ref{Lem: togobs} to show certain classes of elements are always swept into or out of $\row(I).$  This approach is extended in Theorem \ref{thm:AltRow} to show that for interval-closed sets, the element-toggle definition of rowmotion is equivalent to a global definition via set operations.

\begin{lem}\label{Lemma: RowMaxMin}
Given $I\in\IC(P)$, and a maximal (resp.\ minimal) element $x$ of $P$, if $x\in I$ then $x\notin\row(I)$. 
\end{lem}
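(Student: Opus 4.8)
The plan is to reduce the statement to Lemma~\ref{Lem: togobs}, whose condition (3) characterizes exactly when an element $x\in I$ fails to be toggled \emph{out} by rowmotion: it requires the simultaneous existence of elements $y,z\in P$ with $y<x<z$, $y\in I$, and $z\in\row(I)$. The preliminary bookkeeping observation is that rowmotion toggles each element of $P$ exactly once, and toggling any element $w\neq x$ never alters the membership of $x$; hence, since $x\in I$, the element $x$ is still present in the working set at the moment its own toggle is applied, so that toggle is an attempt to \emph{remove} $x$. By Lemma~\ref{Lem: togobs} this removal succeeds unless condition (3) holds, and once $x$ is removed it is never reinserted. So it suffices to show condition (3) cannot be met.

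I would then treat the two cases symmetrically. If $x$ is a maximal element of $P$, there is no $z\in P$ with $z>x$, so condition (3) cannot be satisfied; rowmotion therefore toggles $x$ out, giving $x\notin\row(I)$. Dually, if $x$ is a minimal element of $P$, there is no $y\in P$ with $y<x$, so again condition (3) fails and $x$ is toggled out, giving $x\notin\row(I)$.

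As a self-contained alternative that avoids invoking Lemma~\ref{Lem: togobs}, I could argue straight from the toggle definition by checking that removing a maximal (resp.\ minimal) element $x$ from any interval-closed set $J$ with $x\in J$ again yields an interval-closed set: for $a,b\in J-\{x\}$ with $a\le b$ we have $[a,b]\subseteq J$ by interval-closedness, and $x\notin[a,b]$, since $x$ maximal would force $x=b$ and $x$ minimal would force $x=a$, each contradicting $a,b\in J-\{x\}$. Applying this to the working set at the instant $x$ is toggled shows the removal succeeds. I do not expect a genuine obstacle; the only point needing care is the bookkeeping that $x$ is toggled exactly once and as a removal, which is transparent in both routes, so the statement is essentially immediate once the correct obstruction (condition (3)) is identified.
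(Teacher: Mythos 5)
Your main argument is correct and is essentially the paper's own proof: both reduce to scenario (3) of Lemma~\ref{Lem: togobs} and observe that maximality (resp.\ minimality) of $x$ rules out the existence of the required $z>x$ (resp.\ $y<x$). The extra bookkeeping you supply (each element is toggled exactly once, so $x$ is still present at its own toggle and never reinserted afterward) is exactly what Lemma~\ref{Lem: togobs} already packages, and your self-contained alternative—checking directly that deleting a maximal or minimal element of $P$ from any interval-closed set again yields an interval-closed set—is also valid.
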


\begin{proof}
Let $x$ be in $I$ for some interval-closed set $I$ in $\IC(P)$.  Suppose $x$ is maximal in $P$.  By definition of maximal, there does not exist an element $z$ such that $x<z.$ Thus $x$ is not in scenario (3) of Lemma \ref{Lem: togobs}, and $x$ is toggled out under rowmotion, i.e. $x\not\in\row(I)$.

Suppose instead is a minimal element of $P$.  Then by definition of minimal there does not exist an element $y<x$ and again $x$ is not in scenario (3).  Thus $x\not\in\row(I).$ 
\end{proof}

\begin{lem}\label{Lemma: RowCeil}
Given $I\in\IC(P)$, if $x\in\f(I)-I$ (i.e.\ $x$ is comparable to $I$ from above), then $x\in \row(I)$ if and only if $x\in \ceil{I}$. 
\end{lem}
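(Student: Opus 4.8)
The plan is to apply the local toggle criterion of Lemma~\ref{Lem: togobs}. Since $x\in\f(I)-I$ lies outside $I$, rowmotion attempts to toggle $x$ \emph{in}, so $x\in\row(I)$ exactly when neither obstruction (1) nor (2) of that lemma occurs; the whole proof amounts to relating these two obstructions to the minimality condition defining $\ceil{I}=\Min(\f(I)-I)$.

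For the ``only if'' direction I would argue the contrapositive, which is quick. If $x\in\f(I)-I$ is not minimal in $\f(I)-I$, pick $z\in\f(I)-I$ with $z<x$; since $z\in\f(I)$ some $y\in I$ satisfies $y\le z$, and $y<z$ because $z\notin I$. Then $y\in I$, $z\in P-I$, and $y<z<x$ realize obstruction (1), so $x\notin\row(I)$. The same computation shows that when $x\in\ceil{I}$ obstruction (1) \emph{cannot} occur: any witness $z$ with $y<z<x$, $y\in I$, $z\notin I$ would lie in $\f(I)-I$ and sit strictly below $x$, contradicting $x\in\Min(\f(I)-I)$.

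The substantive part, and the step I expect to be the main obstacle, is ruling out obstruction (2) when $x\in\ceil{I}$. Suppose for contradiction that $x<y<z$ with $z\in\row(I)$. Every element above $x$ lies in the order filter $\f(I)$, so $z\in\f(I)$. The key observation I would isolate is a dichotomy: any $w\in\f(I)$ with $w>x$ and $w\in\row(I)$ must actually lie in $I$. Indeed, if $w\in\f(I)-I$ then, because $x\in\f(I)-I$ sits strictly below $w$, $w$ is not minimal in $\f(I)-I$, hence $w\notin\ceil{I}$, and the already-proven ``only if'' direction gives $w\notin\row(I)$, a contradiction. Applying this to $z$ yields $z\in I\cap\row(I)$. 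But then $z$ survived the attempt to toggle it out, so condition (3) provides $z'>z$ with $z'\in\row(I)$, and the dichotomy forces $z'\in I$ as well. Iterating builds a strictly increasing chain inside $I\cap\row(I)$, which is impossible in a finite poset; concretely, choosing a maximal element $m$ of $\{w\in I\cap\row(I):w\ge z\}$ produces via (3) a strictly larger element of the same set, contradicting maximality (the top of such a chain would be a maximal element of $P$ lying in $I$ yet in $\row(I)$, which is excluded by Lemma~\ref{Lemma: RowMaxMin}). Hence (2) fails and $x\in\row(I)$, completing both directions.
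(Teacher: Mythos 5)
Your proof is correct, and its first two steps coincide with the paper's argument: you establish the ``only if'' direction by exhibiting scenario (1) of Lemma~\ref{Lem: togobs} for any non-minimal element of $\f(I)-I$, and you rule out scenario (1) for ceiling elements by exactly the same minimality argument. Where you genuinely diverge is in ruling out scenario (2). The paper's proof is much shorter there: it observes that no element $z>x$ can lie in $I$ at all --- since $x\in\f(I)-I$ there is $y_0\in I$ with $y_0<x$, and $z\in I$ would then force $x\in I$ by interval-closedness --- so every $z>x$ lies in $\f(I)-I-\ceil{I}$ and is excluded from $\row(I)$ by the already-proven direction, killing scenario (2) in one stroke. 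You miss this observation, so your dichotomy lands you in the case $z\in I\cap\row(I)$ (a case which in fact can never occur), and you then need the extra machinery of scenario (3) plus a maximality/finite-chain contradiction, in the same style the paper uses for Lemma~\ref{Lemma: RowInc} and Case 3 of Theorem~\ref{thm:AltRow}. That machinery is sound: the dichotomy is a valid consequence of the first direction, an element of $I\cap\row(I)$ must satisfy scenario (3) and hence produces a strictly larger element of $I\cap\row(I)$, and finiteness yields the contradiction (your parenthetical appeal to Lemma~\ref{Lemma: RowMaxMin} is redundant but harmless). So both proofs are correct and both actually prove the stronger fact that nothing strictly above a ceiling element survives in $\row(I)$; the paper's exploits interval-closedness to collapse the scenario-(2) step to a single application of the first direction, while yours trades that one observation for a self-contained but longer induction.
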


\begin{proof}
Let $I\in\IC(P)$. Recall from Definition~\ref{defn: ceiling} that $\ceil{I}=\Min(\f(I)-I)$. Consider an element $x\in \f(I)-I-\ceil{I}.$  By definition of the ceiling and order filter of $I$, there must exist $z\in\ceil{I}\subseteq P-I$ and $y\in I$ such that $y< z< x.$ Thus, $x$ is in scenario (1) of Lemma \ref{Lem: togobs} and therefore $x\not\in\row(I).$

Suppose $x\in\ceil{I}.$  There does not exist a $y\in I$ and a $z\in P-I$ such that $y< z< x$, as this would imply $z\in\f(I)-I$ with $z< x$, but by definition of ceiling, $x$ is a minimal element in $\f(I)-I.$  Thus $x$ is not in scenario (1).  Also by definition of ceiling, $x\not\in I$ and for all $z> x, z\in\f(I)-I-\ceil{I}$.  Thus, as we just showed, $z\not\in\row(I),$  and $x$ cannot be in scenario (2). Therefore, for all $x\in\ceil{I}$, $x\in\row(I).$  
\end{proof}

\begin{lem}\label{Lemma: RowInc}
Given $I\in\IC(P)$, if an element $x\in P$ is incomparable to $I$ then $x\in \row(I).$
\end{lem}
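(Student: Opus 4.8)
The plan is to invoke the toggleability criterion of Lemma~\ref{Lem: togobs} and rule out both obstructions to toggling $x$ in. First, note that $x$ incomparable to $I$ means $x\notin\comp(I)=\f(I)\cup\oi(I)$; since $I\subseteq\oi(I)$, this already gives $x\notin I$, so the $x\in P-I$ branch of Lemma~\ref{Lem: togobs} applies, and I must show $x$ falls in neither scenario (1) nor (2). Scenario (1) is immediate: it would require some $y\in I$ with $y<x$ (as $y<z<x$), but then $x\in\f(I)$, contradicting incomparability. The substance is ruling out scenario (2).

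For scenario (2) I would argue by induction on the number of elements of $P$ strictly above $x$ (equivalently, process elements from the top down, assuming the lemma for every incomparable element lying strictly above $x$; this is legitimate since rowmotion has already decided membership for all $w>x$ by the time $x$ is toggled). The key preliminary observation is that any $w>x$ satisfies $w\notin I$ and $w\notin\oi(I)$: if $w\in I$ then $x<w\in I$ forces $x$ comparable to $I$, and $w\in\oi(I)$ gives $w\le i$ for some $i\in I$, whence $x<i$, again a contradiction. Thus every element strictly above $x$ is either incomparable to $I$ or lies in $\f(I)-I$.

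Now suppose toward a contradiction that scenario (2) holds, so there are $y,z$ with $x<y<z$, $y\notin\row(I)$, and $z\in\row(I)$. Apply the dichotomy to $y$. If $y$ is incomparable to $I$, then the inductive hypothesis (valid since $y>x$ has strictly fewer elements above it) gives $y\in\row(I)$, contradicting $y\notin\row(I)$. Hence $y\in\f(I)-I$. Since $\f(I)$ is an order filter and $y<z$, we get $z\in\f(I)$, and $z\notin I$ by the observation above, so $z\in\f(I)-I$; Lemma~\ref{Lemma: RowCeil} together with $z\in\row(I)$ then forces $z\in\ceil{I}=\Min(\f(I)-I)$. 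But $y\in\f(I)-I$ with $y<z$ contradicts the minimality of $z$ in $\f(I)-I$. Either branch yields a contradiction, so scenario (2) cannot occur and $x\in\row(I)$; the base case $x$ maximal is trivial, since then neither scenario can arise.

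I expect the main obstacle to be scenario (2): unlike scenario (1), it cannot be dispatched by a one-line comparability argument, because it concerns the already-computed values of $\row(I)$ above $x$ rather than the raw poset relation to $I$. The crux is recognizing that every element above an incomparable $x$ splits cleanly into ``incomparable'' (handled inductively) and ``in $\f(I)-I$'' (handled by the ceiling characterization of Lemma~\ref{Lemma: RowCeil}), and that these two classes cannot interleave so as to produce the forbidden gap $y<z$.
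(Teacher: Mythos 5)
Your proof is correct, and it rules out scenario (2) of Lemma~\ref{Lem: togobs} by a genuinely different mechanism than the paper does. The paper's proof never invokes Lemma~\ref{Lemma: RowCeil}: instead, it shows that a bad pair $x<y<z$ with $y\not\in\row(I)$, $z\in\row(I)$ forces $y$ itself into scenario (2) (after observing that scenario (1) for $y$ would propagate upward to $z$, contradicting $z\in\row(I)$), which spawns a new bad pair strictly higher, and so on; finiteness of $P$ then yields the contradiction via this infinite ascent. You instead run a well-founded induction on the elements above $x$ and use the structural dichotomy that every $w>x$ lies in $\inc(I)$ or in $\f(I)-I$ (your preliminary observation, which is correct since $w\in\oi(I)$ would force $x\in\oi(I)$). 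The incomparable case recurses through the induction hypothesis, while the case $y\in\f(I)-I$ is killed immediately and locally: $z\in\f(I)-I$ with $y<z$ contradicts $z\in\ceil{I}=\Min(\f(I)-I)$, which Lemma~\ref{Lemma: RowCeil} forces from $z\in\row(I)$. What each approach buys: the paper's argument is self-contained, depending only on Lemma~\ref{Lem: togobs}, at the cost of the somewhat informal "one bad pair implies infinitely many" termination argument; yours leans on the previously established Lemma~\ref{Lemma: RowCeil} (legitimately--its proof does not use the present lemma, so there is no circularity) and in exchange gets a cleaner, standard induction in which the contradiction in the comparable case is a one-line minimality argument rather than a propagation argument. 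One cosmetic point: in your base case the claim that "neither scenario can arise" for maximal $x$ is slightly loose--scenario (1) concerns elements below $x$ and is ruled out by incomparability, not maximality--but since you had already dispatched scenario (1) in general, this does not affect correctness.
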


\begin{proof}
 Let $x$ be an element of $P$ incomparable to $I$, i.e.\ $x\in\inc(I)$.  Thus for all $y< x$, $y\not\in I$ and $x$ can not be in scenario (1) of Lemma \ref{Lem: togobs}.  To show $x$ is not in scenario (2) we proceed by way of contradiction.  Suppose there exist $y,z\in P$ such that $x< y< z$ with $y\not\in\row(I)$ and $z\in\row(I)$.  
Since $x$ is incomparable to $I$, it follows that $y,z\not\in I.$  If $y$ were in scenario (1), that would imply $z$ is too, which can't be the case since $z$ was toggled in during the rowmotion process.  Thus, since $y\not\in\row(I)$ it must be in scenario (2), i.e. there must be elements $y_1,z_1\in P$ such that $y< y_1< z_1$, $y_1\not\in\row(I)$, and $z_1\in\row(I)$.  
As $x< y$, the same reasoning applied above to $y$ and $z$ applies to $y_1, z_1$, and thus there must be elements $y_2,z_2$ such that $y_1< y_2< z_2$, $y_2\not\in\row(I)$, and $z_2\in\row(I)$, and so on.  
Thus the existence of even one such $y,z$ pair with $x< y< z$ and $y\not\in\row(I)$ but $z\in\row(I)$ implies the existence of infinitely many.  Thus, since $P$ is finite (by our assumption of all posets in this paper), there does not exist any such $y,z$ pair and $x$ is not in scenario (2).  Therefore, for all $x\in\inc(I)$, $x\in\row(I).$ 
\end{proof}

The following theorem is the main result of Section~\ref{sec:togglingICS}.  One may wish to consult Example~\ref{Ex:Sec2_ThmAltRow} before and while reading the proof. Theorem~\ref{alt_def_ordinal} gives a simplification of Theorem~\ref{thm:AltRow} in the case that the poset is an ordinal sum of antichains.
See Remark~\ref{remark:comp_oi} for discussion of this result as compared to the analogous result for  order ideal rowmotion. 

\begin{thm}\label{thm:AltRow}
Given an interval-closed set $I\in \IC(P)$, rowmotion on $I$ is given by

\begin{equation}
\label{Eqn:arow formula}
\row(I)=\arow.
\end{equation}
\end{thm}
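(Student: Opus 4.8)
\emph{The plan.} I would prove the set identity \eqref{Eqn:arow formula} elementwise after partitioning $P$ by position relative to $I$. Interval-closedness forces $\f(I)\cap\oi(I)=I$ (anything sandwiched between two members of $I$ lies in $I$), so $P=\inc(I)\sqcup\big(\f(I)-I\big)\sqcup\oi(I)$, and I would check on each region that the right-hand side of \eqref{Eqn:arow formula} agrees with $\row(I)$. On $\inc(I)$ both sides equal $\inc(I)$: the first term supplies all of it and Lemma~\ref{Lemma: RowInc} places all of it in $\row(I)$. On $\f(I)-I$ both sides equal $\ceil{I}$: Lemma~\ref{Lemma: RowCeil} gives $\row(I)\cap\big(\f(I)-I\big)=\ceil{I}$, while on the formula side the middle term lies in $\oi\inc_I(\ceil{I})\subseteq\oi(I)$ (so misses $\f(I)-I$) and the third term meets $\f(I)-I$ exactly in $\ceil{I}$, since by minimality an element of $\f(I)-I$ lies in $\oi\ceil{I}$ iff it is itself a ceiling element. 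These checks are routine set theory.

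The substance lies in the third region, where the right-hand side restricts to $\big(\oi\ceil{I}-\oi(\F)\big)\cup\big(\oi\inc_I(\ceil{I})-(I\cup\oi\ceil{I})\big)$. Because rowmotion toggles top-down, the fate of $x\in\oi(I)$ depends on already-decided elements above $x$, some of which again lie in $\oi(I)$, so I would induct on $P$ downward, deciding $x$ while assuming the conclusion for all $w>x$. Splitting $\oi(I)=I\sqcup(\oi(I)-I)$ and recording the decomposition $I=(I\cap\oi\ceil{I})\sqcup\inc_I(\ceil{I})$ with $\inc_I(\ceil{I})=I-\oi\ceil{I}$, I first treat $x\in I$. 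Scenario (3) of Lemma~\ref{Lem: togobs} shows $x$ survives iff there is a member of $I$ below $x$ and a member of $\row(I)$ above $x$; the first says $x\notin\Min(I)$. For the second, a $\row(I)$-element above $x$ lies in $\f(I)$, so by Lemma~\ref{Lemma: RowCeil} and the inductive hypothesis it is a ceiling element $c>x$ or an $I$-element above $x$ that is itself under the ceiling, and in either case this is equivalent to $x\in\oi\ceil{I}$. Finally, for $x\in I\cap\oi\ceil{I}$ one has $x\in\Min(I)\iff x\in\oi(\F)$, since $\F=\Min(I)\cap\oi\ceil{I}$ and any member of $I$ at or below a minimal element of $I$ equals it. Hence $x\in\row(I)\iff x\in\oi\ceil{I}-\oi(\F)$, matching the restricted right-hand side on $I$ (the middle term excludes $I$).

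The main obstacle is the floor $\oi(I)-I$. Here scenario (1) is automatically vacuous: a floor element satisfies $x<i$ for some $i\in I$, so any $y\in I$ with $y<z<x$ yields $y<z<i$ and, by interval-closedness, $z\in I$, contradicting $z\notin I$. Thus a floor element survives iff scenario (2) fails, and every floor element lies in $\oi\ceil{I}$ (if below $I\cap\oi\ceil{I}$) or in $\oi\inc_I(\ceil{I})-\oi\ceil{I}$ (if below only $\inc_I(\ceil{I})$). I would show scenario (2) fires at a floor element $x$ precisely when $x\in\oi(\F)$: if $x<f$ with $f\in\F$, then $f$ is toggled out (previous paragraph) while a ceiling element $c>f$ is toggled in, so $x<f<c$ is an instance of scenario (2); conversely, ruling out any instance uses a propagation/infinite-descent argument as in Lemma~\ref{Lemma: RowInc}. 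The new difficulty beyond that lemma—and the heart of the proof—is that the lower element $y$ of a putative scenario-(2) pair above a floor element may now lie in $I$ (in Lemma~\ref{Lemma: RowInc} it could not); handling this case is exactly where $\F$ and the minimality of $\Min(I)$ enter, forcing $y\in\F$ and hence $x\in\oi(\F)$. Since $\oi(\F)\subseteq\oi\ceil{I}$, this simultaneously shows a floor element below the ceiling survives iff it avoids $\oi(\F)$, and a floor element not below the ceiling survives unconditionally—matching the two terms $\oi\ceil{I}-\oi(\F)$ and $\oi\inc_I(\ceil{I})-(I\cup\oi\ceil{I})$. Reassembling the contributions of the three regions then yields exactly $\arow$, establishing \eqref{Eqn:arow formula}.
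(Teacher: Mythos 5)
Your plan is correct and would yield a complete proof, but it is organized genuinely differently from the paper's argument. Both proofs come down to element-wise verification through Lemma~\ref{Lem: togobs} together with Lemmas~\ref{Lemma: RowCeil} and~\ref{Lemma: RowInc}; the difference is in the decomposition and in how the top-down nature of rowmotion is formalized. The paper partitions $\comp(I)$ by position relative to the \emph{ceiling} --- strictly above it ($\f(I)-I-\ceil{I}$, Case 1), at or below it ($\oi\ceil{I}\cap\comp(I)$, Case 2), in or below $I$ but incomparable to the ceiling ($\oi\inc_I(\ceil{I})-\oi\ceil{I}$, Case 3) --- so each case aligns with one term of the formula, and it then needs a local induction on $\f(\{x\})$ plus infinite covering-pair descent arguments inside Case 3. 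You instead slice $P$ by position relative to $I$ itself ($\inc(I)$, $\f(I)-I$, $\oi(I)$), dispatch the first two regions cheaply, and run a single global top-down induction on $P$ carrying the theorem statement as inductive hypothesis. That choice buys a real simplification which you do not fully exploit: in ruling out scenario (2) at an element $x\in\oi(I)-I$, a putative lower witness $y\in\oi(I)-I$ is disposed of immediately by the inductive hypothesis (it forces $y\in\oi(\F)$, hence $x\in\oi(\F)$), and a witness $y\in\f(I)-I$ or $y\in\inc(I)$ contradicts Lemma~\ref{Lemma: RowCeil} or~\ref{Lemma: RowInc} outright, so the appeal to a propagation/infinite-descent argument can be dropped entirely; the only case requiring thought is $y\in I$, which you correctly identify as the heart (a $\row(I)$-element above $y$ forces $y\in\oi\ceil{I}$, while $y\notin\row(I)$ then forces $y\in\Min(I)$, so $y\in\F$). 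Your two intermediate characterizations --- for $x\in I$: $x\in\row(I)$ iff $x\notin\Min(I)$ and $x\in\oi\ceil{I}$; for $x\in\oi(I)-I$: scenario (2) fires iff $x\in\oi(\F)$ --- are exactly the content of the paper's Cases 2 and 3, stated more symmetrically. Two small points to tidy in a full write-up: your claim that the third term meets $\f(I)-I$ exactly in $\ceil{I}$ also needs $\ceil{I}\cap\oi(\F)=\emptyset$ (one line from interval-closedness, as in the paper's Case 2), and your ``floor'' is all of $\oi(I)-I$ rather than the paper's $\fl(I)$, which denotes only its maximal elements.
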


\begin{proof}

By Lemma \ref{Lemma: RowInc}, $\inc(I)\subseteq \row(I)$.  In fact, by Corollary \ref{cor:empty ICS}, when $I=\emptyset$ then $\row(I)=P=\inc(I)$. 
All that remains to show is that for elements of $P$ comparable to $I$, i.e.\ $x\in\comp(I)$, $x\in \row(I)$ if and only if $x\in\arowcomp$.   We consider separately the set of elements comparable to $I$ from above the ceiling (Case 1), the set of elements comparable to $I$ at or below the ceiling (Case 2), and the set of elements which are in or below $I$ but not below any ceiling element (Case 3). These cases are illustrated in Example \ref{Ex:Sec2_ThmAltRow}.

\medskip
\noindent Case 1: The set of elements comparable to $I$ from above the ceiling, i.e. $x\in\f(I)-I-\ceil{I}$
\smallskip

By Lemma \ref{Lemma: RowCeil}, for all $x\in\f(I)-I-\ceil{I},$ $x$ is not in $\row(I).$  We will show $x$ is similarly not an element of $\arowcomp$. 

As $x\in\f(I)$ but not in $I$, it follows that $y< x$ for some $y\in I$ and that for all $t\in I$, $x\not< t$.  Otherwise, we would have $i,t\in I$ such that $i< x< t$ which violates the definition of an interval-closed set. Thus $x\not\in \oi(I)$.  Since $\oi\A-(I\cup\oi\ceil{I})$ is a subset of $\oi(I)$, it follows that $x\not\in\oi\A-(I\cup\oi\ceil{I}).$

Similarly, as the ceiling elements are those in the set $\Min(\f(I)-I),$ it follows that for all $x\in\f(I)-I-\ceil{I}$, $x\not< t$ for all $t\in\ceil{I}.$  Thus $x\not\in\oi\ceil{I}$ and therefore $x\not\in \oi\ceil{I}-\oi(\Min(I)\cap\oi\ceil{I}).$
Thus, for all $x\in\f(I)-I-\ceil{I},$ $x\notin\row(I)$ and $x\not\in\arowcomp.$

\medskip
\noindent Case 2: The set of elements comparable to $I$ at or below the ceiling, i.e. $x\in\oi\ceil{I}\cap\comp(I)$
\smallskip

We will show in this case that $x\in\row(I)$ unless $x\in \oi\big(\F\big).$  
First consider $x\in\ceil{I}$. By Lemma \ref{Lemma: RowCeil} $x\in \row(I).$  By definition of ceiling, for all $z\geq x$, $z\not\in I$.  Thus $x\not\in \oi\Min(I)$ and therefore not in $\oi\big(\Min(I)\cap\oi\ceil{I}\big).$

Consider $x\in\oi\ceil{I}\cap\comp(I)-\ceil{I}.$ 
As the ceiling is the set of minimal elements in $\f(I)-I$, it follows that $x\not\in\f(I)-I$.  As $x$ is comparable to $I$, it must be in $\oi(I),$ and therefore either in the set $x\in\oi\ceil{I}\cap I-\Min(I)$ or the set $x\in\oi\big(\F\big).$

If $x\in\oi\ceil{I}\cap I-\Min(I),$ then by definition of order ideal there exists an element $z\in\ceil{I}\subseteq\row(I)$ such that $x< z$.  As $x$ is not a minimal element of $I$, there also exists an element $y\in I$ such that $y< x$ and we are in scenario (3) of Lemma \ref{Lem: togobs}, implying $x$ does not get toggled out of the interval-closed set and therefore $x\in\row(I).$ 

If $x\in\oi\big(\F\big),$ then by definition of order ideal, there exists an element $z\in\ceil{I}$ such that $x< z$ and $z\in\row(I)$.  If $x\in\Min(I)$ then $x\in I$ but for all $y< x,$ $y\not\in I$.  Thus, scenario (3) is avoided and $x$ gets toggled out.  If $x\in\oi\big(\F\big)-\Min(I)$ then $x\not\in I$ but there exists an element $y\in\Min(I)$ such that $x< y< z$ with $y\not\in\row(I)$ and $z\in\row(I)$, thus we are in scenario (2) of Lemma \ref{Lem: togobs} and $x$ does not toggle in.  In either case, $x\not\in\row(I).$

Therefore, for all $x\in\oi\ceil{I}\cap\comp(I)$, $x\in\row(I)$ if and only if $x \notin \oi\big(\F\big),$ i.e.\ $x\in\oi\ceil{I}-\oi\big(\F\big).$ 

\medskip
\noindent Case 3: The set of elements which are in or below $I$ but not below any ceiling element, i.e. $x\in\oi\A-\oi\ceil{I}$

\smallskip For this case we will consider two subcases, $x$ in $I$ and $x$ not in $I$. 
Let $x$ be an arbitrary element of $\Big(\oi\A-\oi\ceil{I}\Big)\cap I$, i.e.\ $x\in\A.$  It follows by the definition of ceiling and order filter that $\f(\{x\})\subseteq \f(I)-\ceil{I}$.  We proceed by induction, considering first a maximal element $m$ of $\f(\{x\}).$ By definition of order filter, since $m$ is maximal in $\f(\{x\})$ there is no element $z\in P$ greater than $m$.  Thus, if $m\in I$, it is not in scenario (3) of Lemma \ref{Lem: togobs}, and therefore $m\not\in\row(I)$.  If $m\not\in I$, then $m$ is in $\f(I)-I-\ceil{I},$ and  Lemma \ref{Lemma: RowCeil} implies $m\not\in\row(I)$. Now suppose by way of induction that for some $t\in \f(\{x\})$ it is the case that for all $z\in\f(\{x\})$ such that $z>t$, $z\not\in\row(I)$.  As before, there are two possible cases: $t\in\f(I)-I-\ceil{I},$ or $t\in I$.  If $t\in\f(I)-I-\ceil{I},$ then again by Lemma \ref{Lemma: RowCeil}, $t\not\in\row(I)$.  We assumed that for all $z\in\f(\{x\})$ such that $z>t,$ $z\not\in\row(I).$  However, by definition of order filter, the set of $z\in\f(\{x\})$ such that $z>t$ is the same as the set of all $z\in P$ such that $z>t.$  Thus, if $t\in I$, it is not in scenario (3) of Lemma \ref{Lem: togobs}, and therefore $t\not\in\row(I)$. 
Either way, $t\not\in\row(I)$.  Taking any linear extension of $P$, it follows by induction on the rank of elements in $\f(\{x\})$ that for all $t\in\f(\{x\})$, $t\not\in\row(I)$, and thus $x\not\in\row(I)$.

Now let $x$ be an arbitrary element of $\Big(\oi\A-\oi\ceil{I}\Big)-I.$  It follows that  $x$ is not in $I$, but by definition of $\oi\A$, there is some $z\in\A$ such that $x< z.$ Thus, for all $y< x$, $y\not\in I$ or we would contradict our assumption that $I$ is an interval-closed set.  Thus $x$ is not in scenario (1) of Lemma \ref{Lem: togobs}.    

Suppose by way of contradiction that $x$ is in scenario (2) of Lemma \ref{Lem: togobs}, i.e.\ that there are elements $y,z$ such that $x< y< z$, $y\not\in\row(I)$, and $z\in\row(I).$  If such $y$ and $z$ exist, then there must be a pair of elements $y_1$ and $z_1$ such that $x< y_1< z_1$,  $y_1\not\in\row(I)$, $z_1\in\row(I)$, and $y_1\lessdot z_1$.  If $y_1$ is in $I$, it must be in $\A$ as otherwise $x< y_1$ would contradiction our assumption that $x$ is incomparable to $\ceil{I}$.  Similarly, if $z_1\in I$ it must be in $\A.$  However, since we just showed at the start of Case 3, that for all $w\in \A$, $w$ and all elements of $\f(\{w\})$ are not in $\row(I)$, and as $z_1\in\row(I)$ and $z_1\in \f(\{y_1\})$ it follows that both $z_1$ and $y_1$ are not in $\A$ and thus not in $I$.  Thus we have elements $y_1< z_1$ in $P-I$ such that $z_1$ was toggled in during rowmotion but $y_1$ was not.  As $z_1$ could be toggled in, it follows that for all $t< y_1< z_1,$  $t\not\in I.$  Otherwise $z_1$ would be in scenario (1) of Lemma \ref{Lem: togobs} and would not have been toggled in.  This implies $y_1$ is not in scenario (1) itself, and therefore must be in scenario (2) of Lemma \ref{Lem: togobs}.  This implies the existence of a pair of elements $y_2$ and $z_2$, with such that $y_1< y_2< z_2$, $y_2\not\in\row(I),$ $z_2\in\row(I),$ and $z_2$ covers $y_2.$  The covering relationships and assumptions about inclusion in $\row(I)$ imply that $y_1, y_2, z_1,$ and $z_2$ are all distinct elements.  Moreover, as $x< y_2< z_2$, the same argument applied to $y_1$ and $z_1$ applies to $y_2$ and $z_2$, implying the existence of another pair of distinct elements $y_3$ and $z_3$.  Thus, the existence of one pair of elements $y, z$ such that $x< y < z$, $y\not\in\row(I)$, and $z\in\row(I)$ implies the existence of infinitely many covering pairs $y_i,z_i$.  Since $P$ is finite, this leads to a contradiction and thus $x$ is not in scenario (2) of Lemma \ref{Lem: togobs}.  Therefore, for all $x\in\oi\A-\oi\ceil{I}-I$, $x\in\row(I).$

Thus, for all $x\in \oi\A-\oi\ceil{I}$, $x\in\row(I)$ if an only if $x\not\in I,$ or equivalently if $x\in\oi\A-(I\cup\oi\ceil{I}).$ 

\smallskip
Having covered all possible cases, it follows that $x\in\row(I)$ if and only if $x\in\arow.$
\end{proof}

\begin{remark}\label{remark:comp_oi}
This result should be thought of as analogous to the global definition of rowmotion on order ideals \cite{CF1995,SW2012}. Using the notation of this paper, if $I$ is an order ideal of $P$, the result of applying order ideal rowmotion would be $\oi(\ceil{I})$.
The comparative level of complexity illustrates that though the definition of interval-closed sets seems to be a mild generalization of order ideals, the dynamics is, in many cases, much more complicated. 
\end{remark}

\begin{ex}\label{Ex:Sec2_ThmAltRow}
We again turn to our example interval-closed set $I=\Iex$ to illustrate the cases and conclusions from the proof of Theorem \ref{thm:AltRow}.  Figure \ref{fig:thm_cases} shows how the cases considered in Theorem~\ref{thm:AltRow}, along with the elements incomparable to $I$, partition the poset. 

Recall that $\row(I)=\{3,4,6,7,9,10,12,13,14,15,17,18\}.$ Examining Figure \ref{fig:thm_cases}, we observe that for all $x\in \inc(I)=\{7, 12, 13, 18\}$, $x$ is in $\row(I)$, as shown in Lemma \ref{Lemma: RowInc}.  Furthermore, the elements comparable to $I$ from above the ceiling  (i.e. $x\in\f(I)-I-\ceil{I}=\{19,20\},$ considered in Case 1) are comparable to $I$ but strictly above both $I$ and the ceiling $\ceil{I}$.  Thus these elements are not in $\arow$, but also not in $\row(I)$.  

Recall that for this particular interval-closed set, the minimum elements of $I$ under the ceiling, $\Min(I)\cap\oi\ceil{I}$, are $5$ and $8$ (as shown in Figure \ref{fig:Sec2_Ceil_MinUnder}).  Thus $\oi(\Min(I)\cap\oi\ceil{I})=\{1, 2, 5, 8\}.$  In Figure \ref{fig:thm_cases}, we see that the elements comparable to $I$ at or below the ceiling (i.e.\ $x\in\oi\ceil{I}\cap\comp(I)$, considered in Case 2 and indicated with circular nodes in Figure \ref{fig:thm_cases}) are in $\row(I)$ if an only if they are not in $\oi(\Min(I)\cap\oi\ceil{I}).$

Finally, for those elements in or below $I$ but not below any ceiling element (i.e.\ $x\in \oi\inc_I(\ceil{I})-\oi\ceil{I}=\{6, 11, 16\}$, considered in Case 3), we observe that $x\in\row(I)$ if and only if $x\not\in I$.
\end{ex}

\begin{figure}[htbp]\centering
	\begin{minipage}{.36\linewidth}\centering
		\begin{tikzpicture}[scale = .5]
		\node[draw,circle](A) at (5,0) {1};
		\node[draw,circle](B) at (2,2) {2};
		\node[draw,circle, fill=cyan](C) at (4,2) {3};
		\node[draw,circle, fill=cyan](D) at (6,2) {4};
		\node[draw,circle,fill=red!60](E) at (8,2) {5};
		\node[draw,diamond, fill=cyan, inner sep=.8mm](F) at (2,4) {7};
		\node[draw,circle,fill=red!60](G) at (4,4) {8};
    \node[draw,circle,left color=red!70, right color=cyan](H) at (6,4) {9};
    \node[draw,circle,left color=red!70, right color=cyan](I) at (8,4) {10};
    \node[draw,rectangle, minimum size=6.7mm,fill=red!60](J) at (10,4) {11};
		\node[draw,rectangle, minimum size=6.7mm,fill=cyan](T) at (10,2) {6};
    \node[draw,diamond, fill=cyan, inner sep=.5mm](K) at (0,6) {12};
    \node[draw,diamond, fill=cyan, inner sep=.5mm](L) at (2,6) {13};
    \node[draw,circle, fill=cyan](M) at (4,6) {14};
    \node[draw,circle, fill=cyan](N) at (6,6) {15};
    \node[draw,rectangle, minimum size=6.7mm,fill=red!60](O) at (8,6) {16};
    \node[draw,circle, fill=cyan](P) at (10,6) {17};
    \node[draw,diamond, fill=cyan, inner sep=.5mm](Q) at (0,8) {18};
    \node[regular polygon, draw, regular polygon sides=5, inner sep=.6mm](R) at (3,8) {19};
    \node[regular polygon, draw, regular polygon sides=5, inner sep=.6mm](S) at (6,8) {20};
		\draw[ultra thick] (A) --(B) --(F) --(K) --(Q);
		\draw[ultra thick] (F) --(L) --(R);
		\draw[ultra thick] (F) --(M) --(S);
		\draw[ultra thick] (B) --(G) --(M) --(R);
		\draw[ultra thick] (G) --(N) --(S);
    \draw[ultra thick] (A) --(C) --(H) --(O) --(S);
    \draw[ultra thick] (A) --(D) --(I) --(P) --(S);
    \draw[ultra thick] (A) --(E) --(I) --(O);
    \draw[ultra thick] (E) --(H) --(N);
    \draw[ultra thick] (A) --(C) --(H) --(O) --(S);
    \draw[ultra thick] (I) --(N);
    \draw[ultra thick] (L) --(S);
    \draw[ultra thick] (J) --(O);
    \draw[ultra thick] (J) --(T);
\end{tikzpicture}
\end{minipage}
	\caption{The interval-closed set $I$ is shaded in red, while elements of $\row(I)$ are shaded in blue.  Elements incomparable to $I$, $\inc(I)$, are indicated with diamond nodes; elements comparable to $I$ from above the ceiling (Case 1: $\f(I)-I-\ceil{I}$), are indicated with pentagonal nodes; elements comparable to $I$ at or below the ceiling (Case 2: $\oi\ceil{I}\cap\comp(I)$), are indicated with circular nodes; and elements which are in or below $I$ but not below any ceiling element (Case 3: $\oi\inc_I(\ceil{I})-\oi\ceil{I}$), are indicated with square nodes.} \label{fig:thm_cases}
\end{figure}
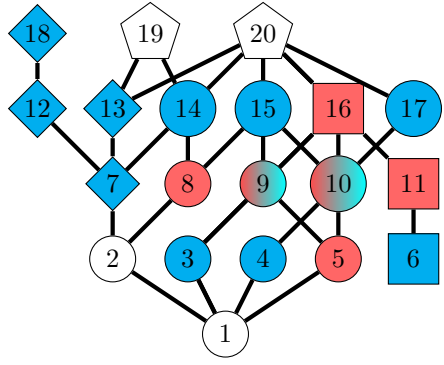

Before studying rowmotion on interval-closed sets of specific posets in Sections~\ref{sec:ordinalsum} and \ref{sec:products_of_chains}, we prove in the next subsection a homomesy result that holds for all posets.

\subsection{Toggleability homomesy}
We define an analogue of the toggleability statistic of  \cite[Def 6.1]{Striker2015} for interval-closed sets and prove it is homomesic for extremal elements.

\begin{definition}
Fix a finite poset $P$. For each $x\in P$, define the \emph{toggleability statistic} $\tog_x: \IC(P)\rightarrow \{-1, 0, 1\}$  as follows:
\[\mathfrak{T}_x(I) =
\begin{cases}
1 &\text{if } x \text{ may be toggled in to } I, \\
-1 &\text{if } x \text{ may be toggled out of } I, \\
0 &\text{otherwise}.
\end{cases}\]
\end{definition}

\begin{definition}[\cite{PR2015}]\label{def:homomesy}
We say that a statistic exhibits \emph{homomesy} under some action when every orbit of that action has the same average when the statistic is calculated over the orbit. That is, 
\[\frac{1}{|\mathcal{O}|}\sum_{I \in \mathcal{O}} \textrm{stat}(I) =c\]
for all orbits $\mathcal{O}$. In this case, we say that the statistic is $c$-mesic.
\end{definition}

\begin{ex}
Let $P$ be the diamond poset (the product of chains $[2]\times [2]$) as in Figure \ref{fig:homomesy_toggleability_max}. 
In this figure, each interval-closed set is listed, and the toggleability statistic of each element appears as the label of the element. The interval-closed sets are grouped by their rowmotion orbits. We highlight in the figure the toggleability statistic for the unique maximal element. Note that the average value of this statistic is $0$ on each orbit, thus this is an instance of homomesy. We show in Proposition~\ref{toggleabilitly_max_min} that this statistic is $0$-mesic with respect to rowmotion on any poset.
\end{ex}

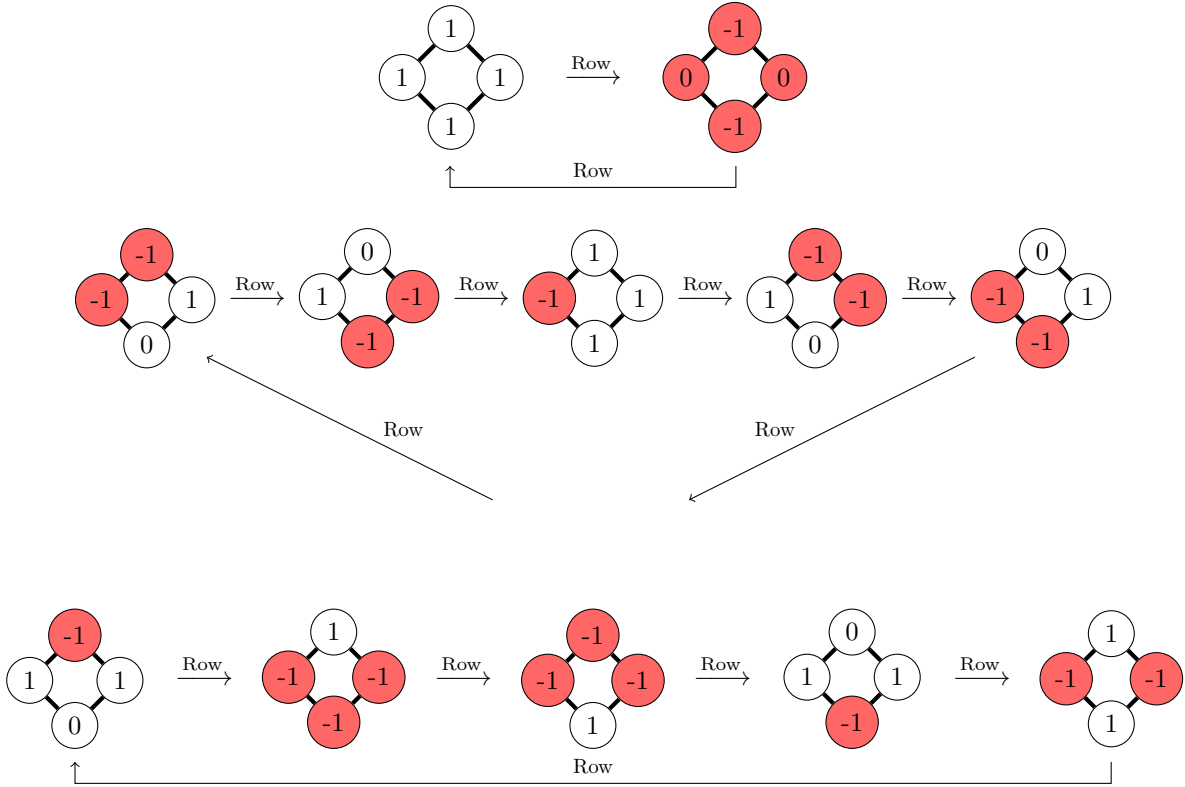
\begin{figure}[hbt]
\centering
	\begin{minipage}{.14\linewidth}\centering
		\begin{tikzpicture}[scale = .65]
		\node[draw,circle](A) at (1,0) {1};
		\node[draw,circle](B) at (0,1) {1};
		\node[draw,circle](C) at (2,1) {1};
		\node[draw,circle](D) at (1,2) {1};
		\draw[ultra thick] (A) --(B) --(D);
		\draw[ultra thick] (A) --(C) --(D);
	\end{tikzpicture}
	\end{minipage}\quad $\xlongrightarrow{\text{Row}}$\quad
	\begin{minipage}{.14\linewidth}\centering
		\begin{tikzpicture}[scale = .65]
		\node[draw,circle,fill=red!60](A) at (1,0) {-1};
		\node[draw,circle,fill=red!60](B) at (0,1) {0};
		\node[draw,circle,fill=red!60](C) at (2,1) {0};
		\node[draw,circle,fill=red!60](D) at (1,2) {-1};
		\draw[ultra thick] (A) --(B) --(D);
		\draw[ultra thick] (A) --(C) --(D);
		\end{tikzpicture}
	\end{minipage}
  \begin{minipage}{1\linewidth}\centering
\begin{tikzpicture}[scale=.95]
\node[anchor=center, scale=0.78] at (2.25,.25) {Row};
\draw[->] (4.25,.3)--(4.25,0)--(0.25,0)--(0.25,.3);
\end{tikzpicture}
\end{minipage}\\
\vspace{2.5mm}
\vspace{2.5mm}

\begin{minipage}{.12\linewidth}
	\centering
	\begin{tikzpicture}[scale = .6]
		\node[draw,circle](A) at (1,0) {0};
		\node[draw,circle,fill=red!60](B) at (0,1) {-1};
		\node[draw,circle](C) at (2,1) {1};
		\node[draw,circle,fill=red!60](D) at (1,2) {-1};
		\draw[ultra thick] (A) --(B) --(D);
		\draw[ultra thick] (A) --(C) --(D);
	\end{tikzpicture}
\end{minipage}\ $\xlongrightarrow{\text{Row}}$\ 
\begin{minipage}{.12\linewidth} \centering
\begin{tikzpicture}[scale = .6]
	\node[draw,circle,fill=red!60](A) at (1,0) {-1};
	\node[draw,circle](B) at (0,1) {1};
	\node[draw,circle,fill=red!60](C) at (2,1) {-1};
	\node[draw,circle](D) at (1,2) {0};
	\draw[ultra thick] (A) --(B) --(D);
	\draw[ultra thick] (A) --(C) --(D);
\end{tikzpicture}
\end{minipage}\ $\xlongrightarrow{\text{Row}}$\ 
\begin{minipage}{.12\linewidth} \centering
\begin{tikzpicture}[scale = .6]
	\node[draw,circle](A) at (1,0) {1};
	\node[draw,circle,fill=red!60](B) at (0,1) {-1};
	\node[draw,circle](C) at (2,1) {1};
	\node[draw,circle](D) at (1,2) {1};
	\draw[ultra thick] (A) --(B) --(D);
	\draw[ultra thick] (A) --(C) --(D);
\end{tikzpicture}
\end{minipage}\ $\xlongrightarrow{\text{Row}}$\ 
\begin{minipage}{.12\linewidth} \centering
\begin{tikzpicture}[scale = .6]
	\node[draw,circle](A) at (1,0) {0};
	\node[draw,circle](B) at (0,1) {1};
	\node[draw,circle,fill=red!60](C) at (2,1) {-1};
	\node[draw,circle,fill=red!60](D) at (1,2) {-1};
	\draw[ultra thick] (A) --(B) --(D);
	\draw[ultra thick] (A) --(C) --(D);
\end{tikzpicture}
\end{minipage}\ $\xlongrightarrow{\text{Row}}$\ 
\begin{minipage}{.12\linewidth} \centering
\begin{tikzpicture}[scale = .6]
	\node[draw,circle,fill=red!60](A) at (1,0) {-1};
	\node[draw,circle,fill=red!60](B) at (0,1) {-1};
	\node[draw,circle](C) at (2,1) {1};
	\node[draw,circle](D) at (1,2) {0};
	\draw[ultra thick] (A) --(B) --(D);
	\draw[ultra thick] (A) --(C) --(D);
\end{tikzpicture}
\end{minipage} 
\\
\vspace{2mm}

\begin{minipage}{1\linewidth}\centering
\begin{tikzpicture}[scale=.95]
\node[anchor=center, scale=0.78] at (3.2,1) {Row};
\node[anchor=center, scale=0.78] at (-2,1) {Row};
\draw[->] (6,2)--(2,0);
\draw[<-] (-4.75,2)--(-.75,0);
\hspace{-.375cm}
\begin{minipage}{.12\linewidth} \centering
\begin{tikzpicture}[scale = .6]
	\node[draw,circle](A) at (1,0) {1};
	\node[draw,circle](B) at (0,1) {1};
	\node[draw,circle,fill=red!60](C) at (2,1) {-1};
	\node[draw,circle](D) at (1,2) {1};
	\draw[ultra thick] (A) --(B) --(D);
	\draw[ultra thick] (A) --(C) --(D);
\end{tikzpicture}
\end{minipage}
\end{tikzpicture}
\end{minipage}
\\
\vspace{5.5mm}
\vspace{5.5mm}

\begin{minipage}{.12\linewidth} \centering
\begin{tikzpicture}[scale = .6]
	\node[draw,circle](A) at (1,0) {0};
	\node[draw,circle](B) at (0,1) {1};
	\node[draw,circle](C) at (2,1) {1};
	\node[draw,circle,fill=red!60](D) at (1,2) {-1};
	\draw[ultra thick] (A) --(B) --(D);
	\draw[ultra thick] (A) --(C) --(D);
\end{tikzpicture}
\end{minipage}\quad $\xlongrightarrow{\text{Row}}$\quad
\begin{minipage}{.12\linewidth} \centering
\begin{tikzpicture}[scale = .6]
	\node[draw,circle,fill=red!60](A) at (1,0) {-1};
	\node[draw,circle,fill=red!60](B) at (0,1) {-1};
	\node[draw,circle,fill=red!60](C) at (2,1) {-1};
	\node[draw,circle](D) at (1,2) {1};
	\draw[ultra thick] (A) --(B) --(D);
	\draw[ultra thick] (A) --(C) --(D);
\end{tikzpicture}
\end{minipage}\quad $\xlongrightarrow{\text{Row}}$\quad
\begin{minipage}{.12\linewidth} \centering
\begin{tikzpicture}[scale = .6]
	\node[draw,circle](A) at (1,0) {1};
	\node[draw,circle,fill=red!60](B) at (0,1) {-1};
	\node[draw,circle,fill=red!60](C) at (2,1) {-1};
	\node[draw,circle,fill=red!60](D) at (1,2) {-1};
	\draw[ultra thick] (A) --(B) --(D);
	\draw[ultra thick] (A) --(C) --(D);
\end{tikzpicture}
\end{minipage}\quad $\xlongrightarrow{\text{Row}}$\quad
\begin{minipage}{.12\linewidth} \centering
\begin{tikzpicture}[scale = .6]
	\node[draw,circle,fill=red!60](A) at (1,0) {-1};
	\node[draw,circle](B) at (0,1) {1};
	\node[draw,circle](C) at (2,1) {1};
	\node[draw,circle](D) at (1,2) {0};
	\draw[ultra thick] (A) --(B) --(D);
	\draw[ultra thick] (A) --(C) --(D);
\end{tikzpicture}
\end{minipage}\quad $\xlongrightarrow{\text{Row}}$\quad
\begin{minipage}{.12\linewidth} \centering
\begin{tikzpicture}[scale = .6]
	\node[draw,circle](A) at (1,0) {1};
	\node[draw,circle,fill=red!60](B) at (0,1) {-1};
	\node[draw,circle,fill=red!60](C) at (2,1) {-1};
	\node[draw,circle](D) at (1,2) {1};
	\draw[ultra thick] (A) --(B) --(D);
	\draw[ultra thick] (A) --(C) --(D);
\end{tikzpicture}
\end{minipage}
\begin{minipage}{1\linewidth}\centering
\begin{tikzpicture}[scale=.95]
\node[anchor=center, scale=0.78] at (.75,.25) {Row};
\draw[->] (8,.3)--(8,0)--(-6.5,0)--(-6.5,.3);
\end{tikzpicture}
\end{minipage}\\
	\caption{Orbits of the diamond poset under rowmotion. The items in each interval-closed set are in red, and the label of each item is the value of the toggleability statistic for this item. Adding the values of the toggleability of the maximal (resp. minimal) element for all the interval-closed sets of a given orbit always gives $0$, showing that the statistic is $0$-mesic. The same is not true for the elements that are neither minima nor maxima.} \label{fig:homomesy_toggleability_max}
\end{figure}

\begin{prop}\label{toggleabilitly_max_min}
For any poset $P$, the toggleability statistic of any maximal (resp.\ minimal) element  is $0$-mesic under rowmotion on $\IC(P)$. 
\end{prop}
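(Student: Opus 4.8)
The plan is to realize the toggleability statistic of a maximal element as an exact telescoping difference along the orbit. Concretely, I would prove that for every maximal $x\in P$ and every $I\in\IC(P)$,
\[
\tog_x(I)=\mathbf{1}_{x\in\row(I)}-\mathbf{1}_{x\in I},
\]
where $\mathbf{1}_{x\in S}$ is $1$ if $x\in S$ and $0$ otherwise. Granting this identity, summing over an orbit $\mathcal{O}$ and using that $\row$ permutes $\mathcal{O}$ gives $\sum_{I\in\mathcal{O}}\mathbf{1}_{x\in\row(I)}=\sum_{I\in\mathcal{O}}\mathbf{1}_{x\in I}$, so the orbit sum of $\tog_x$ is $0$, and dividing by $|\mathcal{O}|$ shows $\tog_x$ is $0$-mesic.

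To establish the identity I would first record two easy facts about a maximal element $x$. If $x\in I$, then $I-\{x\}$ is again interval-closed (any interval $[a,b]\subseteq I$ with $a,b\neq x$ cannot contain the maximal element $x$ unless $b=x$), so $x$ can always be toggled out and $\tog_x(I)=-1$; moreover $x\notin\row(I)$ by Lemma~\ref{Lemma: RowMaxMin}, matching the right-hand side $0-1=-1$. If $x\notin I$, then $\tog_x(I)\in\{0,1\}$, equal to $1$ exactly when $I\cup\{x\}\in\IC(P)$, and the right-hand side is $\mathbf{1}_{x\in\row(I)}$; so it suffices to show that $x\in\row(I)$ if and only if $I\cup\{x\}\in\IC(P)$.

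The heart of the argument is this last equivalence, which I would prove by tracking the single toggle of $x$ during rowmotion. Since $x$ is maximal, in any reverse linear extension every element $z<x$ is toggled strictly after $x$, while every element toggled before $x$ is incomparable to $x$. Hence at the moment $t_x$ is applied, the current set $J$ agrees with $I$ on all elements $\leq x$, and because $x$ is maximal the only new intervals created by adjoining $x$ have the form $[p,x]$; comparing membership below $x$ shows $J\cup\{x\}\in\IC(P)$ if and only if $I\cup\{x\}\in\IC(P)$. Thus $x$ is successfully toggled in precisely when $I\cup\{x\}\in\IC(P)$, and since no subsequent toggle alters the membership of $x$, this determines whether $x\in\row(I)$. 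This is the step I expect to be the main obstacle: one must argue carefully that the portion of the set at or below $x$ is untouched before $t_x$ fires, and that $x$ is untouched afterward, so that the entire global toggle sequence reduces, as far as $x$ is concerned, to the single local toggle measured by $\tog_x(I)$.

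Finally, the minimal case follows by duality rather than a separate computation. By Proposition~\ref{Prop: DualInverse}, $\IC(P)=\IC(P^*)$ with identical toggles, and $\row_{P^*}=\row_P^{-1}$, so $\row_P$ and $\row_{P^*}$ have the same orbits as sets and $\tog_x$ is the same statistic on both. A minimal element of $P$ is a maximal element of $P^*$, so applying the maximal-element result to $P^*$ shows $\tog_x$ is $0$-mesic for minimal $x$ as well.
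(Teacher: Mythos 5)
Your proof is correct and is essentially the paper's argument in cleaner packaging: the identity $\tog_x(I)=\mathbf{1}_{x\in\row(I)}-\mathbf{1}_{x\in I}$ is exactly the paper's observation that every $+1$ value of the statistic is immediately followed by a $-1$ (and every $-1$ preceded by a $+1$), recast as a telescoping sum over the orbit, and it rests on the same two facts — a maximal element is always toggled out (Lemma~\ref{Lemma: RowMaxMin}) and, being toggled first, is toggled in whenever $\tog_x=+1$ — with the same duality argument via Proposition~\ref{Prop: DualInverse} for minimal elements. Your careful check that the toggles applied before $t_x$ (all of elements incomparable to $x$) cannot change whether $x$ may be toggled in is a detail the paper passes over lightly, so this is a slightly more rigorous rendering of the same proof.
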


\begin{proof} 
Let $x$ be a maximal element of a poset $P$.  Consider the toggleability of $x$ within an orbit of some interval-closed set $I$.  By Lemma \ref{Lemma: RowMaxMin}, the maximal element of $P$ is always toggled \emph{out} under rowmotion.  Thus, if $x$ is an element of $\row^j(I)$, then $x$ will be toggled out and $\tog_x(\row^j(I))=-1.$  Moreover, as the maximal element is always toggled out under rowmotion, if $x\in\row^j(I)$, it could not have been an element of $\row^{j-1}(I)$ and therefore must have been toggled in, implying $\tog_x(\row^{j-1}(I))=+1.$  Thus, within an orbit, every $-1$ value of the toggleability statistic for a maximal element of $P$ is immediately preceded by at $+1$ value.  

Since rowmotion is the composition of toggles from top to bottom, a maximal element $x$ of $P$ will be toggled in every time the toggleability statistic equals $+1$.  And as just argued, every time a maximal value is toggled in, it is immediately toggled out under the next application of rowmotion.  Thus every $+1$ value of the toggleability statistic for a maximal element is immediately followed by a $-1$ value.  

Having shown every $+1$ value is followed by a $-1$ value in the following element of the orbit, and every $-1$ value is preceded by at $+1$ value, it follows that these values cancel out over the orbit and the toggleability statistic for a maximal element of $P$ is $0$-mesic under rowmotion.

The previous argument applies to a maximal element of the dual poset $P^*$, and since a minimal element of $P$ is a maximal element of $P^*$, and $\row_{P^*}=\row^{-1}_P$ by Proposition \ref{Prop: DualInverse}, it follows that the toggleability statistic for the minimal element of $P$ under rowmotion is also $0$-mesic.  In this case, each $-1$ vaule of the toggleability statistic is immediately followed by a $+1$ value within the orbit.
\end{proof}

\begin{remark}\label{rmk:toggleability_in_general}
Note that, unlike what happens with order ideals~\cite[Thm 6.7]{Striker2015}, the toggleability statistic by non-extremal elements is not, in general, homomesic. For example, the toggleability statistic for any element in the middle rank of the diamond poset has average $\frac{1}{2}$ on the first orbit of Figure \ref{fig:homomesy_toggleability_max} and $0$ on the second orbit.
\end{remark}

See Sections~\ref{sec:ord_sum_homomesy} and \ref{sec:prod_chains_homomesy} for more homomesy results on interval-closed sets.

\section{Rowmotion on interval-closed sets of ordinal sums of antichains}\label{sec:ordinalsum}
In this section, we examine posets that are ordinal sums of antichains. First, we classify the interval-closed sets of these posets and fully describe their orbits under rowmotion. Then, we highlight specific examples, applying our results to those specific cases. We end the section with results on homomesy.

\subsection{Chain posets}\label{ssec:chains}

Recall that $[n]$ denotes a chain poset of $n$ elements, or equivalently the ordinal sum of $n$ antichains $\mathbf{1}\oplus\mathbf{1}\oplus\cdots\oplus\mathbf{1}$.

\begin{prop}\label{thm:chains_ICS}
The cardinality of $\IC([n])$ is $\binom{n}{2}+n+1$.
\end{prop}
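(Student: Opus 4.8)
The plan is to first pin down exactly what the interval-closed sets of a chain look like, and then count them directly. The key structural observation is that in the chain $[n]$ (with elements $1 < 2 < \cdots < n$), every nonempty interval-closed set must be a block of consecutive elements $\{a, a+1, \ldots, b\}$ for some $1 \le a \le b \le n$. To see this, take a nonempty $I \in \IC([n])$ and set $a = \Min(I)$ and $b = \Max(I)$ (these are single elements since $[n]$ is totally ordered). Since $a, b \in I$ with $a \le b$, the interval-closed condition forces every $z$ with $a \le z \le b$ into $I$; as the ambient poset is a chain, this interval is precisely $\{a, a+1, \ldots, b\}$, so $I \supseteq \{a, \ldots, b\}$. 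The reverse containment $I \subseteq \{a, \ldots, b\}$ is immediate from the choice of $a$ and $b$ as the minimum and maximum. Conversely, any such contiguous block is trivially interval-closed, so these are exactly the nonempty members of $\IC([n])$.

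Having established this characterization, the enumeration reduces to counting the pairs $(a,b)$ with $1 \le a \le b \le n$, since the map sending an interval to its endpoint pair is a bijection onto such pairs. There are $\binom{n}{2}$ choices with $a < b$ and $n$ choices with $a = b$, giving $\binom{n}{2} + n$ nonempty interval-closed sets. Adding the empty set (which is vacuously interval-closed) yields the total
\begin{equation*}
|\IC([n])| = \binom{n}{2} + n + 1,
\end{equation*}
as claimed.

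There is essentially no serious obstacle here: the entire content is the structural observation that interval-closedness collapses to contiguity in a totally ordered poset, after which the count is a routine stars-and-bars–style tally of endpoint pairs. The only point requiring a sentence of care is the inclusion of the empty set as a valid interval-closed set, which must be handled separately from the endpoint-pair bijection so that the additive constant $+1$ is correctly accounted for.
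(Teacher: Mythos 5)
Your proof is correct and takes essentially the same route as the paper: both characterize the nonempty interval-closed sets of $[n]$ as contiguous intervals determined by an endpoint pair $(a,b)$ with $a\le b$, count $\binom{n}{2}$ pairs with $a<b$ plus $n$ singletons, and add $1$ for the empty set. The only difference is cosmetic: the paper obtains the contiguity characterization by citing Proposition~\ref{prop:ICS_alt_def}, whereas you prove it directly from the definition, which is equally valid and self-contained.
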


\begin{proof}
By Proposition \ref{prop:ICS_alt_def}, the interval-closed sets of $[n]$ are the empty set or intervals of the form $[x_i, x_j]$ with $x_i \leq x_j.$ Thus there are $\binom{n}{2}$ interval-closed sets with more than a single element, $n$ interval-closed sets consisting of a single element, and finally the empty set.
\end{proof}

\begin{thm}\label{thm:chains_orbits}
The set $\IC([n])$ with $n \geq 0$ has rowmotion order dividing $2(n+2)$ when $n$ is odd and $n+2$ when $n$ is even.
Moreover, its rowmotion orbit structure is described below:
\begin{itemize}
\item a single orbit of size $2$ corresponding to $\O(\emptyset)=\{\emptyset,[n]\}$, 
\item $\lfloor \frac{n-1}{2}\rfloor$ orbits of size $n + 2$. These orbits have representatives $[1,k]$ where $1\leq k<\frac{n}{2}$, and are of the form
\[
\O([1,k]) = \{ [1,k], [2,k+1], \ldots , [n-k+1, n], [1,n-k],\ldots [k+1,n]\},
\]
\item and when $n$ is even, a single orbit of size $\frac{n + 2}{2}$ of the form
\[
\O\left (\left [1,\frac{n}{2}\right ]\right)=\left \{\left [1,\frac{n}{2}\right ],\left [2,\frac{n}{2}+1\right ],\ldots,\left [\frac{n}{2}+1,n\right ]\right\}.
\]
\end{itemize}
\end{thm}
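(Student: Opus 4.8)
The plan is to first determine exactly how rowmotion acts on a single interval-closed set of $[n]$, and then read off the orbit structure and the order from that rule.

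By Proposition~\ref{thm:chains_ICS} (via Proposition~\ref{prop:ICS_alt_def}), every nonempty element of $\IC([n])$ is an interval $[a,b]=\{a,a+1,\ldots,b\}$ with $1\le a\le b\le n$, and the only other interval-closed set is $\emptyset$. My first and main step is to show that rowmotion is the ``shift up with wraparound'' map
\[
\row([a,b])=\begin{cases} [a+1,b+1] & \text{if } b<n,\\ [1,a-1] & \text{if } b=n,\end{cases}
\qquad \row(\emptyset)=[1,n],
\]
with the convention $[1,0]=\emptyset$. I would prove this by directly tracking the sweep of toggles $t_n,t_{n-1},\ldots,t_1$ from top to bottom. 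When $b<n$, the sweep does nothing until it reaches $b+1$, which is toggled in (giving $[a,b+1]$); every subsequent element down to $a+1$ is then interior to the current interval and is protected from removal by scenario~(3) of Lemma~\ref{Lem: togobs} (the ceiling element $b+1$ lies above it in $\row(I)$ and a smaller element of $I$ lies below it), whereas the bottom element $a$ has nothing below it in $I$ and so is toggled out, yielding $[a+1,b+1]$. When $b=n$ there is no element above to toggle in, so nothing protects the interval: the maximal element $n$ is toggled out by Lemma~\ref{Lemma: RowMaxMin}, and inductively each successive top element is toggled out until the set is empty, after which $a-1,a-2,\ldots,1$ are toggled in to produce $[1,a-1]$. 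Alternatively, this rule can be obtained by specializing the global formula of Theorem~\ref{thm:AltRow}, using that on a chain $\inc(I)=\emptyset$, $\ceil{[a,b]}=\{b+1\}$ for $b<n$, and $\Min([a,b])\cap\oi\ceil{[a,b]}=\{a\}$.

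Given this rule the dynamics become transparent: rowmotion shifts an interval upward preserving its length $\ell=b-a+1$ until its top reaches $n$, and the wraparound step then sends a length-$\ell$ interval ending at $n$ to a length-$(n-\ell)$ interval beginning at $1$. The crucial observation is that the two lengths occurring in any non-trivial orbit are complementary, summing to $n$, so orbits are naturally indexed by unordered pairs $\{\ell,n-\ell\}$ of complementary lengths (with $\ell=n$ paired with $\ell=0$, i.e. $[n]$ paired with $\emptyset$).

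Finally I would assemble the orbits and compute the order. For each $k$ with $1\le k<n/2$, I will check that iterating the rule from $[1,k]$ sweeps through all $n-k+1$ intervals of length $k$ and then all $k+1$ intervals of length $n-k$ before returning to $[1,k]$, giving exactly the orbit in the statement, of size $(n-k+1)+(k+1)=n+2$; there are $\lfloor \tfrac{n-1}{2}\rfloor$ admissible values of $k$. When $n$ is even the self-complementary length $n/2$ produces a single orbit consisting of all $n/2+1$ intervals of length $n/2$, of size $(n+2)/2$, matching the displayed orbit; and the pair $\{n,0\}$ gives $\O(\emptyset)=\{\emptyset,[n]\}$ of size $2$, consistent with Corollary~\ref{cor:empty ICS}. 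That these orbits are disjoint and exhaust $\IC([n])$ follows from the length-pairing (each interval length lands in exactly one orbit) and can be cross-checked against the total count $\binom{n}{2}+n+1$ from Proposition~\ref{thm:chains_ICS}. The stated order is then the least common multiple of the orbit sizes: when $n$ is even, both $(n+2)/2$ and $2$ divide $n+2$, so the order divides $n+2$; when $n$ is odd, $n+2$ is odd, so $\lcm(2,n+2)=2(n+2)$. I expect the main obstacle to be the rigorous bookkeeping in the first step---establishing the rowmotion rule---particularly the boundary cases (single-element intervals, intervals touching the top or bottom, and the $a=1$ wraparound to $\emptyset$) and confirming that scenario~(3) of Lemma~\ref{Lem: togobs} protects precisely the interior elements and not the bottom element; once the rule is secured, the enumeration and the order computation are routine.
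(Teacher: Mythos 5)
Your proposal is correct and follows essentially the same route as the paper's proof: establish that rowmotion shifts an interval up by one step while its top stays below $n$ and complements it once it contains $n$ (your wraparound case $b=n$ is exactly Lemma~\ref{lem:max_elt_comp} specialized to chains, which the paper cites rather than re-derives), then trace each orbit through an up-sweep, a complementation, a second up-sweep, and a final complementation back to the start, checking exhaustiveness against Proposition~\ref{thm:chains_ICS} and computing the order as the lcm of orbit sizes. The only differences are presentational: you verify the shift rule at the level of individual toggles (or via Theorem~\ref{thm:AltRow}) where the paper simply asserts it, and you index orbits by complementary length pairs, neither of which changes the argument.
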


\begin{proof} 
Fix $n \geq 0$. Label the elements of the chain poset $[n]$ as $1,2, \dotsc, n$. 

By Corollary \ref{cor:empty ICS}, there is one rowmotion orbit of size 2, consisting of the empty set and the whole chain poset $[n]$. 

Let $I = [1, k]$ be a $k$-element order ideal with $k<\frac{n}{2}$. Then $\row(I) = [2, k + 1]$ is the $k$-element interval-closed set with maximal element of rank one greater than that of $I$. Successive applications of $\row$ continue to shift the $k$-elements up one rank until the interval-closed set contains the maximal element of the poset. That is, the interval-closed set is $[n - k+1, n]$. Call this set $J$. Moving from $I$ to $J$ contributes $n-k+1$ elements to the orbit.  By Lemma \ref{lem:max_elt_comp}, applying $\row$ to $J$ returns the complement $\overline{J} = [1, n - k]$, which is an $(n-k)$-element order ideal. The process repeats on this order ideal, contributing an additional $k+1$ elements to the orbit until we again reach an interval-closed set containing the maximal element of the poset. This set is $[k + 1, n]$, whose complement is $I$. Thus, the orbit has size $n+2.$ As $[1, k]$  and $[1, n - k]$ are contained in the same orbit, we note that there are $\lfloor\frac{n-1}{2}\rfloor$ choices for $k$ when selecting a representative of the form $[1, k]$ that will produce these orbits of size $n+2$.

When $n$ is even, consider $I = [1, \frac{n}{2}]$, the $\frac{n}{2}$-element order ideal.  The orbit of this interval-closed set under $\row$ is similar to the case above, except $J = [\frac{n}{2} + 1, n]$, so the complement $\overline{J} = \row(J)$ is the original interval-closed set $I$. In this case, the orbit has size $\frac{n}{2}+1.$

Using Theorem \ref{thm:chains_ICS}, we see that all $I\in\IC([n])$ are contained in one of the orbits described above. If $n$ is odd, the number of elements in these orbits is $2 + \frac{n - 1}{2}(n + 2) = \binom{n}{2} + n + 1$, and if $n$ is even, the number of elements in these orbits is $2 + \frac{n-2}{2}(n+2) + \frac{n+2}{2} = \binom{n}{2} + n + 1.$

Thus, the order of rowmotion on chains $[n]$ is $2$ when $n=1$ or $2$, $n+2$ when $n\geq 4$ is even, and $2(n+2)$ when $n\geq 3$ is odd. 
\end{proof}

\subsection{Ordinal sums of antichains}
In this subsection, we study a generalization of the chain poset in which all elements in each rank are comparable with all elements of every other rank. This is the ordinal sum of antichains $\mathbf{a}_1\oplus\mathbf{a}_2\oplus\cdots\oplus\mathbf{a}_n$, which includes $a_i$ elements in the $i^{th}$ antichain. Our main result is Theorem~\ref{thm:gen_ord_sum_row}, which completely describes the rowmotion orbits of this family of posets and shows that the order of rowmotion depends only on $n$, not the number of elements in each rank.

We begin by counting the interval-closed sets of $\mathbf{a}_1\oplus\mathbf{a}_2\oplus\cdots\oplus\mathbf{a}_n$.

\begin{thm}\label{thm:gen_ord_sum_ics_card}
The cardinality of $\IC(\mathbf{a}_1\oplus\mathbf{a}_2\oplus\cdots\oplus\mathbf{a}_n)$ is  $1+\sum_{1\leq i\leq n}(2^{a_i}-1)+\sum_{1\leq i<j\leq n}(2^{a_i}-1)(2^{a_j}-1)$. This consists of the following:
\begin{itemize}
\item the empty set $\emptyset$, 
\item subsets of size $k$ within a single antichain $\mathbf{a}_i$, 
\item and subsets consisting of some non-empty subset of size $k$ of $\mathbf{a}_i$, some non-empty subset of size $l$ of $\mathbf{a}_j$ with $i<j$, and all elements of $\mathbf{a}_r$ with $i<r<j$. 
\end{itemize}
\end{thm}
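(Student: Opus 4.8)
The plan is to classify the interval-closed sets of $P=\mathbf{a_1}\oplus\mathbf{a_2}\oplus\cdots\oplus\mathbf{a_n}$ by recording which antichains (``levels'') each one meets, and then to count within each class. First I would record the two basic comparability facts for the ordinal sum: any two elements in the same antichain $\mathbf{a_i}$ are incomparable, while any element of $\mathbf{a_i}$ lies strictly below any element of $\mathbf{a_j}$ whenever $i<j$. Consequently, for $x\in\mathbf{a_i}$ and $y\in\mathbf{a_j}$ with $i<j$, the interval satisfies $[x,y]=\{x\}\cup\{y\}\cup\bigcup_{i<r<j}\mathbf{a_r}$.

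The structural heart of the argument is the following observation, which I would prove directly from the definition: if a nonempty $I\in\IC(P)$ meets both $\mathbf{a_i}$ and $\mathbf{a_j}$ with $i<j$, then every intermediate antichain $\mathbf{a_r}$ with $i<r<j$ is \emph{entirely} contained in $I$. Indeed, choosing $x\in I\cap\mathbf{a_i}$ and $y\in I\cap\mathbf{a_j}$, every $z\in\mathbf{a_r}$ satisfies $x<z<y$ and hence lies in $[x,y]\subseteq I$. Letting $\ell$ and $u$ be the smallest and largest indices of antichains met by $I$, this shows the set of met antichains is exactly the contiguous block $\{\ell,\dots,u\}$, that the strictly intermediate antichains are full, and that $I$ is determined by the data $(\ell,u)$ together with the nonempty subsets $I\cap\mathbf{a_\ell}$ and $I\cap\mathbf{a_u}$.

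Next I would establish the converse, so that the correspondence is a genuine bijection: every configuration described above actually is interval-closed. Given a contiguous block of levels with full interior and arbitrary nonempty subsets at the two endpoints, one checks that for any $x\le y$ in the resulting set $I$ the whole interval $[x,y]$ lies in $I$; this is a short case check on the levels of $x$ and $y$, using that the interior levels are full and the endpoint elements belong to $I$. Alternatively, one may observe that each such $I$ is a difference of two order ideals generated within single antichains and invoke Proposition~\ref{prop:ICS_alt_def}.

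Finally I would enumerate by the three disjoint families the classification produces. The empty set contributes $1$. The configurations with $\ell=u=i$ are precisely the nonempty subsets of a single antichain $\mathbf{a_i}$, contributing $\sum_{1\le i\le n}(2^{a_i}-1)$. The configurations with $\ell=i<j=u$ are indexed by a pair $i<j$ together with a nonempty subset of $\mathbf{a_i}$ and a nonempty subset of $\mathbf{a_j}$ (the interior being forced full), contributing $\sum_{1\le i<j\le n}(2^{a_i}-1)(2^{a_j}-1)$. Since these families are visibly disjoint and, by the classification, exhaustive, summing yields the stated cardinality. I expect the only real obstacle to be the bookkeeping in the converse step and in confirming disjointness and exhaustiveness of the three families; the comparability facts and the ``intermediate levels are full'' claim are immediate from the ordinal-sum structure.
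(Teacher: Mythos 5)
Your proposal is correct and follows essentially the same route as the paper's proof: the key observation that an interval-closed set meeting $\mathbf{a_i}$ and $\mathbf{a_j}$ with $i<j$ must contain every intermediate antichain $\mathbf{a_r}$ in full, followed by the three-way classification (empty set, one partial antichain, two partial antichains with full interior) and the corresponding count. The only difference is that you make explicit the converse verification and the disjointness/exhaustiveness check, which the paper treats as immediate.
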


\begin{proof}
Consider $P=\mathbf{a}_1\oplus\mathbf{a}_2\oplus\cdots\oplus\mathbf{a}_n$. If $I\in\IC(P)$ contains any elements of $\mathbf{a}_i$ and any elements of $\mathbf{a}_j$ with $i<j$, then it must contain all elements of $\mathbf{a}_r$ for all $i<r<j$.  Thus elements of $\IC(P)$ are either the empty set, non-empty subsets of a single $\mathbf{a}_i$, of which there are $\sum_{1\leq i\leq n}(2^{a_i}-1)$, or non-empty subsets of two distinct $a_i$ and $a_j$ along with all elements of $\mathbf{a}_r$ for $i<r<j$, of which there are $\sum_{1\leq i<j\leq n}(2^{a_i}-1)(2^{a_j}-1)$.
\end{proof}

We have the following as a corollary of Lemma~\ref{lem:at_most_1}.
\begin{cor}
Consider $P = \mathbf{a}_1$ or $P = \mathbf{a}_1 \oplus \mathbf{a}_2$. Then, any subset of $P$ is an interval-closed set. Furthermore, $\row(I) = \overline{I}$ for any $I \in \IC(P)$, and rowmotion has order $2$.
\end{cor}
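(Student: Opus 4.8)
The plan is to recognize both posets as instances of the rank-at-most-one hypothesis of Lemma~\ref{lem:at_most_1}, so that the entire statement follows by a direct appeal to that lemma rather than by any new argument. First I would verify the rank condition in each case. The antichain $\mathbf{a_1}$ has no comparabilities at all, so it is ranked with every element at rank $0$ and thus has rank $0$. For $\mathbf{a_1}\oplus\mathbf{a_2}$, the ordinal-sum relation places every element of $\mathbf{a_1}$ strictly below every element of $\mathbf{a_2}$ and imposes no relations within either antichain; assigning rank $0$ to the elements of $\mathbf{a_1}$ and rank $1$ to the elements of $\mathbf{a_2}$ then witnesses that this poset is ranked of rank exactly $1$. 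Crucially, this holds no matter how large $a_1$ and $a_2$ are, so in both cases $P$ has rank at most one.

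Having pinned down the rank bound, I would next note that the claim that every subset of $P$ is interval-closed is precisely the opening observation in the proof of Lemma~\ref{lem:at_most_1}: a poset of rank at most one contains no chain $x<z<y$ of three distinct elements, so the interval-closed condition $z\in I$ whenever $x\leq z\leq y$ with $x,y\in I$ is satisfied vacuously by every subset $S\subseteq P$. Hence $\IC(P)$ equals the full power set of $P$, giving the first assertion of the corollary.

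Finally, the two remaining assertions---that $\row(I)=\overline{I}$ for all $I\in\IC(P)$, and that rowmotion consequently has order $2$---follow immediately by invoking Lemma~\ref{lem:at_most_1} for this $P$. I expect no genuine obstacle in this proof; the only point deserving a moment's care is the rank computation for the ordinal sum, where one should explicitly record that an ordinal sum of two arbitrarily large antichains still admits no three-element chain and therefore still has rank one, so that the hypothesis of Lemma~\ref{lem:at_most_1} is met independently of the antichain sizes.
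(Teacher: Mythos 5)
Your proposal is correct and takes exactly the paper's route: the paper states this corollary as an immediate consequence of Lemma~\ref{lem:at_most_1} with no separate argument, relying on the fact that $\mathbf{a_1}$ and $\mathbf{a_1}\oplus\mathbf{a_2}$ have rank at most one. Your explicit rank verification and the observation that every subset is interval-closed (which is the opening step of that lemma's proof) merely fill in the routine details the paper leaves implicit.
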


We now determine the rowmotion orbits of $\IC(\mathbf{a}_1\oplus\mathbf{a}_2\oplus\cdots\oplus\mathbf{a}_n)$ for $n\geq 3$. We  use the following definition and lemma.

\begin{definition}\label{def: ordsumICS}
  For $I\in\IC(\mathbf{a}_1\oplus\mathbf{a}_2\oplus\cdots\oplus\mathbf{a}_n)$, we denote $I$ as $\binom{\mathbf{a}_i}{k}$ if $I$ is equal to a non-empty proper subset of $\mathbf{a}_i$ of size $k$, and $[\binom{\mathbf{a}_i}{k},\binom{\mathbf{a}_j}{l}]$ if $I$ consists of some non-empty proper subset of size $k$ of $\mathbf{a}_i$, some non-empty proper subset of size $l$ of $\mathbf{a}_j$ with $i<j$, and all elements of $\mathbf{a}_r$ for all $i<r<j$. We use $\mathbf{a}_i$ if $I$ is a single antichain within the ordinal sum, $[\mathbf{a}_i,\binom{\mathbf{a}_j}{l}]$ if $I$ is a non-empty proper subset of some $\mathbf{a}_j$ and all elements of $\mathbf{a}_r$ for $i\leq r<j$, $[\binom{\mathbf{a}_i}{k},\mathbf{a}_j]$ if I is a non-empty proper subset of some $\mathbf{a}_i$ and all elements of $\mathbf{a}_r$ for $i<r\leq j$, and $[\mathbf{a}_i,\mathbf{a}_j]$ if $I$ consists of all elements $\mathbf{a}_r$ for $i\leq r\leq j$.  We use $\overline{\binom{\mathbf{a}_i}{k}}$ to denote the set complement $\mathbf{a}_i-\binom{\mathbf{a}_i}{k}$ within an antichain.  Thus  $\left [\overline{\binom{\mathbf{a}_i}{k}},\overline{\binom{\mathbf{a}_j}{l}}\right ]$ denotes the interval-closed set consisting of the elements $\mathbf{a}_i-\binom{\mathbf{a}_i}{k}$, $\mathbf{a}_j-\binom{\mathbf{a}_j}{l},$ and $\mathbf{a}_r$ for all $i<r<j.$ 
\end{definition}

\begin{lem}\label{lem:ints_commute_map}
Let $f$ be the map that takes $\mathbf{a}_i$ to the single element $i$ and $[\mathbf{a}_i,\mathbf{a}_j]$ to the interval $[i,j]$ in the chain poset $[n]$. Rowmotion commutes with this map, that is, $f(\row(I)) = \row(f(I))$.  
\end{lem}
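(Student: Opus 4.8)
The plan is to reduce the claim to a single explicit computation of $\row$ on the ``full-antichain'' interval-closed sets, which are exactly the domain of $f$. The sets on which $f$ is defined are precisely those of the form $[\mathbf{a_i},\mathbf{a_j}]$ (all of $\mathbf{a_r}$ for $i\le r\le j$), together with the degenerate cases $\emptyset$ and $P=[\mathbf{a_1},\mathbf{a_n}]$. The first thing to check is that this family is closed under rowmotion, so that $f(\row(I))$ is even defined; this will fall out of the computation below. The degenerate cases are immediate, since $\row(\emptyset)=P$ and $\row(P)=\emptyset$ by Corollary~\ref{cor:empty ICS}, matching $\row(\emptyset)=[n]$ and $\row([n])=\emptyset$ on the chain. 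So I may assume $I=[\mathbf{a_i},\mathbf{a_j}]$ with $1\le i\le j\le n$.

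The main step is to establish
\[
\row\big([\mathbf{a_i},\mathbf{a_j}]\big)=
\begin{cases}
[\mathbf{a_{i+1}},\mathbf{a_{j+1}}] & \text{if } j<n,\\
[\mathbf{a_1},\mathbf{a_{i-1}}] & \text{if } j=n,
\end{cases}
\]
with the convention $[\mathbf{a_1},\mathbf{a_0}]=\emptyset$. When $j=n$, the set $I$ is an order filter containing every maximal element of $P$, so Lemma~\ref{lem:max_elt_comp} gives $\row(I)=\overline{I}=[\mathbf{a_1},\mathbf{a_{i-1}}]$ directly. When $j<n$, I would evaluate the global formula of Theorem~\ref{thm:AltRow}. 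Because $P$ is an ordinal sum of antichains, every element is comparable to $I$, so $\inc(I)=\emptyset$; the set $\f(I)-I$ consists of ranks $j+1,\dots,n$, whence $\ceil{I}=\mathbf{a_{j+1}}$ and $\oi\ceil{I}$ is ranks $1,\dots,j+1$; the minimal elements $\Min(I)=\mathbf{a_i}$ all lie under the ceiling, so $\F=\mathbf{a_i}$ and $\oi(\F)$ is ranks $1,\dots,i$; and $\inc_I(\ceil{I})=\emptyset$ since no element of $I$ is incomparable to $\mathbf{a_{j+1}}$. Substituting, the first two pieces of the formula vanish and the last piece is $\oi\ceil{I}-\oi(\F)=[\mathbf{a_{i+1}},\mathbf{a_{j+1}}]$, as claimed. (Alternatively, one can avoid Theorem~\ref{thm:AltRow} and argue directly from Lemma~\ref{Lem: togobs}: ranks above $j+1$ are excluded and rank $j+1$ is included by Lemma~\ref{Lemma: RowCeil}; each rank $r$ with $i<r\le j$ stays in via scenario (3), using a minimal element of $I$ below and a ceiling element above; rank $i$ is toggled out since it is minimal in $I$; and each rank below $i$ is kept out via scenario (2), using a rank-$i$ element, excluded, below a ceiling element, included.)

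With the formula in hand, the conclusion is immediate. The chain poset $[n]$ is the special case $a_1=\cdots=a_n=1$, so the same computation gives $\row_{[n]}([i,j])=[i+1,j+1]$ when $j<n$ and $\row_{[n]}([i,n])=[1,i-1]$, consistent with Theorem~\ref{thm:chains_orbits}. Since $f$ sends $[\mathbf{a_i},\mathbf{a_j}]\mapsto[i,j]$, applying $f$ to the displayed formula yields exactly $\row_{[n]}(f(I))$ in both cases; this also shows $\row(I)$ again lies in the domain of $f$, completing the closure check.

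I expect the only real work to be the bookkeeping in the $j<n$ case, namely correctly identifying $\ceil{I}$, $\F$, and the various order ideals and filters inside the formula of Theorem~\ref{thm:AltRow}. This is routine precisely because the ordinal-sum structure forces each of these sets to be a union of complete ranks, so no genuinely hard step arises; the main pitfall is simply keeping the rank ranges, and the boundary cases $i=1$, $j=n$, and $i=j$, straight.
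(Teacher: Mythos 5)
Your proposal is correct, and it is in fact more substantial than the paper's own argument. The paper's proof of Lemma~\ref{lem:ints_commute_map} is a two-sentence assertion: it states that the rowmotion orbit of $[\mathbf{a_i},\mathbf{a_j}]$ consists entirely of full-rank interval-closed sets, shifting up by one rank at each step and complementing at the top, and concludes commutation from that orbit description, implicitly leaning on the same toggle reasoning used for Theorem~\ref{thm:chains_orbits}. You instead prove the one-step formula $\row([\mathbf{a_i},\mathbf{a_j}])=[\mathbf{a_{i+1}},\mathbf{a_{j+1}}]$ for $j<n$ (and $\row([\mathbf{a_i},\mathbf{a_n}])=\overline{[\mathbf{a_i},\mathbf{a_n}]}$ via Lemma~\ref{lem:max_elt_comp}) by evaluating the global formula of Theorem~\ref{thm:AltRow}: your identifications $\inc(I)=\emptyset$, $\ceil{I}=\mathbf{a_{j+1}}$, $\inc_I(\ceil{I})=\emptyset$, $\F=\Min(I)=\mathbf{a_i}$ are all correct, the first two pieces of the formula vanish, and the third gives ranks $i+1$ through $j+1$; your fallback argument via scenarios (2) and (3) of Lemma~\ref{Lem: togobs} is also sound and respects the top-down order in which toggles are resolved. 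What your route buys is rigor and closure: the explicit formula simultaneously shows the family of full-rank interval-closed sets is stable under rowmotion (so $f(\row(I))$ is defined) and reduces commutation to reading off the same formula with all $a_i=1$. It also anticipates the paper's own later computation in Theorem~\ref{alt_def_ordinal}, which specializes Theorem~\ref{thm:AltRow} to ordinal sums of antichains in exactly this manner; what the paper's terser argument buys is brevity, at the cost of leaving the orbit claim to the reader.
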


\begin{proof}
Each element in the rowmotion orbit of $[\mathbf{a}_i,\mathbf{a}_j]$ where $i\leq j$ contains only completely full ranks forming an interval-closed set, or more precisely $\mathcal{O}\left([\mathbf{a}_i,\mathbf{a}_j]\right)=\{[\mathbf{a}_i,\mathbf{a}_j], [\mathbf{a}_{i+1}, \mathbf{a}_{j+1}],\ldots,[\mathbf{a}_{i+n-j},\mathbf{a}_n],\ldots[\mathbf{a}_{i-1},\mathbf{a}_{j-1}]\}.$ Therefore, we can either perform rowmotion on $[\mathbf{a}_i,\mathbf{a}_j]$, and then look at the related interval-closed set in $[n]$, or we can first map $[\mathbf{a}_i,\mathbf{a}_j]$ to $[i,j]$, and then perform rowmotion.
\end{proof}

\begin{thm}
\label{thm:gen_ord_sum_row} Let $n \geq 3$. 
The set $\IC(\mathbf{a}_1\oplus\mathbf{a}_2\oplus\cdots\oplus\mathbf{a}_n)$ has rowmotion order  $2n(n+2)$ when $n$ is odd and $n(n+2)/2$ when $n$ is even.
Moreover, a complete description of its rowmotion orbit structure is below:
\begin{itemize}
\item $1+\frac{1}{2}\sum_{1\leq i<j\leq n}(2^{a_i}-2)(2^{a_j}-2)$ orbits of size $2$, corresponding to $\{\emptyset, P\}$, and orbits with representatives $[\binom{\mathbf{a}_i}{k},\binom{\mathbf{a}_j}{l}]$, which are of the form,
\[
\O \left (  \left [\binom{\mathbf{a}_i}{k},\binom{\mathbf{a}_j}{l}\right ]\right )=\left\{ \left [\binom{\mathbf{a}_i}{k},\binom{\mathbf{a}_j}{l}\right ], \left [\overline{\binom{\mathbf{a}_i}{k}},\overline{\binom{\mathbf{a}_j}{l}}\right ]\right\},
\]
\item $\lfloor \frac{n-1}{2}\rfloor$ orbits of size $n+2$,  with representatives $[\mathbf{a}_1,\mathbf{a}_j]$ with $j < \frac{n }{2}$, and are of the form 
\[
\O([\mathbf{a}_1,\mathbf{a}_k]) = \{ [\mathbf{a}_1,\mathbf{a}_k], [\mathbf{a}_2,\mathbf{a}_{k+1}], \ldots , [\mathbf{a}_{n-k+1},\mathbf{a}_{n}], [\mathbf{a}_{1},\mathbf{a}_{n-k}],\ldots [\mathbf{a}_{k+1},\mathbf{a}_{n}]\},
\]
\item $1$ orbit of size $\frac{n+2}{2}$ when $n$ is even, with representative $[\mathbf{a}_1,\mathbf{a}_{n/2}]$, of the form \[
\O\left ( [\mathbf{a}_1,\mathbf{a}_{n/2}]\right)=\left \{ [\mathbf{a}_1,\mathbf{a}_{n/2}], [\mathbf{a}_{n/2+1},\mathbf{a}_{n}] \right\},
\]
\item  $\sum_{1\leq i\leq n}(2^{a_i-1}-1)$ orbits of size $2n$ when $n$ is odd and $\sum_{1\leq i\leq n}(2^{a_i}-2)$ orbits of size $n$ when $n$ is even, with representatives $\binom{\mathbf{a}_i}{k}$ in both cases, of the form 
\[\O\left(\binom{\mathbf{a}_i}{k}\right)=\left\{\binom{\mathbf{a}_i}{k},\left[\overline{\binom{\mathbf{a}_i}{k}},\mathbf{a}_{i+1}\right],\left[\binom{\mathbf{a}_i}{k},\mathbf{a}_{i+2}\right],\ldots, \left[x,\mathbf{a}_n],[\mathbf{a}_1,\overline{x}\right],\ldots,\left[\mathbf{a}_{i-1},\overline{\binom{\mathbf{a}_i}{k}}\right]\right\},\]
where $x=\binom{\mathbf{a}_i}{k}$ if $n-i$ is even and $x=\overline{\binom{\mathbf{a}_i}{k}}$ if $n-i$ is odd.
\end{itemize}
\end{thm}

\begin{proof}
Let $P=\mathbf{a}_1\oplus\mathbf{a}_2\oplus\cdots\oplus\mathbf{a}_n$ and $n\geq 3$. If an interval-closed set in $P$ contains elements from $3$ or more antichains, these antichains must be adjacent in the poset, and all elements of the interior antichains must be completely included. We classify the orbit structure of rowmotion on $\IC(P)$ by cases according to the number of partial antichains included in the interval-closed sets.

\medskip 
\noindent Case 1: Interval-closed sets containing only full antichains 

\smallskip
Consider the case of interval-closed sets $I$ of the form $\mathbf{a}_i$ or $[\mathbf{a}_i,\mathbf{a}_j]$. That is, $I$ contains all elements of each $\mathbf{a}_k$ where $i\leq k\leq j$. Let $f$ be the map that takes $\mathbf{a}_i$ to the single element $i$ and $[\mathbf{a}_i,\mathbf{a}_j]$ to the interval $[i,j]$ in the chain poset $[n]$. By Lemma \ref{lem:ints_commute_map}, rowmotion commutes with this map. 

Using Theorem \ref{thm:chains_orbits} on $[n]$ allows us to find the full orbit $\O(I)$ for each $I\in\IC(P)$ considered in this case. 
That is, we have an orbit of size $2$ containing $\{\emptyset, P \}$, $\lfloor \frac{n-1}{2}\rfloor$ orbits of size $n+2$, and if $n$ is even, an additional orbit of size $\frac{n+2}{2}$. 

\medskip
\noindent Case 2: Interval-closed sets containing one partial antichain and possibly other complete antichains 

\smallskip
Consider $I=\binom{\mathbf{a}_i}{k}$. Then, $\row(I)=\overline{\binom{\mathbf{a}_i}{k}}\oplus \mathbf{a}_{i+1},$ where $\overline{\binom{\mathbf{a}_i}{k}}$ denotes the complement $\mathbf{a}_i-\binom{\mathbf{a}_i}{k}$, and repeated applications of rowmotion continue to add all elements of the subsequent rank while alternating between the elements $\binom{\mathbf{a}_i}{k}$ and $\overline{\binom{\mathbf{a}_i}{k}}$.  Thus $\row^2(I)=[\binom{\mathbf{a}_i}{k},\mathbf{a}_{i+2}]$, $\row^3(I)=[\overline{\binom{\mathbf{a}_i}{k}},\mathbf{a}_{i+3}]$ and so forth until we reach $\row^{n-i}(I)=[x,\mathbf{a}_{n}]$, where $x=\binom{\mathbf{a}_i}{k}$ if $n-i$ is even and $x=\overline{\binom{\mathbf{a}_i}{k}}$ if $n-i$ is odd. At this point, applying rowmotion gives us the complement $\row^{n-i+1}(I)=[\mathbf{a}_1,\overline{x}].$  From here, subsequent applications of $\row$ remove elements of minimal rank while alternating between $x$ and $\overline{x}$, thus $\row^{n-i+2}(I)=[\mathbf{a}_2,x], \row^{n-i+3}(I)=[\mathbf{a}_3,\overline{x}],$ and so on.  After $n$ iterations we have $\row^n(I)=\binom{\mathbf{a}_i}{k}=I$ if $n$ is even, and $\row^n(I)=\overline{\binom{\mathbf{a}_i}{k}}$ if $n$ is odd.  In the case when $n$ is odd, an additional $n$ applications of $\row$ means returning to $I$. Thus $\O(\binom{\mathbf{a}_i}{k})$ has size $n$ when $n$ is even and $2n$ when $n$ is odd.  
      
Moreover, we observe that all interval-closed sets containing one partial antichain are included in these orbits. When $n$ is even, each of these orbits has a unique representative of the form $I=\binom{\mathbf{a}_i}{k}$, while when $n$ is odd, both $\binom{\mathbf{a}_i}{k}$ and $\overline{\binom{\mathbf{a}_i}{k}}$ appear in the same orbit.  Given there are $\sum_{1\leq i\leq n}(2^{a_i}-2)$ interval-closed sets of the form $I=\binom{\mathbf{a}_i}{k}$, when $n$ is even we have $\sum_{1\leq i\leq n}(2^{a_i}-2)$ orbits of size $n$, and when $n$ is odd we have half this many, $\sum_{1\leq i\leq n}(2^{a_i-1}-1)$, orbits of size $2n.$  

\medskip 
\noindent Case 3: Interval-closed sets containing two partial antichains and possibly other complete antichains

\smallskip
Consider a fixed interval-closed set $I = [\binom{\mathbf{a}_i}{k},\binom{\mathbf{a}_j}{l}]$, where $i\neq j$. Any interval-closed set of $\mathbf{a}_1\oplus \mathbf{a}_2\oplus \cdots \oplus \mathbf{a}_n$  with two partial antichains has this form. Then, $I$ is bounded above (resp. below) by a non-empty proper subset of the antichain $\mathbf{a}_j$ (resp. $\mathbf{a}_i$). Under application of rowmotion, all elements greater than those in $\mathbf{a}_j$ are not toggled into $I$, as there are elements of $\mathbf{a}_j$ smaller than them not in $I$ that are covers of elements of $I$, in other words they in are in scenario (1) of Lemma \ref{Lem: togobs}. Similarly, elements smaller than those in $\mathbf{a}_i$ are in scenario (2) and are not toggled into $I$, as there are elements of $\mathbf{a}_i$ greater than them not in $I$ that are covered by elements of $I$. Thus, $\row(I)$ remains bounded above by $\mathbf{a}_j$ and below by $\mathbf{a}_i$. In each of these ranks, rowmotion acts by returning the complements $\overline{\binom{\mathbf{a}_j}{l}}$ and $\overline{\binom{\mathbf{a}_i}{k}}$. Finally, the elements strictly between $\mathbf{a}_i$ and $\mathbf{a}_j$ are not toggled out of I, since there are larger elements in $\row(I)$, the elements in $\overline{\binom{\mathbf{a}_j}{l}}$ added under rowmotion, and smaller elements in $I$, the elements of $\binom{\mathbf{a}_i}{k}$ not yet removed by rowmotion. Thus, the orbit is $\{ [\binom{\mathbf{a}_i}{k},\binom{\mathbf{a}_j}{l}], [\overline{\binom{\mathbf{a}_i}{k}},\overline{\binom{\mathbf{a}_j}{l}]}\}$. Hence, $I$ is in an orbit of size $2$.
There is a total of $\sum_{1\leq i<j\leq n}(2^{a_i}-2)(2^{a_j}-2)$ interval-closed sets of the form $[\binom{\mathbf{a}_i}{k},\binom{\mathbf{a}_j}{l}]$ with $i\neq j$, which means that the number of orbits that contain them is $\frac{1}{2}\sum_{1\leq i<j\leq n}(2^{a_i}-2)(2^{a_j}-2)$.

By Theorem~\ref{thm:gen_ord_sum_ics_card} and counting elements in these various orbits, all $I\in\IC(P)$ are contained in one of the orbits described above.
So the order of rowmotion on ordinal sums of $n$ antichains is $\lcm(2,n+2,2n)=2n(n+2)$ when $n$ is odd, and $\lcm(2,n+2,\frac{n+2}{2},n)=n(n+2)/2$ when $n$ is even.
\end{proof}

\begin{ex} We end this subsection by illustrating some examples of the orbits of $P = \mathbf{2} \oplus \mathbf{3} \oplus \mathbf{1} \oplus \mathbf{4}$. 

There are three different types of orbits.

\begin{enumerate} 

\item The orbits of size two, which include $\{ \emptyset, P\}$ and non-trivial orbits consisting of interval-closed sets with both a minimal and (different) maximal rank with partially full antichains.

\begin{figure}[H]
\centering
	\begin{minipage}{.17\linewidth}\centering
		\begin{tikzpicture}[scale = .4]
		\node[draw,circle, fill=red](A) at (-1,-1) {};
		\node[draw,circle](B) at (1,-1) {};
		\node[draw,circle, fill=red](C) at (-2,1) {};
		\node[draw,circle, fill=red](D) at (0,1) {};
		\node[draw,circle, fill=red](E) at (2,1) {};    
		\node[draw,circle, fill=red](F) at (0,3) {};    
		\node[draw,circle, fill=red](G) at (-3,5) {};
		\node[draw,circle](H) at (-1,5) {};
		\node[draw,circle](I) at (1,5) {};
		\node[draw,circle](J) at (3,5) {};    
		\draw[very thick] (A) --(C) --(F) --(G);
		\draw[very thick] (A) --(D) --(F) --(H);
		\draw[very thick] (A) --(E) --(F) --(I);
		\draw[very thick] (B) --(C) --(F) --(J);
		\draw[very thick] (B) --(D);
		\draw[very thick] (B) --(E);    
	\end{tikzpicture}
	\end{minipage}\quad $\xlongrightarrow{\text{Row}}$\quad
	\begin{minipage}{.17\linewidth}\centering
		\begin{tikzpicture}[scale = .4]
		\node[draw,circle](A) at (-1,-1) {};
		\node[draw,circle, fill=red](B) at (1,-1) {};
		\node[draw,circle, fill=red](C) at (-2,1) {};
		\node[draw,circle, fill=red](D) at (0,1) {};
		\node[draw,circle, fill=red](E) at (2,1) {};    
		\node[draw,circle, fill=red](F) at (0,3) {};    
		\node[draw,circle](G) at (-3,5) {};
		\node[draw,circle, fill=red](H) at (-1,5) {};
		\node[draw,circle, fill=red](I) at (1,5) {};
		\node[draw,circle, fill=red](J) at (3,5) {};    
		\draw[very thick] (A) --(C) --(F) --(G);
		\draw[very thick] (A) --(D) --(F) --(H);
		\draw[very thick] (A) --(E) --(F) --(I);
		\draw[very thick] (B) --(C) --(F) --(J);
		\draw[very thick] (B) --(D);
		\draw[very thick] (B) --(E);    
		\end{tikzpicture}
	\end{minipage}\\
  \vspace{1mm}
\begin{minipage}{1\linewidth}\centering
\begin{tikzpicture}[scale=.95]
\node[anchor=center, scale=0.78] at (2.5,.25) {Row};
\draw[->] (4.5,.3)--(4.5,0)--(0,0)--(0,.3);
\end{tikzpicture}
\end{minipage}
\end{figure}

 \vspace{2.5mm}

\item The orbits of size $n+2$ and a single orbit of size $\frac{n+2}{2}$ (since $n$ is even), which consist of interval-closed sets that are made up of consecutive full antichains, excluding the full poset.

\begin{figure}[H]
\begin{center}
\begin{tikzpicture}[scale = .30]

 \draw [->] (3.5,2.5) -- (5.5,2.5);
 \node[anchor=center,scale=.7] at (4.25,3.5) {Row};
 \draw [->] (11.5,2.5) -- (13.5,2.5);
  \node[anchor=center,scale=.7] at (12.25,3.5) {Row};
  \draw [->] (19.5,2.5) -- (21.5,2.5);
   \node[anchor=center,scale=.7] at (20.25,3.5) {Row};
   \draw [->] (27.5,2.5) -- (29.5,2.5);
    \node[anchor=center,scale=.7] at (28.25,3.5) {Row};
    \draw [->] (35.5,2.5) -- (37.5,2.5);
     \node[anchor=center,scale=.7] at (36.25,3.5) {Row};

\draw [-, very thick] (-1,-1) -- (-2,1);
\draw [-, very thick] (-1,-1) -- (0,1);
\draw [-, very thick] (-1,-1) -- (2,1);
\draw [-, very thick] (1,-1) -- (-2,1);
\draw [-, very thick] (1,-1) -- (0,1);
\draw [-, very thick] (1,-1) -- (2,1);

\draw [-, very thick] (7,-1) -- (6,1);
\draw [-, very thick] (7,-1) -- (8,1);
\draw [-, very thick] (7,-1) -- (10,1);
\draw [-, very thick] (9,-1) -- (6,1);
\draw [-, very thick] (9,-1) -- (8,1);
\draw [-, very thick] (9,-1) -- (10,1);

\draw [-, very thick] (15,-1) -- (14,1);
\draw [-, very thick] (15,-1) -- (16,1);
\draw [-, very thick] (15,-1) -- (18,1);
\draw [-, very thick] (17,-1) -- (14,1);
\draw [-, very thick] (17,-1) -- (16,1);
\draw [-, very thick] (17,-1) -- (18,1);

\draw [-,very thick] (23,-1) -- (22,1);
\draw [-, very thick] (23,-1) -- (24,1);
\draw [-, very thick] (23,-1) -- (26,1);
\draw [-, very thick] (25,-1) -- (22,1);
\draw [-, very thick] (25,-1) -- (24,1);
\draw [-, very thick] (25,-1) -- (26,1);

\draw [-, very thick] (31,-1) -- (30,1);
\draw [-, very thick] (33,-1) -- (32,1);
\draw [-, very thick] (31,-1) -- (34,1);
\draw [-, very thick] (33,-1) -- (30,1);
\draw [-, very thick] (31,-1) -- (32,1);
\draw [-, very thick] (33,-1) -- (34,1);

\draw [-, very thick] (39,-1) -- (38,1);
\draw [-, very thick] (41,-1) -- (40,1);
\draw [-, very thick] (39,-1) -- (42,1);
\draw [-, very thick] (41,-1) -- (38,1);
\draw [-, very thick] (39,-1) -- (40,1);
\draw [-, very thick] (41,-1) -- (42,1);

\draw [-, very thick] (-2,1) -- (0,3);
\draw [-, very thick] (0,1) -- (0,3);
\draw [-, very thick] (2,1) -- (0,3);

\draw [-, very thick] (6,1) -- (8,3);
\draw [-, very thick] (8,1) -- (8,3);
\draw [-, very thick] (10,1) -- (8,3);

\draw [-, very thick] (14,1) -- (16,3);
\draw [-, very thick] (16,1) -- (16,3);
\draw [-, very thick] (18,1) -- (16,3);

\draw [-, very thick] (22,1) -- (24,3);
\draw [-, very thick] (24,1) -- (24,3);
\draw [-, very thick] (26,1) -- (24,3);

\draw [-, very thick] (30,1) -- (32,3);
\draw [-, very thick] (32,1) -- (32,3);
\draw [-, very thick] (34,1) -- (32,3);

\draw [-, very thick] (38,1) -- (40,3);
\draw [-, very thick] (40,1) -- (40,3);
\draw [-, very thick] (42,1) -- (40,3);

\draw [-, very thick] (0,3) -- (-3,5);
\draw [-, very thick] (0,3) -- (-1,5);
\draw [-, very thick] (0,3) -- (1,5);
\draw [-, very thick] (0,3) -- (3,5);

\draw [-, very thick] (8,3) -- (5,5);
\draw [-, very thick] (8,3) -- (7,5);
\draw [-, very thick] (8,3) -- (9,5);
\draw [-, very thick] (8,3) -- (11,5);

\draw [-, very thick] (16,3) -- (13,5);
\draw [-, very thick] (16,3) -- (15,5);
\draw [-, very thick] (16,3) -- (17,5);
\draw [-, very thick] (16,3) -- (19,5);

\draw [-, very thick] (24,3) -- (21,5);
\draw [-, very thick] (24,3) -- (23,5);
\draw [-, very thick] (24,3) -- (25,5);
\draw [-, very thick] (24,3) -- (27,5);

\draw [-, very thick] (32,3) -- (29,5);
\draw [-, very thick] (32,3) -- (31,5);
\draw [-, very thick] (32,3) -- (33,5);
\draw [-, very thick] (32,3) -- (35,5);

\draw [-, very thick] (40,3) -- (37,5);
\draw [-, very thick] (40,3) -- (39,5);
\draw [-, very thick] (40,3) -- (41,5);
\draw [-, very thick] (40,3) -- (43,5);

\draw[fill=white, radius = .2] (-1,-1) circle [radius = 0.4];
\draw[fill=white, radius = .2] (1,-1) circle [radius = 0.4];

\draw[fill=white, radius = .2] (7,-1) circle [radius = 0.4];
\draw[fill=white, radius = .2] (9,-1) circle [radius = 0.4];

\draw[fill=white, radius = .2] (15,-1) circle [radius = 0.4];
\draw[fill=white, radius = .2] (17,-1) circle [radius = 0.4];

\draw[fill=red, radius = .2] (23,-1) circle [radius = 0.4];
\draw[fill=red, radius = .2] (25,-1) circle [radius = 0.4];

\draw[fill=white, radius = .2] (31,-1) circle [radius = 0.4];
\draw[fill=white, radius = .2] (33,-1) circle [radius = 0.4];

\draw[fill=red, radius = .2] (39,-1) circle [radius = 0.4];
\draw[fill=red, radius = .2] (41,-1) circle [radius = 0.4];
\draw[fill=red, radius = .2] (-2,1) circle [radius = 0.4];
\draw[fill=red, radius = .2] (-0,1) circle [radius = 0.4];
\draw[fill=red, radius = .2] (2,1) circle [radius = 0.4];

\draw[fill=white, radius = .2] (6,1) circle [radius = 0.4];
\draw[fill=white, radius = .2] (8,1) circle [radius = 0.4];
\draw[fill=white, radius = .2] (10,1) circle [radius = 0.4];

\draw[fill=white, radius = .2] (14,1) circle [radius = 0.4];
\draw[fill=white, radius = .2] (16,1) circle [radius = 0.4];
\draw[fill=white, radius = .2] (18,1) circle [radius = 0.4];

\draw[fill=red, radius = .2] (22,1) circle [radius = 0.4];
\draw[fill=red, radius = .2] (24,1) circle [radius = 0.4];
\draw[fill=red, radius = .2] (26,1) circle [radius = 0.4];

\draw[fill=red, radius = .2] (30,1) circle [radius = 0.4];
\draw[fill=red, radius = .2] (32,1) circle [radius = 0.4];
\draw[fill=red, radius = .2] (34,1) circle [radius = 0.4];

\draw[fill=white, radius = .2] (38,1) circle [radius = 0.4];
\draw[fill=white, radius = .2] (40,1) circle [radius = 0.4];
\draw[fill=white, radius = .2] (42,1) circle [radius = 0.4];

\draw[fill=white, radius = .2] (0,3) circle [radius = 0.4];

\draw[fill=red, radius = .2] (8,3) circle [radius = 0.4];

\draw[fill=white, radius = .2] (16,3) circle [radius = 0.4];

\draw[fill=red, radius = .2] (24,3) circle [radius = 0.4];

\draw[fill=red, radius = .2] (32,3) circle [radius = 0.4];

\draw[fill=white, radius = .2] (40,3) circle [radius = 0.4];

\draw[fill=white, radius = .2] (-3,5) circle [radius = 0.4];
\draw[fill=white, radius = .2] (-1,5) circle [radius = 0.4];
\draw[fill=white, radius = .2] (1,5) circle [radius = 0.4];
\draw[fill=white, radius = .2] (3,5) circle [radius = 0.4];

\draw[fill=white, radius = .2] (5,5) circle [radius = 0.4];
\draw[fill=white, radius = .2] (7,5) circle [radius = 0.4];
\draw[fill=white, radius = .2] (9,5) circle [radius = 0.4];
\draw[fill=white, radius = .2] (11,5) circle [radius = 0.4];

\draw[fill=red, radius = .2] (13,5) circle [radius = 0.4];
\draw[fill=red, radius = .2] (15,5) circle [radius = 0.4];
\draw[fill=red, radius = .2] (17,5) circle [radius = 0.4];
\draw[fill=red, radius = .2] (19,5) circle [radius = 0.4];

\draw[fill=white, radius = .2] (21,5) circle [radius = 0.4];
\draw[fill=white, radius = .2] (23,5) circle [radius = 0.4];
\draw[fill=white, radius = .2] (25,5) circle [radius = 0.4];
\draw[fill=white, radius = .2] (27,5) circle [radius = 0.4];

\draw[fill=red, radius = .2] (29,5) circle [radius = 0.4];
\draw[fill=red, radius = .2] (31,5) circle [radius = 0.4];
\draw[fill=red, radius = .2] (33,5) circle [radius = 0.4];
\draw[fill=red, radius = .2] (35,5) circle [radius = 0.4];

\draw[fill=white, radius = .2] (37,5) circle [radius = 0.4];
\draw[fill=white, radius = .2] (39,5) circle [radius = 0.4];
\draw[fill=white, radius = .2] (41,5) circle [radius = 0.4];
\draw[fill=white, radius = .2] (43,5) circle [radius = 0.4];
\end{tikzpicture}
\end{center}
  \vspace{1mm}
\begin{minipage}{1\linewidth}\centering
\begin{tikzpicture}[scale=.95]
\node[anchor=center, scale=0.78] at (6.25,.25) {Row};
\draw[->] (12.5,.5)--(12.5,0)--(0,0)--(0,.5);
\end{tikzpicture}
\end{minipage}
\end{figure}

\item Lastly, the orbits of size $n$, which consist of interval-closed sets with a single partially full antichain and possibly other complete antichains.

\begin{figure}[H]
\centering
	\begin{minipage}{.15\linewidth}\centering
		\begin{tikzpicture}[scale = .35]
		\node[draw,circle](A) at (-1,-1) {};
		\node[draw,circle](B) at (1,-1) {};
		\node[draw,circle, fill=red](C) at (-2,1) {};
		\node[draw,circle, fill=red](D) at (0,1) {};
		\node[draw,circle](E) at (2,1) {};    
		\node[draw,circle, fill=red](F) at (0,3) {};    
		\node[draw,circle](G) at (-3,5) {};
		\node[draw,circle](H) at (-1,5) {};
		\node[draw,circle](I) at (1,5) {};
		\node[draw,circle](J) at (3,5) {};    
		\draw[very thick] (A) --(C) --(F) --(G);
		\draw[very thick] (A) --(D) --(F) --(H);
		\draw[very thick] (A) --(E) --(F) --(I);
		\draw[very thick] (B) --(C) --(F) --(J);
		\draw[very thick] (B) --(D);
		\draw[very thick] (B) --(E);    
	\end{tikzpicture}
	\end{minipage}\quad $\xlongrightarrow{\text{Row}}$\quad
	\begin{minipage}{.15\linewidth}\centering
		\begin{tikzpicture}[scale = .35]
		\node[draw,circle](A) at (-1,-1) {};
		\node[draw,circle](B) at (1,-1) {};
		\node[draw,circle](C) at (-2,1) {};
		\node[draw,circle](D) at (0,1) {};
		\node[draw,circle, fill=red](E) at (2,1) {};    
		\node[draw,circle, fill=red](F) at (0,3) {};    
		\node[draw,circle, fill=red](G) at (-3,5) {};
		\node[draw,circle, fill=red](H) at (-1,5) {};
		\node[draw,circle, fill=red](I) at (1,5) {};
		\node[draw,circle, fill=red](J) at (3,5) {};    
		\draw[very thick] (A) --(C) --(F) --(G);
		\draw[very thick] (A) --(D) --(F) --(H);
		\draw[very thick] (A) --(E) --(F) --(I);
		\draw[very thick] (B) --(C) --(F) --(J);
		\draw[very thick] (B) --(D);
		\draw[very thick] (B) --(E);    
		\end{tikzpicture}
	\end{minipage}\quad $\xlongrightarrow{\text{Row}}$\quad
	\begin{minipage}{.15\linewidth}\centering
		\begin{tikzpicture}[scale = .35]
		\node[draw,circle, fill=red](A) at (-1,-1) {};
		\node[draw,circle, fill=red](B) at (1,-1) {};
		\node[draw,circle, fill=red](C) at (-2,1) {};
		\node[draw,circle, fill=red](D) at (0,1) {};
		\node[draw,circle](E) at (2,1) {};    
		\node[draw,circle](F) at (0,3) {};    
		\node[draw,circle](G) at (-3,5) {};
		\node[draw,circle](H) at (-1,5) {};
		\node[draw,circle](I) at (1,5) {};
		\node[draw,circle](J) at (3,5) {};    
		\draw[very thick] (A) --(C) --(F) --(G);
		\draw[very thick] (A) --(D) --(F) --(H);
		\draw[very thick] (A) --(E) --(F) --(I);
		\draw[very thick] (B) --(C) --(F) --(J);
		\draw[very thick] (B) --(D);
		\draw[very thick] (B) --(E);    
		\end{tikzpicture}
	\end{minipage}\quad $\xlongrightarrow{\text{Row}}$\quad
	\begin{minipage}{.15\linewidth}\centering
		\begin{tikzpicture}[scale = .35]
		\node[draw,circle](A) at (-1,-1) {};
		\node[draw,circle](B) at (1,-1) {};
		\node[draw,circle](C) at (-2,1) {};
		\node[draw,circle](D) at (0,1) {};
		\node[draw,circle, fill=red](E) at (2,1) {};    
		\node[draw,circle](F) at (0,3) {};    
		\node[draw,circle](G) at (-3,5) {};
		\node[draw,circle](H) at (-1,5) {};
		\node[draw,circle](I) at (1,5) {};
		\node[draw,circle](J) at (3,5) {};    
		\draw[very thick] (A) --(C) --(F) --(G);
		\draw[very thick] (A) --(D) --(F) --(H);
		\draw[very thick] (A) --(E) --(F) --(I);
		\draw[very thick] (B) --(C) --(F) --(J);
		\draw[very thick] (B) --(D);
		\draw[very thick] (B) --(E);    
		\end{tikzpicture}
	\end{minipage}\\
  \vspace{1mm}
\begin{minipage}{1\linewidth}\centering
\begin{tikzpicture}[scale=.95]
\node[anchor=center, scale=0.78] at (6.25,.25) {Row};
\draw[->] (12.5,.5)--(12.5,0)--(0,0)--(0,.5);
\end{tikzpicture}
\end{minipage}
\end{figure}
 \vspace{2.5mm}
\end{enumerate}

\end{ex}


\subsection{Stacks of altered diamonds}

In this subsection, we focus on examples of specific ordinal sum posets. Specifically, the family we call \emph{stacked altered diamond posets}. These posets are generated using the construction of \textit{series-parallel posets} from \cite{Stanley2011}, which are posets created from smaller posets through disjoint unions and ordinal sums. A diamond poset is the ordinal sum $\mathbf{1} \oplus \mathbf{2} \oplus \mathbf{1}$, and a more general altered diamond is the ordinal sum $\mathbf{1} \oplus \mathbf{m} \oplus \mathbf{1}$. 

Let $D(n,m) = \mathbf{1}\oplus\mathbf{m}\oplus \mathbf{1}\oplus\cdots\oplus\mathbf{m}\oplus\mathbf{1}$, where $n$ is the number of summands. We can view this as stacking $\frac{n-1}{2}$ copies of an altered diamond $\mathbf{1} \oplus \mathbf{m} \oplus \mathbf{1}$ where $m\geq 2$. Alternatively, we can write $D(n, m) = \mathbf{a}_1\oplus\mathbf{a}_2\oplus\cdots\oplus\mathbf{a}_n$ with $a_i = 1$ for $i$ odd, and $a_i = m$ for $i$ even.

For example, we have $D(5,2)$ below.

\begin{figure}[H]
\begin{center}
\begin{tikzpicture}[scale = .45]

\draw [-, ultra thick] (0,0) -- (-1,2);
\draw [-, ultra thick] (0,0) -- (1,2);

\draw [-, ultra thick] (-1,2) -- (0,4);
\draw [-, ultra thick] (1,2) -- (0,4);

\draw [-, ultra thick] (0,4) -- (-1,6);
\draw [-, ultra thick] (0,4) -- (1,6);

\draw [-, ultra thick] (0,8) -- (-1,6);
\draw [-, ultra thick] (0,8) -- (1,6);

\draw[fill=white, radius = .2] (0,0) circle [radius = 0.4];

\draw[fill=white, radius = .2] (-1,2) circle [radius = 0.4];
\draw[fill=white, radius = .2] (1,2) circle [radius = 0.4];

\draw[fill=white, radius = .2] (0,4) circle [radius = 0.4];

\draw[fill=white, radius = .2] (-1,6) circle [radius = 0.4];
\draw[fill=white, radius = .2] (1,6) circle [radius = 0.4];

\draw[fill=white, radius = .2] (0,8) circle [radius = 0.4];
\end{tikzpicture}
\end{center}
\end{figure}

For this family of ordinal sums, we have the following corollaries of Theorems \ref{thm:gen_ord_sum_ics_card} and \ref{thm:gen_ord_sum_row} respectively.

\begin{cor}
Let $n\geq 3$ be odd, and $m\geq 2$. The cardinality of $\IC(D(n,m))$ is 
\[
\frac{8+(n+1)(n+3)}{8}+\frac{(n-1)(n+3)}{4}(2^m-1) +\frac{(n-3)(n-1)}{8}(2^m-1)^2
\]

This consists of the following:
\begin{itemize}
\item the empty set $\emptyset$, 
\item non-empty subsets of size $k$ within a single antichain $\mathbf{a}_i,$
\item and subsets consisting of some non-empty subset of size $k$ of $\mathbf{a}_i$, some non-empty subset of size $l$ of $\mathbf{a}_j,$ with $i<j$, and all elements of $\mathbf{a}_r$ for all $i<r<j$. 
\end{itemize}
\end{cor}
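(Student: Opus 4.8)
The plan is to obtain this as a direct specialization of Theorem~\ref{thm:gen_ord_sum_ics_card}. For $D(n,m) = \mathbf{a_1}\oplus\cdots\oplus\mathbf{a_n}$ we have $a_i = 1$ when $i$ is odd and $a_i = m$ when $i$ is even, so $2^{a_i}-1 = 1$ for odd $i$ and $2^{a_i}-1 = 2^m-1$ for even $i$. Since $n$ is odd, there are $\frac{n+1}{2}$ odd-indexed summands and $\frac{n-1}{2}$ even-indexed summands, and I would record these two counts at the outset, since every sum below splits according to the parity of the indices. The single sum $\sum_{1\le i\le n}(2^{a_i}-1)$ then evaluates to $\frac{n+1}{2} + \frac{n-1}{2}(2^m-1)$, the odd positions each contributing $1$ and the even positions each contributing $2^m-1$.

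The main computation is the double sum $\sum_{1\le i<j\le n}(2^{a_i}-1)(2^{a_j}-1)$, which I would partition into three families of index pairs: both indices odd, both even, and mixed parity. A both-odd pair contributes $1$, a both-even pair contributes $(2^m-1)^2$, and a mixed pair contributes $2^m-1$, and the numbers of such pairs are $\binom{(n+1)/2}{2}$, $\binom{(n-1)/2}{2}$, and $\frac{n+1}{2}\cdot\frac{n-1}{2}$ respectively. Summing the empty set, the single sum, and these three contributions gives the cardinality; the remaining work is to simplify the binomial coefficients using $\binom{(n+1)/2}{2} = \frac{(n+1)(n-1)}{8}$ and $\binom{(n-1)/2}{2} = \frac{(n-1)(n-3)}{8}$, and then to collect the constant term, the coefficient of $(2^m-1)$, and the coefficient of $(2^m-1)^2$. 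Matching these against the claimed expression is routine; for example the constant becomes $1 + \frac{n+1}{2} + \frac{(n+1)(n-1)}{8} = \frac{8+(n+1)(n+3)}{8}$, and the coefficient of $2^m-1$ becomes $\frac{n-1}{2} + \frac{(n+1)(n-1)}{4} = \frac{(n-1)(n+3)}{4}$.

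The bulleted classification of the three types of interval-closed sets transfers verbatim from Theorem~\ref{thm:gen_ord_sum_ics_card}, since $D(n,m)$ is itself an ordinal sum of antichains. There is no conceptual obstacle: the entire statement is a specialization, and the only step demanding care is the parity bookkeeping in splitting the double sum --- getting the three pair-counts and the two binomial identities exactly right is precisely what makes the stated closed form emerge.
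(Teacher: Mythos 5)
Your proposal is correct and follows essentially the same route as the paper's proof: both specialize Theorem~\ref{thm:gen_ord_sum_ics_card} to $a_i=1$ ($i$ odd), $a_i=m$ ($i$ even), split the single and double sums by parity of the indices into the same three pair-families with counts $\binom{(n+1)/2}{2}$, $\binom{(n-1)/2}{2}$, and $\frac{n+1}{2}\cdot\frac{n-1}{2}$, and simplify to the stated closed form. The parity bookkeeping and binomial simplifications you flag as the delicate steps are exactly the ones carried out in the paper, and your verification of them is accurate.
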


\begin{proof}
Consider $D(n,m)= \mathbf{1}\oplus\mathbf{m}\oplus \mathbf{1}\oplus\cdots\oplus\mathbf{m}\oplus\mathbf{1}$. From Theorem \ref{thm:gen_ord_sum_ics_card}, we have a formula for the cardinality of $\IC(D(n,m))$ split into cases based on the number of antichains contributing to the interval-closed set: no antichains, a single antichains, or more than one antichain.

\begin{itemize}
\item No antichains: There is one interval-closed set of this type, the empty set.
\item A single antichain: There are $\frac{n+1}{2}$ interval-closed sets consisting of one full $\mathbf{a}_i$ with $i$ odd, and $\frac{n-1}{2}(2^m - 1)$ interval-closed sets consisting of a non-empty subset of $\mathbf{a}_i$ with $i$ even.
\item More than one antichain: Let $\mathbf{a}_i$ be the antichain of smallest rank contributing elements and $\mathbf{a}_j$ the antichain of largest rank. If both $i$ and $j$ are even, then we have $\binom{\frac{n-1}{2}}{2}(2^m-1)^2$ possibilities for interval-closed sets of this type. If both are odd, we have $\binom{\frac{n+1}{2}}{2}$ possibilities, and if they differ in parity we have $\frac{n-1}{2}\frac{n+1}{2}(2^m-1)$ possibilities.
\end{itemize}
All together, the cardinality of $\IC(D(n,m))$ is given by
\[
\displaystyle 1 + \frac{n+1}{2} + \frac{n-1}{2}(2^m - 1) +\binom{\frac{n-1}{2}}{2}(2^m-1)^2 + \binom{\frac{n+1}{2}}{2} + \frac{n-1}{2}\frac{n+1}{2}(2^m-1).
\]
Simplifying this calculation, we have
\[
\left[1+\frac{n+1}{2}+  \binom{\frac{n+1}{2}}{2} \right ]+ \left [\frac{n-1}{2}(2^m - 1)+ \frac{n-1}{2}\frac{n+1}{2}(2^m-1) \right]+\binom{\frac{n-1}{2}}{2}(2^m-1)^2
\]
\[
= \frac{8+(n+1)(n+3)}{8}+\frac{(n-1)(n+3)}{4}(2^m-1) +\frac{(n-3)(n-1)}{8}(2^m-1)^2
\]
as desired.
\end{proof}

\begin{cor}
 Let $n \geq 3$ be odd, and $m\geq 2$. 
The set $\IC(D(n,m))$ has rowmotion order  $2n(n+2)$.
Moreover, a complete description of its rowmotion orbit structure is below:
\begin{itemize}
\item $1 + \frac{(n-3)(n-1)}{8}(2^{m-1}-1)(2^{m}-2)$ orbits of size $2$, corresponding to the orbit of $\emptyset$, and orbits with representatives of the form $[\binom{\mathbf{a}_i}{k},\binom{\mathbf{a}_j}{l}]$ with $i \neq j$ even,
\item $\frac{n-1}{2}$ orbits of size $n+2$,  with representatives of the form $[\mathbf{a}_1, \mathbf{a}_j]$ with $j < \frac{n }{2}$,
\item  $\frac{n-1}{2}(2^{m-1}-1)$ orbits of size $2n$, with representatives of the form $\binom{\mathbf{a}_i}{k}$ with $i$ even.
\end{itemize}
\end{cor}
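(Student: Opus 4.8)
The plan is to derive this corollary directly from Theorem~\ref{thm:gen_ord_sum_row} by specializing the antichain sizes, so no new dynamical argument is needed. First I would record that $D(n,m)=\mathbf{a_1}\oplus\mathbf{a_2}\oplus\cdots\oplus\mathbf{a_n}$ with $a_i=1$ when $i$ is odd and $a_i=m$ when $i$ is even; since $n\geq 3$, Theorem~\ref{thm:gen_ord_sum_row} applies verbatim. Because $n$ is odd, the stated order of rowmotion is $\lcm(2,n+2,2n)=2n(n+2)$ (here $n+2$ is odd and coprime to $2n$, so the lcm collapses to $2n(n+2)$), and the four bullet points of the general orbit structure specialize to: $1+\tfrac12\sum_{1\leq i<j\leq n}(2^{a_i}-2)(2^{a_j}-2)$ orbits of size $2$; $\lfloor\frac{n-1}{2}\rfloor=\frac{n-1}{2}$ orbits of size $n+2$; no orbit of size $\frac{n+2}{2}$ (since $n$ is odd); and $\sum_{1\leq i\leq n}(2^{a_i-1}-1)$ orbits of size $2n$.

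The single observation driving every simplification is that both $2^{a_i}-2$ and $2^{a_i-1}-1$ vanish when $a_i=1$. Consequently, in each of these sums only the even indices $i\in\{2,4,\dots,n-1\}$ contribute, and there are exactly $\frac{n-1}{2}$ of them. For the size-$2n$ orbits this immediately yields $\sum_{i\text{ even}}(2^{m-1}-1)=\frac{n-1}{2}(2^{m-1}-1)$, which is the third bullet of the corollary. For the size-$(n+2)$ orbits, $\lfloor\frac{n-1}{2}\rfloor=\frac{n-1}{2}$ since $n$ is odd, matching the second bullet, and I would note that the representatives $[\mathbf{a_1},\mathbf{a_j}]$ of Theorem~\ref{thm:gen_ord_sum_row} are exactly those listed here.

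For the size-$2$ orbits, only pairs $i<j$ with both indices even survive in the sum, and there are $\binom{(n-1)/2}{2}=\frac{(n-1)(n-3)}{8}$ such pairs, each contributing a factor $(2^m-2)^2$. Hence the count is $1+\tfrac12\cdot\frac{(n-1)(n-3)}{8}(2^m-2)^2$. I would finish by applying the identity $2^m-2=2(2^{m-1}-1)$, so that $(2^m-2)^2=2(2^{m-1}-1)(2^m-2)$, which converts $\frac{(n-1)(n-3)}{16}(2^m-2)^2$ into $\frac{(n-3)(n-1)}{8}(2^{m-1}-1)(2^m-2)$, exactly the advertised form. The representatives $[\binom{\mathbf{a_i}}{k},\binom{\mathbf{a_j}}{l}]$ with $i\neq j$ even are precisely the nontrivial size-$2$ orbits surviving the specialization, together with $\{\emptyset,P\}$.

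The only real work is this parity bookkeeping and the final algebraic rewriting; everything else is inherited from Theorem~\ref{thm:gen_ord_sum_row}. The main (and only mildly delicate) obstacle is keeping the count of even indices correct for odd $n$ and confirming the absence of a size-$\frac{n+2}{2}$ orbit, together with the reconciliation of the squared factor $(2^m-2)^2$ with the product form $(2^{m-1}-1)(2^m-2)$ appearing in the statement.
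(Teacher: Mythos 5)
Your proposal is correct and follows essentially the same route as the paper: both specialize Theorem~\ref{thm:gen_ord_sum_row} using the fact that $2^{a_i}-2$ and $2^{a_i-1}-1$ vanish when $a_i=1$, count the $\frac{n-1}{2}$ even indices, and rewrite the resulting expressions algebraically (the paper absorbs the factor $\frac12$ by writing $2^{a_i-1}-1$ inside the sum, whereas you apply $2^m-2=2(2^{m-1}-1)$ at the end, but this is the same computation). Your parity bookkeeping, the absence of a size-$\frac{n+2}{2}$ orbit for odd $n$, and the final simplification to $1+\frac{(n-3)(n-1)}{8}(2^{m-1}-1)(2^m-2)$ all check out.
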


\begin{proof} Consider $D(n,m)= \mathbf{1}\oplus\mathbf{m}\oplus \mathbf{1}\oplus\cdots\oplus\mathbf{m}\oplus\mathbf{1}$, with $n\geq 3$ odd. We can use the formulas provided in Theorem \ref{thm:gen_ord_sum_row} to find the complete descriptions of the rowmotion orbit structure. As before, there are only two sizes for the antichains $\mathbf{a}_i$: $a_i = 1$ for $i$ odd, and $a_ i = m$ for $i$ even. This fact allows us to simplify the calculations significantly.

We have $$1+\displaystyle\sum_{1\leq i<j\leq n}(2^{a_i-1}-1)(2^{a_j}-2) = 1+\displaystyle\sum_{\substack{1\leq i<j\leq n\\ i, j \ \textrm{even}}} (2^{m-1}-1)(2^{m}-2).$$ Thus, there are $$1 + \displaystyle\binom{\frac{n-1}{2}}{2}(2^{m-1}-1)(2^{m}-2)= 1 + \frac{(n-3)(n-1)}{8}(2^{m-1}-1)(2^{m}-2)$$ orbits of size $2$.

Similarly, $$\displaystyle \sum_{1\leq i\leq n}(2^{a_i-1}-1) = \displaystyle \sum_{\substack{1\leq i\leq n\\ i \ \textrm{even}}}(2^{m-1}-1),$$ so there are $\frac{n-1}{2}(2^{m-1}-1)$ orbits of size $2n$

As $n$ is always odd, the only orbits with representatives of the form $[\mathbf{a}_i,\mathbf{a}_j]$ are of size $n+2$ and there remain $\frac{n-1}{2}$ of them as these orbits depend only on the rank of the poset and not the sizes of the antichains.

Finally, the order of rowmotion is inherited directly from Theorem \ref{thm:gen_ord_sum_row}.
\end{proof}

\begin{ex} Consider the three different types of orbits for $D(7,3)$.

\begin{enumerate} 

\item The orbits of size two, which include the orbit of $\emptyset$ and non-trivial orbits consisting of interval-closed sets with both a minimal and (different) maximal rank with partially full antichains.

\begin{figure}[H]
\begin{center}
\begin{tikzpicture}[scale = .30]
 \draw [->] (2,8) -- (4,8);
 \node[anchor=center] at (3,8.75) {\footnotesize Row};
 \draw [->] (6,-1) -- (6,-2) -- (0,-2) -- (0,-1);
  \node[anchor=center] at (3,-1) {\footnotesize Row};

\draw [-, ultra thick] (0,0) -- (-2,2);
\draw [-, ultra thick] (0,0) -- (0,2);
\draw [-, ultra thick] (0,0) -- (2,2);

\draw [-, ultra thick] (6,0) -- (4,2);
\draw [-, ultra thick] (6,0) -- (6,2);
\draw [-, ultra thick] (6,0) -- (8,2);

\draw [-, ultra thick] (0,4) -- (-2,2);
\draw [-, ultra thick] (0,4) -- (0,2);
\draw [-, ultra thick] (0,4) -- (2,2);

\draw [-, ultra thick] (6,4) -- (4,2);
\draw [-, ultra thick] (6,4) -- (6,2);
\draw [-, ultra thick] (6,4) -- (8,2);

\draw [-, ultra thick] (0,4) -- (-2,6);
\draw [-, ultra thick] (0,4) -- (0,6);
\draw [-, ultra thick] (0,4) -- (2,6);

\draw [-, ultra thick] (6,4) -- (4,6);
\draw [-, ultra thick] (6,4) -- (6,6);
\draw [-, ultra thick] (6,4) -- (8,6);

\draw [-, ultra thick] (0,8) -- (-2,6);
\draw [-, ultra thick] (0,8) -- (0,6);
\draw [-, ultra thick] (0,8) -- (2,6);

\draw [-, ultra thick] (6,8) -- (4,6);
\draw [-, ultra thick] (6,8) -- (6,6);
\draw [-, ultra thick] (6,8) -- (8,6);

\draw [-, ultra thick] (0,8) -- (-2,10);
\draw [-, ultra thick] (0,8) -- (0,10);
\draw [-, ultra thick] (0,8) -- (2,10);

\draw [-, ultra thick] (6,8) -- (4,10);
\draw [-, ultra thick] (6,8) -- (6,10);
\draw [-, ultra thick] (6,8) -- (8,10);

\draw [-, ultra thick] (0,12) -- (-2,10);
\draw [-, ultra thick] (0,12) -- (0,10);
\draw [-, ultra thick] (0,12) -- (2,10);

\draw [-, ultra thick] (6,12) -- (4,10);
\draw [-, ultra thick] (6,12) -- (6,10);
\draw [-, ultra thick] (6,12) -- (8,10);

\draw[fill=white, radius = .2] (0,0) circle [radius = 0.4];

\draw[fill=white, radius = .2] (6,0) circle [radius = 0.4];

\draw[fill=red, radius = .2] (-2,2) circle [radius = 0.4];
\draw[fill=white, radius = .2] (0,2) circle [radius = 0.4];
\draw[fill=white, radius = .2] (2,2) circle [radius = 0.4];

\draw[fill=white, radius = .2] (4,2) circle [radius = 0.4];
\draw[fill=red, radius = .2] (6,2) circle [radius = 0.4];
\draw[fill=red, radius = .2] (8,2) circle [radius = 0.4];

\draw[fill=red, radius = .2] (0,4) circle [radius = 0.4];

\draw[fill=red, radius = .2] (6,4) circle [radius = 0.4];

\draw[fill=red, radius = .2] (-2,6) circle [radius = 0.4];
\draw[fill=red, radius = .2] (0,6) circle [radius = 0.4];
\draw[fill=red, radius = .2] (2,6) circle [radius = 0.4];

\draw[fill=red, radius = .2] (4,6) circle [radius = 0.4];
\draw[fill=red, radius = .2] (6,6) circle [radius = 0.4];
\draw[fill=red, radius = .2] (8,6) circle [radius = 0.4];

\draw[fill=red, radius = .2] (0,8) circle [radius = 0.4];

\draw[fill=red, radius = .2] (6,8) circle [radius = 0.4];

\draw[fill=white, radius = .2] (-2,10) circle [radius = 0.4];
\draw[fill=red, radius = .2] (0,10) circle [radius = 0.4];
\draw[fill=white, radius = .2] (2,10) circle [radius = 0.4];

\draw[fill=red, radius = .2] (4,10) circle [radius = 0.4];
\draw[fill=white, radius = .2] (6,10) circle [radius = 0.4];
\draw[fill=red, radius = .2] (8,10) circle [radius = 0.4];

\draw[fill=white, radius = .2] (0,12) circle [radius = 0.4];

\draw[fill=white, radius = .2] (6,12) circle [radius = 0.4];
\end{tikzpicture}
\end{center}
\end{figure}

\item The orbits of size $n+2$, which consist of interval-closed sets that are made up of consecutive full antichains, excluding the full poset.

\begin{figure}[H]
\begin{center}
\begin{tikzpicture}[scale = .25]
\draw [->] (2,8) -- (4,8);
 \node[anchor=center] at (3,8.75) {\footnotesize Row};
 \draw [->] (8,8) -- (10,8);
  \node[anchor=center] at (9,8.75) {\footnotesize Row};
 \draw [->] (14,8) -- (16,8);
  \node[anchor=center] at (15,8.75) {\footnotesize Row};
 \draw [->] (20,8) -- (22,8);
  \node[anchor=center] at (21,8.75) {\footnotesize Row};
 \draw [->] (26,8) -- (28,8);
  \node[anchor=center] at (27,8.75) {\footnotesize Row};
 \draw [->] (32,8) -- (34,8);
  \node[anchor=center] at (33,8.75) {\footnotesize Row};
 \draw [->] (38,8) -- (40,8);
  \node[anchor=center] at (39,8.75) {\footnotesize Row};
 \draw [->] (44,8) -- (46,8);
  \node[anchor=center] at (45,8.75) {\footnotesize Row};
 \draw [->] (48,-1) -- (48,-2) -- (0,-2) -- (0,-1);
   \node[anchor=center] at (24,-1.25) {\footnotesize Row};

\draw [-, ultra thick] (0,0) -- (-2,2);
\draw [-, ultra thick] (0,0) -- (0,2);
\draw [-, ultra thick] (0,0) -- (2,2);

\draw [-, ultra thick] (6,0) -- (4,2);
\draw [-, ultra thick] (6,0) -- (6,2);
\draw [-, ultra thick] (6,0) -- (8,2);

\draw [-, ultra thick] (12,0) -- (10,2);
\draw [-, ultra thick] (12,0) -- (12,2);
\draw [-, ultra thick] (12,0) -- (14,2);

\draw [-, ultra thick] (18,0) -- (16,2);
\draw [-, ultra thick] (18,0) -- (18,2);
\draw [-, ultra thick] (18,0) -- (20,2);

\draw [-, ultra thick] (24,0) -- (22,2);
\draw [-, ultra thick] (24,0) -- (24,2);
\draw [-, ultra thick] (24,0) -- (26,2);

\draw [-, ultra thick] (30,0) -- (28,2);
\draw [-, ultra thick] (30,0) -- (30,2);
\draw [-, ultra thick] (30,0) -- (32,2);

\draw [-, ultra thick] (36,0) -- (34,2);
\draw [-, ultra thick] (36,0) -- (36,2);
\draw [-, ultra thick] (36,0) -- (38,2);

\draw [-, ultra thick] (42,0) -- (40,2);
\draw [-, ultra thick] (42,0) -- (42,2);
\draw [-, ultra thick] (42,0) -- (44,2);

\draw [-, ultra thick] (48,0) -- (46,2);
\draw [-, ultra thick] (48,0) -- (48,2);
\draw [-, ultra thick] (48,0) -- (50,2);

\draw [-, ultra thick] (0,4) -- (-2,2);
\draw [-, ultra thick] (0,4) -- (0,2);
\draw [-, ultra thick] (0,4) -- (2,2);

\draw [-, ultra thick] (6,4) -- (4,2);
\draw [-, ultra thick] (6,4) -- (6,2);
\draw [-, ultra thick] (6,4) -- (8,2);

\draw [-, ultra thick] (12,4) -- (10,2);
\draw [-, ultra thick] (12,4) -- (12,2);
\draw [-, ultra thick] (12,4) -- (14,2);

\draw [-, ultra thick] (18,4) -- (16,2);
\draw [-, ultra thick] (18,4) -- (18,2);
\draw [-, ultra thick] (18,4) -- (20,2);

\draw [-, ultra thick] (24,4) -- (22,2);
\draw [-, ultra thick] (24,4) -- (24,2);
\draw [-, ultra thick] (24,4) -- (26,2);

\draw [-, ultra thick] (30,4) -- (28,2);
\draw [-, ultra thick] (30,4) -- (30,2);
\draw [-, ultra thick] (30,4) -- (32,2);

\draw [-, ultra thick] (36,4) -- (34,2);
\draw [-, ultra thick] (36,4) -- (36,2);
\draw [-, ultra thick] (36,4) -- (38,2);

\draw [-, ultra thick] (42,4) -- (40,2);
\draw [-, ultra thick] (42,4) -- (42,2);
\draw [-, ultra thick] (42,4) -- (44,2);

\draw [-, ultra thick] (48,4) -- (46,2);
\draw [-, ultra thick] (48,4) -- (48,2);
\draw [-, ultra thick] (48,4) -- (50,2);

\draw [-, ultra thick] (0,4) -- (-2,6);
\draw [-, ultra thick] (0,4) -- (0,6);
\draw [-, ultra thick] (0,4) -- (2,6);

\draw [-, ultra thick] (6,4) -- (4,6);
\draw [-, ultra thick] (6,4) -- (6,6);
\draw [-, ultra thick] (6,4) -- (8,6);

\draw [-, ultra thick] (12,4) -- (10,6);
\draw [-, ultra thick] (12,4) -- (12,6);
\draw [-, ultra thick] (12,4) -- (14,6);

\draw [-, ultra thick] (18,4) -- (16,6);
\draw [-, ultra thick] (18,4) -- (18,6);
\draw [-, ultra thick] (18,4) -- (20,6);

\draw [-, ultra thick] (24,4) -- (22,6);
\draw [-, ultra thick] (24,4) -- (24,6);
\draw [-, ultra thick] (24,4) -- (26,6);

\draw [-, ultra thick] (30,4) -- (28,6);
\draw [-, ultra thick] (30,4) -- (30,6);
\draw [-, ultra thick] (30,4) -- (32,6);

\draw [-, ultra thick] (36,4) -- (34,6);
\draw [-, ultra thick] (36,4) -- (36,6);
\draw [-, ultra thick] (36,4) -- (38,6);

\draw [-, ultra thick] (42,4) -- (40,6);
\draw [-, ultra thick] (42,4) -- (42,6);
\draw [-, ultra thick] (42,4) -- (44,6);

\draw [-, ultra thick] (48,4) -- (46,6);
\draw [-, ultra thick] (48,4) -- (48,6);
\draw [-, ultra thick] (48,4) -- (50,6);

\draw [-, ultra thick] (0,8) -- (-2,6);
\draw [-, ultra thick] (0,8) -- (0,6);
\draw [-, ultra thick] (0,8) -- (2,6);

\draw [-, ultra thick] (6,8) -- (4,6);
\draw [-, ultra thick] (6,8) -- (6,6);
\draw [-, ultra thick] (6,8) -- (8,6);

\draw [-, ultra thick] (12,8) -- (10,6);
\draw [-, ultra thick] (12,8) -- (12,6);
\draw [-, ultra thick] (12,8) -- (14,6);

\draw [-, ultra thick] (18,8) -- (16,6);
\draw [-, ultra thick] (18,8) -- (18,6);
\draw [-, ultra thick] (18,8) -- (20,6);

\draw [-, ultra thick] (24,8) -- (22,6);
\draw [-, ultra thick] (24,8) -- (24,6);
\draw [-, ultra thick] (24,8) -- (26,6);

\draw [-, ultra thick] (30,8) -- (28,6);
\draw [-, ultra thick] (30,8) -- (30,6);
\draw [-, ultra thick] (30,8) -- (32,6);

\draw [-, ultra thick] (36,8) -- (34,6);
\draw [-, ultra thick] (36,8) -- (36,6);
\draw [-, ultra thick] (36,8) -- (38,6);

\draw [-, ultra thick] (42,8) -- (40,6);
\draw [-, ultra thick] (42,8) -- (42,6);
\draw [-, ultra thick] (42,8) -- (44,6);

\draw [-, ultra thick] (48,8) -- (46,6);
\draw [-, ultra thick] (48,8) -- (48,6);
\draw [-, ultra thick] (48,8) -- (50,6);

\draw [-, ultra thick] (0,8) -- (-2,10);
\draw [-, ultra thick] (0,8) -- (0,10);
\draw [-, ultra thick] (0,8) -- (2,10);

\draw [-, ultra thick] (6,8) -- (4,10);
\draw [-, ultra thick] (6,8) -- (6,10);
\draw [-, ultra thick] (6,8) -- (8,10);

\draw [-, ultra thick] (12,8) -- (10,10);
\draw [-, ultra thick] (12,8) -- (12,10);
\draw [-, ultra thick] (12,8) -- (14,10);

\draw [-, ultra thick] (18,8) -- (16,10);
\draw [-, ultra thick] (18,8) -- (18,10);
\draw [-, ultra thick] (18,8) -- (20,10);

\draw [-, ultra thick] (24,8) -- (22,10);
\draw [-, ultra thick] (24,8) -- (24,10);
\draw [-, ultra thick] (24,8) -- (26,10);

\draw [-, ultra thick] (30,8) -- (28,10);
\draw [-, ultra thick] (30,8) -- (30,10);
\draw [-, ultra thick] (30,8) -- (32,10);

\draw [-, ultra thick] (36,8) -- (34,10);
\draw [-, ultra thick] (36,8) -- (36,10);
\draw [-, ultra thick] (36,8) -- (38,10);

\draw [-, ultra thick] (42,8) -- (40,10);
\draw [-, ultra thick] (42,8) -- (42,10);
\draw [-, ultra thick] (42,8) -- (44,10);

\draw [-, ultra thick] (48,8) -- (46,10);
\draw [-, ultra thick] (48,8) -- (48,10);
\draw [-, ultra thick] (48,8) -- (50,10);

\draw [-, ultra thick] (0,12) -- (-2,10);
\draw [-, ultra thick] (0,12) -- (0,10);
\draw [-, ultra thick] (0,12) -- (2,10);

\draw [-, ultra thick] (6,12) -- (4,10);
\draw [-, ultra thick] (6,12) -- (6,10);
\draw [-, ultra thick] (6,12) -- (8,10);

\draw [-, ultra thick] (12,12) -- (10,10);
\draw [-, ultra thick] (12,12) -- (12,10);
\draw [-, ultra thick] (12,12) -- (14,10);

\draw [-, ultra thick] (18,12) -- (16,10);
\draw [-, ultra thick] (18,12) -- (18,10);
\draw [-, ultra thick] (18,12) -- (20,10);

\draw [-, ultra thick] (24,12) -- (22,10);
\draw [-, ultra thick] (24,12) -- (24,10);
\draw [-, ultra thick] (24,12) -- (26,10);

\draw [-, ultra thick] (30,12) -- (28,10);
\draw [-, ultra thick] (30,12) -- (30,10);
\draw [-, ultra thick] (30,12) -- (32,10);

\draw [-, ultra thick] (36,12) -- (34,10);
\draw [-, ultra thick] (36,12) -- (36,10);
\draw [-, ultra thick] (36,12) -- (38,10);

\draw [-, ultra thick] (42,12) -- (40,10);
\draw [-, ultra thick] (42,12) -- (42,10);
\draw [-, ultra thick] (42,12) -- (44,10);

\draw [-, ultra thick] (48,12) -- (46,10);
\draw [-, ultra thick] (48,12) -- (48,10);
\draw [-, ultra thick] (48,12) -- (50,10);

\draw[fill=red, radius = .2] (0,0) circle [radius = 0.4];

\draw[fill=white, radius = .2] (6,0) circle [radius = 0.4];

\draw[fill=white, radius = .2] (12,0) circle [radius = 0.4];

\draw[fill=white, radius = .2] (18,0) circle [radius = 0.4];

\draw[fill=white, radius = .2] (24,0) circle [radius = 0.4];

\draw[fill=red, radius = .2] (30,0) circle [radius = 0.4];

\draw[fill=white, radius = .2] (36,0) circle [radius = 0.4];

\draw[fill=white, radius = .2] (42,0) circle [radius = 0.4];

\draw[fill=white, radius = .2] (48,0) circle [radius = 0.4];

\draw[fill=red, radius = .2] (-2,2) circle [radius = 0.4];
\draw[fill=red, radius = .2] (0,2) circle [radius = 0.4];
\draw[fill=red, radius = .2] (2,2) circle [radius = 0.4];

\draw[fill=red, radius = .2] (4,2) circle [radius = 0.4];
\draw[fill=red, radius = .2] (6,2) circle [radius = 0.4];
\draw[fill=red, radius = .2] (8,2) circle [radius = 0.4];

\draw[fill=white, radius = .2] (10,2) circle [radius = 0.4];
\draw[fill=white, radius = .2] (12,2) circle [radius = 0.4];
\draw[fill=white, radius = .2] (14,2) circle [radius = 0.4];

\draw[fill=white, radius = .2] (16,2) circle [radius = 0.4];
\draw[fill=white, radius = .2] (18,2) circle [radius = 0.4];
\draw[fill=white, radius = .2] (20,2) circle [radius = 0.4];

\draw[fill=white, radius = .2] (22,2) circle [radius = 0.4];
\draw[fill=white, radius = .2] (24,2) circle [radius = 0.4];
\draw[fill=white, radius = .2] (26,2) circle [radius = 0.4];

\draw[fill=red, radius = .2] (28,2) circle [radius = 0.4];
\draw[fill=red, radius = .2] (30,2) circle [radius = 0.4];
\draw[fill=red, radius = .2] (32,2) circle [radius = 0.4];

\draw[fill=red, radius = .2] (34,2) circle [radius = 0.4];
\draw[fill=red, radius = .2] (36,2) circle [radius = 0.4];
\draw[fill=red, radius = .2] (38,2) circle [radius = 0.4];

\draw[fill=white, radius = .2] (40,2) circle [radius = 0.4];
\draw[fill=white, radius = .2] (42,2) circle [radius = 0.4];
\draw[fill=white, radius = .2] (44,2) circle [radius = 0.4];

\draw[fill=white, radius = .2] (46,2) circle [radius = 0.4];
\draw[fill=white, radius = .2] (48,2) circle [radius = 0.4];
\draw[fill=white, radius = .2] (50,2) circle [radius = 0.4];

\draw[fill=red, radius = .2] (0,4) circle [radius = 0.4];

\draw[fill=red, radius = .2] (6,4) circle [radius = 0.4];

\draw[fill=red, radius = .2] (12,4) circle [radius = 0.4];

\draw[fill=white, radius = .2] (18,4) circle [radius = 0.4];

\draw[fill=white, radius = .2] (24,4) circle [radius = 0.4];

\draw[fill=red, radius = .2] (30,4) circle [radius = 0.4];

\draw[fill=red, radius = .2] (36,4) circle [radius = 0.4];

\draw[fill=red, radius = .2] (42,4) circle [radius = 0.4];

\draw[fill=white, radius = .2] (48,4) circle [radius = 0.4];

\draw[fill=white, radius = .2] (-2,6) circle [radius = 0.4];
\draw[fill=white, radius = .2] (0,6) circle [radius = 0.4];
\draw[fill=white, radius = .2] (2,6) circle [radius = 0.4];

\draw[fill=red, radius = .2] (4,6) circle [radius = 0.4];
\draw[fill=red, radius = .2] (6,6) circle [radius = 0.4];
\draw[fill=red, radius = .2] (8,6) circle [radius = 0.4];

\draw[fill=red, radius = .2] (10,6) circle [radius = 0.4];
\draw[fill=red, radius = .2] (12,6) circle [radius = 0.4];
\draw[fill=red, radius = .2] (14,6) circle [radius = 0.4];

\draw[fill=red, radius = .2] (16,6) circle [radius = 0.4];
\draw[fill=red, radius = .2] (18,6) circle [radius = 0.4];
\draw[fill=red, radius = .2] (20,6) circle [radius = 0.4];

\draw[fill=white, radius = .2] (22,6) circle [radius = 0.4];
\draw[fill=white, radius = .2] (24,6) circle [radius = 0.4];
\draw[fill=white, radius = .2] (26,6) circle [radius = 0.4];

\draw[fill=red, radius = .2] (28,6) circle [radius = 0.4];
\draw[fill=red, radius = .2] (30,6) circle [radius = 0.4];
\draw[fill=red, radius = .2] (32,6) circle [radius = 0.4];

\draw[fill=red, radius = .2] (34,6) circle [radius = 0.4];
\draw[fill=red, radius = .2] (36,6) circle [radius = 0.4];
\draw[fill=red, radius = .2] (38,6) circle [radius = 0.4];

\draw[fill=red, radius = .2] (40,6) circle [radius = 0.4];
\draw[fill=red, radius = .2] (42,6) circle [radius = 0.4];
\draw[fill=red, radius = .2] (44,6) circle [radius = 0.4];

\draw[fill=red, radius = .2] (46,6) circle [radius = 0.4];
\draw[fill=red, radius = .2] (48,6) circle [radius = 0.4];
\draw[fill=red, radius = .2] (50,6) circle [radius = 0.4];

\draw[fill=white, radius = .2] (0,8) circle [radius = 0.4];

\draw[fill=white, radius = .2] (6,8) circle [radius = 0.4];

\draw[fill=red, radius = .2] (12,8) circle [radius = 0.4];

\draw[fill=red, radius = .2] (18,8) circle [radius = 0.4];

\draw[fill=red, radius = .2] (24,8) circle [radius = 0.4];

\draw[fill=white, radius = .2] (30,8) circle [radius = 0.4];

\draw[fill=red, radius = .2] (36,8) circle [radius = 0.4];

\draw[fill=red, radius = .2] (42,8) circle [radius = 0.4];

\draw[fill=red, radius = .2] (48,8) circle [radius = 0.4];

\draw[fill=white, radius = .2] (-2,10) circle [radius = 0.4];
\draw[fill=white, radius = .2] (0,10) circle [radius = 0.4];
\draw[fill=white, radius = .2] (2,10) circle [radius = 0.4];

\draw[fill=white, radius = .2] (4,10) circle [radius = 0.4];
\draw[fill=white, radius = .2] (6,10) circle [radius = 0.4];
\draw[fill=white, radius = .2] (8,10) circle [radius = 0.4];

\draw[fill=white, radius = .2] (10,10) circle [radius = 0.4];
\draw[fill=white, radius = .2] (12,10) circle [radius = 0.4];
\draw[fill=white, radius = .2] (14,10) circle [radius = 0.4];

\draw[fill=red, radius = .2] (16,10) circle [radius = 0.4];
\draw[fill=red, radius = .2] (18,10) circle [radius = 0.4];
\draw[fill=red, radius = .2] (20,10) circle [radius = 0.4];

\draw[fill=red, radius = .2] (22,10) circle [radius = 0.4];
\draw[fill=red, radius = .2] (24,10) circle [radius = 0.4];
\draw[fill=red, radius = .2] (26,10) circle [radius = 0.4];

\draw[fill=white, radius = .2] (28,10) circle [radius = 0.4];
\draw[fill=white, radius = .2] (30,10) circle [radius = 0.4];
\draw[fill=white, radius = .2] (32,10) circle [radius = 0.4];

\draw[fill=white, radius = .2] (34,10) circle [radius = 0.4];
\draw[fill=white, radius = .2] (36,10) circle [radius = 0.4];
\draw[fill=white, radius = .2] (38,10) circle [radius = 0.4];

\draw[fill=red, radius = .2] (40,10) circle [radius = 0.4];
\draw[fill=red, radius = .2] (42,10) circle [radius = 0.4];
\draw[fill=red, radius = .2] (44,10) circle [radius = 0.4];

\draw[fill=red, radius = .2] (46,10) circle [radius = 0.4];
\draw[fill=red, radius = .2] (48,10) circle [radius = 0.4];
\draw[fill=red, radius = .2] (50,10) circle [radius = 0.4];

\draw[fill=white, radius = .2] (0,12) circle [radius = 0.4];

\draw[fill=white, radius = .2] (6,12) circle [radius = 0.4];

\draw[fill=white, radius = .2] (12,12) circle [radius = 0.4];

\draw[fill=white, radius = .2] (18,12) circle [radius = 0.4];

\draw[fill=red, radius = .2] (24,12) circle [radius = 0.4];

\draw[fill=white, radius = .2] (30,12) circle [radius = 0.4];

\draw[fill=white, radius = .2] (36,12) circle [radius = 0.4];

\draw[fill=white, radius = .2] (42,12) circle [radius = 0.4];

\draw[fill=red, radius = .2] (48,12) circle [radius = 0.4];
\end{tikzpicture}
\end{center}
\end{figure}

\item Lastly, the orbits of size $2n$, which consist of interval-closed sets with a single antichain that is partially full and possibly other complete antichains.

\begin{figure}[H]
\begin{center}
\begin{tikzpicture}[scale = .30]
\draw [->] (2,8) -- (4,8);
 \node[anchor=center] at (3,8.75) {\footnotesize Row};
 \draw [->] (8,8) -- (10,8);
  \node[anchor=center] at (9,8.75) {\footnotesize Row};
 \draw [->] (14,8) -- (16,8);
  \node[anchor=center] at (15,8.75) {\footnotesize Row};
 \draw [->] (20,8) -- (22,8);
  \node[anchor=center] at (21,8.75) {\footnotesize Row};
 \draw [->] (26,8) -- (28,8);
  \node[anchor=center] at (27,8.75) {\footnotesize Row};
 \draw [->] (32,8) -- (34,8);
  \node[anchor=center] at (33,8.75) {\footnotesize Row};
 \draw [->] (38,8) -- (40,8);
  \node[anchor=center] at (39,8.75) {\footnotesize Row};
  \draw [->] (44,8) -- (46,8);
   \node[anchor=center] at (45,8.75) {\footnotesize Row};
 
\draw [-, ultra thick] (0,0) -- (-2,2);
\draw [-, ultra thick] (0,0) -- (0,2);
\draw [-, ultra thick] (0,0) -- (2,2);

\draw [-, ultra thick] (6,0) -- (4,2);
\draw [-, ultra thick] (6,0) -- (6,2);
\draw [-, ultra thick] (6,0) -- (8,2);

\draw [-, ultra thick] (12,0) -- (10,2);
\draw [-, ultra thick] (12,0) -- (12,2);
\draw [-, ultra thick] (12,0) -- (14,2);

\draw [-, ultra thick] (18,0) -- (16,2);
\draw [-, ultra thick] (18,0) -- (18,2);
\draw [-, ultra thick] (18,0) -- (20,2);

\draw [-, ultra thick] (24,0) -- (22,2);
\draw [-, ultra thick] (24,0) -- (24,2);
\draw [-, ultra thick] (24,0) -- (26,2);

\draw [-, ultra thick] (30,0) -- (28,2);
\draw [-, ultra thick] (30,0) -- (30,2);
\draw [-, ultra thick] (30,0) -- (32,2);

\draw [-, ultra thick] (36,0) -- (34,2);
\draw [-, ultra thick] (36,0) -- (36,2);
\draw [-, ultra thick] (36,0) -- (38,2);

\draw [-, ultra thick] (42,0) -- (40,2);
\draw [-, ultra thick] (42,0) -- (42,2);
\draw [-, ultra thick] (42,0) -- (44,2);

\draw [-, ultra thick] (0,4) -- (-2,2);
\draw [-, ultra thick] (0,4) -- (0,2);
\draw [-, ultra thick] (0,4) -- (2,2);

\draw [-, ultra thick] (6,4) -- (4,2);
\draw [-, ultra thick] (6,4) -- (6,2);
\draw [-, ultra thick] (6,4) -- (8,2);

\draw [-, ultra thick] (12,4) -- (10,2);
\draw [-, ultra thick] (12,4) -- (12,2);
\draw [-, ultra thick] (12,4) -- (14,2);

\draw [-, ultra thick] (18,4) -- (16,2);
\draw [-, ultra thick] (18,4) -- (18,2);
\draw [-, ultra thick] (18,4) -- (20,2);

\draw [-, ultra thick] (24,4) -- (22,2);
\draw [-, ultra thick] (24,4) -- (24,2);
\draw [-, ultra thick] (24,4) -- (26,2);

\draw [-, ultra thick] (30,4) -- (28,2);
\draw [-, ultra thick] (30,4) -- (30,2);
\draw [-, ultra thick] (30,4) -- (32,2);

\draw [-, ultra thick] (36,4) -- (34,2);
\draw [-, ultra thick] (36,4) -- (36,2);
\draw [-, ultra thick] (36,4) -- (38,2);

\draw [-, ultra thick] (42,4) -- (40,2);
\draw [-, ultra thick] (42,4) -- (42,2);
\draw [-, ultra thick] (42,4) -- (44,2);

\draw [-, ultra thick] (0,4) -- (-2,6);
\draw [-, ultra thick] (0,4) -- (0,6);
\draw [-, ultra thick] (0,4) -- (2,6);

\draw [-, ultra thick] (6,4) -- (4,6);
\draw [-, ultra thick] (6,4) -- (6,6);
\draw [-, ultra thick] (6,4) -- (8,6);

\draw [-, ultra thick] (12,4) -- (10,6);
\draw [-, ultra thick] (12,4) -- (12,6);
\draw [-, ultra thick] (12,4) -- (14,6);

\draw [-, ultra thick] (18,4) -- (16,6);
\draw [-, ultra thick] (18,4) -- (18,6);
\draw [-, ultra thick] (18,4) -- (20,6);

\draw [-, ultra thick] (24,4) -- (22,6);
\draw [-, ultra thick] (24,4) -- (24,6);
\draw [-, ultra thick] (24,4) -- (26,6);

\draw [-, ultra thick] (30,4) -- (28,6);
\draw [-, ultra thick] (30,4) -- (30,6);
\draw [-, ultra thick] (30,4) -- (32,6);

\draw [-, ultra thick] (36,4) -- (34,6);
\draw [-, ultra thick] (36,4) -- (36,6);
\draw [-, ultra thick] (36,4) -- (38,6);

\draw [-, ultra thick] (42,4) -- (40,6);
\draw [-, ultra thick] (42,4) -- (42,6);
\draw [-, ultra thick] (42,4) -- (44,6);

\draw [-, ultra thick] (0,8) -- (-2,6);
\draw [-, ultra thick] (0,8) -- (0,6);
\draw [-, ultra thick] (0,8) -- (2,6);

\draw [-, ultra thick] (6,8) -- (4,6);
\draw [-, ultra thick] (6,8) -- (6,6);
\draw [-, ultra thick] (6,8) -- (8,6);

\draw [-, ultra thick] (12,8) -- (10,6);
\draw [-, ultra thick] (12,8) -- (12,6);
\draw [-, ultra thick] (12,8) -- (14,6);

\draw [-, ultra thick] (18,8) -- (16,6);
\draw [-, ultra thick] (18,8) -- (18,6);
\draw [-, ultra thick] (18,8) -- (20,6);

\draw [-, ultra thick] (24,8) -- (22,6);
\draw [-, ultra thick] (24,8) -- (24,6);
\draw [-, ultra thick] (24,8) -- (26,6);

\draw [-, ultra thick] (30,8) -- (28,6);
\draw [-, ultra thick] (30,8) -- (30,6);
\draw [-, ultra thick] (30,8) -- (32,6);

\draw [-, ultra thick] (36,8) -- (34,6);
\draw [-, ultra thick] (36,8) -- (36,6);
\draw [-, ultra thick] (36,8) -- (38,6);

\draw [-, ultra thick] (42,8) -- (40,6);
\draw [-, ultra thick] (42,8) -- (42,6);
\draw [-, ultra thick] (42,8) -- (44,6);

\draw [-, ultra thick] (0,8) -- (-2,10);
\draw [-, ultra thick] (0,8) -- (0,10);
\draw [-, ultra thick] (0,8) -- (2,10);

\draw [-, ultra thick] (6,8) -- (4,10);
\draw [-, ultra thick] (6,8) -- (6,10);
\draw [-, ultra thick] (6,8) -- (8,10);

\draw [-, ultra thick] (12,8) -- (10,10);
\draw [-, ultra thick] (12,8) -- (12,10);
\draw [-, ultra thick] (12,8) -- (14,10);

\draw [-, ultra thick] (18,8) -- (16,10);
\draw [-, ultra thick] (18,8) -- (18,10);
\draw [-, ultra thick] (18,8) -- (20,10);

\draw [-, ultra thick] (24,8) -- (22,10);
\draw [-, ultra thick] (24,8) -- (24,10);
\draw [-, ultra thick] (24,8) -- (26,10);

\draw [-, ultra thick] (30,8) -- (28,10);
\draw [-, ultra thick] (30,8) -- (30,10);
\draw [-, ultra thick] (30,8) -- (32,10);

\draw [-, ultra thick] (36,8) -- (34,10);
\draw [-, ultra thick] (36,8) -- (36,10);
\draw [-, ultra thick] (36,8) -- (38,10);

\draw [-, ultra thick] (42,8) -- (40,10);
\draw [-, ultra thick] (42,8) -- (42,10);
\draw [-, ultra thick] (42,8) -- (44,10);

\draw [-, ultra thick] (0,12) -- (-2,10);
\draw [-, ultra thick] (0,12) -- (0,10);
\draw [-, ultra thick] (0,12) -- (2,10);

\draw [-, ultra thick] (6,12) -- (4,10);
\draw [-, ultra thick] (6,12) -- (6,10);
\draw [-, ultra thick] (6,12) -- (8,10);

\draw [-, ultra thick] (12,12) -- (10,10);
\draw [-, ultra thick] (12,12) -- (12,10);
\draw [-, ultra thick] (12,12) -- (14,10);

\draw [-, ultra thick] (18,12) -- (16,10);
\draw [-, ultra thick] (18,12) -- (18,10);
\draw [-, ultra thick] (18,12) -- (20,10);

\draw [-, ultra thick] (24,12) -- (22,10);
\draw [-, ultra thick] (24,12) -- (24,10);
\draw [-, ultra thick] (24,12) -- (26,10);

\draw [-, ultra thick] (30,12) -- (28,10);
\draw [-, ultra thick] (30,12) -- (30,10);
\draw [-, ultra thick] (30,12) -- (32,10);

\draw [-, ultra thick] (36,12) -- (34,10);
\draw [-, ultra thick] (36,12) -- (36,10);
\draw [-, ultra thick] (36,12) -- (38,10);

\draw [-, ultra thick] (42,12) -- (40,10);
\draw [-, ultra thick] (42,12) -- (42,10);
\draw [-, ultra thick] (42,12) -- (44,10);

\draw[fill=red, radius = .2] (0,0) circle [radius = 0.4];

\draw[fill=white, radius = .2] (6,0) circle [radius = 0.4];

\draw[fill=white, radius = .2] (12,0) circle [radius = 0.4];

\draw[fill=white, radius = .2] (18,0) circle [radius = 0.4];

\draw[fill=white, radius = .2] (24,0) circle [radius = 0.4];

\draw[fill=white, radius = .2] (30,0) circle [radius = 0.4];

\draw[fill=white, radius = .2] (36,0) circle [radius = 0.4];

\draw[fill=red, radius = .2] (42,0) circle [radius = 0.4];

\draw[fill=red, radius = .2] (-2,2) circle [radius = 0.4];
\draw[fill=red, radius = .2] (0,2) circle [radius = 0.4];
\draw[fill=red, radius = .2] (2,2) circle [radius = 0.4];

\draw[fill=red, radius = .2] (4,2) circle [radius = 0.4];
\draw[fill=red, radius = .2] (6,2) circle [radius = 0.4];
\draw[fill=red, radius = .2] (8,2) circle [radius = 0.4];

\draw[fill=white, radius = .2] (10,2) circle [radius = 0.4];
\draw[fill=white, radius = .2] (12,2) circle [radius = 0.4];
\draw[fill=white, radius = .2] (14,2) circle [radius = 0.4];

\draw[fill=white, radius = .2] (16,2) circle [radius = 0.4];
\draw[fill=white, radius = .2] (18,2) circle [radius = 0.4];
\draw[fill=white, radius = .2] (20,2) circle [radius = 0.4];

\draw[fill=white, radius = .2] (22,2) circle [radius = 0.4];
\draw[fill=white, radius = .2] (24,2) circle [radius = 0.4];
\draw[fill=white, radius = .2] (26,2) circle [radius = 0.4];

\draw[fill=white, radius = .2] (28,2) circle [radius = 0.4];
\draw[fill=white, radius = .2] (30,2) circle [radius = 0.4];
\draw[fill=white, radius = .2] (32,2) circle [radius = 0.4];

\draw[fill=white, radius = .2] (34,2) circle [radius = 0.4];
\draw[fill=white, radius = .2] (36,2) circle [radius = 0.4];
\draw[fill=white, radius = .2] (38,2) circle [radius = 0.4];

\draw[fill=red, radius = .2] (40,2) circle [radius = 0.4];
\draw[fill=red, radius = .2] (42,2) circle [radius = 0.4];
\draw[fill=red, radius = .2] (44,2) circle [radius = 0.4];

\draw[fill=red, radius = .2] (0,4) circle [radius = 0.4];

\draw[fill=red, radius = .2] (6,4) circle [radius = 0.4];

\draw[fill=red, radius = .2] (12,4) circle [radius = 0.4];

\draw[fill=white, radius = .2] (18,4) circle [radius = 0.4];

\draw[fill=white, radius = .2] (24,4) circle [radius = 0.4];

\draw[fill=white, radius = .2] (30,4) circle [radius = 0.4];

\draw[fill=white, radius = .2] (36,4) circle [radius = 0.4];

\draw[fill=red, radius = .2] (42,4) circle [radius = 0.4];

\draw[fill=red, radius = .2] (-2,6) circle [radius = 0.4];
\draw[fill=red, radius = .2] (0,6) circle [radius = 0.4];
\draw[fill=white, radius = .2] (2,6) circle [radius = 0.4];

\draw[fill=white, radius = .2] (4,6) circle [radius = 0.4];
\draw[fill=white, radius = .2] (6,6) circle [radius = 0.4];
\draw[fill=red, radius = .2] (8,6) circle [radius = 0.4];

\draw[fill=red, radius = .2] (10,6) circle [radius = 0.4];
\draw[fill=red, radius = .2] (12,6) circle [radius = 0.4];
\draw[fill=white, radius = .2] (14,6) circle [radius = 0.4];

\draw[fill=white, radius = .2] (16,6) circle [radius = 0.4];
\draw[fill=white, radius = .2] (18,6) circle [radius = 0.4];
\draw[fill=red, radius = .2] (20,6) circle [radius = 0.4];

\draw[fill=red, radius = .2] (22,6) circle [radius = 0.4];
\draw[fill=red, radius = .2] (24,6) circle [radius = 0.4];
\draw[fill=white, radius = .2] (26,6) circle [radius = 0.4];

\draw[fill=white, radius = .2] (28,6) circle [radius = 0.4];
\draw[fill=white, radius = .2] (30,6) circle [radius = 0.4];
\draw[fill=red, radius = .2] (32,6) circle [radius = 0.4];

\draw[fill=red, radius = .2] (34,6) circle [radius = 0.4];
\draw[fill=red, radius = .2] (36,6) circle [radius = 0.4];
\draw[fill=white, radius = .2] (38,6) circle [radius = 0.4];

\draw[fill=white, radius = .2] (40,6) circle [radius = 0.4];
\draw[fill=white, radius = .2] (42,6) circle [radius = 0.4];
\draw[fill=red, radius = .2] (44,6) circle [radius = 0.4];

\draw[fill=white, radius = .2] (0,8) circle [radius = 0.4];

\draw[fill=white, radius = .2] (6,8) circle [radius = 0.4];

\draw[fill=white, radius = .2] (12,8) circle [radius = 0.4];

\draw[fill=white, radius = .2] (18,8) circle [radius = 0.4];

\draw[fill=red, radius = .2] (24,8) circle [radius = 0.4];

\draw[fill=red, radius = .2] (30,8) circle [radius = 0.4];

\draw[fill=red, radius = .2] (36,8) circle [radius = 0.4];

\draw[fill=white, radius = .2] (42,8) circle [radius = 0.4];

\draw[fill=white, radius = .2] (-2,10) circle [radius = 0.4];
\draw[fill=white, radius = .2] (0,10) circle [radius = 0.4];
\draw[fill=white, radius = .2] (2,10) circle [radius = 0.4];

\draw[fill=white, radius = .2] (4,10) circle [radius = 0.4];
\draw[fill=white, radius = .2] (6,10) circle [radius = 0.4];
\draw[fill=white, radius = .2] (8,10) circle [radius = 0.4];

\draw[fill=white, radius = .2] (10,10) circle [radius = 0.4];
\draw[fill=white, radius = .2] (12,10) circle [radius = 0.4];
\draw[fill=white, radius = .2] (14,10) circle [radius = 0.4];

\draw[fill=white, radius = .2] (16,10) circle [radius = 0.4];
\draw[fill=white, radius = .2] (18,10) circle [radius = 0.4];
\draw[fill=white, radius = .2] (20,10) circle [radius = 0.4];

\draw[fill=white, radius = .2] (22,10) circle [radius = 0.4];
\draw[fill=white, radius = .2] (24,10) circle [radius = 0.4];
\draw[fill=white, radius = .2] (26,10) circle [radius = 0.4];

\draw[fill=red, radius = .2] (28,10) circle [radius = 0.4];
\draw[fill=red, radius = .2] (30,10) circle [radius = 0.4];
\draw[fill=red, radius = .2] (32,10) circle [radius = 0.4];

\draw[fill=red, radius = .2] (34,10) circle [radius = 0.4];
\draw[fill=red, radius = .2] (36,10) circle [radius = 0.4];
\draw[fill=red, radius = .2] (38,10) circle [radius = 0.4];

\draw[fill=white, radius = .2] (40,10) circle [radius = 0.4];
\draw[fill=white, radius = .2] (42,10) circle [radius = 0.4];
\draw[fill=white, radius = .2] (44,10) circle [radius = 0.4];

\draw[fill=white, radius = .2] (0,12) circle [radius = 0.4];

\draw[fill=white, radius = .2] (6,12) circle [radius = 0.4];

\draw[fill=white, radius = .2] (12,12) circle [radius = 0.4];

\draw[fill=white, radius = .2] (18,12) circle [radius = 0.4];

\draw[fill=white, radius = .2] (24,12) circle [radius = 0.4];

\draw[fill=white, radius = .2] (30,12) circle [radius = 0.4];

\draw[fill=red, radius = .2] (36,12) circle [radius = 0.4];

\draw[fill=white, radius = .2] (42,12) circle [radius = 0.4];

\end{tikzpicture}
\end{center}
\end{figure}

\begin{figure}[H]
\begin{center}
\begin{tikzpicture}[scale = .30]

 \draw [->] (8,8) -- (10,8);
  \node[anchor=center] at (9,8.75) {\footnotesize Row};
 \draw [->] (14,8) -- (16,8);
  \node[anchor=center] at (15,8.75) {\footnotesize Row};
 \draw [->] (20,8) -- (22,8);
  \node[anchor=center] at (21,8.75) {\footnotesize Row};
 \draw [->] (26,8) -- (28,8);
  \node[anchor=center] at (27,8.75) {\footnotesize Row};
 \draw [->] (32,8) -- (34,8);
  \node[anchor=center] at (33,8.75) {\footnotesize Row};
 \draw [->] (38,8) -- (40,8);
  \node[anchor=center] at (39,8.75) {\footnotesize Row};

\draw [-, ultra thick] (6,0) -- (4,2);
\draw [-, ultra thick] (6,0) -- (6,2);
\draw [-, ultra thick] (6,0) -- (8,2);

\draw [-, ultra thick] (12,0) -- (10,2);
\draw [-, ultra thick] (12,0) -- (12,2);
\draw [-, ultra thick] (12,0) -- (14,2);

\draw [-, ultra thick] (18,0) -- (16,2);
\draw [-, ultra thick] (18,0) -- (18,2);
\draw [-, ultra thick] (18,0) -- (20,2);

\draw [-, ultra thick] (24,0) -- (22,2);
\draw [-, ultra thick] (24,0) -- (24,2);
\draw [-, ultra thick] (24,0) -- (26,2);

\draw [-, ultra thick] (30,0) -- (28,2);
\draw [-, ultra thick] (30,0) -- (30,2);
\draw [-, ultra thick] (30,0) -- (32,2);

\draw [-, ultra thick] (36,0) -- (34,2);
\draw [-, ultra thick] (36,0) -- (36,2);
\draw [-, ultra thick] (36,0) -- (38,2);

\draw [-, ultra thick] (42,0) -- (40,2);
\draw [-, ultra thick] (42,0) -- (42,2);
\draw [-, ultra thick] (42,0) -- (44,2);


\draw [-, ultra thick] (6,4) -- (4,2);
\draw [-, ultra thick] (6,4) -- (6,2);
\draw [-, ultra thick] (6,4) -- (8,2);

\draw [-, ultra thick] (12,4) -- (10,2);
\draw [-, ultra thick] (12,4) -- (12,2);
\draw [-, ultra thick] (12,4) -- (14,2);

\draw [-, ultra thick] (18,4) -- (16,2);
\draw [-, ultra thick] (18,4) -- (18,2);
\draw [-, ultra thick] (18,4) -- (20,2);

\draw [-, ultra thick] (24,4) -- (22,2);
\draw [-, ultra thick] (24,4) -- (24,2);
\draw [-, ultra thick] (24,4) -- (26,2);

\draw [-, ultra thick] (30,4) -- (28,2);
\draw [-, ultra thick] (30,4) -- (30,2);
\draw [-, ultra thick] (30,4) -- (32,2);

\draw [-, ultra thick] (36,4) -- (34,2);
\draw [-, ultra thick] (36,4) -- (36,2);
\draw [-, ultra thick] (36,4) -- (38,2);

\draw [-, ultra thick] (42,4) -- (40,2);
\draw [-, ultra thick] (42,4) -- (42,2);
\draw [-, ultra thick] (42,4) -- (44,2);


\draw [-, ultra thick] (6,4) -- (4,6);
\draw [-, ultra thick] (6,4) -- (6,6);
\draw [-, ultra thick] (6,4) -- (8,6);

\draw [-, ultra thick] (12,4) -- (10,6);
\draw [-, ultra thick] (12,4) -- (12,6);
\draw [-, ultra thick] (12,4) -- (14,6);

\draw [-, ultra thick] (18,4) -- (16,6);
\draw [-, ultra thick] (18,4) -- (18,6);
\draw [-, ultra thick] (18,4) -- (20,6);

\draw [-, ultra thick] (24,4) -- (22,6);
\draw [-, ultra thick] (24,4) -- (24,6);
\draw [-, ultra thick] (24,4) -- (26,6);

\draw [-, ultra thick] (30,4) -- (28,6);
\draw [-, ultra thick] (30,4) -- (30,6);
\draw [-, ultra thick] (30,4) -- (32,6);

\draw [-, ultra thick] (36,4) -- (34,6);
\draw [-, ultra thick] (36,4) -- (36,6);
\draw [-, ultra thick] (36,4) -- (38,6);

\draw [-, ultra thick] (42,4) -- (40,6);
\draw [-, ultra thick] (42,4) -- (42,6);
\draw [-, ultra thick] (42,4) -- (44,6);

\draw [-, ultra thick] (6,8) -- (4,6);
\draw [-, ultra thick] (6,8) -- (6,6);
\draw [-, ultra thick] (6,8) -- (8,6);

\draw [-, ultra thick] (12,8) -- (10,6);
\draw [-, ultra thick] (12,8) -- (12,6);
\draw [-, ultra thick] (12,8) -- (14,6);

\draw [-, ultra thick] (18,8) -- (16,6);
\draw [-, ultra thick] (18,8) -- (18,6);
\draw [-, ultra thick] (18,8) -- (20,6);

\draw [-, ultra thick] (24,8) -- (22,6);
\draw [-, ultra thick] (24,8) -- (24,6);
\draw [-, ultra thick] (24,8) -- (26,6);

\draw [-, ultra thick] (30,8) -- (28,6);
\draw [-, ultra thick] (30,8) -- (30,6);
\draw [-, ultra thick] (30,8) -- (32,6);

\draw [-, ultra thick] (36,8) -- (34,6);
\draw [-, ultra thick] (36,8) -- (36,6);
\draw [-, ultra thick] (36,8) -- (38,6);

\draw [-, ultra thick] (42,8) -- (40,6);
\draw [-, ultra thick] (42,8) -- (42,6);
\draw [-, ultra thick] (42,8) -- (44,6);

\draw [-, ultra thick] (6,8) -- (4,10);
\draw [-, ultra thick] (6,8) -- (6,10);
\draw [-, ultra thick] (6,8) -- (8,10);

\draw [-, ultra thick] (12,8) -- (10,10);
\draw [-, ultra thick] (12,8) -- (12,10);
\draw [-, ultra thick] (12,8) -- (14,10);

\draw [-, ultra thick] (18,8) -- (16,10);
\draw [-, ultra thick] (18,8) -- (18,10);
\draw [-, ultra thick] (18,8) -- (20,10);

\draw [-, ultra thick] (24,8) -- (22,10);
\draw [-, ultra thick] (24,8) -- (24,10);
\draw [-, ultra thick] (24,8) -- (26,10);

\draw [-, ultra thick] (30,8) -- (28,10);
\draw [-, ultra thick] (30,8) -- (30,10);
\draw [-, ultra thick] (30,8) -- (32,10);

\draw [-, ultra thick] (36,8) -- (34,10);
\draw [-, ultra thick] (36,8) -- (36,10);
\draw [-, ultra thick] (36,8) -- (38,10);

\draw [-, ultra thick] (42,8) -- (40,10);
\draw [-, ultra thick] (42,8) -- (42,10);
\draw [-, ultra thick] (42,8) -- (44,10);


\draw [-, ultra thick] (6,12) -- (4,10);
\draw [-, ultra thick] (6,12) -- (6,10);
\draw [-, ultra thick] (6,12) -- (8,10);

\draw [-, ultra thick] (12,12) -- (10,10);
\draw [-, ultra thick] (12,12) -- (12,10);
\draw [-, ultra thick] (12,12) -- (14,10);

\draw [-, ultra thick] (18,12) -- (16,10);
\draw [-, ultra thick] (18,12) -- (18,10);
\draw [-, ultra thick] (18,12) -- (20,10);

\draw [-, ultra thick] (24,12) -- (22,10);
\draw [-, ultra thick] (24,12) -- (24,10);
\draw [-, ultra thick] (24,12) -- (26,10);

\draw [-, ultra thick] (30,12) -- (28,10);
\draw [-, ultra thick] (30,12) -- (30,10);
\draw [-, ultra thick] (30,12) -- (32,10);

\draw [-, ultra thick] (36,12) -- (34,10);
\draw [-, ultra thick] (36,12) -- (36,10);
\draw [-, ultra thick] (36,12) -- (38,10);

\draw [-, ultra thick] (42,12) -- (40,10);
\draw [-, ultra thick] (42,12) -- (42,10);
\draw [-, ultra thick] (42,12) -- (44,10);

\draw[fill=white, radius = .2] (6,0) circle [radius = 0.4];

\draw[fill=white, radius = .2] (12,0) circle [radius = 0.4];

\draw[fill=white, radius = .2] (18,0) circle [radius = 0.4];

\draw[fill=white, radius = .2] (24,0) circle [radius = 0.4];

\draw[fill=white, radius = .2] (30,0) circle [radius = 0.4];

\draw[fill=white, radius = .2] (36,0) circle [radius = 0.4];

\draw[fill=red, radius = .2] (42,0) circle [radius = 0.4];


\draw[fill=red, radius = .2] (4,2) circle [radius = 0.4];
\draw[fill=red, radius = .2] (6,2) circle [radius = 0.4];
\draw[fill=red, radius = .2] (8,2) circle [radius = 0.4];

\draw[fill=white, radius = .2] (10,2) circle [radius = 0.4];
\draw[fill=white, radius = .2] (12,2) circle [radius = 0.4];
\draw[fill=white, radius = .2] (14,2) circle [radius = 0.4];

\draw[fill=white, radius = .2] (16,2) circle [radius = 0.4];
\draw[fill=white, radius = .2] (18,2) circle [radius = 0.4];
\draw[fill=white, radius = .2] (20,2) circle [radius = 0.4];

\draw[fill=white, radius = .2] (22,2) circle [radius = 0.4];
\draw[fill=white, radius = .2] (24,2) circle [radius = 0.4];
\draw[fill=white, radius = .2] (26,2) circle [radius = 0.4];

\draw[fill=white, radius = .2] (28,2) circle [radius = 0.4];
\draw[fill=white, radius = .2] (30,2) circle [radius = 0.4];
\draw[fill=white, radius = .2] (32,2) circle [radius = 0.4];

\draw[fill=white, radius = .2] (34,2) circle [radius = 0.4];
\draw[fill=white, radius = .2] (36,2) circle [radius = 0.4];
\draw[fill=white, radius = .2] (38,2) circle [radius = 0.4];

\draw[fill=red, radius = .2] (40,2) circle [radius = 0.4];
\draw[fill=red, radius = .2] (42,2) circle [radius = 0.4];
\draw[fill=red, radius = .2] (44,2) circle [radius = 0.4];


\draw[fill=red, radius = .2] (6,4) circle [radius = 0.4];

\draw[fill=red, radius = .2] (12,4) circle [radius = 0.4];

\draw[fill=white, radius = .2] (18,4) circle [radius = 0.4];

\draw[fill=white, radius = .2] (24,4) circle [radius = 0.4];

\draw[fill=white, radius = .2] (30,4) circle [radius = 0.4];

\draw[fill=white, radius = .2] (36,4) circle [radius = 0.4];

\draw[fill=red, radius = .2] (42,4) circle [radius = 0.4];


\draw[fill=red, radius = .2] (4,6) circle [radius = 0.4];
\draw[fill=red, radius = .2] (6,6) circle [radius = 0.4];
\draw[fill=white, radius = .2] (8,6) circle [radius = 0.4];

\draw[fill=white, radius = .2] (10,6) circle [radius = 0.4];
\draw[fill=white, radius = .2] (12,6) circle [radius = 0.4];
\draw[fill=red, radius = .2] (14,6) circle [radius = 0.4];

\draw[fill=red, radius = .2] (16,6) circle [radius = 0.4];
\draw[fill=red, radius = .2] (18,6) circle [radius = 0.4];
\draw[fill=white, radius = .2] (20,6) circle [radius = 0.4];

\draw[fill=white, radius = .2] (22,6) circle [radius = 0.4];
\draw[fill=white, radius = .2] (24,6) circle [radius = 0.4];
\draw[fill=red, radius = .2] (26,6) circle [radius = 0.4];

\draw[fill=red, radius = .2] (28,6) circle [radius = 0.4];
\draw[fill=red, radius = .2] (30,6) circle [radius = 0.4];
\draw[fill=white, radius = .2] (32,6) circle [radius = 0.4];

\draw[fill=white, radius = .2] (34,6) circle [radius = 0.4];
\draw[fill=white, radius = .2] (36,6) circle [radius = 0.4];
\draw[fill=red, radius = .2] (38,6) circle [radius = 0.4];

\draw[fill=red, radius = .2] (40,6) circle [radius = 0.4];
\draw[fill=red, radius = .2] (42,6) circle [radius = 0.4];
\draw[fill=white, radius = .2] (44,6) circle [radius = 0.4];


\draw[fill=white, radius = .2] (6,8) circle [radius = 0.4];

\draw[fill=white, radius = .2] (12,8) circle [radius = 0.4];

\draw[fill=white, radius = .2] (18,8) circle [radius = 0.4];

\draw[fill=red, radius = .2] (24,8) circle [radius = 0.4];

\draw[fill=red, radius = .2] (30,8) circle [radius = 0.4];

\draw[fill=red, radius = .2] (36,8) circle [radius = 0.4];

\draw[fill=white, radius = .2] (42,8) circle [radius = 0.4];

\draw[fill=white, radius = .2] (4,10) circle [radius = 0.4];
\draw[fill=white, radius = .2] (6,10) circle [radius = 0.4];
\draw[fill=white, radius = .2] (8,10) circle [radius = 0.4];

\draw[fill=white, radius = .2] (10,10) circle [radius = 0.4];
\draw[fill=white, radius = .2] (12,10) circle [radius = 0.4];
\draw[fill=white, radius = .2] (14,10) circle [radius = 0.4];

\draw[fill=white, radius = .2] (16,10) circle [radius = 0.4];
\draw[fill=white, radius = .2] (18,10) circle [radius = 0.4];
\draw[fill=white, radius = .2] (20,10) circle [radius = 0.4];

\draw[fill=white, radius = .2] (22,10) circle [radius = 0.4];
\draw[fill=white, radius = .2] (24,10) circle [radius = 0.4];
\draw[fill=white, radius = .2] (26,10) circle [radius = 0.4];

\draw[fill=red, radius = .2] (28,10) circle [radius = 0.4];
\draw[fill=red, radius = .2] (30,10) circle [radius = 0.4];
\draw[fill=red, radius = .2] (32,10) circle [radius = 0.4];

\draw[fill=red, radius = .2] (34,10) circle [radius = 0.4];
\draw[fill=red, radius = .2] (36,10) circle [radius = 0.4];
\draw[fill=red, radius = .2] (38,10) circle [radius = 0.4];

\draw[fill=white, radius = .2] (40,10) circle [radius = 0.4];
\draw[fill=white, radius = .2] (42,10) circle [radius = 0.4];
\draw[fill=white, radius = .2] (44,10) circle [radius = 0.4];


\draw[fill=white, radius = .2] (6,12) circle [radius = 0.4];

\draw[fill=white, radius = .2] (12,12) circle [radius = 0.4];

\draw[fill=white, radius = .2] (18,12) circle [radius = 0.4];

\draw[fill=white, radius = .2] (24,12) circle [radius = 0.4];

\draw[fill=white, radius = .2] (30,12) circle [radius = 0.4];

\draw[fill=red, radius = .2] (36,12) circle [radius = 0.4];

\draw[fill=white, radius = .2] (42,12) circle [radius = 0.4];
\end{tikzpicture}
\end{center}
\end{figure}

\end{enumerate}
\end{ex}

\subsection{Ordinal sums of repeated antichains}
In this subsection, we consider ordinal sums of the form $\bigoplus_{i=1}^n \mathbf{m}$. That is, ordinal sums where each rank has antichains of the same size.

\begin{cor}
Let $n\geq 3$, and $m\geq 2$. The cardinality of $\IC(\bigoplus_{i=1}^n \mathbf{m})$ is 
\[
1 + n(2^m - 1) + \binom{n}{2}(2^m - 1)^2.
\]
\end{cor}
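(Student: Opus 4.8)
The plan is to obtain this formula as a direct specialization of Theorem~\ref{thm:gen_ord_sum_ics_card}. The poset $\bigoplus_{i=1}^n \mathbf{m}$ is precisely the ordinal sum of antichains $\mathbf{a_1}\oplus\mathbf{a_2}\oplus\ldots\oplus\mathbf{a_n}$ with $a_i = m$ for every $1 \leq i \leq n$, so Theorem~\ref{thm:gen_ord_sum_ics_card} applies verbatim, and I would simply substitute $a_i = m$ into its cardinality expression $1+\sum_{1\leq i\leq n}(2^{a_i}-1)+\sum_{1\leq i<j\leq n}(2^{a_i}-1)(2^{a_j}-1)$.

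First I would evaluate the single-antichain term: since each of the $n$ summands equals $2^m - 1$, the sum $\sum_{1\leq i\leq n}(2^{m}-1)$ collapses to $n(2^m - 1)$. Next, for the two-antichain term, every product $(2^{a_i}-1)(2^{a_j}-1)$ becomes the constant $(2^m - 1)^2$; as the index set $\{(i,j) : 1 \leq i < j \leq n\}$ has exactly $\binom{n}{2}$ elements, the double sum equals $\binom{n}{2}(2^m-1)^2$. Adding the contribution of the empty set (the leading $1$) then yields $1 + n(2^m - 1) + \binom{n}{2}(2^m - 1)^2$, as claimed. Note that the computation is cleaner here than in the analogous $D(n,m)$ corollary, because all antichains share the common size $m$ and no splitting by the parity of the indices $i,j$ is required.

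There is no genuine obstacle in this argument, since it is purely arithmetic bookkeeping once the hypotheses of Theorem~\ref{thm:gen_ord_sum_ics_card} are confirmed. The only point meriting a moment's attention is verifying that the restrictions $m \geq 2$ and $n \geq 3$ imposed in the corollary do not interfere: the enumeration formula of Theorem~\ref{thm:gen_ord_sum_ics_card} holds for all choices of antichain sizes and all $n$, so these hypotheses—present mainly for compatibility with the companion orbit-structure results—are harmless here and may simply be carried along without additional work.
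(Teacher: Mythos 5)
Your proposal is correct and matches the paper's own proof: both obtain the formula by substituting $a_i = m$ for all $i$ into the enumeration of Theorem~\ref{thm:gen_ord_sum_ics_card} and simplifying the single-antichain and two-antichain sums to $n(2^m-1)$ and $\binom{n}{2}(2^m-1)^2$, respectively. Your remark that the hypotheses $n\geq 3$, $m\geq 2$ play no role in the enumeration itself is also accurate.
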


\begin{proof}
Consider $\bigoplus_{i=1}^n \mathbf{m}$. From Theorem \ref{thm:gen_ord_sum_ics_card}, we have a formula for the cardinality of $\IC(\bigoplus_{i=1}^n \mathbf{m})$ split into cases based on the number of antichains contributing to the interval-closed set: no antichains, a single antichain, or more than one antichain.

For $\bigoplus_{i=1}^n \mathbf{m}$, there is only one size for the antichains. Thus $a_i = m$ for all $i$. Using this information, we have the following:

\begin{itemize}
\item No antichains: there is one element of this type, the empty set.
\item A single antichain: there are $n(2^m-1)$ elements of this type.
\item More than one antichain: there are $\binom{n}{2}(2^m - 1)^2$ elements of this type.
\end{itemize}
Summing these, we get the cardinality of $\IC(\bigoplus_{i=1}^n \mathbf{m})$.
\end{proof}

\begin{cor}
Let $n\geq 3$, and denote $\mathbf{m}_i$ as the antichain of rank $i + 1$. The set $\IC(\bigoplus_{i=1}^n \mathbf{m})$ has rowmotion order $2n(n+2)$ when $n$ is odd and $n(n+2)/2$ when $n$ is even. 
Moreover, a complete description of its rowmotion orbit structure is below:
\begin{itemize}
\item $1+\frac{1}{2}\binom{n}{2}(2^m-2)^2$ orbits of size $2$, corresponding to $\{\emptyset, P\}$ and the orbits with representatives of the form $\left [ \binom{\mathbf{m}_i}{k}, \binom{\mathbf{m}_j}{l} \right ]$,
\item $\lfloor \frac{n-1}{2}\rfloor$ orbits of size $n+2$, with representatives of the form $[\mathbf{m}_1, \mathbf{m}_j]$ with $j < \frac{n}{2}$,
\item An orbit of size $\frac{n+2}{2}$ when $n$ is even, having a representative $[\mathbf{m}_1, \mathbf{m}_{n/2}]$,
\item $n(2^{m-1}-1)$ orbits of size $2n$ when $n$ is odd, and $n(2^m-2)$ orbits of size $n$ if $n$ is even, with representatives of the form $\binom{\mathbf{m}_i}{k}$ in both cases. 
\end{itemize}

\end{cor}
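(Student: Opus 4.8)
The plan is to invoke Theorem~\ref{thm:gen_ord_sum_row} directly, recognizing that $P = \bigoplus_{i=1}^n \mathbf{m}$ is precisely the ordinal sum $\mathbf{a_1} \oplus \mathbf{a_2} \oplus \cdots \oplus \mathbf{a_n}$ with $a_i = m$ for every $i$. Since the hypothesis $n \geq 3$ is satisfied, every assertion of that theorem applies verbatim, so the only remaining task is to specialize each orbit-count formula to the case of constant antichain size and simplify the resulting sums. There is no new dynamical content to establish.

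First I would handle the orbits of size $2$. Substituting $a_i = a_j = m$ into $\frac{1}{2}\sum_{1\leq i<j\leq n}(2^{a_i}-2)(2^{a_j}-2)$ makes every summand equal to $(2^m-2)^2$, so the sum collapses to $\binom{n}{2}(2^m-2)^2$ and the total count becomes $1 + \frac{1}{2}\binom{n}{2}(2^m-2)^2$, matching the claimed value, where the $+1$ again accounts for the orbit $\{\emptyset, P\}$ and the representatives $[\binom{\mathbf{m_i}}{k}, \binom{\mathbf{m_j}}{l}]$ are the general representatives $[\binom{\mathbf{a_i}}{k}, \binom{\mathbf{a_j}}{l}]$ under the identification $\mathbf{a_i} = \mathbf{m_i}$.

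Next I would observe that the orbits of size $n+2$ and the single orbit of size $\frac{n+2}{2}$ (present only when $n$ is even) are counted in Theorem~\ref{thm:gen_ord_sum_row} by the quantities $\lfloor \frac{n-1}{2}\rfloor$ and $1$, which depend only on $n$ and are therefore unaffected by the specialization; their representatives $[\mathbf{m_1}, \mathbf{m_j}]$ and $[\mathbf{m_1}, \mathbf{m_{n/2}}]$ are exactly the \emph{full antichain} interval-closed sets $[\mathbf{a_1}, \mathbf{a_j}]$ of the general statement. For the last family, substituting $a_i = m$ into $\sum_{1\leq i\leq n}(2^{a_i-1}-1)$ yields $n(2^{m-1}-1)$ orbits of size $2n$ when $n$ is odd, while $\sum_{1\leq i\leq n}(2^{a_i}-2)$ yields $n(2^m-2)$ orbits of size $n$ when $n$ is even, with representatives $\binom{\mathbf{m_i}}{k}$ in both cases, exactly as claimed.

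Finally, the order of rowmotion is inherited directly from Theorem~\ref{thm:gen_ord_sum_row}, where it was shown to equal $\lcm(2, n+2, 2n) = 2n(n+2)$ for $n$ odd and $\lcm(2, n+2, \frac{n+2}{2}, n) = n(n+2)/2$ for $n$ even; neither expression depends on $m$. Since Theorem~\ref{thm:gen_ord_sum_row} already certifies that these orbit families exhaust $\IC(P)$, no separate completeness argument is needed. The only genuine obstacle is the bookkeeping of matching the specialized representatives to the general ones and confirming that the constant-$a_i$ substitution factors cleanly out of each sum, but this is immediate; the substance of the result lies entirely in the theorem being specialized.
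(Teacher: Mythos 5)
Your proposal is correct and follows essentially the same route as the paper's own proof: both simply specialize Theorem~\ref{thm:gen_ord_sum_row} to the case $a_i = m$ for all $i$, note that the counts of orbits of size $n+2$ and $\frac{n+2}{2}$ depend only on $n$, collapse the sums for the size-$2$ and size-$n$ (or $2n$) orbit families, and inherit the rowmotion order directly. The only cosmetic difference is that the paper writes the odd-$n$ count as $\frac{n}{2}(2^m-2)$ while you write the equal quantity $n(2^{m-1}-1)$.
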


\begin{proof} Consider $\bigoplus_{i=1}^n \mathbf{m}$. We can use the formulas provided in Theorem \ref{thm:gen_ord_sum_row} to find the complete descriptions of the rowmotion orbit structures. As all antichains are of size $m$, we can simplify the calculations significantly.

We have $$1+\displaystyle\frac{1}{2}\sum_{1\leq i<j\leq n}(2^{a_i}-2)(2^{a_j}-2) = 1+\displaystyle\frac{1}{2}\sum_{1\leq i<j\leq n}(2^{m}-2)^2.$$ Thus, there are $1 + \frac{1}{2}\binom{n}{2}(2^{m}-2)^2$ orbits of size $2$.

Similarly, $$\displaystyle \sum_{1\leq i\leq n}(2^{a_i}-2) = \displaystyle \sum_{1\leq i\leq n}(2^{m}-2),$$ so there are $\frac{n}{2}(2^{m}-2)$ orbits of size $2n$ when $n$ is odd and $n(2^m - 2)$ orbits of size $n$ when $n$ is even. 

As the $\lfloor \frac{n-1}{2}\rfloor$ orbits of size $n+2$, and the single orbit of size $\frac{n + 2}{2}$ when $n$ is even, depend only on the number of ranks, and not the size of the antichains, these remain unchanged. 

Finally, the order of rowmotion is inherited directly from Theorem \ref{thm:gen_ord_sum_row}.
\end{proof}

\subsection{Rowmotion as a global action on ordinal sums of antichains}

Next, we examine how the alternative definition of rowmotion given in Theorem \ref{thm:AltRow} simplifies when working with ordinal sums of antichains. Example \ref{ex:altdef ordsum} illustrates the categories of interval-closed sets considered in the proof of Theorem \ref{alt_def_ordinal}.  The reader is encouraged to refer to this example while processing the proof.

\begin{thm}\label{alt_def_ordinal}
Let $P = \mathbf{a}_1\oplus\mathbf{a}_2\oplus\cdots\oplus\mathbf{a}_n$ and $I \in \IC(P)$. Then rowmotion on $I$ is given as
\[\row(I) =
\begin{cases}
\overline{I} &\text{if } I=\emptyset, \ \mathbf{a}_n\subseteq I, \text{ or }  I\subseteq \mathbf{a}_n , \\
\oi\ceil{I}-\oi\Min(I) &\text{otherwise}.
\end{cases}\]
\end{thm}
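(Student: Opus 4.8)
The plan is to read off the result from the global formula \eqref{Eqn:arow formula} of Theorem~\ref{thm:AltRow}, showing that in an ordinal sum of antichains each of its three terms collapses. I would first clear the three cases of the top branch. When $I=\emptyset$ this is Corollary~\ref{cor:empty ICS}. When $\mathbf{a_n}\subseteq I$, the set $I$ contains every maximal element of $P$, so $\row(I)=\overline{I}$ by Lemma~\ref{lem:max_elt_comp}. When $\emptyset\neq I\subseteq\mathbf{a_n}$, every element of $I$ is maximal and $\f(I)=I$, so $\ceil{I}=\emptyset$; expanding \eqref{Eqn:arow formula} directly, the incomparable term contributes $\inc(I)=\mathbf{a_n}-I$ and the middle term contributes $\oi(I)-I=\mathbf{a_1}\cup\cdots\cup\mathbf{a_{n-1}}$, which together give $\overline{I}$.

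For the main branch, assume $I\neq\emptyset$, $\mathbf{a_n}\not\subseteq I$, and $I\not\subseteq\mathbf{a_n}$. By the classification in Theorem~\ref{thm:gen_ord_sum_ics_card}, $I$ meets a contiguous block of antichains $\mathbf{a_p},\ldots,\mathbf{a_q}$ with $p<n$, containing every interior antichain in full, so $\Min(I)=I\cap\mathbf{a_p}$ and $\Max(I)=I\cap\mathbf{a_q}$. The key computation is to identify $\ceil{I}$, and here I would split into three shapes: (i) a single antichain $p=q$; (ii) $p<q$ with $\mathbf{a_q}$ fully occupied; and (iii) $p<q$ with $\mathbf{a_q}$ only partially occupied. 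In case (iii), $\mathbf{a_q}-I$ already lies in $\f(I)$ because it sits above the full antichain $\mathbf{a_{q-1}}\subseteq I$, so $\ceil{I}=\mathbf{a_q}-I$; in cases (i) and (ii) the set $\f(I)-I$ begins one antichain higher, giving $\ceil{I}=\mathbf{a_{p+1}}$ and $\ceil{I}=\mathbf{a_{q+1}}$ respectively, where $p<n$ holds throughout and $q<n$ in case (ii) since a saturated top at level $n$ would force $\mathbf{a_n}\subseteq I$. In every case $\Min(I)=I\cap\mathbf{a_p}\subseteq\mathbf{a_p}\subseteq\oi\ceil{I}$, so $\F=\Min(I)$ and the third term of \eqref{Eqn:arow formula} is exactly $\oi\ceil{I}-\oi\Min(I)$.

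It then remains to check that the first two terms add nothing. For the middle term $\oi\A-(I\cup\oi\ceil{I})$, I would compute $\inc_I(\ceil{I})$: in case (iii) it equals $\Max(I)$, and in cases (i) and (ii) it is empty since $\ceil{I}$ is then a full antichain, hence comparable to every element; in both situations $\oi\A\subseteq I\cup\oi\ceil{I}$, so the middle term is empty. For $\inc(I)$, total comparability across distinct antichains forces $\inc(I)=\emptyset$ whenever $p<q$, whereas in the single-antichain case $\inc(I)=\mathbf{a_p}-I$; the latter lies in $\oi\ceil{I}=\mathbf{a_1}\cup\cdots\cup\mathbf{a_{p+1}}$ and is disjoint from $\oi\Min(I)=\oi(I)$, so $\inc(I)\subseteq\oi\ceil{I}-\oi\Min(I)$ in all cases. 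Assembling the three terms yields $\row(I)=\oi\ceil{I}-\oi\Min(I)$.

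The main obstacle is the case-dependent behavior of the ceiling: whether $\ceil{I}$ is the unfilled remainder $\mathbf{a_q}-I$ of the top antichain of $I$, or jumps to the next full antichain, depends jointly on whether $I$ spans one or several antichains and on whether its top antichain is saturated. Getting this description right is the crux; in particular one must verify that the single-antichain case---the only one where $\inc(I)$ is nonempty---does not genuinely enlarge $\row(I)$, since $\mathbf{a_p}-I$ is already absorbed into $\oi\ceil{I}-\oi\Min(I)$. Once $\ceil{I}$ is pinned down in each shape, the remaining set-theoretic bookkeeping for the three terms of \eqref{Eqn:arow formula} is routine.
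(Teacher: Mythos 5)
Your proposal is correct and takes essentially the same approach as the paper's proof: both specialize formula \eqref{Eqn:arow formula} of Theorem~\ref{thm:AltRow}, dispatch $I=\emptyset$, $\mathbf{a_n}\subseteq I$, and $I\subseteq\mathbf{a_n}$ via Corollary~\ref{cor:empty ICS}, Lemma~\ref{lem:max_elt_comp}, and direct expansion, and then compute $\ceil{I}$, $\inc(I)$, $\inc_I(\ceil{I})$, and $\F=\Min(I)$ case by case; the paper's three categories of interval-closed sets are exactly your shapes (i)--(iii) regrouped (your (i) merges the paper's single full antichain with its category of a single partial antichain), with the same values and the same absorption arguments in each case. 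The only small slip is in your shape (iii) when $q=p+1$: the antichain $\mathbf{a_{q-1}}=\mathbf{a_p}$ need not be fully contained in $I$ as you assert, but the conclusion $\ceil{I}=\mathbf{a_q}-I$ still holds because every element of $\mathbf{a_q}$ lies above the nonempty set $I\cap\mathbf{a_p}$.
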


\begin{proof}  Consider  $I \in \IC(P)$ where $P = \mathbf{a}_1\oplus\mathbf{a}_2\oplus\cdots\oplus\mathbf{a}_n.$  Corollary \ref{cor:empty ICS} and Lemma \ref{lem:max_elt_comp} give the result $\row(I)=\overline{I}$ when $I=\emptyset$ and $\mathbf{a}_n\subseteq I,$ respectively.  For all other cases, we start with the formula \eqref{Eqn:arow formula} from Theorem \ref{thm:AltRow}.

Consider the case $I\subseteq\mathbf{a}_n,$ i.e. $I=\binom{\mathbf{a}_n}{k}$ using the notation of Definition \ref{def: ordsumICS}.  In this case $\f(I)=I,$ implying $\ceil{I}$ and $\oi\ceil{I}-\oi\left(\Min(I)\cap\oi\ceil{I}\right)$ are both equal to the empty set.
Furthermore, $\inc(I)=\overline{\binom{\mathbf{a}_n}{k}}$ and $\inc_I(\ceil{I})=I.$  Thus Equation \eqref{Eqn:arow formula} becomes 
\[\row(I)=\overline{\binom{\mathbf{a}_n}{k}}\cup \left(\oi I-I\right)\cup \emptyset.\]
Since $I=\binom{\mathbf{a}_n}{k}$, $\oi I-I=[\mathbf{a}_1,\mathbf{a}_{n-1}]$ and $\overline{\binom{\mathbf{a}_n}{k}}\cup \left(\oi I-I\right)=[\mathbf{a}_1,\overline{\binom{\mathbf{a}_n}{k}}]=\overline{I}.$

By Theorem \ref{thm:gen_ord_sum_ics_card}, all other interval-closed sets of $\mathbf{a}_1\oplus\mathbf{a}_2\oplus\cdots\oplus\mathbf{a}_n$ fall into one of the following three categories.
\begin{enumerate}
\item Interval closed sets of the form $\mathbf{a}_j$, $[\mathbf{a}_i,\mathbf{a}_j]$, or $[\binom{\mathbf{a}_i}{k},\mathbf{a}_j]$ with $i<j<n$,
\item Interval closed sets of the form $\binom{\mathbf{a}_j}{l}$ with $j<n$,
\item Interval closed sets of the form $[\mathbf{a}_i,\binom{\mathbf{a}_j}{l}]$ or $[\binom{\mathbf{a}_i}{k},\binom{\mathbf{a}_j}{l
}]$ with $i<j\leq n$.
\end{enumerate}

Consider interval-closed sets of the form $\mathbf{a}_j$, $[\mathbf{a}_i,\mathbf{a}_j]$, or $[\binom{\mathbf{a}_i}{k},\mathbf{a}_j],$ with $i<j<n.$  It follows by the ordinal sum of antichains construction that $\inc(I)=\emptyset$, $\ceil{I}=\mathbf{a}_{j+1}$, $\inc_I(\ceil{I})=\emptyset$, and $\Min(I)\cap\oi\ceil{I}=\Min(I).$  Substituting these values into Equation \eqref{Eqn:arow formula} we find 
\[\row(I)=\emptyset\cup\emptyset\cup\left(\oi\ceil{I}-\oi\Min(I)\right)=\oi\ceil{I}-\oi\Min(I).\]

For interval-closed sets of the form $I=\binom{\mathbf{a}_j}{l}$ with $j<n$, we find $\inc(I)=\overline{\binom{\mathbf{a}_j}{l}}.$ Once again, however, $\ceil{I}=\mathbf{a}_{j+1},$ $\inc_I(\ceil{I})=\emptyset,$ and $\Min(I)\cap\oi\ceil{I}=\Min(I).$ Thus, Equation \eqref{Eqn:arow formula} simplifies to 
\[\row(I)=\overline{\binom{\mathbf{a}_j}{l}}\cup\emptyset\cup \left(\oi\ceil{I}-\oi\Min(I)\right).\]
As $\Min(I)=I$ in this case, $\overline{\binom{\mathbf{a}_j}{l}}$ is disjoint from $\oi\Min(I)$ but a subset of $\oi\ceil{I}.$ Therefore $\overline{\binom{\mathbf{a}_j}{l}}\cup\emptyset\cup \left(\oi\ceil{I}-\oi\Min(I)\right)=\oi\ceil{I}-\oi\Min(I).$

A slightly more complex computation finds the same result for interval-closed sets of the form $[\mathbf{a}_i,\binom{\mathbf{a}_j}{l}]$ or $[\binom{\mathbf{a}_i}{k},\binom{\mathbf{a}_j}{l
}],$ with $i<j\leq n$.  For $I$ in this category, $\inc(I)=\emptyset$, $\ceil{I}=\overline{\binom{\mathbf{a}_j}{l}}$, and $\inc_I(\ceil{I})=\binom{\mathbf{a}_j}{l}=\Max(I).$ Thus $\oi\ceil{I}=[\mathbf{a}_1,\overline{\binom{\mathbf{a}_j}{l}}]$ and $\Min(I)\cap\oi\ceil{I}=\Min(I).$  Moreover $\oi\inc_I(\ceil{I})=\oi I,$ which is a subset of $I\cup \oi\ceil{I},$ and $\oi\inc_I(\ceil{I})-\left(I\cup\oi\ceil{I}\right)=\emptyset.$   By Equation \eqref{Eqn:arow formula},
\[\row(I)=\oi\ceil{I}-\oi\Min(I).\]
\end{proof}

\begin{ex}\label{ex:altdef ordsum}
Let $P = \textbf{2} \oplus \textbf{4}   \oplus \textbf{2}   \oplus \textbf{4} $. Figures \ref{Fig: altdef ordsum ex1} - \ref{Fig: altdef ordsum ex4} illustrate the categories of interval-closed sets considered in the proof of Theorem \ref{alt_def_ordinal}. In each figure, the interval closed set $I$ is shown with red nodes followed by $\row(I)$ shown with blue nodes.  The minimum elements of $I$ are shown with pentagonal nodes.  Where existent, the ceiling of $I$ is indicated with triangular shaped nodes and the elements incomparable to $I$ are indicated with diamond shaped nodes.   

\begin{figure}[hbpt]
\begin{minipage}{.35\linewidth}\centering
		\begin{tikzpicture}[scale = .58]
\node[draw,circle](A) at (3,0) {\small{$a_{1,1}$}};
		\node[draw,circle](B) at (5,0) {\small{$a_{1,2}$} };
		\node[draw,circle](C) at (1,2) {\small{$a_{2,1}$}};
		\node[draw,circle](D) at (3,2) {\small{$a_{2,2}$} };
		\node[draw,circle](E) at (5,2) {\small{$a_{2,3}$} };
		\node[draw,circle](F) at (7, 2) {\small{$a_{2,4}$} };
		\node[draw,circle](G) at (3,4) {\small{$a_{3,1}$} };
    \node[draw,circle](H) at (5,4) {\small{$a_{3,2}$}};
    \node[draw,diamond, scale=.88](I) at (1,6) {\small{$a_{4,1}$}};
    \node[draw,diamond,  scale=.88](J) at (3,6) {\small{$a_{4,2}$}};
    \node[regular polygon, draw, regular polygon sides=5, inner sep=.4mm, fill=red!60](K) at (5,6) {\small{$a_{4,3}$}};
    \node[regular polygon, draw, regular polygon sides=5, inner sep=.4mm, fill=red!60](L) at (7,6) {\small{$a_{4,4}$}};
		\draw[very thick] (A) --(C) --(G) --(I);
		\draw[very thick] (A) --(D) --(G) --(J);
		\draw[very thick] (A) --(E) --(G) --(K);
		\draw[very thick] (A) --(F) --(G) --(L);
		\draw[very thick] (B) --(C) --(H) --(I);
		\draw[very thick] (B) --(D) --(H) --(J);
		\draw[very thick] (B) --(E) --(H) --(K);
		\draw[very thick] (B) --(F) --(H) --(L);
\end{tikzpicture} 
\end{minipage}\quad $\xlongrightarrow{\text{Row}}$\quad
\begin{minipage}{.35\linewidth}\centering
		\begin{tikzpicture}[scale = .58]
\node[draw,circle, fill=cyan](A) at (3,0) {\small{$a_{1,1}$}};
		\node[draw,circle, fill=cyan](B) at (5,0) {\small{$a_{1,2}$} };
		\node[draw,circle, fill=cyan](C) at (1,2) {\small{$a_{2,1}$}};
		\node[draw,circle, fill=cyan](D) at (3,2) {\small{$a_{2,2}$} };
		\node[draw,circle, fill=cyan](E) at (5,2) {\small{$a_{2,3}$} };
		\node[draw,circle, fill=cyan](F) at (7, 2) {\small{$a_{2,4}$} };
		\node[draw,circle, fill=cyan](G) at (3,4) {\small{$a_{3,1}$} };
    \node[draw,circle, fill=cyan](H) at (5,4) {\small{$a_{3,2}$}};
    \node[draw,diamond, fill=cyan, scale=.88](I) at (1,6) {\small{$a_{4,1}$}};
    \node[draw,diamond,  fill=cyan, scale=.88](J) at (3,6) {\small{$a_{4,2}$}};
    \node[regular polygon, draw, regular polygon sides=5, inner sep=.4mm](K) at (5,6) {\small{$a_{4,3}$}};
    \node[regular polygon, draw, regular polygon sides=5, inner sep=.4mm](L) at (7,6) {\small{$a_{4,4}$}};
		\draw[very thick] (A) --(C) --(G) --(I);
		\draw[very thick] (A) --(D) --(G) --(J);
		\draw[very thick] (A) --(E) --(G) --(K);
		\draw[very thick] (A) --(F) --(G) --(L);
		\draw[very thick] (B) --(C) --(H) --(I);
		\draw[very thick] (B) --(D) --(H) --(J);
		\draw[very thick] (B) --(E) --(H) --(K);
		\draw[very thick] (B) --(F) --(H) --(L);
\end{tikzpicture} 
\end{minipage}
\caption{ $I\subset \mathbf{a}_n$, i.e. $I=\binom{\mathbf{a}_n}{k},$ and $\row(I)=\overline{I}.$}\label{Fig: altdef ordsum ex1}
\bigskip
\begin{minipage}{.35\linewidth}\centering
		\begin{tikzpicture}[scale = .58]
\node[draw,circle](A) at (3,0) {\small{$a_{1,1}$}};
		\node[draw,circle](B) at (5,0) {\small{$a_{1,2}$} };
		\node[draw,circle](C) at (1,2) {\small{$a_{2,1}$}};
		\node[regular polygon, draw, regular polygon sides=5, inner sep=.4mm, fill=red!60](D) at (3,2) {\small{$a_{2,2}$} };
		\node[draw,circle](E) at (5,2) {\small{$a_{2,3}$} };
		\node[regular polygon, draw, regular polygon sides=5, inner sep=.4mm, fill=red!60](F) at (7, 2) {\small{$a_{2,4}$} };
		\node[draw,circle, fill=red!60](G) at (3,4) {\small{$a_{3,1}$} };
    \node[draw,circle, fill=red!60](H) at (5,4) {\small{$a_{3,2}$}};
    \node[regular polygon, draw, regular polygon sides=3, scale=.52](I) at (1,6) {$a_{4,1}$};
    \node[regular polygon, draw, regular polygon sides=3, scale=.52](J) at (3,6) {$a_{4,2}$};
    \node[regular polygon, draw, regular polygon sides=3, scale=.52](K) at (5,6) {$a_{4,3}$};
    \node[regular polygon, draw, regular polygon sides=3, scale=.52](L) at (7,6) {$a_{4,4}$};
		\draw[very thick] (A) --(C) --(G) --(I);
		\draw[very thick] (A) --(D) --(G) --(J);
		\draw[very thick] (A) --(E) --(G) --(K);
		\draw[very thick] (A) --(F) --(G) --(L);
		\draw[very thick] (B) --(C) --(H) --(I);
		\draw[very thick] (B) --(D) --(H) --(J);
		\draw[very thick] (B) --(E) --(H) --(K);
		\draw[very thick] (B) --(F) --(H) --(L);
\end{tikzpicture} 
\end{minipage}\quad $\xlongrightarrow{\text{Row}}$\quad
\begin{minipage}{.35\linewidth}\centering
		\begin{tikzpicture}[scale = .58]
\node[draw,circle](A) at (3,0) {\small{$a_{1,1}$}};
		\node[draw,circle](B) at (5,0) {\small{$a_{1,2}$} };
		\node[draw,circle, fill=cyan](C) at (1,2) {\small{$a_{2,1}$}};
		\node[regular polygon, draw, regular polygon sides=5, inner sep=.4mm](D) at (3,2) {\small{$a_{2,2}$} };
		\node[draw,circle, fill=cyan](E) at (5,2) {\small{$a_{2,3}$} };
		\node[regular polygon, draw, regular polygon sides=5, inner sep=.4mm](F) at (7, 2) {\small{$a_{2,4}$} };
		\node[draw,circle, fill=cyan](G) at (3,4) {\small{$a_{3,1}$} };
    \node[draw,circle, fill=cyan](H) at (5,4) {\small{$a_{3,2}$}};
    \node[regular polygon, draw, regular polygon sides=3, scale=.52, fill=cyan](I) at (1,6) {$a_{4,1}$};
    \node[regular polygon, draw, regular polygon sides=3, scale=.52, fill=cyan](J) at (3,6) {$a_{4,2}$};
    \node[regular polygon, draw, regular polygon sides=3, scale=.52, fill=cyan](K) at (5,6) {$a_{4,3}$};
    \node[regular polygon, draw, regular polygon sides=3, scale=.52, fill=cyan](L) at (7,6) {$a_{4,4}$};
		\draw[very thick] (A) --(C) --(G) --(I);
		\draw[very thick] (A) --(D) --(G) --(J);
		\draw[very thick] (A) --(E) --(G) --(K);
		\draw[very thick] (A) --(F) --(G) --(L);
		\draw[very thick] (B) --(C) --(H) --(I);
		\draw[very thick] (B) --(D) --(H) --(J);
		\draw[very thick] (B) --(E) --(H) --(K);
		\draw[very thick] (B) --(F) --(H) --(L);
\end{tikzpicture} 
\end{minipage}
\caption{ $I=[\binom{\mathbf{a}_i}{k},\mathbf{a}_j]$ with $i<j<n$ (Category (1)), and $\row(I)=\oi\ceil{I}-\oi\Min(I).$}

\bigskip

\begin{minipage}{.35\linewidth}\centering
		\begin{tikzpicture}[scale = .58]
\node[draw,circle](A) at (3,0) {\small{$a_{1,1}$}};
		\node[draw,circle](B) at (5,0) {\small{$a_{1,2}$} };
		\node[draw,diamond, scale=.88](C) at (1,2) {\small{$a_{2,1}$}};
		\node[regular polygon, draw, regular polygon sides=5, inner sep=.4mm, fill=red!60](D) at (3,2) {\small{$a_{2,2}$} };
		\node[draw,diamond,scale=.88](E) at (5,2) {\small{$a_{2,3}$} };
		\node[regular polygon, draw, regular polygon sides=5, inner sep=.4mm, fill=red!60](F) at (7, 2) {\small{$a_{2,4}$} };
		\node[regular polygon, draw, regular polygon sides=3, scale=.52](G) at (3,4) {$a_{3,1}$ };
    \node[regular polygon, draw, regular polygon sides=3, scale=.52](H) at (5,4) {$a_{3,2}$};
    \node[draw,circle](I) at (1,6) {\small{$a_{4,1}$}};
    \node[draw,circle](J) at (3,6) {\small{$a_{4,2}$}};
    \node[draw,circle](K) at (5,6) {\small{$a_{4,3}$}};
    \node[draw,circle](L) at (7,6) {\small{$a_{4,4}$}};
		\draw[very thick] (A) --(C) --(G) --(I);
		\draw[very thick] (A) --(D) --(G) --(J);
		\draw[very thick] (A) --(E) --(G) --(K);
		\draw[very thick] (A) --(F) --(G) --(L);
		\draw[very thick] (B) --(C) --(H) --(I);
		\draw[very thick] (B) --(D) --(H) --(J);
		\draw[very thick] (B) --(E) --(H) --(K);
		\draw[very thick] (B) --(F) --(H) --(L);
\end{tikzpicture} 
\end{minipage}\quad $\xlongrightarrow{\text{Row}}$\quad
\begin{minipage}{.35\linewidth}\centering
		\begin{tikzpicture}[scale = .58]
\node[draw,circle](A) at (3,0) {\small{$a_{1,1}$}};
		\node[draw,circle](B) at (5,0) {\small{$a_{1,2}$} };
		\node[draw,diamond, scale=.88,fill=cyan](C) at (1,2) {\small{$a_{2,1}$}};
		\node[regular polygon, draw, regular polygon sides=5, inner sep=.4mm](D) at (3,2) {\small{$a_{2,2}$} };
		\node[draw,diamond,scale=.88,fill=cyan](E) at (5,2) {\small{$a_{2,3}$} };
		\node[regular polygon, draw, regular polygon sides=5, inner sep=.4mm](F) at (7, 2) {\small{$a_{2,4}$} };
		\node[regular polygon, draw, regular polygon sides=3, scale=.52,fill=cyan](G) at (3,4) {$a_{3,1}$ };
    \node[regular polygon, draw, regular polygon sides=3, scale=.52,fill=cyan](H) at (5,4) {$a_{3,2}$};
    \node[draw,circle](I) at (1,6) {\small{$a_{4,1}$}};
    \node[draw,circle](J) at (3,6) {\small{$a_{4,2}$}};
    \node[draw,circle](K) at (5,6) {\small{$a_{4,3}$}};
    \node[draw,circle](L) at (7,6) {\small{$a_{4,4}$}};
		\draw[very thick] (A) --(C) --(G) --(I);
		\draw[very thick] (A) --(D) --(G) --(J);
		\draw[very thick] (A) --(E) --(G) --(K);
		\draw[very thick] (A) --(F) --(G) --(L);
		\draw[very thick] (B) --(C) --(H) --(I);
		\draw[very thick] (B) --(D) --(H) --(J);
		\draw[very thick] (B) --(E) --(H) --(K);
		\draw[very thick] (B) --(F) --(H) --(L);
\end{tikzpicture} 
\end{minipage}
	\caption{ $I=\binom{\mathbf{a}_j}{l}$ with $j<n$ (Category (2)), and $\row(I)=\oi\ceil{I}-\oi\Min(I).$}

\bigskip

\begin{minipage}{.35\linewidth}\centering
		\begin{tikzpicture}[scale = .58]
\node[draw,circle](A) at (3,0) {\small{$a_{1,1}$}};
		\node[draw,circle](B) at (5,0) {\small{$a_{1,2}$} };
		\node[draw,circle](C) at (1,2) {\small{$a_{2,1}$}};
		\node[draw,circle](D) at (3,2) {\small{$a_{2,2}$} };
		\node[draw,circle](E) at (5,2) {\small{$a_{2,3}$} };
		\node[draw,circle](F) at (7, 2) {\small{$a_{2,4}$} };
		\node[draw,circle](G) at (3,4) {\small{$a_{3,1}$} };
    \node[regular polygon, draw, regular polygon sides=5, inner sep=.4mm, fill=red!60](H) at (5,4) {\small{$a_{3,2}$}};
     \node[regular polygon, draw, regular polygon sides=3, scale=.52](I) at (1,6) {$a_{4,1}$};
    \node[regular polygon, draw, regular polygon sides=3, scale=.52](J) at (3,6) {$a_{4,2}$};
    \node[draw,circle, fill=red!60](K) at (5,6) {\small{$a_{4,3}$}};
    \node[draw,circle, fill=red!60](L) at (7,6) {\small{$a_{4,4}$}};
		\draw[very thick] (A) --(C) --(G) --(I);
		\draw[very thick] (A) --(D) --(G) --(J);
		\draw[very thick] (A) --(E) --(G) --(K);
		\draw[very thick] (A) --(F) --(G) --(L);
		\draw[very thick] (B) --(C) --(H) --(I);
		\draw[very thick] (B) --(D) --(H) --(J);
		\draw[very thick] (B) --(E) --(H) --(K);
		\draw[very thick] (B) --(F) --(H) --(L);
\end{tikzpicture} 
\end{minipage}\quad $\xlongrightarrow{\text{Row}}$\quad
\begin{minipage}{.35\linewidth}\centering
		\begin{tikzpicture}[scale = .58]
\node[draw,circle](A) at (3,0) {\small{$a_{1,1}$}};
		\node[draw,circle](B) at (5,0) {\small{$a_{1,2}$} };
		\node[draw,circle](C) at (1,2) {\small{$a_{2,1}$}};
		\node[draw,circle](D) at (3,2) {\small{$a_{2,2}$} };
		\node[draw,circle](E) at (5,2) {\small{$a_{2,3}$} };
		\node[draw,circle](F) at (7, 2) {\small{$a_{2,4}$} };
		\node[draw,circle, fill=cyan](G) at (3,4) {\small{$a_{3,1}$} };
    \node[regular polygon, draw, regular polygon sides=5, inner sep=.4mm](H) at (5,4) {\small{$a_{3,2}$}};
 \node[regular polygon, draw, regular polygon sides=3, scale=.52, fill=cyan](I) at (1,6) {$a_{4,1}$};
    \node[regular polygon, draw, regular polygon sides=3, scale=.52,fill=cyan](J) at (3,6) {$a_{4,2}$};
    \node[draw,circle](K) at (5,6) {\small{$a_{4,3}$}};
    \node[draw,circle](L) at (7,6) {\small{$a_{4,4}$}};
		\draw[very thick] (A) --(C) --(G) --(I);
		\draw[very thick] (A) --(D) --(G) --(J);
		\draw[very thick] (A) --(E) --(G) --(K);
		\draw[very thick] (A) --(F) --(G) --(L);
		\draw[very thick] (B) --(C) --(H) --(I);
		\draw[very thick] (B) --(D) --(H) --(J);
		\draw[very thick] (B) --(E) --(H) --(K);
		\draw[very thick] (B) --(F) --(H) --(L);
\end{tikzpicture}
\end{minipage}
	\caption{ $I=[\binom{\mathbf{a}_i}{k},\binom{\mathbf{a}_j}{l}]$ with $i<j\leq n$ (Category (3)), and $\row(I)=\oi\ceil{I}-\oi\Min(I).$}\label{Fig: altdef ordsum ex4}
\end{figure}

\end{ex}

\subsection{Signed cardinality homomesy}
\label{sec:ord_sum_homomesy}

Unlike in the case of order ideals, chains (and as we'll see in Section~\ref{sec:prod_chains_homomesy}, products of chains) do not in general exhibit homomesy with respect to the cardinality statistic. 

\begin{prop}\label{coro:chains_orbits} 
The set $\IC([n])$ with $n \geq 3$ under rowmotion does not exhibit homomesy with respect to the cardinality statistic.
Moreover, the average cardinality for each rowmotion orbit  is as follows:
\begin{itemize}
\item the orbit $\{\emptyset,[n]\}$ has average cardinality $\frac{n}{2}$, 
\item the $\lfloor \frac{n-1}{2}\rfloor$ orbits of size $n + 2$ have average cardinality $\displaystyle\frac{2k(n-k)+n}{n+2}$ for $1\leq k<\frac{n}{2}$,
\item and when $n$ is even, the single orbit of size $\frac{n + 2}{2}$ has average cardinality $\frac{n}{2}$.
\end{itemize}
\end{prop}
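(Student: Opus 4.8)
The plan is to read off the complete orbit description from Theorem~\ref{thm:chains_orbits} and compute the sum of cardinalities over each orbit, dividing by the orbit size. Since every orbit is given explicitly as a list of intervals $[a,b]$ in $[n]$, and the cardinality of such an interval is just $b-a+1$, each average reduces to a short arithmetic computation.

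First I would dispatch the two ``easy'' orbit types. For $\O(\emptyset)=\{\emptyset,[n]\}$ the cardinalities are $0$ and $n$, so the average is $\frac{n}{2}$. When $n$ is even, the orbit of size $\frac{n+2}{2}$ consists of the translates $[1,\frac{n}{2}],[2,\frac{n}{2}+1],\ldots,[\frac{n}{2}+1,n]$, each of cardinality exactly $\frac{n}{2}$; hence its average is also $\frac{n}{2}$.

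Next I would treat the orbits of size $n+2$ with representative $[1,k]$, $1\le k<\frac{n}{2}$. Following the proof of Theorem~\ref{thm:chains_orbits}, I split such an orbit into the $k$-element intervals $[1,k],[2,k+1],\ldots,[n-k+1,n]$, of which there are $n-k+1$, and the $(n-k)$-element intervals $[1,n-k],\ldots,[k+1,n]$, of which there are $k+1$. Summing cardinalities gives $(n-k+1)k+(k+1)(n-k)$, which expands to $2k(n-k)+n$; dividing by $n+2$ yields the claimed average $\frac{2k(n-k)+n}{n+2}$.

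Finally, to establish the failure of homomesy I would compare these averages to $\frac{n}{2}$. Setting $\frac{2k(n-k)+n}{n+2}=\frac{n}{2}$ and clearing denominators reduces to $4k(n-k)=n^2$, i.e.\ $(2k-n)^2=0$, forcing $k=\frac{n}{2}$. Since the size-$(n+2)$ orbits require $1\le k<\frac{n}{2}$, we have $(2k-n)^2>0$, so each such orbit has average \emph{strictly} less than $\frac{n}{2}$, which differs from the average of $\O(\emptyset)$. Because $n\ge 3$ guarantees $\lfloor\frac{n-1}{2}\rfloor\ge 1$, at least one such orbit exists, and homomesy fails. The computation is essentially routine; the only step requiring care is getting the counts $n-k+1$ and $k+1$ of intervals of each size correct, since an off-by-one there would corrupt the average and the final algebraic identity $(2k-n)^2=0$ on which the non-homomesy conclusion rests.
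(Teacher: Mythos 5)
Your proposal is correct, and the orbit-average computations coincide exactly with the paper's: both rely on the orbit description from Theorem~\ref{thm:chains_orbits}, split each size-$(n+2)$ orbit into $n-k+1$ intervals of cardinality $k$ and $k+1$ intervals of cardinality $n-k$, and arrive at the sum $2k(n-k)+n$. Where you genuinely diverge is the non-homomesy conclusion. The paper disposes of it with a single numerical instance: for $n=3$ the orbit of size $5$ has average $\frac{7}{5}$ while $\{\emptyset,[3]\}$ has average $\frac{3}{2}$; strictly read, this only certifies failure of homomesy at $n=3$, leaving the general $n\geq 3$ case to the reader (the needed inequality is implicit in the computed averages). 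Your argument is uniform in $n$: clearing denominators in $\frac{2k(n-k)+n}{n+2}=\frac{n}{2}$ gives $4k(n-k)=n^2$, i.e.\ $(2k-n)^2=0$, which is impossible for $1\leq k<\frac{n}{2}$, so every size-$(n+2)$ orbit has average strictly below $\frac{n}{2}$, and $n\geq 3$ guarantees such an orbit exists. This buys a complete proof of the stated proposition for all $n\geq 3$ at essentially no extra cost, and as a byproduct it shows the orbit averages are pairwise distinct across $k$ (the point the paper defers to its remark on homometry); the paper's example, by contrast, is quicker to verify but weaker in scope.
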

\begin{proof}
We begin by proving the average cardinality counts. The average cardinality of $\{\emptyset,[n]\}$ is $\frac{n}{2}$. 
By Theorem~\ref{thm:chains_orbits}, when $n$ is even there is a single orbit of size $\frac{n+2}{2}$ of the form $\O([1,\frac{n}{2}])=\{[1,\frac{n}{2}],[2,\frac{n}{2}+1],\ldots,[\frac{n}{2}+1,n]\}.$ Each interval-closed set in this orbit has cardinality $\frac{n}{2}$, thus the average is again $\frac{n}{2}$.

By Theorem~\ref{thm:chains_orbits}, when $n \geq 3$ there are $\lfloor\frac{n-1}{2}\rfloor$ orbits of size $n+2$ having the form $\O([1,k])=\{[1,k],[2,k+1],\ldots,[n-k+1,n],[1,n-k],\ldots,[k+1,n]\}$ for any $k<n/2$.  These orbits have $n-k+1$ interval-closed sets of cardinality $k$ and $k+1$ interval-closed sets of cardinality $n-k$. The sum of these cardinalities is $2nk-2k^2+n,$ and the average over the orbit is $\frac{2nk-2k^2+n}{n+2}.$

For $n=1,2$, there are no orbits of size $n+2$, and homomesy holds. But for $n=3$, we have an orbit of size $5$ with average cardinality $\frac{7}{5},$ as well as an orbit of size 2 with average cardinality $\frac{3}{2}.$ Thus the cardinality statistic is not homomesic. 
\end{proof}

Note that since the values $\frac{2k(n-k)+n}{n+2}$ differ for various values of $k$ in $1\leq k<\frac{n}{2}$, interval-closed sets of a chain are not generally \textit{homometric} (meaning orbit averages over orbits of the same size are not necessarily equal \cite{homometry}).

Unlike cardinality, signed cardinality, does exhibit homomesy for ordinal sums of antichains under certain conditions; however, there are still many cases where homomesy does not hold.

\begin{definition}
Fix a finite poset $P$. For each $x\in P$, define the \textbf{signed cardinality statistic} $SC(x): P \rightarrow \{-1, 1\}$  as follows:
\[SC(x) =
\begin{cases}
1 \textrm{ if } \rank(x) \textrm{ is even, } \\
-1 \textrm{ if } \rank(x) \textrm{ is odd. }
\end{cases}\]
For an interval-closed set $I$, $SC(I) = \sum_{x\in I}SC(x)$.
\end{definition}

\begin{prop}\label{prop:signed_card_chains}
The signed cardinality statistic exhibits homomesy with respect to rowmotion on the chain poset $P=[n]$ when $n$ is even.
\end{prop}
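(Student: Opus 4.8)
The plan is to reduce the statement to a short alternating-sum computation, leaning entirely on the explicit orbit structure already established in Theorem~\ref{thm:chains_orbits}. First I would record a closed form for the statistic on a single interval. Writing the elements of $[n]$ as $1,2,\ldots,n$ (so $\rank(i)=i-1$), we have $SC(i)=(-1)^{i-1}$, and hence for a nonempty interval-closed set $I=[a,b]$,
\[
SC([a,b]) = \sum_{i=a}^{b}(-1)^{i-1} =
\begin{cases}
0 & \text{if } b-a+1 \text{ is even,}\\
(-1)^{a-1} & \text{if } b-a+1 \text{ is odd.}
\end{cases}
\]
In words, $SC(I)=0$ whenever $|I|$ is even, while for $|I|$ odd the value is $\pm 1$ according to the parity of $\Min(I)$. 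This is the one computational ingredient, and it is elementary.

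Next I would pin down the target constant. Evaluating $SC$ on the orbit $\{\emptyset,[n]\}$ gives $SC(\emptyset)=0$ and, since $n$ is even, $SC([n])=\tfrac{n}{2}-\tfrac{n}{2}=0$; so the average on this orbit is $0$, and I aim to prove that $SC$ is $0$-mesic.

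The core of the argument is then to check, orbit by orbit from Theorem~\ref{thm:chains_orbits}, that the sum of $SC$ over each orbit vanishes. Every orbit consists of one or two ``sliding'' families of intervals of a fixed cardinality, whose minimal elements range over a block of consecutive integers. If that fixed cardinality is even, each term is $0$ by the formula above. If it is odd, the family contributes $\sum_{a}(-1)^{a-1}$ over a run of consecutive minimal elements, which vanishes exactly when the run has even length. Thus everything reduces to counting lengths: for the size-$(n+2)$ orbit $\O([1,k])$ the two families have $n-k+1$ and $k+1$ intervals (of cardinalities $k$ and $n-k$), and for the exceptional size-$\tfrac{n+2}{2}$ orbit $\O([1,\tfrac{n}{2}])$ there is a single family of $\tfrac{n}{2}+1$ intervals (each of cardinality $\tfrac{n}{2}$). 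Using that $n$ is even, one verifies that whenever a family's common cardinality ($k$, $n-k$, or $\tfrac{n}{2}$) is odd, the matching count ($n-k+1$, $k+1$, or $\tfrac{n}{2}+1$) is even, so the alternating sum is $0$; and when the cardinality is even the terms vanish individually. Hence every orbit sums to $0$, so each average is $0$.

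I do not expect a genuine obstacle: the result follows from the known orbit description together with parity bookkeeping, the essential mechanism being that the evenness of $n$ forces the parity of each ``family length'' to be opposite to that of the corresponding interval cardinality. The only real point of care is keeping the two parity conventions straight — rank parity, which \emph{defines} $SC$, versus interval-length parity, which governs whether a term survives — and confirming that the consecutive-minimal-element runs genuinely have the lengths read off from Theorem~\ref{thm:chains_orbits}.
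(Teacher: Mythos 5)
Your proposal is correct and follows essentially the same route as the paper: both invoke the explicit orbit description of Theorem~\ref{thm:chains_orbits}, observe that the signed cardinality of an interval vanishes when its length is even and equals $\pm 1$ (alternating with the minimal element) when its length is odd, and then use the evenness of $n$ to see that every alternating run has even length, so each orbit sums to $0$. Your bookkeeping by ``sliding families'' of fixed cardinality is just a slightly more explicit organization of the paper's observation that the values alternate between $\pm 1$ around each orbit of even size.
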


\begin{proof}
Suppose $n$ is even.  By Theorem \ref{thm:chains_orbits}, the orbits of $\IC([n])$ are: $\{\emptyset,[n]\}$, the orbits $\O([1,k])$ of size $n+2$ for $k<n/2$, and the orbit $\O([1,n/2])$ of size $\frac{n}{2}+1.$

Since $n$ is even, the signed cardinality of $[n]$ is zero, thus the average of the signed cardinality over the orbit $\{\emptyset,[n]\}$ is $0$.  For $k<n/2$, consider the orbit $\O([1,k])=\{[1,k],[2,k+1],\ldots,[n-k+1,n],[1,n-k],\ldots,[k+1,n]\}.$  If $k$ is even, the signed cardinality of each element of this orbit is $0$.  If $k$ is odd, the signed cardinality alternates between $\pm 1$.  As the orbit has even size $n+2$, the sum of the signed cardinality is $\frac{n+2}{2}(1-1)=0.$  Thus, the average of the signed cardinality is zero over all orbits of the form $\O([1,k])$ for $k<n/2$ when $n$ is even.

If $n/2$ is even, then signed cardinality of each element in the orbit $\O([1,n/2])=\{[1,n/2],[2,\frac{n}{2}+1],\ldots,[\frac{n}{2}+1,n]\}$ is zero.  If $n/2$ is odd, then the signed cardinalities once again alternate between $\pm 1$ over the orbit of even size $\frac{n}{2}+1,$ and the average of the signed cardinality is $\frac{1}{2}(\frac{n}{2}+1)(1-1)=0.$

Thus, in all cases the average of the signed cardinality is $0$.
\end{proof}

\begin{remark}
Note, when $n$ is odd the sum of the signed cardinalities over the orbits of $\IC([n])$ under rowmotion is always $1$.  Thus, as the orbits have different sizes, the signed cardinality is not homomesic when $n$ is odd.
\end{remark}

\begin{ex}
For ordinal sums of antichains with the following sequence of antichain lengths, we tested the homomesy of the signed cardinality statistic under rowmotion.  
\[\begin{array}{|c|c|}
\hline
\textrm{Homomesic} & \textrm{Not homomesic}\\
\hline
1,1,1,1 & 2,1,1,2\\
2,2,2,2 & 2,3,3,2\\
3,3,3,3 & 1,2,1,2,1\\
1,2,2,1 & 2,2,1,1,2,2\\
1,1,2,2,1,1 & 1,2,1,2\\
1,1,3,3,1,1 & 1,1,2,1,1\\
1,3,3,1 & 2,2,2\\
~& 1,2,1\\
~& 1,1,1,1,1\\
\hline
\end{array}\]
\end{ex}

\begin{thm}\label{thm:signed_card_ord_sum}
The signed cardinality statistic exhibits homomesy with respect to rowmotion on the poset $\bigoplus_{i=1}^n \mathbf{m}$ where $n$ is even.
\end{thm}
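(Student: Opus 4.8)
The plan is to show the statistic is $0$-mesic by treating each of the four orbit types classified in Theorem~\ref{thm:gen_ord_sum_row} (specialized to all $a_i = m$). First observe that the target constant must be $0$: the orbit $\{\emptyset, P\}$ from Corollary~\ref{cor:empty ICS} has average $\tfrac12\big(SC(\emptyset) + SC(P)\big) = \tfrac12 SC(P)$, and since $n$ is even the antichains split into equally many of each sign, giving $SC(P) = m\sum_{r=1}^n s_r = 0$, where $s_r$ denotes the common value of $SC$ on the antichain $\mathbf{a_r}$. The structural simplification I would use throughout is that $SC$ depends only on the antichain occupancies $c_r(I) := |I \cap \mathbf{a_r}|$, namely $SC(I) = \sum_r c_r(I)\, s_r$, together with the two elementary facts that $s_{r+1} = -s_r$ and hence a sum of consecutive $s_r$ over an even-length range vanishes, while over an odd-length range it equals its first term.

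For the orbits built from full antichains (those of size $n+2$ and the one of size $\tfrac{n+2}{2}$), I would pass to the chain $[n]$. Writing $f$ for the map of Lemma~\ref{lem:ints_commute_map}, every $I$ in such an orbit satisfies $SC_P(I) = m\, SC_{[n]}(f(I))$, and $f$ carries the orbit bijectively to the corresponding rowmotion orbit in $\IC([n])$; hence the orbit average of $SC_P$ is $m$ times the orbit average of $SC_{[n]}$, which is $0$ by Proposition~\ref{prop:signed_card_chains} since $n$ is even. For the size-$2$ orbits $\big\{[\binom{\mathbf{a_i}}{k},\binom{\mathbf{a_j}}{l}],\,[\overline{\binom{\mathbf{a_i}}{k}},\overline{\binom{\mathbf{a_j}}{l}}]\big\}$ with $i\neq j$, the two sets together occupy all of $\mathbf{a_i}$ and $\mathbf{a_j}$ and each interior antichain twice, so
\[
SC(I) + SC(\row(I)) = m\Big(s_i + s_j + 2\!\!\sum_{i<r<j}\! s_r\Big) = m\Big(\sum_{r=i}^{j} s_r + \sum_{r=i+1}^{j-1} s_r\Big),
\]
and a short parity check on the lengths $j-i+1$ and $j-i-1$ (which have the same parity) shows both summands conspire to give $0$.

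The main work, and the step I expect to be the principal obstacle, is the family of size-$n$ orbits with representative $I_0 = \binom{\mathbf{a_i}}{k}$. Here I would prove the clean identity $SC(\row^t(I_0)) = (-1)^t\, k\, s_i$ for every $t$, whence the orbit sum vanishes immediately because $n$ is even. To establish it I would track the occupancies $c_r$ along the explicit orbit listed in Theorem~\ref{thm:gen_ord_sum_row}: for $0 \le t \le n-i$ the set is of the form $[\,\cdot\,,\mathbf{a_{i+t}}]$ with a full block of antichains $i+1, \dots, i+t$ (contributing $m\sum_{r=i+1}^{i+t} s_r$, which is $0$ for $t$ even and $-m s_i$ for $t$ odd) and with $c_i$ equal to $k$ for $t$ even and $m-k$ for $t$ odd; combining these via $(m-k)-m = -k$ gives exactly $(-1)^t k s_i$. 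After the top reaches $\mathbf{a_n}$, complementation (Lemma~\ref{lem:max_elt_comp}) launches the symmetric lower half, and the same bookkeeping---now with a climbing block of full low antichains and the alternation of the partial antichain $i$ between sizes $k$ and $m-k$ governed by the parity of $n-i$---again yields $(-1)^t k s_i$. The bulk of the effort lies in verifying the size-and-sign bookkeeping at the wrap-around and across all parities of $i$ and $t$. Summing the identity over the orbit of even length $n$ gives $0$, completing the last case and hence the theorem.
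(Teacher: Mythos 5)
Your proposal is correct and follows essentially the same route as the paper's proof: the same orbit-type decomposition from Theorem~\ref{thm:gen_ord_sum_row}, the same reduction of full-antichain orbits to the chain $[n]$ via Lemma~\ref{lem:ints_commute_map} and Proposition~\ref{prop:signed_card_chains}, the same alternating $\pm k$ computation (your identity $SC(\row^t(I_0)) = (-1)^t k\, s_i$) for the size-$n$ orbits, and the same parity cancellation for the size-$2$ orbits, which you merely package as a single algebraic identity rather than the paper's two subcases. No gaps; the bookkeeping you flag at the wrap-around checks out exactly as in the paper.
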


\begin{proof}
Let $P=\bigoplus_{i=1}^n \mathbf{m}$ where $n$ is even. By Theorem \ref{thm:gen_ord_sum_row} there are three different types of orbits to consider: those consisting of interval-closed sets containing only full antichains, those consisting of interval-closed sets containing one partial antichain and possibly other full antichains, and the orbits of size 2.

\medskip
\noindent Case 1: Interval-closed sets containing only full antichains. 

\smallskip
As in the proof of Theorem \ref{thm:gen_ord_sum_row}, we map our interval-closed set $I=[\mathbf{m}_i,\mathbf{m}_j]$ to the interval-closed set $[i,j]$ in $[n]$.  By Lemma \ref{lem:ints_commute_map}, this map commutes with rowmotion and the average of the signed cardinality statistic over the orbit of $I=[\mathbf{m}_i,\mathbf{m}_j]$ is just $m$ times the average over the orbit of $[i,j]$ in $[n].$  By Proposition \ref{prop:signed_card_chains}, we know that the signed cardinality statistic is 0-mesic for all orbits in $[n]$, thus it is alos $0$-mesic for all orbits with representatives of the form $I=[\mathbf{m}_i,\mathbf{m}_j]$ in $P.$  

\medskip
\noindent Case 2: Interval-closed sets containing one partial antichain and possibly other full antichains.

\smallskip
By Theorem~\ref{thm:chains_orbits}, the orbits of these interval-closed sets have the from \[\O\left(\binom{\mathbf{m}_i}{k}\right)=\left\{\binom{\mathbf{m}_i}{k},\left[\overline{\binom{\mathbf{m}_i}{k}},\mathbf{m}_{i+1}\right],\left[\binom{\mathbf{m}_i}{k},\mathbf{m}_{i+2}\right],\ldots, \left[x,\mathbf{m}_n],[\mathbf{m}_1,\overline{x}\right],\ldots,\left[\mathbf{m}_{i-1},\overline{\binom{\mathbf{m}_i}{k}}\right]\right\}.\] 
Elements of $\binom{\mathbf{m}_i}{k}$ are all at rank $i + 1$.  If $i$ is odd, this rank is signed $+1$ and thus the signed cardinality of $\binom{\mathbf{m}_i}{k}$ is $k$.  Similarly, the $m$ elements of $\mathbf{m}_{i+1}$ all lie on rank $i + 2$, which when $i$ is odd is signed $-1$, thus the signed cardinality of $[\overline{\binom{\mathbf{m}_i}{k}},\mathbf{m}_{i+1}]$ is $(m-k)-m=-k$.  Continuing, we find the signed cardinality of $[\binom{\mathbf{m}_i}{k},\mathbf{m}_{i+2}]$ is $k-m+m=k$, and so forth, with the signed cardinalities alternating between $\pm k$. 
If $i$ is even, the signed cardinalities alternate between $\mp k$.  In either case, as we are summing over an orbit of even size $n$, the average of the signed cardinality is zero. 

\medskip
\noindent Case 3: Orbits of size 2.

\smallskip
First consider the orbit $\{\emptyset, P\}$. As $n$ is even, we can pair the ranks of $P$ so the signed cardinality is $\sum_{i = 1}^{n/2} (m - m) = 0$. 

Next, consider the orbits of the form $\{[\binom{\mathbf{m}_i}{k}, \binom{\mathbf{m}_j}{l}], [\overline{\binom{\mathbf{m}_i}{k}}, \overline{\binom{\mathbf{m}_j}{l}}]\}$. Recall that the interval-closed sets in this orbit consist of a distinct minimal and maximal rank that are partially full (and all ranks between, if any, completely full). We break this into two cases depending on whether or not $i$ and $j$ have the same parity.

If rank $i$ and rank $j$ have different parity, then the number of ranks between them is even. Thus any of these ranks can be paired so their contribution to the signed cardinality sums to 0. As the maximal and minimal ranks toggle by taking the complement, the contribution of each over the full orbit is $(-1)^i m$  and $(-1)^j m$ respectively. As $i$ and $j$ differ in parity, these cancel, resulting in an overall signed cardinality of 0.

If rank $i$ and rank $j$ have the same parity, then the number of ranks between them is odd. We pair these except for the $(j-1)^{th}$ rank, resulting in an overall contribution of $(-1)^{j-1}2m$ to the signed cardinality across the orbit. As the maximal and minimal ranks toggle by taking the complement, the contribution of each is $(-1)^i m$  and $(-1)^j m$ respectively. As $i$ and $j$ have the same parity, this totals to $(-1)^j 2m$, which cancels with $(-1)^{j-1}2m$, resulting in an overall signed cardinality of 0.
\end{proof}

\section{Interval-closed sets of products of chains}\label{sec:products_of_chains}

Consider the Cartesian product $[m]\times[n]$ of chain posets $[m]$ and $[n]$.
Elements in $[m]\times[n]$ are the tuples $\{(a,b) \mid 1\leq a\leq m, 1\leq b\leq n\}$. For the poset $[2]\times [n]$, we say that $\{(1, i) \mid 1\leq i \leq n\}$ is in the \textit{bottom chain} and that $\{(2, j) \mid  1\leq j \leq n\}$ is in the \textit{top chain}. 
Similarly, one can take the products of multiple chains $[a_1]\times [a_2] \times \cdots \times[a_k]$, whose elements are given by tuples of length $k$.

\begin{remark}
We may interpret posets constructed as products of chains as \emph{divisor posets}. Let $d = p_1^{e_1}p_2^{e_2}\dotsm p_k^{e_k}$ be the prime factorization of $d$. Then the divisor poset of $d$ is the product of chains poset $[e_1 + 1] \times [e_2 + 1] \times \dotsm [e_k + 1]$. 

For example, the divisor poset of $48 = 3*2^4$ is the poset  $[2] \times [5]$. The two primes correspond to the two chains, and the number of elements in each chain is one more than the exponent of the prime in the decomposition. 

\begin{center}
\begin{minipage}[c]{5cm}
\begin{tikzpicture}[scale = .65]

\draw [-, ultra thick] (-1,0) -- (-2,1);
\draw [-, ultra thick] (-2,1) -- (-3,2);
\draw [-, ultra thick] (-3,2) -- (-4,3);
\draw [-, ultra thick] (-4,3) -- (-5,4);

\draw [-, ultra thick] (-3,0) -- (-4,1);
\draw [-, ultra thick] (-2,-1) -- (-3,0);
\draw [-, ultra thick] (-4,1) -- (-5,2);
\draw [-, ultra thick] (-5,2) -- (-6,3);

\draw [-, ultra thick] (-1,0) -- (-2,-1);
\draw [-, ultra thick] (-2,1) -- (-3,0);
\draw [-, ultra thick] (-3,2) -- (-4,1);
\draw [-, ultra thick] (-4,3) -- (-5,2);
\draw [-, ultra thick] (-5,4) -- (-6,3);

\draw[fill=white, radius = .2] (-1,0) circle [radius = 0.2];
\draw (-0,0.5) node{$2^0 3^1$};
\draw[fill=white, radius = .2] (-2,1) circle [radius = 0.2];
\draw (-1,1.5) node{$2^1 3^1$};
\draw[fill=white, radius = .2] (-3,2) circle [radius = 0.2];
\draw (-2,2.5) node{$2^2 3^1$};
\draw[fill=white, radius = .2] (-4,3) circle [radius = 0.2];
\draw (-3,3.5) node{$2^3 3^1$};
\draw[fill=white, radius = .2] (-5,4) circle [radius = 0.2];
\draw (-4,4.5) node{$2^4 3^1$};
\draw[fill=white, radius = .2] (-2,-1) circle [radius = 0.2];
\draw (-3,-1.5) node{$2^0 3^0$};
\draw[fill=white, radius = .2] (-3,0) circle [radius = 0.2];
\draw (-4,-0.5) node{$2^1 3^0$};
\draw[fill=white, radius = .2] (-4,1) circle [radius = 0.2];
\draw (-5,0.5) node{$2^2 3^0$};
\draw[fill=white, radius = .2] (-5,2) circle [radius = 0.2];
\draw (-6,1.5) node{$2^3 3^0$};
\draw[fill=white, radius = .2] (-6,3) circle [radius = 0.2];
\draw (-7,2.5) node{$2^4 3^0$};
\end{tikzpicture}
  \end{minipage}
\end{center}
\end{remark}

\subsection{Enumeration of interval-closed sets in products of two chains}
In this section, we give a formula for the cardinality of $\IC([2]\times[n])$ (Theorem \ref{prodofchainICS}), as well as for the number of interval-closed sets in $\IC([m]\times[n])$ that contain elements in each chain (Theorem \ref{thm:enum_ICS_m_by_n_with_items_in_each_chain}). Finding an enumeration formula for the cardinality of $\IC([a_1]\times [a_2] \times \cdots \times[a_k])$, even for the product of two chains, remains an open problem.  

\begin{thm}\label{prodofchainICS}
The cardinality of $\IC([2] \times [n])$ for $n\geq 2$ is $1+2\left(\binom{n}{2}+n\right)+ \frac{n+1}{2} \binom{n+2}{3}$. This sum counts the following:
\begin{itemize}
\item the empty set $\emptyset$,
\item non-empty intervals that are completely contained in either the top chain or the bottom chain, and
\item pairs made of one non-empty interval in each chain (these may or may not form a single interval in the poset).
\end{itemize}
\end{thm}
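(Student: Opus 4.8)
The plan is to encode each interval-closed set by its intersection with the two chains and reduce the entire count to a short binomial identity. Given $I \in \IC([2]\times[n])$, write $B = \{b : (1,b)\in I\}$ and $T = \{b : (2,b)\in I\}$ for the sets of columns occupied in the bottom and top chains. First I would apply the interval-closed condition to two elements lying in the same chain: the poset interval between $(1,i)$ and $(1,j)$ is exactly $\{(1,k) : i \le k \le j\}$, and similarly in the top chain, so $B$ and $T$ must each be a set of consecutive integers (an integer interval), possibly empty. This immediately accounts for the first two bullets: $B = T = \emptyset$ is the single empty set, while exactly one of $B,T$ nonempty gives a nonempty interval inside one chain, of which there are $\binom{n}{2}+n$ per chain by Proposition~\ref{thm:chains_ICS}, hence $2\left(\binom{n}{2}+n\right)$ in total.

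The heart of the argument is the remaining case, where both $B = [p,q]$ and $T = [r,s]$ are nonempty intervals; these are the ``pairs made of one non-empty interval in each chain.'' Here the only comparabilities crossing the two chains are $(1,b) \le (2,b')$ with $b \le b'$, and the poset interval $[(1,b),(2,b')]$ is the full $2\times(b'-b+1)$ rectangle. Thus I would show $I$ is interval-closed if and only if, for every $b \in B$ and $b'\in T$ with $b \le b'$, one has $[b,b'] \subseteq B \cap T$. Since every such $[b,b']$ lies in the single largest one, the condition reduces to the extreme pair: either there is no comparable pair at all (when $\min B > \max T$), or else $[\min B, \max T] \subseteq B\cap T$, which is exactly $\min T \le \min B$ together with $\max T \le \max B$.

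This splits the both-nonempty case into two mutually exclusive and exhaustive families. When $\max T < \min B$ (the top interval sits strictly to the left of the bottom interval in column index) every configuration is automatically interval-closed, and these correspond to the ordered pairs of disjoint intervals $r \le s < p \le q$ in $[n]$; a strict-increase substitution sends these bijectively to $4$-subsets of $[n+2]$, giving $\binom{n+2}{4}$. When $\min B \le \max T$, the two derived inequalities combine with $\min B \le \max T$ into the single weakly increasing chain $\min T \le \min B \le \max T \le \max B$, i.e.\ a weakly increasing $4$-tuple drawn from $[n]$, giving $\binom{n+3}{4}$. Summing, the both-nonempty count is $\binom{n+3}{4} + \binom{n+2}{4}$, and I would finish by simplifying this via Pascal's rule ($\binom{n+3}{4} = \binom{n+2}{4} + \binom{n+2}{3}$) to $2\binom{n+2}{4} + \binom{n+2}{3} = \frac{n+1}{2}\binom{n+2}{3}$, which matches the third term.

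The main obstacle is the cross-chain condition: pinning down the equivalence ``$I$ interval-closed $\iff [b,b']\subseteq B\cap T$ for all comparable pairs'' precisely, and then checking that testing only the single extreme pair $(\min B, \max T)$ is \emph{sufficient} rather than merely necessary. Once that reduction is secured, the case split is clean and the remaining work is two standard stars-and-bars counts together with the closing binomial identity, all of which is routine.
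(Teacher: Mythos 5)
Your proposal is correct, and it shares the paper's structural skeleton: both proofs reduce $I$ to its traces $B$ and $T$ on the two chains, which must be integer intervals; both count the empty and single-chain cases identically; and your cross-chain criterion (no condition when $\min B > \max T$, otherwise $\min T \le \min B$ and $\max T \le \max B$) is exactly the paper's pair of inequalities $i_2 \le i_1$ and $j_2 \le j_1$. Where you genuinely diverge is the enumeration of the both-nonempty case. The paper merges the connected and disjoint configurations into a single family of quadruples $(i_1,j_1,i_2,j_2)$ satisfying $i_2 \le i_1 \le j_1$ and $i_2 \le j_2 \le j_1$ (noting that disjointness $i_2 \le j_2 < i_1 \le j_1$ automatically implies these), and then evaluates $\sum_{i_2 \le j_1}(j_1-i_2+1)^2$ as a nested sum of squares, the last step being left as a "straightforward computation." You instead keep the two configurations separate and count each bijectively: disjoint pairs $r \le s < p \le q$ correspond to $4$-subsets of $[n+2]$, giving $\binom{n+2}{4}$, while overlapping pairs become weakly increasing chains $\min T \le \min B \le \max T \le \max B$ in $[n]$, giving $\binom{n+3}{4}$; Pascal's rule then collapses $\binom{n+3}{4}+\binom{n+2}{4} = 2\binom{n+2}{4}+\binom{n+2}{3} = \frac{n+1}{2}\binom{n+2}{3}$. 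Your route buys transparency (two standard bijections plus a one-line identity, with nothing hidden in an unstated summation), at the cost of an extra case split; the paper's merging trick buys a single uniform count but defers the arithmetic. Finally, the step you flagged as the main obstacle is not a gap: once $B$ and $T$ are intervals, if any comparable cross pair exists then $(\min B,\max T)$ is itself one, and every interval $[(1,b),(2,b')]$ coming from a comparable pair is contained in $[(1,\min B),(2,\max T)]$, so the single extreme containment is indeed sufficient.
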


\begin{proof}
By construction, the largest antichain of $[2] \times [n]$ has size $2$. Therefore, any interval-closed set is made of at most two disjoint (incomparable) intervals. For an interval-closed set made of two intervals, one of them must belong to the top chain and one to the bottom chain. Given $I \in \IC([2] \times [n])$, there are four cases:
\begin{enumerate}
\item $I$ is empty.
\item $I$ is a single interval, belonging to either the top or bottom chain. 
\item $I$ is a single interval with items in both the top and bottom chains. These can be described as $[(1,i_1),(1, j_1)] \cup [(2,i_2),(2,j_2)]$, with $i_1 \leq j_1$, $i_2 \leq j_2$, $i_2 \leq i_1$, and $j_2 \leq j_1$, where the last two conditions ensure that the set is an interval-closed set. Furthermore, to ensure that this is a single interval, we need that $i_1 \leq j_2$.
\item $I$ is made of two disjoint intervals $[(1,i_1), (1,j_1)]$ and $[(2,i_2),(2,j_2)]$, with $i_2 \leq j_2 < i_1 \leq j_1$.
\end{enumerate}

In Case (2), the number of interval-closed sets is twice as much as the number of non-empty interval-closed sets in the chain poset $[n]$. By Theorem \ref{thm:chains_ICS}, this is $2\left(\binom{n}{2}+n\right)$.
For counting purposes, we can group Cases (3) and (4) and count the number of quadruples $(i_1, j_1, i_2, j_2)$ satisfying the constraint $(i_2 \leq i_1, j_2 \leq j_1)$. Therefore, the number of interval-closed sets with items in both the top and bottom chains is
\[  \sum_{1 \leq i_2 ,j_1 \leq n} (j_1-i_2+1)^2 = \sum_{i_2=1}^n  \sum_{\ell=1}^{n-i_2+1} \ell^2 = \frac{n+1}{2}\binom{n+2}{3}, \]
where the first equality is obtained by setting $l = j_1-i_2+1$, and last equality is obtained by a straightforward computation.
\end{proof}

\begin{ex}Let $P = [2] \times [7]$. Then $[(2, 2), (2, 6)]$ (below, left) is an example of an interval-closed set that is made up of a non-empty interval contained in the top chain, and $[(1, 4),(1,7)]\cup[(2, 2), (2,6)]$ (below, right) is an example of aninterval-closed set made up of a single interval containing elements from both the top and bottom chains. 

\begin{center}
\begin{minipage}[c]{5cm}
\begin{tikzpicture}[scale = .65]

\draw [-, ultra thick] (-1,0) -- (-2,1);
\draw [-, ultra thick] (-2,1) -- (-3,2);
\draw [-, ultra thick] (-3,2) -- (-4,3);
\draw [-, ultra thick] (-4,3) -- (-5,4);
\draw [-, ultra thick] (-5,4) -- (-6,5);
\draw [-, ultra thick] (-3,0) -- (-4,1);
\draw [-, ultra thick] (-2,-1) -- (-3,0);
\draw [-, ultra thick] (-4,1) -- (-5,2);
\draw [-, ultra thick] (-5,2) -- (-6,3);
\draw [-, ultra thick] (-6,3) -- (-7,4);
\draw [-, ultra thick] (-7,4) -- (-8,5);
\draw [-, ultra thick] (-6,5) -- (-7,6);
\draw [-, ultra thick] (-1,0) -- (-2,-1);
\draw [-, ultra thick] (-2,1) -- (-3,0);
\draw [-, ultra thick] (-3,2) -- (-4,1);
\draw [-, ultra thick] (-4,3) -- (-5,2);
\draw [-, ultra thick] (-5,4) -- (-6,3);
\draw [-, ultra thick] (-6,5) -- (-7,4);
\draw [-, ultra thick] (-7,6) -- (-8,5);
\draw[fill=white, radius = .2] (-1,0) circle [radius = 0.2];
\draw[fill=red, radius = .2] (-2,1) circle [radius = 0.2];
\draw[fill=red, radius = .2] (-3,2) circle [radius = 0.2];
\draw[fill=red, radius = .2] (-4,3) circle [radius = 0.2];
\draw[fill=red, radius = .2] (-5,4) circle [radius = 0.2];
\draw[fill=red, radius = .2] (-6,5) circle [radius = 0.2];
\draw[fill=white, radius = .2] (-7,6) circle [radius = 0.2];
\draw[fill=white, radius = .2] (-2,-1) circle [radius = 0.2];
\draw[fill=white, radius = .2] (-3,0) circle [radius = 0.2];
\draw[fill=white, radius = .2] (-4,1) circle [radius = 0.2];
\draw[fill=white, radius = .2] (-5,2) circle [radius = 0.2];
\draw[fill=white, radius = .2] (-6,3) circle [radius = 0.2];
\draw[fill=white, radius = .2] (-7,4) circle [radius = 0.2];
\draw[fill=white, radius = .2] (-8,5) circle [radius = 0.2];
\end{tikzpicture}
  \end{minipage}
\begin{minipage}[c]{5cm}
\begin{tikzpicture}[scale = .65]
\draw [-, ultra thick] (-1,0) -- (-2,1);
\draw [-, ultra thick] (-2,1) -- (-3,2);
\draw [-, ultra thick] (-3,2) -- (-4,3);
\draw [-, ultra thick] (-4,3) -- (-5,4);
\draw [-, ultra thick] (-5,4) -- (-6,5);
\draw [-, ultra thick] (-3,0) -- (-4,1);
\draw [-, ultra thick] (-2,-1) -- (-3,0);
\draw [-, ultra thick] (-4,1) -- (-5,2);
\draw [-, ultra thick] (-5,2) -- (-6,3);
\draw [-, ultra thick] (-6,3) -- (-7,4);
\draw [-, ultra thick] (-7,4) -- (-8,5);
\draw [-, ultra thick] (-6,5) -- (-7,6);
\draw [-, ultra thick] (-1,0) -- (-2,-1);
\draw [-, ultra thick] (-2,1) -- (-3,0);
\draw [-, ultra thick] (-3,2) -- (-4,1);
\draw [-, ultra thick] (-4,3) -- (-5,2);
\draw [-, ultra thick] (-5,4) -- (-6,3);
\draw [-, ultra thick] (-6,5) -- (-7,4);
\draw [-, ultra thick] (-7,6) -- (-8,5);
\draw[fill=white, radius = .2] (-1,0) circle [radius = 0.2];
\draw[fill=red, radius = .2] (-2,1) circle [radius = 0.2];
\draw[fill=red, radius = .2] (-3,2) circle [radius = 0.2];
\draw[fill=red, radius = .2] (-4,3) circle [radius = 0.2];
\draw[fill=red, radius = .2] (-5,4) circle [radius = 0.2];
\draw[fill=red, radius = .2] (-6,5) circle [radius = 0.2];
\draw[fill=white, radius = .2] (-7,6) circle [radius = 0.2];
\draw[fill=white, radius = .2] (-2,-1) circle [radius = 0.2];
\draw[fill=white, radius = .2] (-3,0) circle [radius = 0.2];
\draw[fill=white, radius = .2] (-4,1) circle [radius = 0.2];
\draw[fill=red, radius = .2] (-5,2) circle [radius = 0.2];
\draw[fill=red, radius = .2] (-6,3) circle [radius = 0.2];
\draw[fill=red, radius = .2] (-7,4) circle [radius = 0.2];
\draw[fill=red, radius = .2] (-8,5) circle [radius = 0.2];
\end{tikzpicture}
  \end{minipage}
\end{center}
\end{ex}

The last proof leads us to ask if there is a way to enumerate interval-closed sets of $[m]\times[n]$ that have elements in all $m$ parallel chains. We show that those are counted by the Narayana numbers by constructing a bijection between order ideals of $[m]\times [n-1]\times [2]$ and interval-closed sets of $[m] \times [n]$ that have elements in all $m$ chains, assuming $n\geq 2$. 

\begin{thm}\label{thm:enum_ICS_m_by_n_with_items_in_each_chain}
Suppose $n\geq 2$. The number of interval-closed sets of $[m] \times [n]$ containing at least one element of the form $(a, b)$ for each $a \in [m]$ is the Narayana number
\[ N(n+m,n) = \frac{\binom{n+m}{n-1}\binom{n+m-1}{n-1} }{n}.  \]
\end{thm}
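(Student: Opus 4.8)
The plan is to prove the statement by realizing the promised bijection explicitly, first reducing an interval-closed set to simple per-chain data and then matching that data with pairs of nested order ideals. Write the $m$ parallel chains as $C_a=\{(a,b)\mid 1\le b\le n\}$ for $a\in[m]$. Since each $C_a$ is itself a chain, the restriction $I\cap C_a$ of any $I\in\IC([m]\times[n])$ is interval-closed within $C_a$, hence (as in the description in Proposition~\ref{thm:chains_ICS}) is an interval, possibly empty. The requirement that $I$ meet every chain is exactly the condition that each such interval be nonempty, so I would record $I$ by the pair of sequences $\ell=(\ell_1,\dots,\ell_m)$ and $u=(u_1,\dots,u_m)$ determined by $I\cap C_a=\{(a,b)\mid \ell_a\le b\le u_a\}$, where $1\le \ell_a\le u_a\le n$.

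The technical heart is to characterize which pairs $(\ell,u)$ arise, and I expect this to be the main obstacle. The claim is that $\bigcup_a(\{a\}\times[\ell_a,u_a])$ is interval-closed if and only if both $\ell$ and $u$ are weakly decreasing in $a$. The backward direction is a short check: given a comparable pair $(a,b)\le(a',b')$ in $I$ and an intermediate column $c\in[a,a']$, monotonicity yields $\ell_c\le\ell_a\le b$ and $u_c\ge u_{a'}\ge b'$, so the whole box $[a,a']\times[b,b']$ lies in $I$. The forward direction requires more care, because two column-intervals in columns $a<a'$ may be comparable (when $\ell_a\le u_{a'}$) or completely incomparable (when $\ell_a>u_{a'}$). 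In the comparable case, applying the interval-closed condition to $(a,\ell_a)\le(a',u_{a'})$ at the columns $c=a$ and $c=a'$ forces $u_a\ge u_{a'}$ and $\ell_{a'}\le\ell_a$; in the incomparable case the chain of inequalities $\ell_{a'}\le u_{a'}<\ell_a\le u_a$ gives $\ell_{a'}<\ell_a$ and $u_{a'}<u_a$ directly. Thus in every case $\ell$ and $u$ decrease weakly, which establishes the characterization.

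Finally I would convert the characterization into the count. Subtracting $1$ from every entry sends $(\ell,u)$ to the pair of sequences $\ell-\mathbf 1$ and $u-\mathbf 1$, which are weakly decreasing in $\{0,1,\dots,n-1\}$ and satisfy $0\le \ell_a-1\le u_a-1\le n-1$; these are precisely the height functions of a nested pair $L'\subseteq K'$ of order ideals of $[m]\times[n-1]$. Since an order ideal of $[m]\times[n-1]\times[2]$ is the same datum as such a nested pair (split it into its two $[2]$-layers), this gives a bijection between interval-closed sets of $[m]\times[n]$ meeting every chain and order ideals of $[m]\times[n-1]\times[2]$; the shift $n\mapsto n-1$ arises exactly from the nonemptiness $\ell_a\le u_a$, which becomes $\ell_a-1\le u_a-1$ after the subtraction. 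The proof then concludes by invoking the known enumeration of order ideals of $[m]\times[n-1]\times[2]$ (equivalently, plane partitions in an $m\times(n-1)\times 2$ box, or nested pairs of monotone lattice paths counted by a Lindstr\"om--Gessel--Viennot determinant), which evaluates to the Narayana number $N(n+m,n)=\frac1n\binom{n+m}{n-1}\binom{n+m-1}{n-1}$. The cases $m=1$ (recovering the $\binom{n+1}{2}$ nonempty intervals of $[n]$) and $m=2$ serve as convenient sanity checks.
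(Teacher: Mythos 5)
Your proposal is correct and takes essentially the same route as the paper: both establish a bijection between interval-closed sets of $[m]\times[n]$ meeting every chain and order ideals of $[m]\times[n-1]\times[2]$, and then invoke the known enumeration of the latter (Lemma~\ref{lem:order_ideal_ab2_Narayana}). The difference is only in packaging: you record an interval-closed set by its per-chain endpoint sequences $(\ell,u)$ and prove the weakly-decreasing characterization directly, whereas the paper encodes the same data as the antichain pair $(\Max(I),\fl(I))$ via Proposition~\ref{prop:ICS_alt_def}; your subtract-one correspondence with nested order ideals is precisely the paper's map $\psi$ read in terms of column heights.
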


We use the following lemma in the proof of the theorem. Though the result is known (see e.g. \cite[A001263]{OEIS}, \cite[Thm.\ 7.8]{SW2012}), we include a short proof.
\begin{lem}\label{lem:order_ideal_ab2_Narayana} 
The number of order ideals of $[m]\times[n-1]\times[2]$ is the Narayana number $N(m+n,n)$.
\end{lem}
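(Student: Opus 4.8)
The plan is to reduce the count to a problem about non-intersecting lattice paths and then apply the Lindström--Gessel--Viennot (LGV) lemma. First I would slice the poset along its $[2]$ factor. Writing $Q=[m]\times[n-1]$, an order ideal $J$ of $Q\times[2]$ is determined by the pair $(I_1,I_2)$ where $I_i=\{q\in Q : (q,i)\in J\}$. Since $(q,1)<(q,2)$ in $Q\times[2]$, each $I_i$ is an order ideal of $Q$ and $I_2\subseteq I_1$; conversely every nested pair of order ideals of $Q$ arises from a unique $J$. Thus it suffices to count pairs $I_2\subseteq I_1$ of order ideals of $Q=[m]\times[n-1]$.

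Next I would pass to lattice paths. Order ideals of $[m]\times[n-1]$ are in the standard bijection with monotone (unit east/north) lattice paths from $(0,0)$ to $(n-1,m)$, the path tracing the boundary between the ideal and its complement; there are $\binom{m+n-1}{n-1}$ of them. Under this encoding, inclusion of ideals corresponds to one path lying weakly below the other, so a nested pair $I_2\subseteq I_1$ becomes an ordered pair $(P_1,P_2)$ of such paths with $P_2$ weakly below $P_1$ (the two being allowed to share vertices and edges).

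To count these with LGV I would remove the possibility of contact by the standard diagonal shift: replacing $P_2$ with its translate by $(1,-1)$ produces a path from $(1,-1)$ to $(n,m-1)$, and $P_2$ lies weakly below $P_1$ if and only if this shifted path is vertex-disjoint from $P_1$. The two sources $(0,0),(1,-1)$ and two sinks $(n-1,m),(n,m-1)$ admit only one non-crossing matching (the identity one), so the LGV lemma expresses the number of such disjoint pairs as the determinant
\[
\det\begin{pmatrix} \binom{m+n-1}{n-1} & \binom{m+n-1}{n} \\[2pt] \binom{m+n-1}{n-2} & \binom{m+n-1}{n-1}\end{pmatrix}=\binom{m+n-1}{n-1}^2-\binom{m+n-1}{n}\binom{m+n-1}{n-2},
\]
each entry counting the monotone paths between the relevant source and sink.

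Finally I would check that this determinant equals $N(m+n,n)$. Factoring out $\binom{m+n-1}{n-1}^2$ and simplifying the ratios $\binom{m+n-1}{n}/\binom{m+n-1}{n-1}=m/n$ and $\binom{m+n-1}{n-2}/\binom{m+n-1}{n-1}=(n-1)/(m+1)$ collapses the expression to $\binom{m+n-1}{n-1}^2\cdot\frac{m+n}{n(m+1)}$, which is exactly $\frac{1}{n}\binom{m+n-1}{n-1}\binom{m+n}{n-1}=N(m+n,n)$. The only genuinely delicate point is the shift/LGV setup---verifying that the translate turns ``weakly below'' into vertex-disjointness and that the non-identity matching of sources to sinks is forced to cross, so that the determinant receives no spurious contribution; the concluding binomial identity is a routine computation that can be relegated to a single line.
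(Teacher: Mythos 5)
Your proof is correct, but it takes a genuinely different route from the paper's. The paper's proof is essentially a one-line citation: it invokes Stanley's enumeration of order ideals of a product of three chains (Theorem 18.1 of the cited Stanley reference, equivalently MacMahon's box formula for plane partitions), substitutes $r=n-1$, $c=2$, and simplifies the ratio of binomial coefficients to $N(m+n,n)$. You instead build the count from scratch: slicing along the $[2]$ factor to reduce to nested pairs $I_2\subseteq I_1$ of order ideals of $[m]\times[n-1]$, encoding these as weakly non-crossing pairs of monotone lattice paths, applying the diagonal shift plus the Lindstr\"om--Gessel--Viennot lemma to obtain a $2\times 2$ determinant, and closing with a binomial identity. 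I checked the details: the determinant entries match the endpoint data $(0,0),(1,-1)\to(n-1,m),(n,m-1)$; the non-identity matching is indeed forced to cross (two monotone lattice paths can only meet at lattice points, so crossing implies a shared vertex), so it contributes nothing; and your simplification, using $\binom{m+n-1}{n}/\binom{m+n-1}{n-1}=m/n$, $\binom{m+n-1}{n-2}/\binom{m+n-1}{n-1}=(n-1)/(m+1)$, and $\binom{m+n}{n-1}=\frac{m+n}{m+1}\binom{m+n-1}{n-1}$, correctly yields $\frac{1}{n}\binom{m+n}{n-1}\binom{m+n-1}{n-1}=N(m+n,n)$ under the paper's definition of $N(j,k)$. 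What the paper's route buys is brevity, at the price of resting on a much more general enumeration theorem; what your route buys is self-containedness (only LGV is quoted) and transparency, since it effectively reproves the $c=2$ case of the box formula and explains why a Narayana number appears at all --- Narayana numbers classically count exactly such non-crossing pairs of paths. If you were to write this up in full, the one step to spell out is the equivalence ``$P_2$ weakly below $P_1$ $\iff$ the $(1,-1)$-translate of $P_2$ is vertex-disjoint from $P_1$,'' which you rightly flag as the delicate point; it follows from the same observation that intersections of monotone lattice paths occur only at lattice points.
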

\begin{proof}
Order ideals of $[m]\times[r]\times[c]$ are enumerated as a ratio of binomial coefficients in \cite[Thm.\ 18.1]{Stanley_PP_II}. By substituting $r=n-1$ and $c=2$ we obtain that the number of order ideals of $[m]\times[n-1]\times[2]$ is
\[ \frac{\binom{m+n}{n-1}\binom{m+n-1}{n-1}}{\binom{n-1}{n-1}\binom{n}{n-1}} = \frac{\binom{m+n}{n-1}\binom{m+n-1}{n-1}}{n} = N(m+n, n), \]
where the Narayana number $N(j,k)$ is given by 
\[ N(j,k) = \frac{1}{k}\binom{j}{k-1}\binom{j-1}{k-1}. \]
\end{proof}

\begin{proof}[Proof of Theorem~\ref{thm:enum_ICS_m_by_n_with_items_in_each_chain}]
In light of Lemma \ref{lem:order_ideal_ab2_Narayana}, it is sufficient to establish a bijection between the order ideals of $[m]\times[n-1]\times[2]$ and the interval-closed sets of $[m]\times[n]$ that contain at least one element of the form $(a,b)$ for each $a \in [m]$.

To do so, we use Proposition \ref{prop:ICS_alt_def}, that describes  an interval-closed set $I$ as a pair of disjoint antichains $(\Max(I),\fl(I))$, in which $\fl(I)$ is strictly below $\Max(I)$. Then the interval-closed set is the difference of the order ideals generated by $\Max(I)$ and $\fl(I)$.

Let $J$ be an order ideal of $[m]\times[n-1]\times[2]$, and define $A_1$ to be the antichain of maxima of $J$ in the restriction of the poset to the elements with $1$ as their last coordinate. Similarly, define $A_2$ to be the antichain of maxima of $J$ in the restriction of the poset to the elements with $2$ as their last coordinate. The pair $(A_1, A_2)$ completely defines $J$. Because $J$ is an order ideal, if $(a,b,2) \in A_2$, then an element $(a,c,1)$ can belong to $A_1$ only if $c\geq b$. Therefore, the antichain defining $J$ has exactly one element of each of the forms $(a,b,2)$ and $(a,c,1)$ for each $a \in [m]$ and for some $c\in \{0,\ldots, n-1\}$, $b \in \{0,\ldots,c\}$. (Here we write $(x,0,y)$ to say that there is no element of $J$ with first coordinate $x$ and last coordinate $y$.)

Consider the values $(a,b,2)$ and $(a,c,1)$ defining the contours of $J$. We claim that we can build an interval-closed set of $[m]\times[n]$ using them. Fix $a \in [m]$ and consider $(a,b,2)$ and $(a,c,1)$ that are at the top of the antichains in $J$. Then, the set of elements $\{(a,x)\mid b < x\leq c+1 \}$ forms a non-empty interval-closed set of $[m]\times[n]$. Furthermore, from Proposition \ref{prop:ICS_alt_def} we may construct an interval-closed set $I$ such that $\Max(I) = \{(a,x+1) \mid (a,x,1) \in A_1\}$ and $\fl(I) = \{(a,y) \mid (a,y,2) \in A_2\}$. This interval-closed set has elements of the form $(a,b)$ for each $a \in [m]$. Let the map just described be denoted as $\psi$, that is, $\psi(J)=I$.

Similarly, one may construct the inverse map as follows. 
Given an interval-closed set $I$ of $[m] \times [n]$ that has elements of the form $(a,b)$ for each $a \in [m]$, 
we use Proposition \ref{prop:ICS_alt_def} to translate it into two antichains $\Max(I)$ and $\fl(I)$. Then, an order ideal of $[m] \times [n-1] \times [2]$ is defined by all the elements below $\{(a,x-1,1) \mid (a,x) \in \max(I)\} \cup \{(a,y,2) \mid (a,y) \in \fl(I)\}$. This map is clearly the inverse of $\psi$.
An example of this bijection is given in Example \ref{ex:ICS-OI_ex}.

Finally, using Lemma \ref{lem:order_ideal_ab2_Narayana}, the number of interval-closed sets of $[m]\times[n]$ containing at least one element of the form $(a, b)$ for each $a \in [m]$ is the Narayana number $N(n+m,n)$.
\end{proof}

\begin{ex}\label{ex:ICS-OI_ex}
We illustrate the bijection with a small example, namely, an instance of the bijection map $\psi$ from an order ideal of $[2]\times[2]\times[2]$ to an interval-closed set of $[2]\times [3]$ with elements in both the top and bottom chain. We start with order ideal $J=\{(1,1,1),(1,1,2),(1,2,1),(1,2,2),(2,1,1)\}$ of $[2]\times[2]\times[2]$. The bijection requires us to identify the maximal elements of the restriction of $J$ to items with last coordinate being respectively $1$ and $2$: here, these are $A_1 = \{(1,2,1), (2,1,1)\}$ and $A_2=\{(1,2,2)\}$. Therefore, for each pair $(a,d) \in \{1,2\}\times\{1,2\}$, we identify the elements of the form $(a, x,d)$ in $A_1$ or $A_2$. Here, these are $\{(1,2,1), (1,2,2), (2,1,1), (2,0,2)\}$. The description of the bijection then yields that $\psi(J)  = \Delta(\{(1,3),(2,2)\}) - \underbrace{\Delta(\{(1,2), (2,0)\}}_{\Delta(\{(1,2)\})} = \{(1,3), (2,1), (2,2)\}$.

Hence, $\psi(J) =  \{(1,3),(2,1), (2,2)\}$ as in Figure \ref{fig:ICS-OI_ex}.\\

For the inverse direction, we start with $I = \{(1,3),(2,1), (2,2)\}$. Then, $\Max(I) = \{(1,3), (2,2)\}$, and $\fl(I) = \{(1,2)\}$. Hence, $\psi^{-1}(I) = \Delta(\{(1,2,1),(2,1,1),(1,2,2) \}) = J$. 
\end{ex}

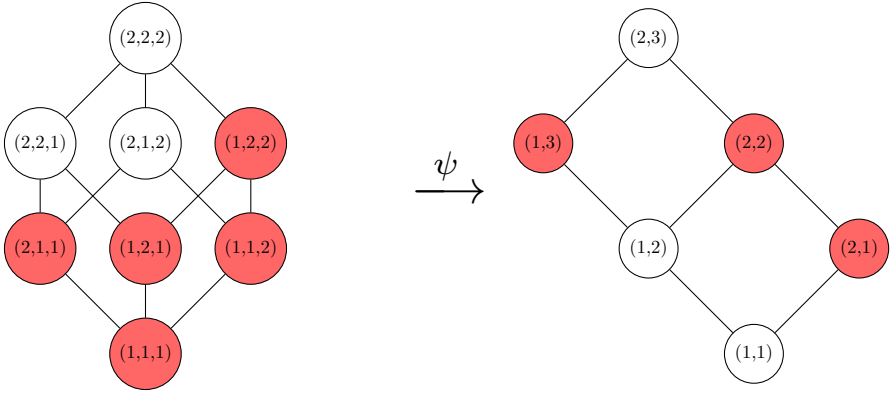
\begin{figure}
\centering
\begin{minipage}{.3\linewidth}
\begin{tikzpicture}[scale = .7]
\node[draw, circle, fill=red!60, scale=.7] (A) at (2,0) {(1,1,1)};
\node[draw, circle, fill=red!60, scale=.7] (B) at (0,2) {(2,1,1)};
\node[draw, circle, fill=red!60, scale=.7] (C) at (2,2) {(1,2,1)};
\node[draw, circle, fill=red!60, scale=.7] (D) at (4,2) {(1,1,2)};
\node[draw, circle, scale=.7] (E) at (0,4) {(2,2,1)};
\node[draw, circle, scale=.7] (F) at (2,4) {(2,1,2)};
\node[draw, circle, fill=red!60, scale=.7] (G) at (4,4) {(1,2,2)};
\node[draw, circle, scale=.7] (H) at (2,6) {(2,2,2)};
\draw (A) --(B) --(E) -- (H);
\draw (A) --(C) --(E);
\draw (A) --(D) --(F) -- (H);
\draw (B) --(F);
\draw (C) --(G);
\draw (D) --(G) -- (H);
\end{tikzpicture}
\end{minipage}
\quad \resizebox{1cm}{!}{$\xlongrightarrow{\large \psi}$}\quad
\begin{minipage}{.4\linewidth}
\begin{tikzpicture}[scale = .7]
\node[draw, circle, scale=.7] (A) at (4,0) {(1,1)};
\node[draw, circle, fill=red!60, scale=.7] (B) at (6,2) {(2,1)};
\node[draw, circle, scale=.7] (C) at (2,2) {(1,2)};
\node[draw, circle, fill=red!60, scale=.7] (E) at (4,4) {(2,2)};
\node[draw, circle, fill=red!60, scale=.7] (F) at (0,4) {(1,3)};
\node[draw, circle, scale=.7] (G) at (2,6) {(2,3)};
\draw (A) --(B) --(E) -- (G);
\draw (A) --(C) --(F) --(G);
\draw (C) -- (E);
\end{tikzpicture}
\end{minipage}
\caption{The bijection $\psi$ between order ideals of $[2]\times[2]\times[2]$ and interval-closed sets of $[2]\times [3]$ with elements in both the top and bottom chain. Here, the order ideal of $[2]\times[2]\times[2]$ illustrated is $J=\{(1,1,1),(1,1,2),(1,2,1),(1,2,2),(2,1,1)\}$ (shown in red on the left), so that $\psi(J) =  \{(1,3),(2,1), (2,2)\}$ is an interval-closed set of $[2]\times [3]$ with elements in both the top and bottom chain (shown in red on the right).}
\label{fig:ICS-OI_ex}
\end{figure}

Unfortunately, Theorem \ref{thm:enum_ICS_m_by_n_with_items_in_each_chain} does not give enough information to enumerate all interval-closed sets in the poset $[m]\times[n]$. 
We leave as an open problem to count the number of interval-closed sets in products of two chains and provide some data for small values of $m+n$ in Table \ref{table:number_ICS_a_by_b}.

\begin{table}[htbp]
\begin{center}
\begin{tabular}{c|c|c|c|c|c|c|c|c}
\diagbox{$m$}{$n$} & 1 & 2 & 3 & 4 & 5 & 6 & 7 & 8\\
\hline
1 & 2 & 4 & 7 & 11 & 16 & 22 & 29 & 37 \\
2 & 4 & 13 & 33 & 71 & 136 & 239 & 393 & 613 \\
3 & 7 & 33 & 114 & 321 & 781 & 1,702 & 3,403 & 6,349\\
4 & 11 & 71 & 321 & 1146 & 3,449 & 9,115 & 21,743 & 47,737\\
5 & 16 & 136 & 781 & 3,449 &12,578 & 39,614 &111,063& 283,243\\
\end{tabular}
\end{center}
\caption{Number of interval-closed sets in $[m]\times[n]$}\label{table:number_ICS_a_by_b}
\end{table}

\subsection{Max minus min homomesy}\label{sec:prod_chains_homomesy}
Rowmotion on product of chains posets provided the setting in which one of the first instances of the homomesy phenomenon (see Definition~\ref{def:homomesy}) was discovered. Propp and Roby showed that the cardinality statistic  on order ideals of $[m]\times[n]$ exhibits homomesy under rowmotion \cite{PR2015}; this result was extended to $[m]\times[n]\times[2]$ by Vorland \cite{Vorland2019}. It is natural to ask whether rowmotion on interval-closed sets of these posets also exhibits homomesy for some statistic. In this section, we give such a homomesy result for the $[2]\times[n]$ poset in Theorem~\ref{thm:homomesy_2_by_n_max-min} and conjecture an analogous statement for $[m]\times[n]$. The statistic exhibiting homomesy in this case is not the cardinality statistic, but rather the number of maximal elements minus the number of minimal elements of the interval-closed set.

\begin{thm}\label{thm:homomesy_2_by_n_max-min}
The number of maximal elements minus the number of minimal elements is $0$-mesic under rowmotion on $\IC(P)$, for $P=[2] \times [n]$.
\end{thm}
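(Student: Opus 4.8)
The plan is to first turn the statistic into a purely combinatorial quantity and then prove a balancing statement orbit by orbit. Using the description of $\IC([2]\times[n])$ from the proof of Theorem~\ref{prodofchainICS}, every nonempty interval-closed set $I$ is recorded by a bottom-chain interval $[(1,i_1),(1,j_1)]$ and a top-chain interval $[(2,i_2),(2,j_2)]$, subject to $i_2\le i_1$ and $j_2\le j_1$ when both chains are occupied. Since the largest antichain of $[2]\times[n]$ has size $2$, both $|\Max(I)|$ and $|\Min(I)|$ lie in $\{0,1,2\}$, and a direct check gives $|\Max(I)|=2$ precisely when $I$ meets both chains with $j_1>j_2$, while $|\Min(I)|=2$ precisely when $I$ meets both chains with $i_2<i_1$. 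Thus $|\Max(I)|-|\Min(I)|\in\{-1,0,1\}$, and the theorem is equivalent to showing that in each rowmotion orbit the number of sets having two maximal elements equals the number having two minimal elements.

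A clean partial tool comes from the anti-automorphism $\sigma(a,b)=(3-a,n+1-b)$ of $[2]\times[n]$, which is an involution. It preserves $\IC([2]\times[n])$, exchanges maximal and minimal elements so that $\bigl(|\Max|-|\Min|\bigr)\circ\sigma=-\bigl(|\Max|-|\Min|\bigr)$, and (being an isomorphism onto the dual) satisfies $\sigma\,\row\,\sigma=\row^{-1}$ by Proposition~\ref{Prop: DualInverse}. Consequently the global average of the statistic is $0$, and any orbit fixed setwise by $\sigma$ has statistic-sum $0$ immediately. The plan is to use this to dispose of the $\sigma$-invariant orbits; however, not every orbit is $\sigma$-invariant (for instance, in $[2]\times[4]$ the orbit of $\{(1,1),(2,1)\}$ is not), so this step must be supplemented.

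For the remaining orbits I would analyze the rowmotion dynamics on the profile $(i_1,j_1,i_2,j_2)$ directly, using the global formula of Theorem~\ref{thm:AltRow}, and singling out the sets containing the global maximum $(2,n)$, where $\row(I)=\overline I$ by Lemma~\ref{lem:max_elt_comp}. The goal is the near-identity $|\Max(I)|=|\Min(\row(I))|$: I expect this to hold for all $I$ except at transitions between one-chain and two-chain configurations, where the two sides differ by exactly $1$. Granting this, reindexing the second sum by the orbit permutation gives
\[
\sum_{I\in\O}\bigl(|\Max(I)|-|\Min(I)|\bigr)=\sum_{I\in\O}\bigl(|\Max(I)|-|\Min(\row(I))|\bigr),
\]
and the right-hand side is a sum of the $\pm1$ discrepancies; one then shows these occur in canceling pairs as the cycle is traversed, so that each step dropping a maximal element is compensated by a step creating a minimal element elsewhere in the same orbit. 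The orbit $\{\emptyset,P\}$ is checked by hand, since $|\Max(\emptyset)|-|\Min(\emptyset)|=0$ and $|\Max(P)|-|\Min(P)|=1-1=0$.

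The hard part will be the bookkeeping in the third paragraph: establishing the rowmotion action on profiles precisely enough to locate every step where $|\Max(I)|\ne|\Min(\row(I))|$, and proving that these discrepancies cancel within each individual orbit rather than merely globally. This is exactly where the $\sigma$-symmetry fails to suffice, and where a careful case analysis of how rowmotion moves the two chain-intervals, together with the complementation behavior at sets containing $(2,n)$, is needed.
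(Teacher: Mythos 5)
Your reduction is correct and coincides with the paper's: every nonempty $I\in\IC([2]\times[n])$ is encoded by the profile $(i_1,j_1,i_2,j_2)$, the statistic $|\Max(I)|-|\Min(I)|$ lies in $\{-1,0,1\}$, it equals $+1$ exactly for overlapping two-chain configurations with $j_2<j_1$ and $i_1=i_2$, and $-1$ exactly for those with $j_1=j_2$ and $i_2<i_1$, so the theorem amounts to matching $+1$ sets with $-1$ sets inside each orbit. The involution $\sigma(a,b)=(3-a,n+1-b)$ is also a correct and genuinely new ingredient: it is an isomorphism onto the dual poset, it negates the statistic, and by Proposition~\ref{Prop: DualInverse} it conjugates $\row$ to $\row^{-1}$, so orbit sums occur in $\pm$ pairs and vanish on $\sigma$-stable orbits. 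But, as you yourself observe, this controls only the global average and the symmetric orbits, while the theorem asserts that \emph{every} orbit sum vanishes.

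The genuine gap is that everything beyond this point is deferred rather than proved. The reindexing identity $\sum_{I\in\O}\bigl(|\Max(I)|-|\Min(I)|\bigr)=\sum_{I\in\O}\bigl(|\Max(I)|-|\Min(\row(I))|\bigr)$ is vacuous: it holds for any statistic and any permutation of a finite orbit, so it effects no reduction, and the assertion that the resulting $\pm1$ discrepancies ``occur in canceling pairs'' is exactly equivalent to the statement being proved. Likewise, the ``near-identity'' $|\Max(I)|=|\Min(\row(I))|$ away from transitions is plausible but nowhere established. The paper's proof consists precisely of the case analysis you postpone: it shows that from a $+1$ configuration ($j_2<j_1$, $i_1=i_2$) successive applications of rowmotion slide both intervals up one step at a time through configurations contributing $0$, until the bottom-chain interval reaches the top of the poset, at which point the next application of rowmotion (analyzed in two subcases, according to whether the top-chain interval is one step or more than one step below the top) produces a configuration with one maximal and two minimal elements, i.e.\ a $-1$; running the same analysis backwards from a $-1$ configuration shows the pairing is a bijection, and the orbit $\{\emptyset,P\}$ is checked directly. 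Without this dynamical argument (or an equivalent proof of your discrepancy-cancellation claim), the proposal does not prove the theorem; with it, the $\sigma$-symmetry and the reindexing step become unnecessary.
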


\begin{proof} Consider $P = [2] \times [n]$. Given $I \in \IC(P)$, we saw in the proof of Theorem \ref{prodofchainICS} that there are four types of interval-closed sets $I$: 
\begin{enumerate}
\item $I$ is empty.
\item $I$ is a single interval, belonging to either the top or bottom chain. 
\item $I$ is a single interval with items in both the top and bottom chains of the form $[(1,i_1),(1, j_1)] \cup [(2,i_2),(2,j_2)]$, with $i_2\leq i_1 \leq j_2\leq j_1$.
\item $I$ is made of two disjoint intervals on each chain of the form $[(1,i_1), (1,j_1)]$ and $[(2,i_2),(2,j_2)]$ with $i_2 \leq j_2 < i_1 \leq j_1$.
\end{enumerate}

Let $\mm(I)$ denote the number of maximal elements minus the number of minimal elements of $I$. 
If $I$ is in Case (1), $I$ is the empty set, so clearly $\mm(I)=0$. If $I$ is in Case (2), $I$ is a single interval, so there is one maximal and one minimal element. Thus, $\mm(I) = 1 - 1 =0$. If $I$ is in Case (4), $I$ consists of two disjoint intervals. So each interval contributes a maximal and minimal element, resulting in $\mm(I) = 2 - 2 = 0$.

Therefore, the only case we need study is Case (3), where $I$ consists of two intervals that overlap. We first examine how the number of maximal and minimal elements changes depending on several subcases:

\begin{enumerate}
\item[(3.1)] If $j_1 = j_2$, then there is only one maximal element, $j_2$.
\item[(3.2)] If $j_2 < j_1$, then there are two maximal elements, $j_1$ and $j_2$.
\item[(3.3)] If $i_1 = i_2$, then there is only one minimal element, $i_1$. 
\item[(3.4)] If $i_2 < i_1$, then there are two minimal elements, $i_1$ and $i_2$. 
\end{enumerate}

We consider all the ways to pair (3.1)/(3.2) with (3.3)/(3.4). We have $\mm(I) = 0$ when either $j_1 = j_2$ and $i_1 = i_2$ ((3.1) $+$ (3.3)) or $j_1 > j_2$ and $i_1 > i_2$ ((3.2) $+$ (3.4)),  $\mm(I) = 1$ when $j_2 < j_1$ and $i_1 = i_2$ ((3.2) $+$ (3.3)), and $\mm(I) = -1$ when $j_1 = j_2$ and $i_2 < i_1$ ((3.1) + (3.4)).

We finish the proof by showing that whenever you encounter an $I$ of the form ((3.2) $+$ (3.3)), then repeated applications of rowmotion result in a (possibly empty) sequence of interval-closed sets contributing $0$ to the statistic and finally an interval-closed set contributing $-1$ to the statistic. In this way, we can pair any element in the orbit that contributes $+1$ to an element that contributes $-1$. 

Consider $I \in \IC(P)$ of the form $[(1,i_1),(1, j_1)] \cup [(2,i_2),(2,j_2)]$ with $j_2 < j_1$ and $i_1 = i_2$. If $i_1 \neq n$, then performing rowmotion results in the interval-closed set $[(1,i_1 + 1),(1, j_1 + 1)] \cup [(2,1),(2,j_2 + 1)]$. Performing rowmotion again results in sliding each interval up its chain by one element. This continues until the interval on the bottom chain reaches the top. 

\begin{center}
\begin{minipage}[c]{5cm}
\begin{tikzpicture}[scale = .65]
\draw [-, ultra thick] (-1,0) -- (-2,1);
\draw [-, ultra thick] (-2,1) -- (-3,2);
\draw [-, ultra thick] (-3,2) -- (-4,3);
\draw [-, ultra thick] (-4,3) -- (-5,4);
\draw [-, ultra thick] (-5,4) -- (-6,5);
\draw [-, ultra thick] (-3,0) -- (-4,1);
\draw [-, ultra thick] (-2,-1) -- (-3,0);
\draw [-, ultra thick] (-4,1) -- (-5,2);
\draw [-, ultra thick] (-5,2) -- (-6,3);
\draw [-, ultra thick] (-6,3) -- (-7,4);
\draw [-, ultra thick] (-7,4) -- (-8,5);
\draw [-, ultra thick] (-6,5) -- (-7,6);
\draw [-, ultra thick] (-1,0) -- (-2,-1);
\draw [-, ultra thick] (-2,1) -- (-3,0);
\draw [-, ultra thick] (-3,2) -- (-4,1);
\draw [-, ultra thick] (-4,3) -- (-5,2);
\draw [-, ultra thick] (-5,4) -- (-6,3);
\draw [-, ultra thick] (-6,5) -- (-7,4);
\draw [-, ultra thick] (-7,6) -- (-8,5);
\draw[fill=white, radius = .2] (-1,0) circle [radius = 0.2];
\draw[fill=white, radius = .2] (-2,1) circle [radius = 0.2];
\draw[fill=red, radius = .2] (-3,2) circle [radius = 0.2];
\draw[fill=white, radius = .2] (-4,3) circle [radius = 0.2];
\draw[fill=white, radius = .2] (-5,4) circle [radius = 0.2];
\draw[fill=white, radius = .2] (-6,5) circle [radius = 0.2];
\draw[fill=white, radius = .2] (-7,6) circle [radius = 0.2];
\draw[fill=white, radius = .2] (-2,-1) circle [radius = 0.2];
\draw[fill=white, radius = .2] (-3,0) circle [radius = 0.2];
\draw[fill=red, radius = .2] (-4,1) circle [radius = 0.2];
\draw[fill=red, radius = .2] (-5,2) circle [radius = 0.2];
\draw[fill=red, radius = .2] (-6,3) circle [radius = 0.2];
\draw[fill=white, radius = .2] (-7,4) circle [radius = 0.2];
\draw[fill=white, radius = .2] (-8,5) circle [radius = 0.2];
\end{tikzpicture}
  \end{minipage}
   \hspace{-0.9cm}\resizebox{1cm}{!}{$\xlongrightarrow{\text{Row}}$} \hspace{-0.9cm}
  \begin{minipage}[c]{5cm}
\begin{tikzpicture}[scale = .65]
\draw [-, ultra thick] (-1,0) -- (-2,1);
\draw [-, ultra thick] (-2,1) -- (-3,2);
\draw [-, ultra thick] (-3,2) -- (-4,3);
\draw [-, ultra thick] (-4,3) -- (-5,4);
\draw [-, ultra thick] (-5,4) -- (-6,5);
\draw [-, ultra thick] (-3,0) -- (-4,1);
\draw [-, ultra thick] (-2,-1) -- (-3,0);
\draw [-, ultra thick] (-4,1) -- (-5,2);
\draw [-, ultra thick] (-5,2) -- (-6,3);
\draw [-, ultra thick] (-6,3) -- (-7,4);
\draw [-, ultra thick] (-7,4) -- (-8,5);
\draw [-, ultra thick] (-6,5) -- (-7,6);
\draw [-, ultra thick] (-1,0) -- (-2,-1);
\draw [-, ultra thick] (-2,1) -- (-3,0);
\draw [-, ultra thick] (-3,2) -- (-4,1);
\draw [-, ultra thick] (-4,3) -- (-5,2);
\draw [-, ultra thick] (-5,4) -- (-6,3);
\draw [-, ultra thick] (-6,5) -- (-7,4);
\draw [-, ultra thick] (-7,6) -- (-8,5);
\draw[fill=red, radius = .2] (-1,0) circle [radius = 0.2];
\draw[fill=red, radius = .2] (-2,1) circle [radius = 0.2];
\draw[fill=red, radius = .2] (-3,2) circle [radius = 0.2];
\draw[fill=red, radius = .2] (-4,3) circle [radius = 0.2];
\draw[fill=white, radius = .2] (-5,4) circle [radius = 0.2];
\draw[fill=white, radius = .2] (-6,5) circle [radius = 0.2];
\draw[fill=white, radius = .2] (-7,6) circle [radius = 0.2];
\draw[fill=white, radius = .2] (-2,-1) circle [radius = 0.2];
\draw[fill=white, radius = .2] (-3,0) circle [radius = 0.2];
\draw[fill=white, radius = .2] (-4,1) circle [radius = 0.2];
\draw[fill=red, radius = .2] (-5,2) circle [radius = 0.2];
\draw[fill=red, radius = .2] (-6,3) circle [radius = 0.2];
\draw[fill=red, radius = .2] (-7,4) circle [radius = 0.2];
\draw[fill=white, radius = .2] (-8,5) circle [radius = 0.2];
\end{tikzpicture}
  \end{minipage}
   \hspace{-0.9cm}\resizebox{1cm}{!}{$\xlongrightarrow{\text{Row}}$} \hspace{-0.9cm}
\begin{minipage}[c]{5cm}
\begin{tikzpicture}[scale = .65]
\draw [-, ultra thick] (-1,0) -- (-2,1);
\draw [-, ultra thick] (-2,1) -- (-3,2);
\draw [-, ultra thick] (-3,2) -- (-4,3);
\draw [-, ultra thick] (-4,3) -- (-5,4);
\draw [-, ultra thick] (-5,4) -- (-6,5);
\draw [-, ultra thick] (-3,0) -- (-4,1);
\draw [-, ultra thick] (-2,-1) -- (-3,0);
\draw [-, ultra thick] (-4,1) -- (-5,2);
\draw [-, ultra thick] (-5,2) -- (-6,3);
\draw [-, ultra thick] (-6,3) -- (-7,4);
\draw [-, ultra thick] (-7,4) -- (-8,5);
\draw [-, ultra thick] (-6,5) -- (-7,6);
\draw [-, ultra thick] (-1,0) -- (-2,-1);
\draw [-, ultra thick] (-2,1) -- (-3,0);
\draw [-, ultra thick] (-3,2) -- (-4,1);
\draw [-, ultra thick] (-4,3) -- (-5,2);
\draw [-, ultra thick] (-5,4) -- (-6,3);
\draw [-, ultra thick] (-6,5) -- (-7,4);
\draw [-, ultra thick] (-7,6) -- (-8,5);
\draw[fill=white, radius = .2] (-1,0) circle [radius = 0.2];
\draw[fill=red, radius = .2] (-2,1) circle [radius = 0.2];
\draw[fill=red, radius = .2] (-3,2) circle [radius = 0.2];
\draw[fill=red, radius = .2] (-4,3) circle [radius = 0.2];
\draw[fill=red, radius = .2] (-5,4) circle [radius = 0.2];
\draw[fill=white, radius = .2] (-6,5) circle [radius = 0.2];
\draw[fill=white, radius = .2] (-7,6) circle [radius = 0.2];
\draw[fill=white, radius = .2] (-2,-1) circle [radius = 0.2];
\draw[fill=white, radius = .2] (-3,0) circle [radius = 0.2];
\draw[fill=white, radius = .2] (-4,1) circle [radius = 0.2];
\draw[fill=white, radius = .2] (-5,2) circle [radius = 0.2];
\draw[fill=red, radius = .2] (-6,3) circle [radius = 0.2];
\draw[fill=red, radius = .2] (-7,4) circle [radius = 0.2];
\draw[fill=red, radius = .2] (-8,5) circle [radius = 0.2];
\end{tikzpicture}
  \end{minipage}
\end{center}

At this point, we must consider two cases. If the interval on the top chain is more than one away from the top of the chain, then performing rowmotion slides that top interval up by one.  It removes elements from the top of the bottom chain until the maximal element in the bottom chain is at the same height as that on the top chain. It also removes the bottom most element of the bottom chain.  More formally, if $I'=[(1,i_1'),(1,n)]\cup[(2,i_2'),(2,j_2')]$ with $i_1'=j_2'$ and $j_2'<n-1$, then $\row(I')=[(1,i_2'+1),(1,j_2'+1)]\cup[(2,i_2'+1),(2,j_2'+1)].$ Thus the resulting interval closed set is of the form ((3.1) $+$ (3.4)) and the number of maximal elements minus the number of minimal elements is $1 - 2 = -1$

\begin{center}
\begin{minipage}[c]{5cm}
\begin{tikzpicture}[scale = .65]
\draw [-, ultra thick] (-1,0) -- (-2,1);
\draw [-, ultra thick] (-2,1) -- (-3,2);
\draw [-, ultra thick] (-3,2) -- (-4,3);
\draw [-, ultra thick] (-4,3) -- (-5,4);
\draw [-, ultra thick] (-5,4) -- (-6,5);
\draw [-, ultra thick] (-3,0) -- (-4,1);
\draw [-, ultra thick] (-2,-1) -- (-3,0);
\draw [-, ultra thick] (-4,1) -- (-5,2);
\draw [-, ultra thick] (-5,2) -- (-6,3);
\draw [-, ultra thick] (-6,3) -- (-7,4);
\draw [-, ultra thick] (-7,4) -- (-8,5);
\draw [-, ultra thick] (-6,5) -- (-7,6);
\draw [-, ultra thick] (-1,0) -- (-2,-1);
\draw [-, ultra thick] (-2,1) -- (-3,0);
\draw [-, ultra thick] (-3,2) -- (-4,1);
\draw [-, ultra thick] (-4,3) -- (-5,2);
\draw [-, ultra thick] (-5,4) -- (-6,3);
\draw [-, ultra thick] (-6,5) -- (-7,4);
\draw [-, ultra thick] (-7,6) -- (-8,5);
\draw[fill=white, radius = .2] (-1,0) circle [radius = 0.2];
\draw[fill=red, radius = .2] (-2,1) circle [radius = 0.2];
\draw[fill=red, radius = .2] (-3,2) circle [radius = 0.2];
\draw[fill=red, radius = .2] (-4,3) circle [radius = 0.2];
\draw[fill=red, radius = .2] (-5,4) circle [radius = 0.2];
\draw[fill=white, radius = .2] (-6,5) circle [radius = 0.2];
\draw[fill=white, radius = .2] (-7,6) circle [radius = 0.2];
\draw[fill=white, radius = .2] (-2,-1) circle [radius = 0.2];
\draw[fill=white, radius = .2] (-3,0) circle [radius = 0.2];
\draw[fill=white, radius = .2] (-4,1) circle [radius = 0.2];
\draw[fill=white, radius = .2] (-5,2) circle [radius = 0.2];
\draw[fill=red, radius = .2] (-6,3) circle [radius = 0.2];
\draw[fill=red, radius = .2] (-7,4) circle [radius = 0.2];
\draw[fill=red, radius = .2] (-8,5) circle [radius = 0.2];
\end{tikzpicture}
  \end{minipage}
   \hspace{-0.9cm}\resizebox{1cm}{!}{$\xlongrightarrow{\text{Row}}$} \hspace{-0.9cm}
\begin{minipage}[c]{5cm}
\begin{tikzpicture}[scale = .65]
\draw [-, ultra thick] (-1,0) -- (-2,1);
\draw [-, ultra thick] (-2,1) -- (-3,2);
\draw [-, ultra thick] (-3,2) -- (-4,3);
\draw [-, ultra thick] (-4,3) -- (-5,4);
\draw [-, ultra thick] (-5,4) -- (-6,5);
\draw [-, ultra thick] (-3,0) -- (-4,1);
\draw [-, ultra thick] (-2,-1) -- (-3,0);
\draw [-, ultra thick] (-4,1) -- (-5,2);
\draw [-, ultra thick] (-5,2) -- (-6,3);
\draw [-, ultra thick] (-6,3) -- (-7,4);
\draw [-, ultra thick] (-7,4) -- (-8,5);
\draw [-, ultra thick] (-6,5) -- (-7,6);
\draw [-, ultra thick] (-1,0) -- (-2,-1);
\draw [-, ultra thick] (-2,1) -- (-3,0);
\draw [-, ultra thick] (-3,2) -- (-4,1);
\draw [-, ultra thick] (-4,3) -- (-5,2);
\draw [-, ultra thick] (-5,4) -- (-6,3);
\draw [-, ultra thick] (-6,5) -- (-7,4);
\draw [-, ultra thick] (-7,6) -- (-8,5);
\draw[fill=white, radius = .2] (-1,0) circle [radius = 0.2];
\draw[fill=white, radius = .2] (-2,1) circle [radius = 0.2];
\draw[fill=red, radius = .2] (-3,2) circle [radius = 0.2];
\draw[fill=red, radius = .2] (-4,3) circle [radius = 0.2];
\draw[fill=red, radius = .2] (-5,4) circle [radius = 0.2];
\draw[fill=red, radius = .2] (-6,5) circle [radius = 0.2];
\draw[fill=white, radius = .2] (-7,6) circle [radius = 0.2];
\draw[fill=white, radius = .2] (-2,-1) circle [radius = 0.2];
\draw[fill=white, radius = .2] (-3,0) circle [radius = 0.2];
\draw[fill=white, radius = .2] (-4,1) circle [radius = 0.2];
\draw[fill=white, radius = .2] (-5,2) circle [radius = 0.2];
\draw[fill=white, radius = .2] (-6,3) circle [radius = 0.2];
\draw[fill=red, radius = .2] (-7,4) circle [radius = 0.2];
\draw[fill=white, radius = .2] (-8,5) circle [radius = 0.2];
\end{tikzpicture}
  \end{minipage}
\end{center}

If the interval on the top chain is one away from the top of the chain, then performing rowmotion results in sliding the interval on the top chain up by one element and removing an element from the bottom of the interval on the bottom chain. Thus, the resulting interval-closed set is of the form $[(1,i_1'+1),(1, n)] \cup [(2,i_2'+1),(2,n)]$ with $i_1' > i_2',$ and the number of maximal elements minus the number of minimal elements is $1 - 2 = -1$.

\begin{center}
\begin{minipage}[c]{5cm}
\begin{tikzpicture}[scale = .65]
\draw [-, ultra thick] (-1,0) -- (-2,1);
\draw [-, ultra thick] (-2,1) -- (-3,2);
\draw [-, ultra thick] (-3,2) -- (-4,3);
\draw [-, ultra thick] (-4,3) -- (-5,4);
\draw [-, ultra thick] (-5,4) -- (-6,5);
\draw [-, ultra thick] (-3,0) -- (-4,1);
\draw [-, ultra thick] (-2,-1) -- (-3,0);
\draw [-, ultra thick] (-4,1) -- (-5,2);
\draw [-, ultra thick] (-5,2) -- (-6,3);
\draw [-, ultra thick] (-6,3) -- (-7,4);
\draw [-, ultra thick] (-7,4) -- (-8,5);
\draw [-, ultra thick] (-6,5) -- (-7,6);
\draw [-, ultra thick] (-1,0) -- (-2,-1);
\draw [-, ultra thick] (-2,1) -- (-3,0);
\draw [-, ultra thick] (-3,2) -- (-4,1);
\draw [-, ultra thick] (-4,3) -- (-5,2);
\draw [-, ultra thick] (-5,4) -- (-6,3);
\draw [-, ultra thick] (-6,5) -- (-7,4);
\draw [-, ultra thick] (-7,6) -- (-8,5);
\draw[fill=white, radius = .2] (-1,0) circle [radius = 0.2];
\draw[fill=red, radius = .2] (-2,1) circle [radius = 0.2];
\draw[fill=red, radius = .2] (-3,2) circle [radius = 0.2];
\draw[fill=red, radius = .2] (-4,3) circle [radius = 0.2];
\draw[fill=red, radius = .2] (-5,4) circle [radius = 0.2];
\draw[fill=red, radius = .2] (-6,5) circle [radius = 0.2];
\draw[fill=white, radius = .2] (-7,6) circle [radius = 0.2];
\draw[fill=white, radius = .2] (-2,-1) circle [radius = 0.2];
\draw[fill=white, radius = .2] (-3,0) circle [radius = 0.2];
\draw[fill=white, radius = .2] (-4,1) circle [radius = 0.2];
\draw[fill=white, radius = .2] (-5,2) circle [radius = 0.2];
\draw[fill=red, radius = .2] (-6,3) circle [radius = 0.2];
\draw[fill=red, radius = .2] (-7,4) circle [radius = 0.2];
\draw[fill=red, radius = .2] (-8,5) circle [radius = 0.2];
\end{tikzpicture}
  \end{minipage}
       \hspace{-0.9cm}\resizebox{1cm}{!}{$\xlongrightarrow{\text{Row}}$}   \hspace{-0.9cm}
\begin{minipage}[c]{5cm}
\begin{tikzpicture}[scale = .65]
\draw [-, ultra thick] (-1,0) -- (-2,1);
\draw [-, ultra thick] (-2,1) -- (-3,2);
\draw [-, ultra thick] (-3,2) -- (-4,3);
\draw [-, ultra thick] (-4,3) -- (-5,4);
\draw [-, ultra thick] (-5,4) -- (-6,5);
\draw [-, ultra thick] (-3,0) -- (-4,1);
\draw [-, ultra thick] (-2,-1) -- (-3,0);
\draw [-, ultra thick] (-4,1) -- (-5,2);
\draw [-, ultra thick] (-5,2) -- (-6,3);
\draw [-, ultra thick] (-6,3) -- (-7,4);
\draw [-, ultra thick] (-7,4) -- (-8,5);
\draw [-, ultra thick] (-6,5) -- (-7,6);
\draw [-, ultra thick] (-1,0) -- (-2,-1);
\draw [-, ultra thick] (-2,1) -- (-3,0);
\draw [-, ultra thick] (-3,2) -- (-4,1);
\draw [-, ultra thick] (-4,3) -- (-5,2);
\draw [-, ultra thick] (-5,4) -- (-6,3);
\draw [-, ultra thick] (-6,5) -- (-7,4);
\draw [-, ultra thick] (-7,6) -- (-8,5);
\draw[fill=white, radius = .2] (-1,0) circle [radius = 0.2];
\draw[fill=white, radius = .2] (-2,1) circle [radius = 0.2];
\draw[fill=red, radius = .2] (-3,2) circle [radius = 0.2];
\draw[fill=red, radius = .2] (-4,3) circle [radius = 0.2];
\draw[fill=red, radius = .2] (-5,4) circle [radius = 0.2];
\draw[fill=red, radius = .2] (-6,5) circle [radius = 0.2];
\draw[fill=red, radius = .2] (-7,6) circle [radius = 0.2];
\draw[fill=white, radius = .2] (-2,-1) circle [radius = 0.2];
\draw[fill=white, radius = .2] (-3,0) circle [radius = 0.2];
\draw[fill=white, radius = .2] (-4,1) circle [radius = 0.2];
\draw[fill=white, radius = .2] (-5,2) circle [radius = 0.2];
\draw[fill=white, radius = .2] (-6,3) circle [radius = 0.2];
\draw[fill=red, radius = .2] (-7,4) circle [radius = 0.2];
\draw[fill=red, radius = .2] (-8,5) circle [radius = 0.2];
\end{tikzpicture}
  \end{minipage}
\end{center}

\begin{center}
\begin{minipage}[c]{5cm}
\begin{tikzpicture}[scale = .65]
\draw [-, ultra thick] (-1,0) -- (-2,1);
\draw [-, ultra thick] (-2,1) -- (-3,2);
\draw [-, ultra thick] (-3,2) -- (-4,3);
\draw [-, ultra thick] (-4,3) -- (-5,4);
\draw [-, ultra thick] (-5,4) -- (-6,5);
\draw [-, ultra thick] (-3,0) -- (-4,1);
\draw [-, ultra thick] (-2,-1) -- (-3,0);
\draw [-, ultra thick] (-4,1) -- (-5,2);
\draw [-, ultra thick] (-5,2) -- (-6,3);
\draw [-, ultra thick] (-6,3) -- (-7,4);
\draw [-, ultra thick] (-7,4) -- (-8,5);
\draw [-, ultra thick] (-6,5) -- (-7,6);
\draw [-, ultra thick] (-1,0) -- (-2,-1);
\draw [-, ultra thick] (-2,1) -- (-3,0);
\draw [-, ultra thick] (-3,2) -- (-4,1);
\draw [-, ultra thick] (-4,3) -- (-5,2);
\draw [-, ultra thick] (-5,4) -- (-6,3);
\draw [-, ultra thick] (-6,5) -- (-7,4);
\draw [-, ultra thick] (-7,6) -- (-8,5);
\draw[fill=white, radius = .2] (-1,0) circle [radius = 0.2];
\draw[fill=red, radius = .2] (-2,1) circle [radius = 0.2];
\draw[fill=red, radius = .2] (-3,2) circle [radius = 0.2];
\draw[fill=red, radius = .2] (-4,3) circle [radius = 0.2];
\draw[fill=red, radius = .2] (-5,4) circle [radius = 0.2];
\draw[fill=red, radius = .2] (-6,5) circle [radius = 0.2];
\draw[fill=white, radius = .2] (-7,6) circle [radius = 0.2];
\draw[fill=white, radius = .2] (-2,-1) circle [radius = 0.2];
\draw[fill=white, radius = .2] (-3,0) circle [radius = 0.2];
\draw[fill=white, radius = .2] (-4,1) circle [radius = 0.2];
\draw[fill=white, radius = .2] (-5,2) circle [radius = 0.2];
\draw[fill=red, radius = .2] (-6,3) circle [radius = 0.2];
\draw[fill=red, radius = .2] (-7,4) circle [radius = 0.2];
\draw[fill=red, radius = .2] (-8,5) circle [radius = 0.2];
\end{tikzpicture}
  \end{minipage}
       \hspace{-0.9cm}\resizebox{1cm}{!}{$\xlongrightarrow{\text{Row}}$}   \hspace{-0.9cm}
\begin{minipage}[c]{5cm}
\begin{tikzpicture}[scale = .65]
\draw [-, ultra thick] (-1,0) -- (-2,1);
\draw [-, ultra thick] (-2,1) -- (-3,2);
\draw [-, ultra thick] (-3,2) -- (-4,3);
\draw [-, ultra thick] (-4,3) -- (-5,4);
\draw [-, ultra thick] (-5,4) -- (-6,5);
\draw [-, ultra thick] (-3,0) -- (-4,1);
\draw [-, ultra thick] (-2,-1) -- (-3,0);
\draw [-, ultra thick] (-4,1) -- (-5,2);
\draw [-, ultra thick] (-5,2) -- (-6,3);
\draw [-, ultra thick] (-6,3) -- (-7,4);
\draw [-, ultra thick] (-7,4) -- (-8,5);
\draw [-, ultra thick] (-6,5) -- (-7,6);
\draw [-, ultra thick] (-1,0) -- (-2,-1);
\draw [-, ultra thick] (-2,1) -- (-3,0);
\draw [-, ultra thick] (-3,2) -- (-4,1);
\draw [-, ultra thick] (-4,3) -- (-5,2);
\draw [-, ultra thick] (-5,4) -- (-6,3);
\draw [-, ultra thick] (-6,5) -- (-7,4);
\draw [-, ultra thick] (-7,6) -- (-8,5);
\draw[fill=white, radius = .2] (-1,0) circle [radius = 0.2];
\draw[fill=white, radius = .2] (-2,1) circle [radius = 0.2];
\draw[fill=red, radius = .2] (-3,2) circle [radius = 0.2];
\draw[fill=red, radius = .2] (-4,3) circle [radius = 0.2];
\draw[fill=red, radius = .2] (-5,4) circle [radius = 0.2];
\draw[fill=red, radius = .2] (-6,5) circle [radius = 0.2];
\draw[fill=red, radius = .2] (-7,6) circle [radius = 0.2];
\draw[fill=white, radius = .2] (-2,-1) circle [radius = 0.2];
\draw[fill=white, radius = .2] (-3,0) circle [radius = 0.2];
\draw[fill=white, radius = .2] (-4,1) circle [radius = 0.2];
\draw[fill=white, radius = .2] (-5,2) circle [radius = 0.2];
\draw[fill=white, radius = .2] (-6,3) circle [radius = 0.2];
\draw[fill=red, radius = .2] (-7,4) circle [radius = 0.2];
\draw[fill=red, radius = .2] (-8,5) circle [radius = 0.2];
\end{tikzpicture}
  \end{minipage}
\end{center}

Note that if the original $I$ has $i_1 = n$, then one of those two cases applies and one instance of rowmotion returns an interval-closed set that contributes $-1$ to the statistic.

Therefore, for any $I \in \IC(P)$, the orbit under rowmotion consists of elements that either contribute $0$ to the statistic or elements that contribute $+1$ that can be paired with elements that contribute $-1$. If instead you start with the element that contributes $-1$, use the inverse of rowmotion and the reverse of the above argument to reach the required element that contributes $+1$.
\end{proof}

\begin{ex}
Consider $P = [2] \times [5]$ and $I = [(1,2),(1,3)] \cup [(2, 2),(2,2)]$. In Figure \ref{fig: min_max_fig}, we give the orbit of $I$ with the number of maximal elements minus the number of minimal elements listed below each poset.
\end{ex}

\begin{figure}[bhpt]
\begin{center}
\begin{minipage}[c]{1.7cm}
\begin{tikzpicture}[scale = .43]
\draw [-, ultra thick] (-1,0) -- (-2,1);
\draw [-, ultra thick] (-2,1) -- (-3,2);
\draw [-, ultra thick] (-3,2) -- (-4,3);
\draw [-, ultra thick] (-4,3) -- (-5,4);
\draw [-, ultra thick] (-3,0) -- (-4,1);
\draw [-, ultra thick] (-2,-1) -- (-3,0);
\draw [-, ultra thick] (-4,1) -- (-5,2);
\draw [-, ultra thick] (-5,2) -- (-6,3);
\draw [-, ultra thick] (-1,0) -- (-2,-1);
\draw [-, ultra thick] (-2,1) -- (-3,0);
\draw [-, ultra thick] (-3,2) -- (-4,1);
\draw [-, ultra thick] (-4,3) -- (-5,2);
\draw [-, ultra thick] (-5,4) -- (-6,3);
\draw[fill=white, radius = .2] (-1,0) circle [radius = 0.2];
\draw[fill=red, radius = .2] (-2,1) circle [radius = 0.2];
\draw[fill=white, radius = .2] (-3,2) circle [radius = 0.2];
\draw[fill=white, radius = .2] (-4,3) circle [radius = 0.2];
\draw[fill=white, radius = .2] (-5,4) circle [radius = 0.2];
\draw[fill=white, radius = .2] (-2,-1) circle [radius = 0.2];
\draw[fill=red, radius = .2] (-3,0) circle [radius = 0.2];
\draw[fill=red, radius = .2] (-4,1) circle [radius = 0.2];
\draw[fill=white, radius = .2] (-5,2) circle [radius = 0.2];
\draw[fill=white, radius = .2] (-6,3) circle [radius = 0.2];
\draw (-2,-2) node{$1$};
\end{tikzpicture}
  \end{minipage}
  \hspace{0.15cm} \resizebox{.5cm}{!}{$\xlongrightarrow{\text{Row}}$}   \hspace{-0.85cm}
\begin{minipage}[c]{1.7cm}
\begin{tikzpicture}[scale = .43]
\draw [-, ultra thick] (-1,0) -- (-2,1);
\draw [-, ultra thick] (-2,1) -- (-3,2);
\draw [-, ultra thick] (-3,2) -- (-4,3);
\draw [-, ultra thick] (-4,3) -- (-5,4);
\draw [-, ultra thick] (-3,0) -- (-4,1);
\draw [-, ultra thick] (-2,-1) -- (-3,0);
\draw [-, ultra thick] (-4,1) -- (-5,2);
\draw [-, ultra thick] (-5,2) -- (-6,3);
\draw [-, ultra thick] (-1,0) -- (-2,-1);
\draw [-, ultra thick] (-2,1) -- (-3,0);
\draw [-, ultra thick] (-3,2) -- (-4,1);
\draw [-, ultra thick] (-4,3) -- (-5,2);
\draw [-, ultra thick] (-5,4) -- (-6,3);
\draw[fill=red, radius = .2] (-1,0) circle [radius = 0.2];
\draw[fill=red, radius = .2] (-2,1) circle [radius = 0.2];
\draw[fill=red, radius = .2] (-3,2) circle [radius = 0.2];
\draw[fill=white, radius = .2] (-4,3) circle [radius = 0.2];
\draw[fill=white, radius = .2] (-5,4) circle [radius = 0.2];
\draw[fill=white, radius = .2] (-2,-1) circle [radius = 0.2];
\draw[fill=white, radius = .2] (-3,0) circle [radius = 0.2];
\draw[fill=red, radius = .2] (-4,1) circle [radius = 0.2];
\draw[fill=red, radius = .2] (-5,2) circle [radius = 0.2];
\draw[fill=white, radius = .2] (-6,3) circle [radius = 0.2];
\draw (-2,-2) node{$0$};
\end{tikzpicture}
  \end{minipage}
  \hspace{0.15cm} \resizebox{.5cm}{!}{$\xlongrightarrow{\text{Row}}$}   \hspace{-0.85cm}
  \begin{minipage}[c]{1.7cm}
\begin{tikzpicture}[scale = .43]
\draw [-, ultra thick] (-1,0) -- (-2,1);
\draw [-, ultra thick] (-2,1) -- (-3,2);
\draw [-, ultra thick] (-3,2) -- (-4,3);
\draw [-, ultra thick] (-4,3) -- (-5,4);
\draw [-, ultra thick] (-3,0) -- (-4,1);
\draw [-, ultra thick] (-2,-1) -- (-3,0);
\draw [-, ultra thick] (-4,1) -- (-5,2);
\draw [-, ultra thick] (-5,2) -- (-6,3);
\draw [-, ultra thick] (-1,0) -- (-2,-1);
\draw [-, ultra thick] (-2,1) -- (-3,0);
\draw [-, ultra thick] (-3,2) -- (-4,1);
\draw [-, ultra thick] (-4,3) -- (-5,2);
\draw [-, ultra thick] (-5,4) -- (-6,3);
\draw[fill=white, radius = .2] (-1,0) circle [radius = 0.2];
\draw[fill=red, radius = .2] (-2,1) circle [radius = 0.2];
\draw[fill=red, radius = .2] (-3,2) circle [radius = 0.2];
\draw[fill=red, radius = .2] (-4,3) circle [radius = 0.2];
\draw[fill=white, radius = .2] (-5,4) circle [radius = 0.2];
\draw[fill=white, radius = .2] (-2,-1) circle [radius = 0.2];
\draw[fill=white, radius = .2] (-3,0) circle [radius = 0.2];
\draw[fill=white, radius = .2] (-4,1) circle [radius = 0.2];
\draw[fill=red, radius = .2] (-5,2) circle [radius = 0.2];
\draw[fill=red, radius = .2] (-6,3) circle [radius = 0.2];
\draw (-2,-2) node{$0$};
\end{tikzpicture}
  \end{minipage}
  \hspace{0.15cm} \resizebox{.5cm}{!}{$\xlongrightarrow{\text{Row}}$}   \hspace{-0.85cm}
  \begin{minipage}[c]{1.7cm}
\begin{tikzpicture}[scale = .43]
\draw [-, ultra thick] (-1,0) -- (-2,1);
\draw [-, ultra thick] (-2,1) -- (-3,2);
\draw [-, ultra thick] (-3,2) -- (-4,3);
\draw [-, ultra thick] (-4,3) -- (-5,4);
\draw [-, ultra thick] (-3,0) -- (-4,1);
\draw [-, ultra thick] (-2,-1) -- (-3,0);
\draw [-, ultra thick] (-4,1) -- (-5,2);
\draw [-, ultra thick] (-5,2) -- (-6,3);
\draw [-, ultra thick] (-1,0) -- (-2,-1);
\draw [-, ultra thick] (-2,1) -- (-3,0);
\draw [-, ultra thick] (-3,2) -- (-4,1);
\draw [-, ultra thick] (-4,3) -- (-5,2);
\draw [-, ultra thick] (-5,4) -- (-6,3);
\draw[fill=white, radius = .2] (-1,0) circle [radius = 0.2];
\draw[fill=white, radius = .2] (-2,1) circle [radius = 0.2];
\draw[fill=red, radius = .2] (-3,2) circle [radius = 0.2];
\draw[fill=red, radius = .2] (-4,3) circle [radius = 0.2];
\draw[fill=red, radius = .2] (-5,4) circle [radius = 0.2];
\draw[fill=white, radius = .2] (-2,-1) circle [radius = 0.2];
\draw[fill=white, radius = .2] (-3,0) circle [radius = 0.2];
\draw[fill=white, radius = .2] (-4,1) circle [radius = 0.2];
\draw[fill=white, radius = .2] (-5,2) circle [radius = 0.2];
\draw[fill=red, radius = .2] (-6,3) circle [radius = 0.2];
\draw (-2,-2) node{$-1$};
 \end{tikzpicture}
  \end{minipage}
  \hspace{0.15cm} \resizebox{.5cm}{!}{$\xlongrightarrow{\text{Row}}$}   \hspace{-0.85cm}
  \begin{minipage}[c]{1.7cm}
\begin{tikzpicture}[scale = .43]
\draw [-, ultra thick] (-1,0) -- (-2,1);
\draw [-, ultra thick] (-2,1) -- (-3,2);
\draw [-, ultra thick] (-3,2) -- (-4,3);
\draw [-, ultra thick] (-4,3) -- (-5,4);
\draw [-, ultra thick] (-3,0) -- (-4,1);
\draw [-, ultra thick] (-2,-1) -- (-3,0);
\draw [-, ultra thick] (-4,1) -- (-5,2);
\draw [-, ultra thick] (-5,2) -- (-6,3);
\draw [-, ultra thick] (-1,0) -- (-2,-1);
\draw [-, ultra thick] (-2,1) -- (-3,0);
\draw [-, ultra thick] (-3,2) -- (-4,1);
\draw [-, ultra thick] (-4,3) -- (-5,2);
\draw [-, ultra thick] (-5,4) -- (-6,3);
\draw[fill=red, radius = .2] (-1,0) circle [radius = 0.2];
\draw[fill=red, radius = .2] (-2,1) circle [radius = 0.2];
\draw[fill=white, radius = .2] (-3,2) circle [radius = 0.2];
\draw[fill=white, radius = .2] (-4,3) circle [radius = 0.2];
\draw[fill=white, radius = .2] (-5,4) circle [radius = 0.2];
\draw[fill=red, radius = .2] (-2,-1) circle [radius = 0.2];
\draw[fill=red, radius = .2] (-3,0) circle [radius = 0.2];
\draw[fill=red, radius = .2] (-4,1) circle [radius = 0.2];
\draw[fill=red, radius = .2] (-5,2) circle [radius = 0.2];
\draw[fill=white, radius = .2] (-6,3) circle [radius = 0.2];
\draw (-2,-2) node{$1$};
\end{tikzpicture}
  \end{minipage}
  \hspace{0.15cm} \resizebox{.5cm}{!}{$\xlongrightarrow{\text{Row}}$}   \hspace{-0.85cm}
  \begin{minipage}[c]{1.7cm}
\begin{tikzpicture}[scale = .43]
\draw [-, ultra thick] (-1,0) -- (-2,1);
\draw [-, ultra thick] (-2,1) -- (-3,2);
\draw [-, ultra thick] (-3,2) -- (-4,3);
\draw [-, ultra thick] (-4,3) -- (-5,4);
\draw [-, ultra thick] (-3,0) -- (-4,1);
\draw [-, ultra thick] (-2,-1) -- (-3,0);
\draw [-, ultra thick] (-4,1) -- (-5,2);
\draw [-, ultra thick] (-5,2) -- (-6,3);
\draw [-, ultra thick] (-1,0) -- (-2,-1);
\draw [-, ultra thick] (-2,1) -- (-3,0);
\draw [-, ultra thick] (-3,2) -- (-4,1);
\draw [-, ultra thick] (-4,3) -- (-5,2);
\draw [-, ultra thick] (-5,4) -- (-6,3);
\draw[fill=red, radius = .2] (-1,0) circle [radius = 0.2];
\draw[fill=red, radius = .2] (-2,1) circle [radius = 0.2];
\draw[fill=red, radius = .2] (-3,2) circle [radius = 0.2];
\draw[fill=white, radius = .2] (-4,3) circle [radius = 0.2];
\draw[fill=white, radius = .2] (-5,4) circle [radius = 0.2];
\draw[fill=white, radius = .2] (-2,-1) circle [radius = 0.2];
\draw[fill=red, radius = .2] (-3,0) circle [radius = 0.2];
\draw[fill=red, radius = .2] (-4,1) circle [radius = 0.2];
\draw[fill=red, radius = .2] (-5,2) circle [radius = 0.2];
\draw[fill=red, radius = .2] (-6,3) circle [radius = 0.2];
\draw (-2,-2) node{$0$};
\end{tikzpicture}
  \end{minipage}
  \hspace{0.15cm} \resizebox{.5cm}{!}{$\xlongrightarrow{\text{Row}}$}   \hspace{-0.85cm}
  \begin{minipage}[c]{1.7cm}
\begin{tikzpicture}[scale = .43]
\draw [-, ultra thick] (-1,0) -- (-2,1);
\draw [-, ultra thick] (-2,1) -- (-3,2);
\draw [-, ultra thick] (-3,2) -- (-4,3);
\draw [-, ultra thick] (-4,3) -- (-5,4);
\draw [-, ultra thick] (-3,0) -- (-4,1);
\draw [-, ultra thick] (-2,-1) -- (-3,0);
\draw [-, ultra thick] (-4,1) -- (-5,2);
\draw [-, ultra thick] (-5,2) -- (-6,3);
\draw [-, ultra thick] (-1,0) -- (-2,-1);
\draw [-, ultra thick] (-2,1) -- (-3,0);
\draw [-, ultra thick] (-3,2) -- (-4,1);
\draw [-, ultra thick] (-4,3) -- (-5,2);
\draw [-, ultra thick] (-5,4) -- (-6,3);
\draw[fill=white, radius = .2] (-1,0) circle [radius = 0.2];
\draw[fill=red, radius = .2] (-2,1) circle [radius = 0.2];
\draw[fill=red, radius = .2] (-3,2) circle [radius = 0.2];
\draw[fill=red, radius = .2] (-4,3) circle [radius = 0.2];
\draw[fill=white, radius = .2] (-5,4) circle [radius = 0.2];
\draw[fill=white, radius = .2] (-2,-1) circle [radius = 0.2];
\draw[fill=white, radius = .2] (-3,0) circle [radius = 0.2];
\draw[fill=red, radius = .2] (-4,1) circle [radius = 0.2];
\draw[fill=red, radius = .2] (-5,2) circle [radius = 0.2];
\draw[fill=white, radius = .2] (-6,3) circle [radius = 0.2];
\draw (-2,-2) node{$-1$};
\end{tikzpicture}
  \end{minipage}
  \hspace{0.15cm} \resizebox{.5cm}{!}{$\xlongrightarrow{\text{Row}}$}   \hspace{-0.85cm}
  \begin{minipage}[c]{1.7cm}
\begin{tikzpicture}[scale = .43]
\draw [-, ultra thick] (-1,0) -- (-2,1);
\draw [-, ultra thick] (-2,1) -- (-3,2);
\draw [-, ultra thick] (-3,2) -- (-4,3);
\draw [-, ultra thick] (-4,3) -- (-5,4);
\draw [-, ultra thick] (-3,0) -- (-4,1);
\draw [-, ultra thick] (-2,-1) -- (-3,0);
\draw [-, ultra thick] (-4,1) -- (-5,2);
\draw [-, ultra thick] (-5,2) -- (-6,3);
\draw [-, ultra thick] (-1,0) -- (-2,-1);
\draw [-, ultra thick] (-2,1) -- (-3,0);
\draw [-, ultra thick] (-3,2) -- (-4,1);
\draw [-, ultra thick] (-4,3) -- (-5,2);
\draw [-, ultra thick] (-5,4) -- (-6,3);
\draw[fill=red, radius = .2] (-1,0) circle [radius = 0.2];
\draw[fill=white, radius = .2] (-2,1) circle [radius = 0.2];
\draw[fill=white, radius = .2] (-3,2) circle [radius = 0.2];
\draw[fill=white, radius = .2] (-4,3) circle [radius = 0.2];
\draw[fill=white, radius = .2] (-5,4) circle [radius = 0.2];
\draw[fill=white, radius = .2] (-2,-1) circle [radius = 0.2];
\draw[fill=white, radius = .2] (-3,0) circle [radius = 0.2];
\draw[fill=white, radius = .2] (-4,1) circle [radius = 0.2];
\draw[fill=red, radius = .2] (-5,2) circle [radius = 0.2];
\draw[fill=red, radius = .2] (-6,3) circle [radius = 0.2];
\draw (-2,-2) node{$0$};
\end{tikzpicture}
  \end{minipage}
    \vspace{1mm}
\begin{minipage}{1\linewidth}
\begin{tikzpicture}[scale=.95]\centering
\node[anchor=center, scale=0.6] at (12.5,.25) {Row};
\draw[->] (19.8,.5)--(19.8,0)--(5.3,0)--(5.3,.5);
\node at (3,0) {};
\end{tikzpicture}
\end{minipage}
\end{center}
\caption{An orbit showing the number of maximal elements minus the number of minimal elements}
\label{fig: min_max_fig}
\end{figure}
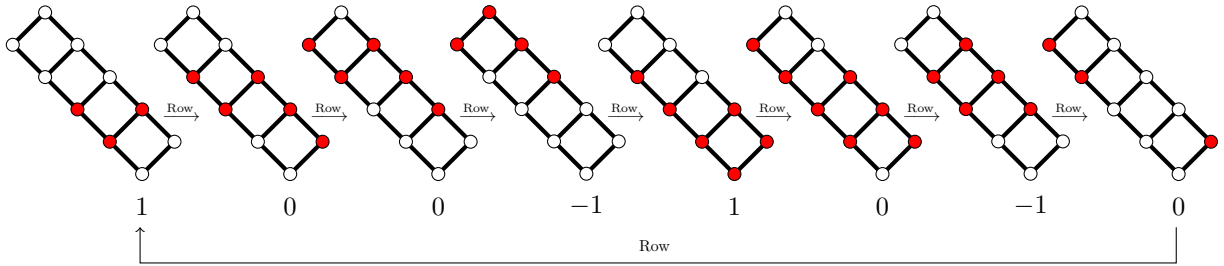

It is reasonable to ask if Theorem \ref{thm:homomesy_2_by_n_max-min} can be generalized to all products of chains. For products of two chains, we tested the following conjecture on all posets $[m] \times [n]$, with $m+n \leq 12$.
\begin{conj}\label{cj:homomesy_prod_2_chains}
The number of maximal elements minus the number of minimal elements is $0$-mesic under rowmotion on $\IC(P)$, for $P=[m] \times [n]$.
\end{conj}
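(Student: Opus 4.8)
The plan is to prove the statement by exhibiting $\mm$ as a \emph{coboundary} for rowmotion, since by the standard criterion (see \cite{PR2015}) a statistic is $0$-mesic exactly when it can be written as $g-g\circ\row$ for some $g\colon\IC(P)\to\mathbb{Z}$; summing such an expression around an orbit telescopes to $0$. First I would record the oriented reformulation of $\mm$. Writing $\mathfrak{t}^{+}_x(I)=1$ when $x\in\Max(I)$ and $\mathfrak{t}^{-}_x(I)=1$ when $x\in\Min(I)$ (and $0$ otherwise), we have $\mm(I)=\sum_{x\in P}\big(\mathfrak{t}^{+}_x(I)-\mathfrak{t}^{-}_x(I)\big)$. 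These refine the toggleability statistic $\tog_x$: for $x\in I$ one checks directly that $I-\{x\}\in\IC(P)$ iff $x$ is not strictly between two elements of $I$, so $\tog_x(I)=-1$ exactly when $x\in\Max(I)\cup\Min(I)$; thus $\mathfrak{t}^{+}_x+\mathfrak{t}^{-}_x$ recovers $-\tog_x$ on $I$ up to the overlap of isolated points, while their \emph{difference} is the new oriented quantity we must control.

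The core step is then to produce the function $g$. Here I would use the global formula of Theorem~\ref{thm:AltRow} to compute $\Max(\row(I))$ and $\Min(\row(I))$ in terms of the data $(\Max(I),\fl(I),\ceil{I},\F)$ attached to $I$; in the order-ideal degenerate case $\Max(\row(I))=\ceil{I}$, and the interval-closed correction terms in \eqref{Eqn:arow formula} should contribute maxima and minima that cancel in $\mm$. Specializing to $P=[2]\times[n]$, the recombination already carried out in the proof of Theorem~\ref{thm:homomesy_2_by_n_max-min} is precisely a proof that $\mm$ is a coboundary: each value $+1$ is matched, after a run of $0$'s, to a later value $-1$ in the orbit. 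For general $m$ the natural candidate for $g$ records, chain by chain, the ``interval profile'' of $I$ --- the list of intervals $[(a,i_a),(a,j_a)]$ occupying each of the $m$ parallel chains --- together with how the tops and bottoms of these intervals are staggered; I would aim to show that the increment $g(I)-g(\row(I))$ reads off precisely $|\Max(I)|-|\Min(I)|$.

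The main obstacle is that, unlike for $[2]\times[n]$ (where antichains have size at most $2$ and rowmotion reduces to sliding two intervals up their chains), the rowmotion orbits of $\IC([m]\times[n])$ have no known closed description, and the interval profiles interact in complicated ways once intervals in different chains overlap only partially. I therefore expect the difficulty to lie entirely in establishing the local relation governing how maxima and minima migrate under a single application of rowmotion, that is, in verifying the candidate coboundary identity from Theorem~\ref{thm:AltRow} rather than in any orbit-by-orbit enumeration. A sensible intermediate milestone is $P=[3]\times[n]$, where the profile consists of three staggered intervals and the case analysis, though longer, remains finite and should either confirm the coboundary $g$ or reveal its correct general form; a complementary line of attack is to conjugate $\row$ to a rank- or file-sweeping toggle order, in the spirit of \cite{SW2012}, for which the cancellation of $\mathfrak{t}^{+}$ against $\mathfrak{t}^{-}$ may become visible one rank at a time.
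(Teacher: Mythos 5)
You should first be aware that the statement you were asked to prove is not proved in the paper at all: it appears there as Conjecture~\ref{cj:homomesy_prod_2_chains}, reported only as verified computationally for all $[m]\times[n]$ with $m+n\leq 12$, and the paper explicitly notes that the analogous statement already fails for $[2]\times[2]\times[5]$ and $[2]\times[2]\times[2]\times[2]$. So there is no paper proof to compare your attempt against; the question is whether your proposal closes the gap, and it does not.

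The genuine gap is that your central device carries no content beyond the conjecture itself. For a bijective action on a finite set, a statistic is $0$-mesic if and only if it is a coboundary $g-g\circ\row$: one direction is the telescoping you cite, and the converse is obtained by defining $g$ via partial sums of the statistic along each orbit. Hence ``exhibit $\mm$ as a coboundary'' is a restatement of the conjecture, not a reduction of it, and the burden falls entirely on the step you defer --- the explicit construction of $g$, or equivalently a local lemma, extracted from Theorem~\ref{thm:AltRow} and Equation~\eqref{Eqn:arow formula}, describing how $\Max$ and $\Min$ transform under a single application of rowmotion on $[m]\times[n]$. You acknowledge this yourself, but nothing in the proposal supplies it. Your reading of the $[2]\times[n]$ case is fair: the proof of Theorem~\ref{thm:homomesy_2_by_n_max-min} does implicitly build such a $g$ by pairing each orbit element contributing $+1$ with a later one contributing $-1$. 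However, that pairing rests on the complete four-case classification of $\IC([2]\times[n])$ (at most two chain-intervals, whose sliding under rowmotion can be tracked explicitly); for $m\geq 3$ no analogous classification of orbits exists, and once three or more chain-intervals overlap only partially, the migration of maxima and minima is no longer a local sliding phenomenon, so the $+1$/$-1$ matching cannot be read off the interval profile as you suggest. Your side observations (that $\tog_x(I)=-1$ exactly when $x\in\Max(I)\cup\Min(I)$ for $x\in I$, and the reformulation of $\mm$ as an oriented sum) are correct but do not help, since the paper's Proposition~\ref{toggleabilitly_max_min} applies only to extremal elements of $P$ and the toggleability statistic is not homomesic at other elements (Remark~\ref{rmk:toggleability_in_general}). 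In short, the proposal is a reasonable research program --- the $[3]\times[n]$ milestone is a sensible next step --- but the conjecture remains open after it.
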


\begin{remark}
Conjecture \ref{cj:homomesy_prod_2_chains} is the best possible such homomesy result, as this statistic does not exhibit homomesy under rowmotion for either of the posets $[2]\times [2]\times [5]$  or   $[2]\times [2]\times [2] \times [2]$.
\end{remark}

\begin{remark}
Theorem \ref{thm:homomesy_2_by_n_max-min} is the only positive homomesy result we found on products of chains. We give here a list of some statistics we determined are not homomesic under rowmotion on interval-closed sets. For each statistic we provide an example of a poset for which the statistic has different orbit averages.
\begin{itemize}
\item  Cardinality: Unlike the case of rowmotion on order ideals \cite{PR2015, Vorland2019}, cardinality (the number of elements in an interval-closed set) is not homomesic for rowmotion on interval-closed sets. Counter-example: $[2]\times[2]$.
\item Signed cardinality: This is the number of elements of an interval-closed set at even ranks minus the number of elements at odd ranks.  Counter-examples: $[2]\times [4]$, $[4]\times[5]$, and $[2]\times[2]\times[3]$. Note that this statistic is homomesic for rowmotion on order ideals of the Type A minuscule poset \cite{Haddadan21}.
\item Number of maximal (respectively, minimal) elements: Unlike what is known for order ideals \cite{PR2015}, this statistic is not homomesic for rowmotion on interval-closed sets. Counter-example: $[2]\times[2]$. 
\item Toggleability for any $x \notin\{1, n \}$ (as explained in Remark \ref{rmk:toggleability_in_general}). Counter-example: $[2]\times[2]$.
\end{itemize}
\end{remark}

The reader may notice that many statistics are not even homomesic for the smallest non-trivial product of chains poset ($[2]\times[2]$); for these statistics, we found several larger counter-examples as well. In the case of the signed cardinality statistic, whose smallest counter-examples are much larger, we offer the following conjecture. We tested this conjecture for all posets with at most 30 elements:

\begin{conj}\label{conj:signed_card} If $m = 2$ or $m = 3,$ then the signed cardinality
statistic is $0$-mesic under rowmotion on interval-closed sets of $[m]\times[n]$ whenever $m + n - 1$ is
even. 
\end{conj}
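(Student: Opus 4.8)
The plan is to exploit the order-reversing self-duality of $[m]\times[n]$ together with the dynamical description of rowmotion, reducing the parity-constrained cases $m=2,3$ to explicit orbit analysis. First I would set up the anti-automorphism $\phi\colon[m]\times[n]\to[m]\times[n]$ given by $\phi(a,b)=(m+1-a,\,n+1-b)$, which realizes $[m]\times[n]$ as self-dual. Since $\rank(\phi(x))=(m+n-2)-\rank(x)$, one gets $SC(\phi(x))=(-1)^{m+n-2}SC(x)=(-1)^{m+n}SC(x)$, so under the hypothesis $m+n-1$ even (i.e.\ $m+n$ odd) we have $SC(\phi(x))=-SC(x)$ for every $x$, and hence $SC(\phi(I))=-SC(I)$ for every $I\in\IC(P)$. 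The same parity forces $SC(P)=\bigl(\sum_{a=1}^m(-1)^{a-1}\bigr)\bigl(\sum_{b=1}^n(-1)^{b-1}\bigr)=0$, so by $SC(\overline I)=SC(P)-SC(I)$ complementation also negates $SC$.

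Next I would transport this symmetry to rowmotion. Because rowmotion is built from toggles and is thus equivariant under poset isomorphisms, viewing $\phi$ as an isomorphism $P\to P^*$ yields $\phi\circ\row_P=\row_{P^*}\circ\phi$, and Proposition~\ref{Prop: DualInverse} gives $\row_{P^*}=\row_P^{-1}$; combining, $\phi\circ\row=\row^{-1}\circ\phi$ on $\IC(P)$. Thus $\phi$ sends each rowmotion orbit to a rowmotion orbit of equal size with the opposite $SC$-sum. In particular every orbit $\O$ fixed by $\phi$ satisfies $\sum_{I\in\O}SC(I)=-\sum_{I\in\O}SC(I)=0$, and summing over all orbits recovers that the global $SC$-average is $0$ (for every $m$).

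The remaining, and genuinely hard, step is to promote ``orbits pair up with opposite sums'' to ``every orbit sum is individually $0$,'' which is exactly where the restriction $m\in\{2,3\}$ enters. For $m=2$ I would reuse the complete dynamical picture from the proof of Theorem~\ref{thm:homomesy_2_by_n_max-min}: each interval-closed set is at most two intervals, one per chain, and rowmotion either slides the configuration up one rank or executes a turnaround governed by complementation. Using that a pure one-rank upward slide negates $SC$ (all ranks shift by one) and that complementation negates $SC$ (since $SC(P)=0$), one tracks $SC$ around the cyclic orbit and shows the alternating contributions telescope to $0$, with the parity hypothesis making the number of slide steps between turnarounds compatible with cancellation. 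For $m=3$ (with $n$ even) the same strategy applies, but interval-closed sets may now consist of up to three intervals with more overlap configurations, so one must first establish the analogous sliding-plus-turnaround description of rowmotion orbits, information not yet available given that even the enumeration of $\IC([m]\times[n])$ is open.

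I expect the main obstacle to be this last point: obtaining a tractable, case-complete description of the rowmotion dynamics on $\IC([3]\times[n])$ and, within the duality reduction, deciding whether non-self-dual orbits can occur. The duality argument cleanly disposes of all self-dual orbits and reduces the problem to the non-self-dual ones; the crux is to show that the special structure of $m=2,3$ either precludes non-self-dual orbits or forces their sums to vanish, equivalently that $\sum_{x}(-1)^{\rank(x)}\,\bigl|\{I\in\O:x\in I\}\bigr|=0$ for each orbit $\O$. A transfer identity expressing $SC$ through the $0$-mesic extremal toggleability statistics of Proposition~\ref{toggleabilitly_max_min} plus telescoping terms would be the natural tool, but constructing such an identity valid precisely for $m=2,3$ is the delicate part.
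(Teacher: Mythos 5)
The statement you are attempting is not proved in the paper: it appears there only as Conjecture~\ref{conj:signed_card}, supported by computational verification (SageMath checks on all such posets with at most $30$ elements). So there is no paper proof to compare against; the question is whether your proposal settles the conjecture, and it does not --- as you yourself acknowledge. Your duality setup is correct and worth recording: $\phi(a,b)=(m+1-a,\,n+1-b)$ is an isomorphism $P\to P^*$, the hypothesis that $m+n-1$ is even gives $SC(\phi(x))=-SC(x)$ and $SC(P)=0$, and combining equivariance of toggles under poset isomorphisms with Proposition~\ref{Prop: DualInverse} yields $\phi\circ\row=\row^{-1}\circ\phi$ on $\IC(P)$. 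From this you correctly conclude that $\phi$ carries rowmotion orbits to rowmotion orbits with negated $SC$-sums, hence every $\phi$-invariant orbit has sum $0$ and the global average is $0$. But homomesy demands that \emph{every} orbit average vanish, and your argument says nothing about orbits that are not $\phi$-invariant beyond the fact that they pair off: two paired orbits could in principle carry sums $+c$ and $-c$ with $c\neq 0$, which is consistent with everything you establish yet would contradict the conjectured conclusion. Note also that the restriction to $m=2,3$ plays no role anywhere in your completed steps, which is a warning sign, since the paper reports the statistic is \emph{not} homomesic in nearby cases (e.g.\ $[4]\times[5]$ and $[2]\times[2]\times[3]$); any correct proof must use the small-$m$ structure in an essential way.

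The step you defer --- a case-complete sliding-plus-turnaround description of the orbits of $\IC([2]\times[n])$ and $\IC([3]\times[n])$, or a transfer identity expressing $SC$ through the $0$-mesic extremal toggleability statistics of Proposition~\ref{toggleabilitly_max_min} --- is precisely the open content of the conjecture, and nothing of that kind is available in the paper either (even the enumeration of $\IC([m]\times[n])$ is left open there). In short: your reductions are sound and would make a reasonable opening section of an eventual proof, but the core of the statement remains unproved in your proposal, just as it remains unproved (and only conjectured) in the paper.
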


Many statistics that exhibit homomesy with respect to rowmotion on order ideals also exhibit the \emph{cyclic sieving phenomenon}~\cite{ReStWh2004}. For each statistic studied in this paper and for the diamond poset, we used SageMath to check for occurrences of the cyclic sieving phenomenon for the triplets made of the set of interval-closed sets, the statistic generating function, and the action of rowmotion, but did not find any. Since the diamond poset is both an ordinal sum of antichains and a product of chains, there is no occurrence of the phenomenon with a statistic from this paper and for either family of posets studied here. It would be interesting to find an instance of this phenomenon involving interval-closed sets. 

\bibliographystyle{plain}
\bibliography{master.bib}

\end{document}